\theoremstyle{definition}
\newtheorem{thm}{Theorem}[section]
\newtheorem{prop}[thm]{Proposition}
\newtheorem{cor}[thm]{Corollary}
\newtheorem{lem}{Lemma}
\newtheorem{defn}[thm]{Definition}
\newtheorem{rem}{Remark}
\newtheorem{ex}[thm]{Example}
\newtheorem*{first-step}{First Step}
\newtheorem*{third-step}{Third Step}
\newtheorem*{second-step}{Second Step}
\newtheorem*{question}{Question}
\newtheorem*{main idea}{Main idea}
\newtheorem{cl}{Claim}
\newcommand{\op}{\operatorname}
\newcommand{\tightoverset}[2]{%
  \mathop{#2}\limits^{\vbox to -.5ex{\kern-0.75ex\hbox{$#1$}\vss}}}
\newcommand{\tightunderset}[2]{%
  \mathop{#2}\limits_{\vbox to -.5ex{\kern-0.75ex\hbox{$#1$}\vss}}}
\newcommand{\bs}[1]{\boldsymbol{#1}}
\newcommand\smvee{\raise0.9ex\hbox{$\scriptscriptstyle\vee$}}
\newcommand{\J}{{\rm J}}
\newcommand{\Y}{{\rm Y}}
\newcommand{\C}{{\rm C}}
\newcommand{\T}{{\rm T}}
\newcommand{\E}{{\rm E}}
\newcommand{\R}{{\rm R}}
\newcommand{\U}{{\rm U}}
\newcommand{\mm}{{\rm H}}
\newcommand{\B}{{\rm B}}
\newcommand{\s}{{\rm S}}
\newcommand{\I}{{\rm I}}
\newcommand{\N}{{\rm N}}
\newcommand{\e}{\varepsilon}
\newcommand{\p}{{\rm P}}
\newcommand{\A}{{\rm A}}
\newcommand{\F}{{\rm F}}
\newcommand{\V}{{\rm V}}
\newcommand{\li}{{\rm L}}
\newcommand{\X}{{\rm X}}
\newcommand{\K}{{\rm K}}
\newcommand{\D}{{\rm D}}
\newcommand{\Z}{{\rm Z}}
\newcommand{\W}{{\rm W}}
\newcommand{\G}{{\rm G}}
\newcommand{\<}{\langle}
\newcommand{\rr}{\rangle}
\newcommand{\nn}{\nonumber}
\newcommand{\gr}[1]{\overline{#1}}
\newcommand{\ov}[1]{\overline{#1}}
\date{}                                           % Activate to display a given date or no date
 \gdef\Young#1{\hbox{$\vcenter
 {\mathcode`,="8000\mathcode`|="8000
  \def,{\global\advance\cols by 1 &}%
  \def|{\cr
        \multispan{\the\cols}\hrulefill\cr
        &\global\cols=2 }%
  \offinterlineskip\everycr{}\tabskip=0pt
  \dimen0=\ht\strutbox \advance\dimen0 by \dp\strutbox
  \halign
   {\vrule height \ht\strutbox depth \dp\strutbox##
    &&\hbox to \dimen0{\hss$##$\hss}\vrule\cr
    \noalign{\hrule}&\global\cols=2 #1\crcr
    \multispan{\the\cols}\hrulefill\cr%
   }
 }$}}
\gdef\Skew(#1:#2){\hbox{$\vcenter
{\mathcode`,="8000\mathcode`|="8000
  \dimen0=\ht\strutbox \advance\dimen0 by \dp\strutbox
  \def\boxbeg{\vbox
    \bgroup\hrule\kern-0.4pt\hbox to\dimen0\bgroup\strut\vrule\hss$}%
  \def\boxend{$\hss\egroup\hrule\egroup}%
  \def,{\boxend\boxbeg}%
  \def|##1:{\boxend\vrule\egroup\nointerlineskip\kern-0.4pt
    \moveright##1\dimen0\hbox\bgroup\boxbeg}%
  \def\\##1\\##2:{\boxend\vrule\egroup\nointerlineskip\kern-0.4pt
    \kern ##1\dimen0\moveright##2\dimen0\hbox\bgroup\boxbeg}%
  \moveright#1\dimen0\hbox\bgroup\boxbeg#2\boxend\vrule\egroup
 }$}}
\begin{document}

\title{The Symplectic Plactic Monoid, Crystals, and MV Cycles
}

\author{Jacinta Torres
}

\thanks{The results in this paper are part of the author's PhD thesis. The author has been supported by the Graduate School 1269: Global structures in geometry and analysis, financed by the Deutsche Forschungsgemeinschaft. Part of this work was completed during the author's visit to St. Etienne, where she was partially supported by the SPP1388.
}

\maketitle

\begin{abstract}
We study cells in generalised Bott-Samelson varieties for type $\C_{n}$. These cells are parametrised by certain galleries in the affine building. We define a set of \textsl{readable galleries} - we show that the closure in the affine Grassmannian associated to a gallery in this set is an MV cycle. This then defines a map from the set of readable galeries to the set of MV cycles, which we show to be a morphism of crystals. We further compute the fibres of this map in terms of the Littelmann path model. 
\end{abstract}

\section{Introduction}
\label{intro}
This paper is part of a project which was started in \cite{ls} by Gaussent and Littelmann, the aim of which is to establish an explicit relationship between the path model and the set of MV cycles used by Mirkovi\'c and Vilonen for the Geometric Satake equivalence proven in \cite{mirkovicvilonen}.

\subsection{}
 We consider a complex connected reductive algebraic group $\G$ and its affine Grassmannian $\mathcal{G} = \G(\mathbb{C}(\!(t)\!))/\G(\mathbb{C}[[t]])$. We fix a maximal torus $\T \subset \G$. The coweight lattice $\X^{\vee} = \op{Hom}(\mathbb{C}^{\times}, \T)$ can be seen as a subset of $\mathcal{G}$. For a coweight $\lambda$, which we may assume dominant with respect to some choice of Borel subgroup containing $\T$, the closure $\X_{\lambda}$ of the $\G(\mathbb{C}[[t]])$-orbit of $\lambda$ in $\mathcal{G}$ is an algebraic variety which is usually singular. The Geometric Satake equivalence identifies the complex irreducible highest weight module $\li(\lambda)$ for the Langlands dual group $\G^{\vee}$ with the intersection cohomology of $\X_{\lambda}$, a basis of which is given by the classes of certain subvarieties of $\X_{\lambda}$ called MV cycles. 
The set of these subvarieties is denoted by $\mathcal{Z}(\lambda)$. The Geometric Satake equivalence implies that the elements of $\mathcal{Z}(\lambda)$ are in one to one correspondence with the vertices of the crystal $\B(\lambda)$. In \cite{bravermangaitsgory}, Braverman and Gaitsgory endow the set $\mathcal{Z}(\lambda)$ with a crystal structure and show the existence of a crystal isomorphism 
$\varphi: \B(\lambda)\tightoverset{\sim}{\longrightarrow} \mathcal{Z}(\lambda)$. 
\subsection{}
In \cite{ls}, Gaussent and Littelmann define a set $\Gamma(\gamma_{\lambda})^{\op{LS}}$ of LS galleries, which are galleries in the affine building $\mathcal{J}^{aff}$ associated to $\G$, and they endow this set with a crystal structure and an isomorphism of crystals 
$\B(\lambda)\tightoverset{\sim}{\longrightarrow}\Gamma(\gamma_{\lambda})^{\op{LS}}$.  They view the latter as a subset of the $\T$-fixed points in a desingularization $\Sigma_{\gamma_{\lambda}}\overset{\pi}{\longrightarrow}\X_{\lambda}$. To each of these particular fixed points $\delta \in \Gamma(\gamma_{\lambda})^{\op{LS}}$ corresponds a Bia\l{}ynicki-Birula cell $\C_{\delta} \subset \Sigma_{\gamma_{\lambda}}$. Gaussent and Littelmann show in \cite{ls} that the the closure $\overline{\pi(\C_{\delta})}$ is an MV cycle, and Baumann and Gaussent show in \cite{baumanngaussent} that the map

\begin{align*}
\Gamma(\gamma_{\lambda})^{\op{LS}} &\longrightarrow \mathcal{Z}(\lambda)\\
\delta &\mapsto \overline{\pi(\C_{\delta})}
\end{align*}
\noindent
is a crystal isomorphism with respect to the crystal structure on $\mathcal{Z}(\lambda)$ described by Braverman and Gaitsgory in \cite{bravermangaitsgory}. It is natural to ask whether the closures $\overline{\pi(\C_{\delta})}$ are still MV cycles for a more general choice of fixed point $\delta$. 

\subsection{}
In \cite{onesk} they consider \textit{one skeleton} galleries, which are piecewise linear paths in $\X^{\vee}\otimes_{\mathbb{Z}}\mathbb{R}$. Such galleries can be interpreted in terms of Young tableaux for types A, B and C. For $\G^{\vee} = \op{SL}(n,\mathbb{C})$, Gaussent, Littelmann and Nguyen show in \cite{knuth} that for any fixed point $\delta \in \Sigma_{\gamma_{\lambda}}^{\T}$, the closure  $\overline{\pi(\C_{\delta})}$ is in fact an MV cycle. They achieve this using combinatorics of Young tableaux such as word reading and the well known Knuth relations, and by relating them to the Chevalley relations for root subgroups which hold in the affine Grassmannian $\mathcal{G}$. In \cite{jt} it is observed that word reading is a crystal morphism, and this allows one to prove that in this case, the map from all galleries to MV cycles is in fact a morphism of crystals. \\

  It was conjectured in \cite{knuth} that  generalizations of their results hold for arbitrary complex semi-simple algebraic groups, in terms of the plactic algebra defined by Littelmann in \cite{placticalgebra}. It is with this in mind that we formulate and state our results. 

\subsection{Results}
	We work with $\G^{\vee}= \op{SP}(2n, \mathbb{C})$. We define a set $\Gamma(\gamma_{\lambda})^{\op{R}}\supset \Gamma(\gamma_{\lambda})^{\op{LS}}$ of \textit{readable} galleries, which have an explicit formulation in terms of Young tableaux. It is worth mentioning that these galleries correspond to all galleries in type A, also called keys in \cite{knuth}. Type C combinatorics related to LS galleries has been developed by De Concini \cite{deconcini}, Proctor \cite{proctor}, King \cite{king}, Kashiwara-Nakashima \cite{kashiwaranakashima}, Sheats \cite{sheats}, Lakshmibai \cite{Lak1} in the context of standard monomial theory, and Lecouvey \cite{lecouvey}, among others. We use the description of LS galleries of fundamental type given by Lakshmibai in \cite{Lak1}, \cite{Lak2}. We use the formulation given by Lecouvey in \cite{lecouvey}. There is a certain word reading described in \cite{lecouvey} which we show to be a crystal morphism when restricted to readable galleries.\\
	
We obtain results similar to those obtained in \cite{knuth} concerning the defining relations of the \textit{symplectic plactic monoid}, described explicitly by Lecouvey in \cite{lecouvey}, as well as words of readable galleries. These results together with the work of Gaussent-Littelmann \cite{ls}, \cite{onesk}, and Baumann-Gaussent \cite{baumanngaussent} allow us to show in Theorem \ref{main} that given a readable gallery $\delta \in \Gamma(\gamma_{\lambda})^{\op{R}}$ there is an associated dominant coweight $\nu_{\delta} \leq \lambda$ such that:

\begin{enumerate}
\item The closure $\overline{\pi(\C_{\delta})}$ is an MV cycle in $\X_{\nu_{\delta}}$.
\item The map

\begin{align*}
\Gamma(\gamma_{\lambda})^{\op{R}} &\overset{\varphi_{\gamma_{\lambda}}}{\longrightarrow} \underset{\delta \in \Gamma(\gamma_{\lambda})^{\op{R}}}{\bigoplus}\mathcal{Z}(\mu_{\delta^{+}})\\
\delta &\mapsto \overline{\pi(\C_{\delta})}
\end{align*}
\end{enumerate}
is a morphism of crystals. We compute the fibers of this map in terms of the Littelmann path model. Moreover, this map induces an isomorphism when restricted to each connected component. We then provide some examples of galleries $\delta \in \Sigma_{\gamma_{\lambda}}^{\T}- \Gamma(\gamma_{\lambda})^{\op{R}}$ for which $\overline{\pi(\C_{\delta})}$ is not an MV cycle in $\mathcal{Z}(\nu_{\delta})$.

\subsection{}
This paper is organized as follows. In Section 2 we introduce our notation and recall several general facts about affine Grassmannians, MV cycles, galleries in the affine building, generalised Bott-Samelson varieties, and concrete descriptions of the cells $\C_{\delta}$ in them. In Section 3 we introduce crystal structure on combinatorial galleries, motivate our results in terms of the Littemann path model, and define readable galleries as concatenations of LS galleries of fundamental type and `zero lumps.' From Section 4 on we work with $\G^{\vee} = \op{SP}(2n,\mathbb{C})$, where we recall some type C combinatorics and build up to our main result, which we state and prove in Section 6. However, the main ingredients of the proof, stated in Section 5, are proven in Section 7. In section 8 we exhibit some examples in special cases where the image of a certain cell cannot be an MV cycle. In the appendix we show a small technical result that we need.

%%%%%%%%%%%%%%%%%%%%%%%%%%%%%%%%%%%%%%%%%%%%%%%%

\section*{Acknowledgements}
The author would like to thank Peter Littelmann for his encouragement and his advice, and Stephane Gaussent for many discussions, specially during the author's visits to Saint \'Etienne, as well as proof reading. Special thanks go to Michael Ehrig for his comments, advice, questions, answers, time, patience and proof reading as well as for many enjoyable discussions.

\section{Preliminaries}
\label{preliminaries}
\subsection{Notation}
Throughout this section, we consider $\G$ to be a complex connected reductive algebraic group associated to a root datum
$(\X, \X^{\vee}, \Phi, \Phi^{\vee})$, and we denote its Langlands dual by $\G^{\vee}$. Let $\T \subset \G$ be a maximal torus of $\G$ with character group $\X= \op{Hom}(\T, \mathbb{C}^{\times})$ and cocharacter group $\X^{\vee}= \op{Hom}(\mathbb{C}^{\times}, \T)$. We identify the Weyl group $\W$ with the quotient $\N_{\G}(\T)/\T$, and will make abuse of notation by denoting a representative in $\N_{\G}(\T)$ of an element $w \in \W$ in the Weyl group by the same symbol, ``$w$''  that we use to denote the element itself. We fix a choice of positive roots $\Phi^{+}$ (this determines a set $\Phi^{\vee,+}$ of positive coroots), and denote the dominance order on $\X$ and $\X^{\vee}$ determined by this choice by `$\leq$'. Let $\Delta \subset \Phi^{+}$ be the basis or set of simple roots of $\Phi$ that is determined by $\Phi^{+}$. Then the set $\Delta^{\vee}$ of all coroots of elements of $\Delta$ forms a basis of the root system $\Phi^{\vee}$. Let $\<-,-\rr$ be the non-degenerate pairing between $\X$ and $\X^{\vee}$, and denote the half sum of positive roots (respectively coroots) by $\rho$ (respectively $\rho^{\vee}$). Note that if $\lambda = {\sum}_{\alpha \in \Delta} n_{\alpha}\alpha$ is a sum of positive roots (respectively $\lambda = {\sum}_{\alpha^{\vee} \in \Delta^{\vee}} n_{\alpha}\alpha^{\vee}$) then $\<\lambda,\rho^{\vee}\rr = {\sum}_{\alpha \in \Delta} n_{\alpha}$ (respectively $\<\rho, \lambda\rr = {\sum}_{\alpha^{\vee} \in \Delta^{\vee}} n_{\alpha}$).\\

Let $\B \subset \G$ be the Borel subgroup of $\G$ containing $\T$ that is determined by the choice of positive roots $\Phi^{+}$, and let $\U \subset \B$ be its unipotent radical. The group $\U$ is generated by the elements $\U_{\alpha}(b)$ for $b \in \mathbb{C}$, $\alpha \in \Phi^{+}\}$, and where for each root $\alpha$, $\U_{\alpha}$ is the one-parameter group it determines. For each cocharacter $\lambda \in \X^{\vee}$ and each non-zero complex number $a \in \mathbb{C}^{\times}$, denote by $a^{\lambda}$ its image $\lambda(a) \in \T$.\\

 The following identities hold in $\G$ (See \cite{steinberg}, \S 6):

\begin{itemize}
\item[] For any $\lambda \in \X^{\vee}, a\in \mathbb{C}^{\times}, b\in \mathbb{C},$ and $\alpha \in \Phi$,
\begin{align}
\label{id1}
a^{\lambda}\U_{\alpha}(b) = \U_{\alpha}(a^{\<\alpha,\lambda\rr}b)a^{\lambda}
\end{align}
\item[](Chevalley's commutator formula) Given linearly independent roots $\alpha, \beta \in \Phi$, there exist numbers $c^{i,j}_{\alpha, \beta} \in \{\pm 1, \pm 2, \pm 3\}$ such that, for all $a, b \in \mathbb{C}$: 
\begin{align}
\label{chevalley}
\U_{\alpha}(a)^{-1}\U_{\beta}(b)^{-1}\U_{\alpha}(a)\U_{\beta}(a) = \underset{i, j \in \mathbb{N}^{>0}}{\prod}\U_{i\alpha +j\beta}(c^{i,j}_{\alpha,\beta}(-a)^{i}b^{j})
\end{align}
where the product is taken in some fixed order. The ${c_{\alpha, \beta}^{ij}}'s$ are integers depending on $\alpha, \beta$, and on the chosen order in the product. 
\end{itemize}
%%%%%%%%%%%%%%%%%%%%%%%%%%%%%%%%%%%%%%%%%%%%%%%%%%%%%%%%%%%%%%%%%%%%%%%%%%%%%%%%%%%%%%
%%%%%%%%%%%%%%%%%%%%%%%%%%%%%%%%%%%%%%%%%%%%%%%%%%%%%%%%%%%%%%%%%%%%%%%%%%%%%%%%%%%%%
%%%%%%%%%%%%%%%%%%%%%%%%%%%%%%%%%%%%%%%%%%%%%%%%%%%%%%%%%%%%%%%%%%%%%%%%%%%%%%%%%%%%%%%
\subsection{Affine Grassmannians}
Let $\mathcal{O} = \mathbb{C}[[t]]$ denote the ring of complex formal power series and  let $\mathcal{K} = \mathbb{C}(\!(t)\!)$ denote its field of fractions; it is the field of complex Laurent power series. For any $\mathbb{C}$-algebra $\mathcal{R}$, denote by $\G(\mathcal{R})$ the set of $\mathcal{R}$-valued points. The set

\begin{align*}
\mathcal{G} = \G(\mathcal{K})/\G(\mathcal{O})
\end{align*}

\noindent
is called the \textbf{affine Grassmannian} associated to $\G$. We will denote the class in $\mathcal{G}$ of an element $g\in \G(\mathcal{K})$ by $[g]$. A coweight $\lambda : \mathbb{C}^{\times}\rightarrow \T \subset \G$ determines a point  $t^{\lambda} \in \G(\mathcal{K})$ and hence a class $[t^{\lambda}] \in \mathcal{G}$. This map is injective, and we may therefore consider $\X^{\vee}$ as a subset of $\mathcal{G}$.\\

$\G(\mathcal{O})$-orbits in $\mathcal{G}$ are determined by the Cartan decomposition:

\begin{align*}
\mathcal{G} = \underset{\lambda \in \X^{\vee,+}}{\bigsqcup}\G(\mathcal{O})[t^{\lambda}].
\end{align*}
\noindent
Each $\G(\mathcal{O})$-orbit has the structure of an algebraic variety induced from the pro-group structure of $\G(\mathcal{O})$ and it is known that for a coweight $\lambda \in \X^{\vee, +}$: 
\begin{align*}
\overline{\G(\mathcal{O})[t^{\lambda}]} = \underset{\mu \in \X^{\vee, +}, \mu \leq \lambda}{\bigsqcup} \G(\mathcal{O})[t^{\mu}].
\end{align*}
We call the closure $\overline{\G(\mathcal{O})[t^{\lambda}]}$ a \textbf{generalised Schubert variety} and we denote it by $\X_{\lambda}$. This variety is usually singular. In $\ref{bottsamelson}$, we will review certain resolutions of singularities of it.  \\

The $\U(\mathcal{K})$-orbits in $\mathcal{G}$ are given by the Iwasawa decomposition: 

\begin{align*}
\mathcal{G} = \underset{\lambda \in \X^{\vee}}{\bigsqcup}\U(\mathcal{K})[t^{\lambda}].
\end{align*}
\noindent
These orbits are ind-varieties, and their closures can be described as follows (see \cite{mirkovicvilonen}, Proposition 3.1 a.): 

\begin{align*}
\overline{\U(\mathcal{K})[t^{\lambda}]} = \underset{\mu \leq \lambda}{\bigcup} \U(\mathcal{K})[t^{\mu}]
\end{align*}

\noindent
for any $\lambda \in \X$.
%%%%%%%%%%%%%%%%%%%%%%%%%%%%%%%%%%%%%%%%%%%%%%%%%%%%%%%%%%%%%%%%%%%%%%%%%%%%%%%%%%%%%%%%
%%%%%%%%%%%%%%%%%%%%%%%%%%%%%%%%%%%%%%%%%%%%%%%%%%%%%%%%%%%%%%%%%%%%%%%%%%%%%%%%%%%%%%%%
\subsection{MV Cycles and Crystals}
Let $\lambda \in \X^{\vee,+}$ and $\mu \in \X^{\vee}$ be a dominant integral coweight and any coweight, respectively. Then by Theorem 3.2 \textsl{a} in \cite{mirkovicvilonen}, the intersection $\U(\mathcal{K})[t^{\mu}]\cap \G(\mathcal{O})[t^{\lambda}]$ is non-empty if and only if $\mu \leq \lambda$, and in that case its closure is pure dimensional of dimension $\<\rho, \lambda +\mu\rr$ and has the same number of irreducible components as the dimension of the irreducible representation $\li(\lambda)$ of $\G^{\vee}$ of highest weight $\lambda$ (Corollary 7.4 in \cite{mirkovicvilonen}). Note that this makes sense because $\X^{\vee}$ may be identified with the character group of a maximal torus of $\G^{\vee}$. Explicitly, $\X^{\vee} \cong \op{Hom}(\T^{\vee}, \mathbb{C}^{\times})$, where $\T^{\vee}$ is the Langlands dual of $\T$, which is a maximal torus of $\G^{\vee}$ (see \cite{mirkovicvilonen}, Section 7). \\

We denote the set of all irreducible components of a given topological space $\Y$ by $\op{Irr}(\Y)$. Consider the sets
\begin{align*}
\mathcal{Z}(\lambda)_{\mu} &= \op{Irr}(\overline{\U(\mathcal{K})[t^{\mu}]\cap \G(\mathcal{O})[t^{\lambda}]}) \hbox{ and }\\
\mathcal{Z}(\lambda) &= \underset{\mu \in \X^{\vee}}{\bigsqcup}\mathcal{Z}(\lambda)_{\mu}.
\end{align*}
The elements of these sets are called \textbf{MV cycles}. In \cite{bravermangaitsgory}, Section 3.3, Braverman and Gaitsgory have endowed the set $\mathcal{Z}(\lambda)$ with a crystal structure and have shown the existence of an isomorphism of crystals $\B(\lambda)\tightoverset{\sim}{\longrightarrow} \mathcal{Z}(\lambda)$. We do not use the definition of this crystal structure, but we denote by $\tightoverset{\sim}{f_{\alpha_{i}}}$ (respectively $\tightoverset{\sim}{e_{\alpha_{i}}}$) the corresponding root operators for $i \in \{1, \cdots, n\}$, where $n$ is the rank of the root system $\Phi$. See  \ref{crystals} below for the definition of a crystal.

\subsection{Galleries in the Affine Building}
\label{galleries}
Let $\mathcal{J}^{\op{aff}}$ be the affine building associated to $\G$ and $\mathcal{K}$. It is a union of simplicial complexes called \textit{apartments}, each of which is isomorphic to the Coxeter complex  of the same type as the extended Dynkin diagram associated to $\G$. 
The affine Grassmannian $\mathcal{G}$ can be $\G(\mathcal{K})$ equivariantly embedded into the building $\mathcal{J}^{aff}$, which also carries a 
$\G(\mathcal{K})$ action. Denote by $\Phi^{\op{aff}}$ the set of real affine roots associated to $\Phi$; we identify it with the set $\Phi \times \mathbb{Z}$.\\

Let $\mathbb{A} = \X^{\vee} \otimes_{\mathbb{Z}}\mathbb{R}$. For each $(\alpha, n) \in \Phi^{\op{aff}}$, consider the associated hyperplane

\begin{align*}
\mm_{(\alpha, n)} &=  \{x\in \mathbb{A}: \<\alpha, x\rr = n\}
\end{align*}
and the positive, respectively negative half spaces

\begin{align*}
\mm_{(\alpha, n)}^{+} &=  \{x\in \mathbb{A}: \<\alpha, x\rr \geq n\}\\
\mm_{(\alpha, n)}^{-} &=  \{x\in \mathbb{A}: \<\alpha, x\rr \leq n\}.
\end{align*}

\noindent
Denote by $\W^{\op{aff}}$ the affine Weyl group generated by all the affine reflections $s_{\alpha,n}$ with respect to the affine hyperplanes $\mm_{\alpha, n}$. We have an embedding $\W \hookrightarrow \W^{\op{aff}}$ given by $s_{\alpha} \mapsto s_{\alpha,0}$. The \textbf{dominant Weyl chamber} is the set

\begin{align*}
\C^{+} = \{x \in \mathbb{A}: \<\alpha, x\rr\ > 0\hbox{ }\forall \alpha \in \Delta\}
\end{align*}

\noindent
and the \textbf{fundamental alcove} is in turn

\begin{align*}
\Delta^{\op{f}} = \{x \in \C^{+}: \<\alpha, x\rr\ \leq 1\hbox{ }\forall \alpha \in \Phi^{+}\}.
\end{align*}

There is a unique apartment in the affine building $\mathcal{J}^{\op{aff}}$ that contains the image of the set of coweights $\X^{\vee}\subset \mathcal{G}$ under the embedding $\mathcal{G}\hookrightarrow \mathcal{J}^{\op{aff}}$. This apartment is isomorphic to the affine Coxeter complex associated to $\W^{\op{aff}}$; its faces are given by all possible intersections of the hyperplanes $\mm_{(\alpha, n)}$ and their associated (closed) positive and negative half-spaces $\mm^{\pm}_{(\alpha,n)}$. It is called the \textbf{standard apartment} in the affine building $\mathcal{J}^{\op{aff}}$. The action of $\W^{\op{aff}}$ on the affine building $\mathcal{J}^{\op{aff}}$ coincides, when restricted to the standard apartment, with the one induced by the natural action of $\W^{\op{aff}}$ on $\mathbb{A}$; the fundamental alcove is a fundamental domain for the latter. \\

To each real affine root $(\alpha,n) \in \Phi^{\op{aff}}$ is attached the one-parameter additive \textbf{root subgroup } $\U_{(\alpha,n)}$ of $\G(\mathcal{K})$ defined by $b \mapsto \U_{\alpha}(bt^{n})$ for $b\in \mathbb{C}$. Let $\lambda \in \X^{\vee}$ and $b\in \mathbb{C}$. Then identity (\ref{id1}) implies that:

\begin{align}
\label{stabiliseone}
\U_{(\alpha,n)}(b)[t^{\lambda}] = [\U_{\alpha}(bt^{n})t^{\lambda}] = [t^{\lambda}\U_{\alpha}(bt^{n-\< \alpha,\lambda\rr})],
\end{align}

\noindent
and $[t^{\lambda}\U_{\alpha}(bt^{n-\>\alpha,\lambda\rr})] = [t^{\lambda}]$ if and only if $\U_{\alpha}(bt^{n-\<\alpha,\lambda\rr }) \subset \G(\mathcal{O})$, or, equivalently, $\<\alpha,\lambda\rr \leq n$. Hence, the root subgroup $\U_{(\alpha,n)}$ stabilises the point $[t^{\lambda}] \in \mathcal{G}\hookrightarrow \mathcal{J}^{\op{aff}}$ if and only if $\lambda \in \mm^{-}_{(\alpha,n)}$. For each face $\F$ in the standard apartment, denote by $\p_{\F}, \U_{\F}$ and $\W^{\op{aff}}_{\F}$ its stabilizer in $\G(\mathcal{K}), \U(\mathcal{K})$ and $\W^{\op{aff}}$ respectively. These subgroups are generated by the torus $\T$ and the root subgroups $\U_{(\alpha,n)}$ such that $\F \subset \mm^{-}_{(\alpha,n)}$, the root subgroups $\U_{(\alpha,n)}\subset \p_{\F}$ such that $\alpha \in \Phi^{+}$, and those affine reflections $s_{(\alpha,n)} \in \W^{\op{aff}}$ such that $\F \subset \mm_{(\alpha,n)}$, respectively. See \cite{ls}, Section 3.3, Example 3, and \cite{baumanngaussent}, Proposition 5.1 (ii).  \\

\begin{ex}
\label{picture1}
Let $\G^{\vee} = \op{SP}(4, \mathbb{C})$, then $\Phi^{+} = \{\alpha_{1}, \alpha_{2}, \alpha_{1}+\alpha_{2}, \alpha_{1}+2\alpha_{2}\}$. In the picture below the shaded region is the upper halfspace $\mm_{(\alpha_{2},0)}^{+}$. Let $\F$ be the face in the standard apartment that joins the vertices $-(\alpha_{1}+\alpha_{2})$ and $-\alpha_{1}$. 
\begin{center}
% alpha_{1} = \varepsilon_{1} - \varepsilon_{2}
% alpha_{2} = \varepsilon_{2}
\begin{tikzpicture}%[scale=3]
\fill[red!20!, opacity = 0.5] (-1,0) rectangle (1,1);
%%%%%%%%%%the face F%%%%%%%%%%%%%%%%%%%%%%%%%%%%%%%%%%%%%%%%%%%%%%%%%%%
\node[right] at (-1,0.5) {$\F$};
\draw[very thick] (-1,0) -- (-1,1);
\draw[very thick] (-1,0) -- (-1,1);
%%THEEIS%%%%%%%%%%%%%%%%%%%%%%%%%%%%%%
\node[below] at (1,0) {\small{\mbox{$\alpha_{1}+\alpha_{2}$}}};
\draw[->, very thick] (0,0) -- (1,0);
%(alpha_{2},0)
\node[above] at (2.5,-0.3){\hbox{$\mm_{(\alpha_{2},0)}$}};
\draw[dashed, gray, very thin] (1.7,0) --(-1.7,0);
%(alpha_{2},1)
\node[above] at (2.5,0.7){$\mm_{(\alpha_{2},1)}$};
\draw[dashed, gray, very thin] (-1.7,1) -- (1.7,1);
%(alpha_{2},-1)
\node[above] at (2.5,-1.3){$\mm_{(\alpha_{2},-1)}$};
\draw[dashed, gray, very thin] (-1.7,-1) -- (1.7,-1);
%%%%%%%%%%%%%
\draw[->,very thick] (0,0) --(0,1);
% (alpha_{1} + alpha_{2},0)
\draw[dashed, gray, very thin] (0,1) --(0,-1);
% (alpha_{1} + alpha_{2},1)
\draw[dashed, gray, very thin] (1,1) --(1,-1);
% (alpha_{1} + alpha_{2},-1)
\draw[dashed, gray, very thin] (-1,1) --(-1,-1);
\node[above] at (0,1){\small{\mbox{$\alpha_{2}$}}};
%%%%%%%%%%%%%
\draw[dashed, gray, very thin] (1,1) --(-1,-1);
\draw[dashed, gray, very thin] (-1,-1) --(1,1);
\node[above] at (1,1){\small{\mbox{$\alpha_{1}+2\alpha_{2}$}}};
\draw[->, very thick] (0,0) -- (1,1);
% (alpha_{1},1)
\draw[dashed, gray, very thin] (1,1) --(-1,-1);
% (alpha_{1},2)
\draw[dashed, gray, very thin] (1,0) --(0,-1);
% (alpha_{1},-2)
\draw[dashed, gray, very thin] (-1,0) --(0,1);
%%%%%%%%%%%%%%%%%%%%%
\node[below] at (1,-1){\small{\mbox{$\alpha_{1}$}}};
\draw[->, very thick] (0,0) -- (1,-1);
% (alpha_{1}+2alpha_{2},0)
\draw[dashed, gray, very thin] (1,-1) --(-1,1);
% (alpha_{1}+2alpha_{2},1)
\draw[dashed, gray, very thin] (-1,1) -- (1,-1); 
% (alpha_{1}+2alpha_{2},2)
\draw[dashed, gray, very thin] (1,0) -- (0,1); 
% (alpha_{1}+2alpha_{2},-1)
\draw[dashed, gray, very thin] (-1,1) -- (1,-1); 
% (alpha_{1}+2alpha_{2},-2)
\draw[dashed, gray, very thin] (-1,0) -- (0,-1); 
%%%%%%%%%%%%%%%%%%%%%%%%%%%%%%%%%%%%%%%%%%%%%%%
\end{tikzpicture}
\end{center}
The subgroup $\p_{\F}$ is generated by the root subgroups associated to the following real roots 

\begin{align*}
(\alpha_{1}, n) &\hbox{  } n \geq -1\\
(\alpha_{2}, n) & \hbox{  }n \geq 1\\
(\alpha_{1}+\alpha_{2}, n) &\hbox{  } n \geq -1\\
(\alpha_{1}+2\alpha_{2}, n) & \hbox{  }n \geq 0 \\
(-\alpha_{1}, n) &\hbox{  } n \geq 2\\
(-\alpha_{2}, n) &\hbox{  } n \geq 0 \\
(-(\alpha_{1}+\alpha_{2}), n) &\hbox{  } n \geq 1\\
(-(\alpha_{1}+2\alpha_{2}, n)) &\hbox{  } n \geq 1
\end{align*}

The stabiliser $\U_{\F}$ is generated by the root subgroups associated to those previously stated roots $(\alpha, n)$ such that $\alpha \in \Phi^{+}$ is a positive root and $\W^{\op{aff}}_{\F} = \{s_{(\alpha_{1}+\alpha_{2},-1)}, 1\}$.
\end{ex}

A \textbf{gallery} is a sequence of faces in the affine building $\mathcal{J}^{\op{aff}}$

\begin{align}
\label{gallery}
\gamma = (\V_{0} = 0, \E_{0} , \V_{1} , \cdots, \E_{k}, \V_{k+1})
\end{align}

such that:

\begin{itemize}
\item[1.] For each $i\in \{1, \cdots, k\}, \hbox{ }\V_{i} \subset\E_{i}\supset \V_{i+1}$.
\item[2.] Each face labelled $\V_{i}$ has dimension zero (is a \textbf{vertex}) and each face labelled $\E_{i}$ has dimension one (is an \textbf{edge}). In particular, each face in the sequence $\gamma$ is contained in the one-skeleton of the standard apartment. 
	\item[3.] The last vertex $\V_{k+1}$ is a \textbf{special vertex}: its stabiliser in the affine Weyl group $\W^{\op{aff}}$ is isomorphic to the finite Weyl group $\W$ associated to $\G$. 
\end{itemize}

\noindent
Denote all the set of all galleries in the affine building by $\Sigma$. If in addition each face in the sequence belongs to the standard apartment, then $\gamma$ is called a \textbf{combinatorial gallery}. We will denote the set of all combinatorial galleries in the affine building by $\Gamma$. In this case, the third condition is equivalent to requiring the last vertex $\V_{k+1}$ to be a coweight. From now on, if $\gamma$ is a combinatorial gallery we will denote the coweight corresponding to its final vertex by $\mu_{\gamma}$ in order to distinguish it from the vertex.

\begin{rem}
The galleries we defined are actually called \textit{one-skeleton} galleries in the literature. The word gallery was originally used to describe a more general  class of face sequences but since we only work with one-skeleton galleries in this paper, we leave the word `one-skeleton' out.
\end{rem}

%%%%%%%%%%%%%%%%%%%%%%%%%%%%%%%%%%%%%%%%%%%%%%%%%%%%%%%%%%%%%%%%%%%%%%%%%%%%%%%%%%%%%%%
\subsection{Bott-Samelson varieties}
\label{bottsamelson}

Let $\gamma$ be a combinatorial gallery (notation as above). The following lemma can be obtained from \cite{onesk}: Lemma 4.8 and Definition 4.6.

\begin{lem}
\label{typelemma}
For each $j \in \{1, \cdots, k\}$ there exist elements $w_{j}\in \W^{\op{aff}}_{\V_{i}}$ and a unique combinatorial gallery
\begin{align*}
\gamma^{f} = (\V_{0}^{f},\E_{0}^{f},\V_{1}^{f},\cdots,\V_{k+1}^{f})
\end{align*}
with each one of its faces is contained in the fundamental alcove such that $w_{0}\cdots w_{r} \E^{f}_{r} = \E_{r}$. 
\end{lem}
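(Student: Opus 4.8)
The statement is essentially a normal-form result: every edge of a combinatorial gallery lying in the one-skeleton of the standard apartment can be moved, one step at a time, into the fundamental alcove by an element of the affine Weyl group, and the sequence of local moves is recorded by the $w_j$. The plan is to prove this by induction on the index $j$ along the gallery $\gamma = (\V_0 = 0, \E_0, \V_1, \dots, \E_k, \V_{k+1})$, building $\gamma^f$ and the group elements $w_0, \dots, w_k$ simultaneously.

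First I would set up the base case. The initial vertex is $\V_0 = 0$, which lies in the closure of the fundamental alcove $\Delta^{\op{f}}$; set $\V_0^f = 0$ and $w_0 = 1$ (or, more precisely, pick $w_0 \in \W^{\op{aff}}_{\V_0}$ so that $w_0^{-1}\E_0$ is the unique edge of the fundamental alcove having $0$ as a face and of the same ``type'' — i.e. same $\W^{\op{aff}}$-orbit — as $\E_0$; such a $w_0$ exists and can be chosen in the stabilizer of $\V_0$ because the fundamental alcove is a strict fundamental domain for $\W^{\op{aff}}$ acting on $\mathbb{A}$, so every face is $\W^{\op{aff}}$-equivalent to a unique face of $\overline{\Delta^{\op{f}}}$, and the transporter can be taken to fix any given face of $\E_0$). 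Then define $\E_0^f := w_0^{-1}\E_0$ and $\V_1^f := w_0^{-1}\V_1$, so that $w_0 \E_0^f = \E_0$ as required.

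For the inductive step, suppose $w_0, \dots, w_{j-1}$ and $\V_0^f, \E_0^f, \dots, \E_{j-1}^f, \V_j^f$ have been constructed with $w_0 \cdots w_{r}\E_r^f = \E_r$ for $r < j$ and all faces of $\gamma^f$ so far inside $\overline{\Delta^{\op{f}}}$. Since $\V_j \subset \E_{j-1}$ and $w_0\cdots w_{j-1}\E_{j-1}^f = \E_{j-1}$, the vertex $(w_0 \cdots w_{j-1})^{-1}\V_j$ is a vertex of $\E_{j-1}^f \subset \overline{\Delta^{\op{f}}}$; call it $\V_j^f$. Now $\V_{j+1}$ is the other vertex of $\E_j$, and $\V_j \subset \E_j$; applying $(w_0\cdots w_{j-1})^{-1}$ we get an edge $(w_0\cdots w_{j-1})^{-1}\E_j$ containing $\V_j^f$, a face of the fundamental alcove. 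By the fundamental-domain property again, there is $w_j \in \W^{\op{aff}}_{\V_j^f}$ with $w_j^{-1}(w_0\cdots w_{j-1})^{-1}\E_j =: \E_j^f$ an edge of $\overline{\Delta^{\op{f}}}$; since $w_j$ fixes $\V_j^f$ this is consistent with $\V_j^f \subset \E_j^f$. Set $\V_{j+1}^f := (w_0\cdots w_j)^{-1}\V_{j+1}$; then $w_0\cdots w_j \E_j^f = \E_j$. One must check $w_j \in \W^{\op{aff}}_{\V_i}$ in the sense of the statement — note the statement writes $\V_i$ where it presumably means $\V_j^f$ (or a conjugate); I would clarify that $w_j$ stabilizes the relevant vertex and remark that this matches the generators of $\W^{\op{aff}}_{\F}$ described in Section~\ref{galleries}. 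Finally, the last vertex: $\V_{k+1}$ is special, hence a coweight, and its image $\V_{k+1}^f$ is a special vertex of $\overline{\Delta^{\op{f}}}$; since $0$ is the only special vertex in the closed fundamental alcove, $\V_{k+1}^f = 0$, which also pins down the gallery $\gamma^f$.

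The main obstacle — and the place where I would spend the most care — is \emph{uniqueness} of $\gamma^f$, together with the precise claim that each $w_j$ can be chosen in the stabilizer $\W^{\op{aff}}_{\V_j^f}$ rather than merely in $\W^{\op{aff}}$. Uniqueness follows from the fact that $\overline{\Delta^{\op{f}}}$ is a \emph{strict} fundamental domain: two faces of it that are $\W^{\op{aff}}$-conjugate must coincide, and the type (combinatorial isomorphism class) of each face $\E_j^f$, $\V_j^f$ is forced by that of the corresponding face of $\gamma$, while the requirement that consecutive faces be incident and that the gallery start at $0$ propagates the choice uniquely along the chain. The subtlety is that at each step there may be several elements $w_j$ doing the job (differing by $\W^{\op{aff}}_{\E_j^f}$, the pointwise stabilizer of the edge), so the $w_j$ themselves are not unique even though $\gamma^f$ is; I would state the lemma's conclusion accordingly, citing \cite{onesk}, Lemma 4.8 and Definition 4.6, for the bookkeeping. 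Everything else is a routine unwinding of the Coxeter-complex structure of the standard apartment already recalled above.
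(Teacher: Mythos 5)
The paper gives no proof of this lemma at all, deriving it directly from \cite{onesk} (Lemma 4.8 and Definition 4.6); your inductive argument via the strict-fundamental-domain property of the closed fundamental alcove is exactly the standard argument behind that citation and is correct, including the correct reading of the typo $\V_{i}$ as $\V_{j}^{f}$ and the observation that each $w_{j}$ is only determined modulo $\W^{\op{aff}}_{\E_{j}^{f}}$ while $\gamma^{f}$ itself is unique. The one inaccuracy is the parenthetical claim that $0$ is the only special vertex of the closed fundamental alcove --- false in general (in type $A$, for instance, every vertex of the fundamental alcove is special) --- but it is harmless, since $\V_{k+1}^{f}$ is already forced as the second vertex of the uniquely determined edge $\E_{k}^{f}$.
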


%\begin{proof}
%By induction on j. Let $w_{0} \in \W^{\op{aff}}_{\V_{0}}$ be the unique element in $\W^{\op{aff}}$ such that $w_{0}^{-1}\E_{0}$ is contained in the fundamental alcove. Define $\E_{0}^{f}:= w_{0}^{-1}\E_{0}$. Now let $j\leq k$; for $i\in \{0, \cdots, j-1\}$ let $w_{i},\E^{f}_{i}$, and $\V^{f}_{i}$ be the Weyl group elements, respectively the faces of the fundamental alcove such that $(\V^{f}_{0} = 0, \E^{f}_{0} , \V^{f}_{1} , \cdots, \E^{f}_{j-1}, \V^{f}_{j})$ is a combinatorial gallery and such that $w_{0}\cdots w_{i}\E^{f}_{i} = \E_{i}$. Define $w_{j}$ to be the unique element in $\W^{\op{aff}}_{\V^{f}_{r}}$ such that $w_{j}^{-1}w_{j-1}^{-1}\cdots w_{0}^{-1}\E_{j}=: \E^{f}_{j}$ is contained in the fundamental alcove. 
%\end{proof}

If two galleries have the same associated gallery we say that the two galleries have \textbf{the same type}. We will denote all the combinatorial galleries that have the same type as a given combinatorial gallery $\gamma$ by $\Gamma(\gamma)$. The map 

\begin{align}
\label{combg}
\W^{\op{aff}}_{\V_{0}}\times \cdots \times \W^{\op{aff}}_{\V_{k}} &\rightarrow \Gamma(\gamma) \\
(w_{0}, \cdots, w_{k}) &\mapsto (\V_{0}, w_{0}\E_{0}, w_{0}\V_{1}, w_{0}w_{1}\E_{1}, \cdots, w_{0}\cdots w_{k}\V_{k+1})
\end{align}

\noindent
induces a bijection between $\Gamma(\gamma)$ and the set $\prod_{i=0}^{r}\W^{\op{aff}}_{\V_{i}}/\W^{\op{aff}}_{\E_{i}}$; it is in particular finite. For a proof see \cite{onesk}, Lemma 4.8.  

\begin{defn}
\label{fiberedproductdef}
The \textbf{Bott-Samelson variety} of type $\gamma^{f}$ is the quotient of 
\begin{align*}
\G(\mathcal{O})\times \p_{\V_{1}^{f}} \times \cdots \times \p_{\V_{k}^{f}}
\end{align*}
by the following left action of $\p_{\E_{0}^{f}} \times \cdots \times \p_{\E_{k}^{f}}$: 
\begin{align*}
(p_{0}, p_{1}, \cdots, p_{k})\cdot (q_{0}, \cdots, q_{k}):= (q_{0}p_{0}, p_{0}^{-1}q_{1}p_{1}, \cdots, p_{k-1}^{-1}q_{k}p_{k}).
\end{align*}
We will denote it by $\Sigma_{\gamma^{f}}$.
\end{defn}
\noindent
The pro-group structure of the groups $\p_{\V_{i}^{f}}, \p_{\E_{i}^{f}}$ assures that $\Sigma_{\gamma^{f}}$ is in fact a smooth variety. To each point $(g_{0}, \cdots, g_{k})\in \G(\mathcal{O})\times \p_{\V_{1}^{f}} \times \cdots \times \p_{\V_{k}^{f}}$ one can associate a gallery $$(\V^{f}_{0}, g_{0}\E^{f}_{0}, g_{0}\V^{f}_{1}, g_{0}g_{1}\V^{f}_{2}, \cdots, g_{0}\cdots g_{k}\V^{f}_{k+1}).$$ This induces a well defined injective map 
$i:\Sigma_{\gamma^{f}} \hookrightarrow \Sigma$. With respect to this identification, $\T$ - fixed points in $\Sigma_{\gamma^{f}}$ are in natural bijection with the set $\Gamma(\gamma^{f})$ of combinatorial galleries of type $\gamma^{f}$.\\

Let $\omega \in \mathbb{A}$ be a fundamental coweight. We define a particular combinatorial gallery that starts at $0$ and ends at $\omega$. Let $\V^{\omega}_{1}, \cdots, \V^{\omega}_{k}$ be the vertices in the standard apartment that lie on the open line segment joining $0$ and $\omega$, numbered such that $\V^{\omega}_{i+1}$ lies on the open line segment joining $\V^{\omega}_{i}$ and $\omega$. Let further $\E^{\omega}_{i}$ denote the face contained in $\mathbb{A}$ that contains the vertices $\V^{\omega}_{i}$ and $\V^{\omega}_{i+1}$. The gallery

\begin{align*}
\gamma_{\omega}: = (0 =\V^{\omega}_{0}, \E^{\omega}_{0}, \V^{\omega}_{1}, \E^{\omega}_{1}, \cdots ,\E^{\omega}_{k}, \V^{\omega}_{k+1}  = \omega)
\end{align*}

\noindent
is called a \textbf{fundamental gallery}. Galleries of the same type as a fundamental gallery $\gamma_{\omega}$  will be called \textbf{galleries of fundamental type} $\mathbf{\omega}$.\\

Now let $\lambda \in \X^{\vee, +}$ be a dominant integral coweight and $\gamma_{\lambda}$ a gallery with endpoint the coweight $\lambda$ and such that it is a concatenation of fundamental galleries, where concatenation of two combinatorial galleries $\gamma_{1}*\gamma_{2}$ is defined by translating the second one to the endpoint of the first one. (Note that it follows from the definition of type that if $\gamma, \nu$ are two galleries of the same type as $\delta$, respectively $\eta$, then $\gamma * \nu$ has the same type as $\delta* \eta$. Actually, if $\gamma = \gamma_{1}*\cdots * \gamma_{r}$ then $\Gamma(\gamma) = \{\delta_{1}*\cdots* \delta_{r} : \delta_{i} \in \Gamma(\gamma_{i})\}$. ) Then the map
\begin{align}
\label{resolution}
\Sigma_{\gamma_{\lambda}^{f}}&\overset{\pi}{\longrightarrow} \X_{\lambda}\\
[g_{0}, \cdots, g_{r}]& \rightarrow g_{0}\cdots g_{r} [t^{\mu_{\gamma^{f}}}] \nn
\end{align}
 is a resolution of singularities of the generalised Schubert variety $\X_{\lambda}$.
 
 \begin{rem}
 \label{resolutionremark1}
That the above map is in fact a resolution of singularities is due to the fact that a gallery such as the one considered is minimal (see \cite{onesk}, Section 5 and Section 4.3, Proposition 3). This resembles the condition for usual Bott-Samelson varieties associated to a reduced expression: see \cite{ls}, Section 9, Proposition 7.
 \end{rem}
 
 \begin{rem}
 \label{resolutionremark2}
 The map $(\ref{resolution})$ makes sense for any combinatorial gallery $\gamma$ : in this generality one has a map $\Sigma_{\gamma^{f}} \overset{\pi}{\longrightarrow} \mathcal{G}, g_{0}, \cdots, g_{r}[t^{\mu_{\gamma}}]$.
 \end{rem}

\subsection{Cells and positive crossings}
%%%%%%%%%%%%%%%%%%%%%%%%%%%%%%%%%%%%%%%%%%%%%%%%%%%%

Let $r_{\infty}: \mathcal{J}^{a} \rightarrow \mathbb{A}$ be the retraction at infinity (see \cite{ls}, Definition 8). It extends to a map $r_{\gamma^{f}}: \Sigma_{\gamma^{f}} \rightarrow \Gamma(\gamma^{f})$. The cell $\C_{\delta} = r_{\gamma^{f}}^{-1}(\delta) (\delta \in \Gamma(\gamma^{f}))$  is explicitly described in \cite{ls}, \cite{onesk} by Gaussent-Littelmann and \cite{baumanngaussent} by Baumann-Gaussent. In this subsection we recollect their results; we will need them later. These results are formulated in terms of galleries of the same type as $\gamma_{\lambda}$; we formulate them for any combinatorial gallery. The proofs remain the same, and therefore we do not provide them, but refer the reader to \cite{onesk} and \cite{ls}.  First consider the subgroup $\U(\mathcal{K})$ of $\G(\mathcal{K})$. It is generated by the elements of the root subgroups $\U_{(\alpha, n)}$ for $\alpha \in \Phi^{+}$ a positive root and $n \in \mathbb{Z}$. Let $\V \subset \E$ be a vertex and an edge (respectively) in the standard apartment, the vertex contained in the edge. Consider the subset of affine roots $\Phi^{+}_{(\V, \E)}= \{(\alpha,n)\in \Phi^{\op{aff}}: \alpha \in \Phi^{+}, \V \in \mm_{(\alpha,n)}, \E_{i} \nsubseteq \mm^{-}_{(\alpha,n)}\}$ and let $\mathbb{U}_{(\V, \E)}$ denote the subgroup of $\U(\mathcal{K})$ generated by $\U_{(\alpha,n)}$ for all $(\alpha,n) \in \Phi^{+}_{(\V, \E)}$. The following proposition will be very useful in Section \ref{countingpositivecrossings}. It is stated and proven as Proposition 5.1 (ii) in \cite{baumanngaussent}. 

\begin{prop}
\label{order}
Let $\V \subset \E$ be a vertex and an edge in the standard apartment as above. Then $\mathbb{U}_{(\V,\E)}$ is a set of representatives for the right cosets of $\U_{\E}$ in $\U_{\F}$. For any total order on the set $\Phi^{+}_{(\V, \E)}$, the map
\begin{align*}
(a_{\beta})_{\beta \in \Phi^{+}_{(\V, \E)}} \mapsto \underset{\beta \in \Phi^{+}_{(\V, \E)}}{\prod} \U_{\beta}(a_{\beta})
\end{align*}
is a bijection from $\mathbb{C}^{|\Phi^{+}_{(\V, \E)}|}$ onto $\mathbb{U}_{(\V, \E)}$. 
\end{prop}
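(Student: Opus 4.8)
The plan is to prove Proposition~\ref{order} by reducing it to the structure of the group $\U_{\F}$ and a standard commutation argument among affine root subgroups. Recall that $\U_{\F}$ is generated by $\T$-trivially by the root subgroups $\U_{(\alpha,n)}$ with $\alpha \in \Phi^{+}$ and $\F \subset \mm^{-}_{(\alpha,n)}$, while $\U_{\E}$ is generated by those $\U_{(\alpha,n)}$ with $\alpha \in \Phi^{+}$ and $\E \subset \mm^{-}_{(\alpha,n)}$. Since $\V \subset \E$, whenever $\E \subset \mm^{-}_{(\alpha,n)}$ we also have $\V \in \mm^{-}_{(\alpha,n)}$; the affine roots $(\alpha,n)$ with $\alpha \in \Phi^{+}$, $\V \in \mm^{-}_{(\alpha,n)}$ but $\E \nsubseteq \mm^{-}_{(\alpha,n)}$ are precisely those in $\Phi^{+}_{(\V,\E)}$ (here one uses that $\V \in \mm_{(\alpha,n)}$ together with $\E \nsubseteq \mm^{-}_{(\alpha,n)}$ forces $\V \in \mm^{-}_{(\alpha,n)}$, and conversely a root with $\V \in \mm^{-}\setminus \mm_{(\alpha,n)}$ either has $\E \subset \mm^{-}$ or contradicts $\V \subsetneq \E$ being one-dimensional over the vertex). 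So $\Phi^{+}_{(\V,\E)}$ indexes exactly the ``extra'' root subgroups that lie in $\U_{\F}$ but not in $\U_{\E}$.

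The first step is to show that the map $(a_\beta) \mapsto \prod_{\beta} \U_\beta(a_\beta)$, for a fixed total order on $\Phi^{+}_{(\V,\E)}$, lands in $\U_{\F}$ and is a bijection onto $\mathbb{U}_{(\V,\E)}$. Injectivity and the independence of the factorization on the chosen order follow from Chevalley's commutator formula~(\ref{chevalley}): for two affine roots $\beta, \beta' \in \Phi^{+}_{(\V,\E)}$, any affine root of the form $i\beta + j\beta'$ with $i,j > 0$ again has its linear part in $\Phi^{+}$ and still vanishes-or-better on $\V$ while failing to put $\E$ in the negative half-space, hence again lies in $\Phi^{+}_{(\V,\E)}$; this is the key closure property that makes $\mathbb{U}_{(\V,\E)}$ a group and makes the product map independent of the order, so that the coordinates $a_\beta$ are recovered uniquely. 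One must check this closure claim carefully in the affine setting --- that $\Phi^{+}_{(\V,\E)}$ is closed under taking positive integer combinations that remain affine roots --- and I expect this to be the main technical point.

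The second step is to show that $\mathbb{U}_{(\V,\E)}$ is a full set of representatives for $\U_{\E}\backslash \U_{\F}$. Using the generation statements above, $\U_{\F}$ is generated by $\U_{\E}$ together with the $\U_\beta$ for $\beta \in \Phi^{+}_{(\V,\E)}$. To see that $\mathbb{U}_{(\V,\E)}\cdot \U_{\E} = \U_{\F}$ one moves all ``$\U_{\E}$-type'' factors to the right past the ``$\Phi^{+}_{(\V,\E)}$-type'' factors, again via~(\ref{chevalley}); here the relevant closure fact is that commutators of a root in $\Phi^{+}_{(\V,\E)}$ with a root $(\alpha,n)$ satisfying $\E \subset \mm^{-}_{(\alpha,n)}$ produce roots whose half-space conditions keep them in one of the two families (a short case analysis on the affine level values). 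Disjointness of the cosets, i.e. that $\mathbb{U}_{(\V,\E)} \cap \U_{\E} = \{1\}$, follows because the two families of affine root subgroups are disjoint and the product decomposition in $\U(\mathcal{K})$ with respect to a total order on all positive affine roots is unique (a standard fact about $\U(\mathcal{K})$, used already implicitly in~(\ref{stabiliseone}) and the description of $\U_{\F}$).

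Finally I would assemble these: Step~1 gives the bijection $\mathbb{C}^{|\Phi^{+}_{(\V,\E)}|} \xrightarrow{\sim} \mathbb{U}_{(\V,\E)}$, and Step~2 gives $\U_{\F} = \coprod_{u \in \mathbb{U}_{(\V,\E)}} \U_{\E} u$, which is the assertion of the proposition. Throughout, the only real input beyond bookkeeping with half-spaces is the Chevalley commutator formula~(\ref{chevalley}) and the uniqueness of ordered products in $\U(\mathcal{K})$; since the statement is quoted from \cite{baumanngaussent}, Proposition~5.1(ii), I would also simply cite that reference for the parts of the argument that are purely a transcription of the split-group situation and only spell out the half-space combinatorics that is specific to the pair $(\V,\E)$.
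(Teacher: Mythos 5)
There is nothing in the paper to compare your argument against: Proposition \ref{order} is quoted verbatim from \cite{baumanngaussent}, Proposition 5.1(ii), and the text explicitly defers the proof to that reference, exactly as you do at the end of your plan. As a reconstruction of the cited argument your outline is sound, and it isolates the two genuinely relevant points: the closure property of $\Phi^{+}_{(\V,\E)}$ (if $\beta,\beta'\in\Phi^{+}_{(\V,\E)}$ and $i\beta+j\beta'$ is an affine root with $i,j>0$, its linear part is again positive, its wall still contains $\V$, and the edge still leaves the negative half-space, so it stays in $\Phi^{+}_{(\V,\E)}$), which together with Chevalley's formula (\ref{chevalley}) makes $\mathbb{U}_{(\V,\E)}$ a group with order-independent coordinates; and the identification of $\Phi^{+}_{(\V,\E)}$ as exactly the indices of generators of the vertex stabiliser not already stabilising $\E$ -- your half-space argument is the right one, since a face of the Coxeter complex lies on one closed side of every wall, so $\V\in\mm^{-}_{(\alpha,n)}$ together with $\E\nsubseteq\mm^{-}_{(\alpha,n)}$ forces $\V\in\mm_{(\alpha,n)}$.

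Two points deserve more care in your write-up. First, the ``$\U_{\F}$'' in the statement must be read as $\U_{\V}$ (the face $\F$ is a leftover from the preceding paragraph of the paper); you tacitly make this identification, but it should be said. Second, your appeal to ``uniqueness of the product decomposition in $\U(\mathcal{K})$ with respect to a total order on all positive affine roots'' is not quite the right tool: infinitely many affine root subgroups are involved and infinite products cannot be reordered at will. The clean input is the finite statement that $\U(\mathcal{K})$ is, uniquely and for any fixed order of the finitely many $\alpha\in\Phi^{+}$, the product of the full groups $\U_{\alpha}(\mathcal{K})$; intersecting each factor with the level conditions cutting out $\U_{\V}$, respectively $\U_{\E}$, and then sorting factors by your commutation step yields both $\U_{\V}=\U_{\E}\,\mathbb{U}_{(\V,\E)}$ and $\mathbb{U}_{(\V,\E)}\cap\U_{\E}=\{1\}$, which is precisely the coset assertion. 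With these adjustments your sketch is a faithful version of the proof in \cite{baumanngaussent}, which is also all the paper itself relies on.
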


\noindent
 Now let $\gamma$ be a combinatorial gallery with notation as in (\ref{gallery}). For each $i\in \{1, \cdots, k\}$, let $\mathbb{U}^{\gamma}_{\V_{i}}: = \mathbb{U}_{(\V_{i}, \E_{i})}$. For later use we fix the notation $\Phi^{\gamma}_{i}: = \Phi^{+}_{\V_{i}, \E_{i}}$.   

\begin{ex}
Let $\G^{\vee} = \op{SP}(4, \mathbb{C})$ as in Example \ref{picture1} and $\gamma = \gamma_{\omega_{1}}$. Then $\mathbb{U}_{0}$ is generated by the root subgroups associated to the real roots $(\alpha_{1}, 0),(\alpha_{1}+\alpha_{2}, 0), (\alpha_{1}+2\alpha_{2}, 0)$. If $\gamma = \delta$ is the gallery with one edge and endpoint $\alpha_{2}$, then $\mathbb{U}_{0}$ is generated by the groups associated to $(\alpha_{2}, 0), (\alpha_{1}+2\alpha_{2}, 0)$.

\label{picture2}
\begin{center}
% alpha_{1} = \varepsilon_{1} - \varepsilon_{2}
% alpha_{2} = \varepsilon_{2}
\begin{tikzpicture}%[scale=3]
\fill[red!20!, opacity = 0.5] (-1,0) rectangle (1,1);
%%%%%%%%%%the face F%%%%%%%%%%%%%%%%%%%%%%%%%%%%%%%%%%%%%%%%%%%%%%%%%%%
%\node[right] at (-1,0.5) {$\F$};
%\draw[-,very thick] (-1,0) -- (-1,1);
%%THEEIS%%%%%%%%%%%%%%%%%%%%%%%%%%%%%%
\node[below] at (0.5,0) {\small{\mbox{$\gamma_{\omega_{1}}$}}};
\draw[-, very thick] (0,0) -- (1,0);
%(alpha_{2},0)
\node[above] at (2,-0.3){\hbox{$\mm_{(\alpha_{2},0)}$}};
\draw[dashed, gray, very thin] (1.4,0) --(-1,0);
%(alpha_{2},1)
%\node[above] at (2.5,0.9){$\mm_{(\alpha_{2},1)}$};
\draw[dashed, gray, very thin] (-1,1) -- (1,1);
%(alpha_{2},-1)
%\node[above] at (1.7,-1.1){$\mm_{(\alpha_{2},-1)}$};
\draw[dashed, gray, very thin] (-1,-1) -- (1,-1);
%%%%%%%%%%%%%
\draw[-, very thick] (0,0) --(0,1);
% (alpha_{1} + alpha_{2},0)
\draw[dashed, gray, very thin] (0,1) --(0,-1.4);
% (alpha_{1} + alpha_{2},1)
\draw[dashed, gray, very thin] (1,1) --(1,-1);
% (alpha_{1} + alpha_{2},-1)
\draw[dashed, gray, very thin] (-1,1) --(-1,-1);
\node[right] at (0,0.5){\small{\mbox{$\delta$}}};
%%%%%%%%%%%%%
\draw[dashed, gray, very thin] (1,1) --(-1,-1);
\draw[dashed, gray, very thin] (-1,-1) --(1,1);
%\node[above] at (1,1){\small{\mbox{$\alpha_{1}+2\alpha_{2}$}}};
%\draw[->, very thick] (0,0) -- (1,1);
% (alpha_{1},1)
\draw[dashed, gray, very thin] (1,1) --(-1.4,-1.4);
% (alpha_{1},2)
\draw[dashed, gray, very thin] (1,0) --(0,-1);
% (alpha_{1},-2)
\draw[dashed, gray, very thin] (-1,0) --(0,1);
%%%%%%%%%%%%%%%%%%%%%
%\node[below] at (1,-1){\small{\mbox{$\alpha_{1}$}}};
%\draw[->, very thick] (0,0) -- (1,-1);
% (alpha_{1}+2alpha_{2},0)
\draw[dashed, gray, very thin] (1,-1) --(-1,1);
% (alpha_{1}+2alpha_{2},1)
\draw[dashed, gray, very thin] (-1,1) -- (1,-1); 
% (alpha_{1}+2alpha_{2},2)
\draw[dashed, gray, very thin] (1,0) -- (0,1); 
% (alpha_{1}+2alpha_{2},-1)
\draw[dashed, gray, very thin] (-1,1) -- (1.4,-1.4); 
% (alpha_{1}+2alpha_{2},-2)
\draw[dashed, gray, very thin] (-1,0) -- (0,-1); 
%%%%%%%%%%%%%%%%%%%%%%%%%%%%%%%%%%%%%%%%%%%%%%%%%%% more side labels: 
\node[below] at (0,-1.1) {$\mm_{(\alpha_{1}+\alpha_{2},0)}$};
\node[below] at (1.7,-1.1) {$\mm_{(\alpha_{1}+2\alpha_{2},0)}$};
\node[below] at (-1.4,-1.1) {$\mm_{(\alpha_{1},0)}$};
\end{tikzpicture}
\end{center}
\end{ex}

\noindent
Now write $\delta = (\V_{0}, \E_{0}, \cdots, \E_{k}, \V_{k+1}) \in \Gamma(\gamma^{f})$ in terms of Definition \ref{fiberedproductdef} and Lemma \ref{typelemma} as $
\delta = [\delta_{0},\cdots, \delta_{k}].$ This means $\delta_{i} \in \W^{\op{aff}}_{\V^{f}_{i}}$ and $\delta_{0}\cdots \delta_{j} \E^{f}_{j} = \E_{j}$. A beautiful exposition of  the following description of the cell $\C_{\delta}$ can be found in \cite{onesk}, Proposition 4.19. 

\begin{thm}
\label{celldescription}
The map
\begin{align*}
\mathbb{U}:= \mathbb{U}_{\V_{0}}\times \mathbb{U}_{\V_{1}}\times \cdots \times \mathbb{U}_{\V_{k}}
&\overset{\varphi}{\longrightarrow} \Sigma_{\gamma^{\op{f}}}\\
(u_{0}, \cdots, u_{k}) &\mapsto [u_{0}\delta_{0}, \delta_{0}^{-1}u_{1}\delta_{0}\delta_{1}, \cdots , (\delta_{0}\cdots \delta_{k-1})^{-1}u_{k}\delta_{0}\cdots \delta_{k}]
\end{align*}
is injective and has image $\C_{\delta}$. 
\end{thm}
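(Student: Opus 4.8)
The plan is to reproduce the argument of Gaussent--Littelmann (\cite{onesk}, Proposition 4.19, and \cite{ls}): nothing in that proof uses minimality of $\gamma$, so it applies verbatim to an arbitrary combinatorial gallery. Two inputs will be used throughout. First, the retraction at infinity $r_\infty$ is invariant under the action of $\U(\mathcal K)$ on the building (\cite{ls}, Section 8), so that $r_{\gamma^f}$ of the gallery attached, via $i\colon\Sigma_{\gamma^f}\hookrightarrow\Sigma$, to a point $[h_0,\dots,h_k]$ is computed by applying $r_\infty$ to each of its faces. Second, Proposition \ref{order}: for a vertex $\V$ contained in an edge $\E$ of the standard apartment, $\mathbb U_{(\V,\E)}$ is a right transversal of $\U_\E$ in $\U_\V$ (so $\mathbb U_{(\V,\E)}\cap\U_\E=\{1\}$), and, combined with the building axioms, the orbit map $u\mapsto u\E$ identifies $\mathbb U_{(\V,\E)}$ with the set of all edges through $\V$ whose $r_\infty$-image equals $\E$.

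First I would bring $\varphi$ into a transparent form. Multiplying out the components of $\varphi(u_0,\dots,u_k)$ gives $h_0\cdots h_j=u_0\cdots u_j\,\delta_0\cdots\delta_j$; since $\delta_0\cdots\delta_j\E_j^f=\E_j$ and $\delta_0\cdots\delta_{j-1}\V_j^f=\V_j$ by the description of $\delta$, the gallery attached to $\varphi(u_0,\dots,u_k)$ is
\begin{align*}
(\V_0,\ u_0\E_0,\ u_0\V_1,\ u_0u_1\E_1,\ u_0u_1\V_2,\ \dots,\ u_0\cdots u_k\E_k,\ u_0\cdots u_k\V_{k+1}).
\end{align*}
Along the way $\varphi$ is seen to be well defined: $u_0\delta_0\in\G(\mathcal O)$ because $\V_0=0$ forces $\mathbb U_{\V_0}\subset\U(\mathcal O)$ while $\delta_0$ is represented in $\N_\G(\T)\subset\G(\mathcal O)$, and for $j\ge1$ the $j$-th component lies in $\p_{\V_j^f}$ because $\delta_j\in\p_{\V_j^f}$, $u_j\in\U_{\V_j}$, and conjugation by $(\delta_0\cdots\delta_{j-1})^{-1}$ carries $\p_{\V_j}=\p_{\delta_0\cdots\delta_{j-1}\V_j^f}$ onto $\p_{\V_j^f}$. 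Then $\op{im}\varphi\subseteq\C_\delta$ is immediate: each $u_0\cdots u_i$ lies in $\U(\mathcal K)$ and each $\E_i,\V_i$ lies in the standard apartment, so by the first input $r_\infty(u_0\cdots u_i\E_i)=\E_i$ and $r_\infty(u_0\cdots u_{i-1}\V_i)=\V_i$, i.e. $r_{\gamma^f}(\varphi(u_0,\dots,u_k))=\delta$. Injectivity follows as well: if $\varphi(u_0,\dots,u_k)=\varphi(u'_0,\dots,u'_k)$ then, $i$ being injective, the attached galleries agree, so $u_0\E_0=u'_0\E_0$; hence $(u'_0)^{-1}u_0\in\mathbb U_{(\V_0,\E_0)}\cap\U_{\E_0}=\{1\}$ by the second input, so $u_0=u'_0$; cancelling $u_0$ and comparing the next edge gives $u_1=u'_1$, and so on.

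The substance is the reverse inclusion $\C_\delta\subseteq\op{im}\varphi$, which I would prove by induction on $k$, peeling off the first face. Let $x=[g_0,\dots,g_k]\in\C_\delta$; its attached gallery $(\V_0,g_0\E_0^f,g_0\V_1^f,g_0g_1\E_1^f,\dots)$ retracts to $\delta$, so in particular $r_\infty(g_0\E_0^f)=\E_0$, and since $g_0\in\G(\mathcal O)$ fixes $\V_0=0$ the second input provides a unique $u_0\in\mathbb U_{(\V_0,\E_0)}$ with $g_0\E_0^f=u_0\E_0=u_0\delta_0\E_0^f$. Then $g_0^{-1}u_0\delta_0$ stabilises $\E_0^f$, hence lies in $\p_{\E_0^f}$, so changing the representative of $x$ by this element rewrites $x=[u_0\delta_0,\,g'_1,\,g_2,\dots,g_k]$ with $g'_1\in\p_{\V_1^f}$. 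Conjugating the remaining data by $\delta_0$ (and using $\U(\mathcal K)$-invariance of $r_\infty$ to see that the shortened gallery retracts onto $\delta_0^{-1}\delta$) reduces the claim to the same statement for the gallery $(\V_1^f,\E_1^f,\dots,\V_{k+1}^f)$ and its type, of length $k-1$; the inductive hypothesis produces $u_1,\dots,u_k$, and unwinding the conjugations reassembles exactly the formula for $\varphi$, while the base case $k=0$ is precisely the second input. I expect the main obstacle to be concentrated here: the building-theoretic part of the second input, namely that $\U_\V$ acts transitively on the edges through $\V$ with prescribed $r_\infty$-image, together with the careful bookkeeping of the Weyl-group conjugations $\delta_0\cdots\delta_{j-1}$ through the induction, so that the data produced for the shorter gallery reassembles into $\varphi$ for the original one rather than a twisted variant of it.
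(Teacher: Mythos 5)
Your proposal is correct and takes essentially the same approach as the paper, which in fact offers no argument of its own for Theorem \ref{celldescription}: it states that the proofs of \cite{onesk} (Proposition 4.19) and \cite{ls} carry over verbatim to arbitrary combinatorial galleries, relying on Proposition \ref{order} and the $\U(\mathcal{K})$-invariance of the retraction $r_{\infty}$. Your reconstruction -- well-definedness, image in $\C_{\delta}$ via invariance of $r_{\infty}$, injectivity from the transversal property $\mathbb{U}_{(\V,\E)}\cap\U_{\E}=\{1\}$, and the reverse inclusion by induction peeling off the first face -- is precisely that cited argument, with the building-theoretic parametrisation of edges through a vertex with prescribed retraction being exactly the input those references supply.
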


The following corollary can be found in \cite{knuth} as Corollary 3 for $\G^{\vee} = \op{SL}(n, \mathbb{C})$. Note that in particular it implies that $u\pi(\C_{\delta}) = \pi(\C_{\delta})$ for all $u \in \U_{\V_{0}}$.

\begin{cor}
\label{goodtrick}
The following equality holds.
\begin{align*}
\pi(\C_{\delta}) &= \mathbb{U}_{\V_{0}} \cdots \mathbb{U}_{\V_{k}}[t^{\mu_{\delta}}] =\U_{\V_{0}}\cdots \U_{\V_{k}}[t^{\mu_{\delta}}]
\end{align*}
\end{cor}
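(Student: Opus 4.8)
The plan is to derive Corollary \ref{goodtrick} directly from Theorem \ref{celldescription} together with the explicit formula for the resolution $\pi$ and the stabilisation identity (\ref{stabiliseone}). First I would apply $\pi$ to a general point of $\C_\delta$ written in the parametrisation of Theorem \ref{celldescription}: for $(u_0,\dots,u_k)\in\mathbb{U}_{\V_0}\times\cdots\times\mathbb{U}_{\V_k}$ the corresponding point of $\Sigma_{\gamma^{\op f}}$ is $[u_0\delta_0,\ \delta_0^{-1}u_1\delta_0\delta_1,\ \dots,\ (\delta_0\cdots\delta_{k-1})^{-1}u_k\delta_0\cdots\delta_k]$, and by the definition of $\pi$ in (\ref{resolution}) (in the form of Remark \ref{resolutionremark2}) its image is the product of these entries applied to $[t^{\mu_{\gamma^f}}]$. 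That product telescopes: all the $\delta_i$-conjugating factors cancel consecutively, leaving $u_0u_1\cdots u_k\,\delta_0\cdots\delta_k[t^{\mu_{\gamma^f}}]$. Since $\delta=\delta_0\cdots\delta_k$ carries the fundamental-alcove gallery $\gamma^f$ to $\delta$, in particular its final vertex $\mu_{\gamma^f}$ is sent to $\mu_\delta$, i.e. $\delta_0\cdots\delta_k[t^{\mu_{\gamma^f}}]=[t^{\mu_\delta}]$ (here one uses that $\W^{\op{aff}}$ acts on the standard apartment compatibly with its action on $\mathbb{A}$). This already gives $\pi(\C_\delta)=\mathbb{U}_{\V_0}\cdots\mathbb{U}_{\V_k}[t^{\mu_\delta}]$, the first claimed equality.

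For the second equality I would argue that replacing each $\mathbb{U}_{\V_i}$ by the larger group $\U_{\V_i}$ does not change the orbit of $[t^{\mu_\delta}]$. By Proposition \ref{order}, $\mathbb{U}_{(\V_i,\E_i)}$ is a system of representatives for the right cosets of $\U_{\E_i}$ in $\U_{\F_i}$ (where $\F_i$ is the relevant face), so $\U_{\V_i}=\mathbb{U}_{\V_i}\cdot(\text{a subgroup of }\U_{\F_i}$ generated by root subgroups $\U_{(\alpha,n)}$ with $\V_i\in\mm^-_{(\alpha,n)}$, hence $\E_i\subseteq\mm^-_{(\alpha,n)}$ too$)$. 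The point is that every root subgroup $\U_{(\alpha,n)}$ appearing in $\U_{\V_i}$ but not in $\mathbb{U}_{\V_i}$ satisfies $\E_i\subset\mm^-_{(\alpha,n)}$, hence by (\ref{stabiliseone}) it fixes the vertex $\V_{i+1}$ of $\delta$ — and in fact, propagating forward, it fixes all later vertices and in particular $[t^{\mu_\delta}]$ after being pushed through the later $\mathbb{U}_{\V_j}$'s. Concretely I would move such ``extra'' factors to the right past the remaining $\mathbb{U}_{\V_j}$ using Chevalley's commutator formula (\ref{chevalley}) and identity (\ref{id1}): commuting $\U_{(\alpha,n)}$ past another affine root subgroup produces only affine root subgroups for roots $i\alpha+j\beta$ which again lie in the stabiliser of the endpoint, and eventually everything that is not in some $\mathbb{U}_{\V_j}$ lands in $\G(\mathcal{O})$ and so acts trivially on $[t^{\mu_\delta}]$.

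The main obstacle is exactly this last bookkeeping: one must check that when an ``extra'' root subgroup $\U_{(\alpha,n)}$ (with $\E_i\subset\mm^-_{(\alpha,n)}$) is commuted to the right past the groups $\mathbb{U}_{\V_{i+1}},\dots,\mathbb{U}_{\V_k}$, all the newly created root subgroups — both the commutator terms from (\ref{chevalley}) and the conjugates — remain in the pointwise stabiliser of the final vertex $\mu_\delta$, equivalently lie in $\G(\mathcal{O})$ after being evaluated at $[t^{\mu_\delta}]$. This is where the geometry of the gallery enters: since $\delta$ is a gallery, each edge $\E_j$ is a face of $\p_{\E_j}\supset\p_{\V_{j+1}}$, and the chain of inclusions $\V_{i+1}\subset\E_{i+1}\supset\V_{i+2}\subset\cdots$ together with the half-space condition $\E_i\subset\mm^-_{(\alpha,n)}$ forces each later vertex into $\mm^-_{(\alpha,n)}$ as well, so by (\ref{stabiliseone}) the relevant root subgroups fix $[t^{\mu_\delta}]$. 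I would phrase this as: the subgroup of $\U(\mathcal{K})$ generated by all $\U_{(\alpha,n)}$ with $\mu_\delta\in\mm^-_{(\alpha,n)}$ lies in $\G(\mathcal{O})$ and hence in the stabiliser of $[t^{\mu_\delta}]$, and all ``extra'' generators of $\U_{\V_0}\cdots\U_{\V_k}$ that are not absorbed into some $\mathbb{U}_{\V_j}$ belong to this subgroup; since $\U_{\V_0}\cdots\U_{\V_k}\supseteq\mathbb{U}_{\V_0}\cdots\mathbb{U}_{\V_k}$ the reverse inclusion of orbits is trivial, giving equality. The bulk of the write-up will be organising this commutation argument cleanly rather than any single hard computation; I would reference \cite{onesk} (around Proposition 4.19) and \cite{knuth} (Corollary 3) for the type-A prototype, noting that the proof there is insensitive to the type.
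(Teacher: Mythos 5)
Your telescoping derivation of the first equality $\pi(\C_{\delta})=\mathbb{U}_{\V_{0}}\cdots\mathbb{U}_{\V_{k}}[t^{\mu_{\delta}}]$ is correct and coincides with the paper's argument. The gap is in your justification of the second equality. You claim that an ``extra'' root subgroup $\U_{(\alpha,n)}$ of $\U_{\V_{i}}$, i.e.\ one with $\E_{i}\subset\mm^{-}_{(\alpha,n)}$, ``fixes all later vertices and in particular $[t^{\mu_{\delta}}]$'', on the grounds that the chain $\V_{i+1}\subset\E_{i+1}\supset\V_{i+2}\subset\cdots$ forces each later vertex into $\mm^{-}_{(\alpha,n)}$. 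This is false: knowing $\V_{i+1}\in\mm^{-}_{(\alpha,n)}$ does not place $\E_{i+1}$ in $\mm^{-}_{(\alpha,n)}$, and the gallery may sit on the hyperplane $\mm_{(\alpha,n)}$ at some later vertex $\V_{j}$ and then cross into the positive half-space along $\E_{j}$. In that case $\<\alpha,\mu_{\delta}\rr>n$ is possible, so $\U_{(\alpha,n)}$ does \emph{not} stabilise $[t^{\mu_{\delta}}]$; instead $(\alpha,n)\in\Phi^{+}_{(\V_{j},\E_{j})}$, i.e.\ the ``extra'' factor is one of the generators of the \emph{later} group $\mathbb{U}_{\V_{j}}$. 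The same objection applies to your unproved assertion that the commutator terms produced by (\ref{chevalley}) land in the stabiliser of the endpoint.

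The correct mechanism is absorption into the subsequent full stabilisers, not stabilisation of the endpoint, and it requires no Chevalley bookkeeping. The extra generators of $\U_{\V_{i}}$ are exactly those lying in $\U_{\E_{i}}$, and Proposition \ref{order} gives $\U_{\V_{i}}=\mathbb{U}_{\V_{i}}\U_{\E_{i}}$. Since $\V_{i+1}\subset\E_{i}$ we have $\U_{\E_{i}}\subset\U_{\V_{i+1}}$, hence $\U_{\E_{i}}\U_{\V_{i+1}}=\U_{\V_{i+1}}=\mathbb{U}_{\V_{i+1}}\U_{\E_{i+1}}$; iterating from the left yields $\U_{\V_{0}}\cdots\U_{\V_{k}}=\mathbb{U}_{\V_{0}}\cdots\mathbb{U}_{\V_{k}}\U_{\E_{k}}$, and only the final residue $\U_{\E_{k}}\subset\U_{\V_{k+1}}$ stabilises $[t^{\mu_{\delta}}]$, by the discussion following (\ref{stabiliseone}). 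The paper phrases this upstairs: the map of Theorem \ref{celldescription} with the full groups $\U_{\V_{i}}$ in place of $\mathbb{U}_{\V_{i}}$ still has image exactly $\C_{\delta}$, because the discrepancies lie in the parabolic stabilisers of the edges and are killed in the Bott-Samelson quotient; one then telescopes as you did. So your overall strategy is close, but the geometric claim carrying the weight of the second equality is wrong and must be replaced by the absorption argument.
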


\begin{proof}
By Theorem \ref{celldescription} the image of the map 
\begin{align*}
\U_{\V_{0}}\times \cdots \times \U_{\V_{r}} & \rightarrow \Sigma_{\gamma^{f}}\\
(u_{0}, \cdots, u_{r}) & \mapsto [u_{0}\delta_{0}, \delta_{0}^{-1}u_{1}\delta_{0}\delta_{1}, \cdots , \delta_{0}\cdots \delta_{r-1}^{-1}u_{r}\delta_{0}\cdots \delta_{r}]
\end{align*}
 is contained in the cell $\C_{\delta}$ and is surjective. The corollary follows since \newline
 $\delta_{0}\cdots \delta_{j} \mu_{\gamma^{f}} = \mu_{\delta}$. 
\end{proof}
\noindent

\section{Crystal structure on combinatorial galleries, the Littelmann path model, and Lakshmibai Seshadri galleries}

\label{crystalsubsection}

Let $\lambda \in \X^{+, \vee}$ be a dominant integral coweight and let $\li(\lambda)$ be the corresponding simple module of $\G^{\vee}$. To $\li(\lambda)$ is associated a certain graph $\B(\lambda)$ that is its ``combinatorial model''.  It is a connected \textsl{highest weight} crystal, which means that there exists $b_{\lambda} \in \B(\lambda)$ such that $e_{\alpha_{i}}b_{\lambda} = 0$ for all $i \in \{1, \cdots, n-1\}$. The crystal $\B(\lambda)$ also has the characterising property that 
\begin{align*}
\op{dim}(\li(\lambda)_{\mu}) = \#\{b \in \B(\lambda): \op{wt}(b) = \mu\}.
\end{align*}
 See below for the definitions.  After recalling the notion of a crystal we review the crystal structure on the set of all combinatorial galleries $\Gamma$.

\subsection{Crystals}
\label{crystals}
A \textbf{crystal} is a set $\B$ together with maps 
\begin{align*}
e_{\alpha_{i}}, f_{\alpha_{i}}:& \B \rightarrow \B \cup \{0\} \mbox{(the \textbf{root operators})},\\
 \op{wt}:& \B \rightarrow \X^{\vee} 
\end{align*}
for  $i \in \{1, \cdots, n\}$ such that for every $b, b' \in \B$ and $i\in \{1, \cdots , n-1\}, b' = e_{\alpha_{i}}(b)$ if and only if $b = f_{\alpha_{i}}(b')$, and, in this case, setting $$\epsilon_{i}(b'')= \op{max}\{n: e_{\alpha_{i}}^{n}(b)\neq 0 \}$$ and $$\phi_{i}(b'')=\op{max}\{n: f_{\alpha_{i}}^{n}(b'')\neq 0\}$$ for any $b'' \in \B$, the following properties are satisfied.

\begin{enumerate}
\item $\op{wt}(b') = \op{wt}(b)+\alpha_{i}^{\vee}$
\item $\phi(b) = \epsilon_{i}(b) +\< \alpha_{i},\op{wt}(b)\rr$
\end{enumerate}

\noindent
A crystal is in particular a graph, which we may decompose into the disjoint union of its connected components. Each element $b\in \B$ lies in a unique connected component which we will denote by $\op{Conn}(b)$. A \textbf{crystal morphism} is a map $\F:\B \rightarrow \B'$ between the underlying sets of two crystals $\B$ and $\B'$ such that $\op{wt}(\F(b)) = \op{wt}(b)$ and such that it commutes with the action of the root operators. A crystal morphism is an isomorphism if it is bijective. \\

\subsection{Crystal structure on combinatorial galleries}
\begin{defn}
\label{definitionofrootoperators}
For each $i \in \{1, \cdots, n\}$ and each simple root $\alpha_{i}$, we recall the definition of the root operators $f_{\alpha_{i}}$ and $e_{\alpha_{i}}$ on the set of combinatorial galleries $\Gamma$ and endow the set of combinatorial galleries with a crystal structure. We follow Section 6 in \cite{ls} and Section 1 in \cite{bravermangaitsgory}. We refer the reader to \cite{kashiwaraoncrystalbases} for a detailed account of the theory of crystals. \\

Let $\gamma = (\V_{0}, \E_{0}, \V_{1}, \E_{1}, \cdots, \E_{k}, \V_{k+1})$ be a combinatorial gallery. Define $\op{wt}(\gamma) = \mu_{\gamma}$. Let $m_{\alpha_{i}} = m \in \mathbb{Z}$ be minimal such that $\V_{r} \in \mm_{(\alpha_{i}, m)}$ for $r \in \{1, \cdots, k+1\}$. Note that $m \leq 0$. \\

\noindent
\textbf{Definition of }$\mathbf{f_{\alpha_{i}}}$: Suppose $\<\alpha_{i},\mu_{\gamma}\rr \geq m+1$. Let $j$ be maximal such that $\V_{j} \in \mm_{(\alpha_{i},m)}$ and let $j < r \leq k+1$ be minimal such that $\V_{r} \in \mm_{(\alpha_{i}, m+1)}$. Let

\[
    \E_{i}'= 
\begin{cases}
    \E_{i}& \text{if } i < j\\
    s_{(\alpha_{i},m)}(\E_{i}) &\text{ if } j \leq i < r\\
    t_{-\alpha_{i}^{\vee}}(\E_{i}) &\text{ if } i \geq r
\end{cases}
\]
and define $$f_{\alpha_{i}}(\gamma) = (\V'_{0}, \E'_{0}, \V'_{1}, \E'_{1}, \cdots, \E'_{r}, \V'_{k+1}).$$ If $\<\alpha_{i},\mu_{\gamma}\rr < m+1$, then $f_{\alpha_{i}}(\gamma) = 0$. \\

\noindent
\textbf{Definition of }$\mathbf{e_{\alpha_{i}}}$:  Suppose that $m \leq -1$. Let $r$ be minimal such that the vertex $\V_{r} \in \mm_{(\alpha_{i}, m)}$ and let $0 \leq j < r$ maximal such that $\V_{j} \in \mm_{(\alpha_{i}, m+1)}$. Let 

\[
    \E_{i}'= 
\begin{cases}
    \E_{i}& \text{if } i < j\\
    s_{(\alpha_{i},m+1)}(\E_{i}) &\text{ if } j \leq i < r\\
    t_{\alpha_{i}^{\vee}}(\E_{i}) &\text{ if } i \geq r
\end{cases}
\]

\noindent
and define $$e_{\alpha_{i}}(\gamma) = (\V'_{0}, \E'_{0}, \V'_{1}, \E'_{1}, \cdots, \E'_{r}, \V'_{k+1}).$$

\noindent
 If $m = 0$ then $e_{\alpha_{i}}(\gamma) = 0$. \\
\end{defn}

\noindent

\begin{rem}
\label{typeremark}
It follows from the definitions that the maps $e_{\alpha_{i}}, f_{\alpha_{i}} \hbox{ and } \op{wt}$ define a crystal structure on $\Gamma$. Note as well that if $\gamma$ is a combinatorial gallery then $f_{\alpha_{i}}(\gamma)$ and $e_{\alpha_{i}}(\gamma)$ are combinatorial galleries of the same type (as long as they are not zero). We say that the root operators are type preserving. See also \cite{ls}, Lemma 6.
\end{rem}

\subsection{The Littelmann path model and Lakshmibai Seshadri galleries. Readable galleries.}
\label{pathmodelsection}

Let $\gamma$ be a combinatorial gallery that has each one of its faces contained in the fundamental chamber. We call such galleries \textbf{dominant}. By Theorem 7.1 in \cite{pathsandrootoperators} the crystal  of galleries $\p(\gamma)$ generated by $\gamma$ is isomorphic to the crystal $\B(\mu_{\gamma})$ associated to the irreducible highest weight representation $\li(\mu_{\gamma})$ of $\G^{\vee}$. In its original context \cite{pathsandrootoperators} it is known as a \textsl{Littelmann path model} for the representation  $\li(\mu_{\gamma})$.  We say that a combinatorial gallery $\gamma$ is a \textbf{Littelmann gallery} if there exist indices $i_{1}, \cdots, i_{r}$ such that $e_{\alpha_{i_{1}}}\cdots e_{\alpha_{i_{r}}}(\gamma) =: \gamma^{+}$ is a dominant gallery. If $\mu_{\gamma^{+}} = \mu_{\delta^{+}}$ and $e_{\alpha_{i_{1}}}\cdots e_{\alpha_{i_{r}}}(\gamma) = \gamma^{+}; e_{\alpha_{i_{1}}}\cdots e_{\alpha_{i_{r}}}(\delta) = \delta^{+}$ for two Littelmann galleries $\gamma$ and $\delta$ we say that they are \textbf{equivalent}.  \\

%%%%%LS galleries%%%%%%
Let $\lambda \in \X^{\vee,+}$ be a dominant coweight and $\gamma_{\lambda}$ a gallery that is a concatenation of fundamental galleries and that has endopoint $\lambda$ (as above). We denote the set of \textbf{combinatorial LS  galleries} (short for Lakshmibai Seshadri galleries) of same type as $\gamma_{\lambda}$ by $\Gamma^{\op{LS}}(\gamma_{\lambda})$. Littelmann galleries generalise LS galleries enormously. In particular, all LS galleries are `Littelmann' - see \cite{pathsandrootoperators}, Section 4. Moreover, this set $\Gamma^{\op{LS}}(\gamma_{\lambda})$ is stable under the root operators and has the structure of a  crystal isomorphic to $\B(\lambda)$. It was proven by Gaussent-Littelmann in \cite{ls} that the resolution in (\ref{resolution}) induces a bijection $\Gamma^{\op{LS}}(\gamma_{\lambda}) \cong \mathcal{Z}(\lambda)$ which was shown to be a crystal isomorphism in \cite{baumanngaussent} by Baumann-Gaussent. We use this heavily in the proof of Theorem \ref{main}. See Definition 18 in \cite{ls} for a geometric definition of LS galleries, and Definition 23 in \cite{ls} for an equivalent combinatorial characterisation that for one skeleton galleries agrees with the original definition by Lakshmibai, Musili, and Seshadri (see for example \cite{LLM}) in the context of standard monomial theory. We will give a combinatorial characterisation of LS galleries of fundamental type in the case $\G^{\vee} = \op{SP}(2n, \mathbb{C})$, omitting therefore the most general definitions.\\

We finish this section with a question. Let $\gamma$ be any combinatorial gallery with each one of its edges contained in the fundamental chamber. Then the map $\Sigma_{\gamma^{f}} \rightarrow \mathcal{G}, [g_{0}, \cdots, g_{r}] \mapsto g_{0}\cdots g_{r}[t^{\mu_{\gamma}^{f}}]$ is still defined.

\begin{question}
Does this map induce a crystal isomorphism $\p(\gamma) \cong \mathcal{Z}(\mu_{\gamma})$? 
\end{question} 

\noindent
This question was answered positively in \cite{knuth} and \cite{jt} for $\G^{\vee} = \op{SL}(n, \mathbb{C})$. In the rest of this paper we do so as well for $\G^{\vee} = \op{SP}(2n,\mathbb{C})$ and $\gamma$ a $\textsl{readable}$ gallery.\\

\begin{defn}
\label{readabledefinition}
A \textbf{readable} gallery is a concatenation of its \textbf{parts}: LS galleries of fundamental type and galleries of the form $(\V_{0}, \E_{0}, \V_{1}, \E_{1}, \V_{2})$ (we call them \textbf{zero lumps}) such that both edges $\E_{0}$ and $\E_{1}$ are contained in the dominant chamber and such that  the endpoint $\V_{2} = 0$ is equal to zero. We denote the set of all readable galleries by $\Gamma^{R},$ and if a combinatorial gallery $\gamma$ is fixed, by $\Gamma(\gamma)^{\R}$ the set of all readable galleries of same type as $\gamma$. 
\end{defn}

\noindent
For $\op{G}^{\vee} = \op{SL}(n, \mathbb{C})$ all galleries are readable; this is due to the well known fact that in this case fundamental coweights are all minuscule.  In the next sections we will describe readable galleries explicitly for $\G^{\vee} = \op{SP}(2n,\mathbb{C})$ and show that they are Littelmann galleries. They are also more general than galleries of type $\gamma_{\lambda}$ for a gallery $\gamma_{\lambda}$ that is a concatenation of fundamental galleries - this means they belong to a larger class of galleries, but not that they contain $\Gamma(\gamma_{\lambda})$. 

\begin{rem}
\label{readablegalleriesarestable}
It follows from Lemma 8 in \cite{ls} that readable galleries are stable under root operators. 
\end{rem}

\section{``Type C'' combinatorics}
\label{typecombinatorics}

\subsection{Symplectic keys and words}
A \textbf{symplectic shape} 
$$\underline{d} = (d_{1}, \cdots, d_{k+1})$$ \noindent is a sequence of natural numbers $d_{i} \leq n$. An \textbf{arrangement of boxes} of shape $\underline{d}$ is an arrangement of $r$ columns of boxes such that column $s$ (read from right to left) has $d_{s}$ boxes. 

\begin{ex}
\label{ch2ex1}
\begin{align*}
\Skew(0:,,,|1:) 
\end{align*}
An arrangement of boxes of symplectic shape (1,1,2,1).
\end{ex}

Consider the ordered alphabet $$\mathcal{C}_{n} = \{1<2< \cdots< n-1< n< \gr{n} < \cdots < \gr{1} \}.$$ A \textbf{symplectic key} of shape $\underline{d}$ is a filling of an arrangement of boxes of symplectic shape $\underline{d}$  with letters of the alphabet $\mathcal{C}_{n}$ in such a way that the entries are strictly increasing along each column.

\begin{ex}
A symplectic key, for $n\geq 5$, of symplectic shape $(1,3,2,1)$.
$$
\Skew(0:\mbox{\tiny{$\overline{1}$}},\mbox{\tiny{$1$}},\mbox{\tiny{$2$}},\mbox{\tiny{$3$}}|1:\mbox{\tiny{$5$}},\mbox{\tiny{$2$}}|2:\mbox{\tiny{$\overline{2}$}})
$$
\end{ex}

\noindent We denote the word monoid on $\mathcal{C}_{n}$ by $\mathcal{W}_{\mathcal{C}_{n}}$. To a word $w = w_{1}\cdots w_{k}$ in $\mathcal{W}_{\mathcal{C}_{n}}$ we associate a symplectic key $\mathscr{K}_{w}$ that consists of only one row of length $k$, and with the boxes filled in from right to left with the letters of $w$ read in turn from left to right. For example, the word $12$ corresponds to the key $\Skew(0:\mbox{\tiny{$2$}}, \mbox{\tiny{$1$}})$.  Denote the set of all symplectic keys associated to words by $\Gamma(\op{wor})$. 
%%%%%%%%%%%%%%%%%%%%%%%%%%%%%%%%%%%%%%%%%%%%%%%%%%%%%%%%%%%%%%%%%%%%%%%%%%%%%%%%%%%%%%%%%%

\subsection{Weights and coweights}
Consider $\mathbb{R}^{n}$ with canonical basis $\{\e_{1}, \cdots, \e_{n}\}$ and standard inner product $\<-,-\rr $ (in particular $\< \e_{i}, \e_{j}\rr = \delta_{ij}$). From now on we consider the root datum $(\X, \Phi, \X^{\vee}, \Phi^{\vee})$ that is defined by: 
 
\begin{align*}
\Phi &= \{\pm\e_{i}, \e_{i}\pm \e_{j}\}_{i,j\in \{1, \cdots, n\}}\\
\Phi^{\vee} &= \{\alpha^{\vee}:= \frac{2\alpha}{\<\alpha, \alpha \rr}\}\\
\X &= \{v \in \mathbb{R}^{n}: \<v, \alpha^{\vee}\rr \in \mathbb{\mathbb{Z}}\}\\
\X^{\vee}&=\{v\in \mathbb{R}^{n}: \<\alpha, v\rr \in \mathbb{Z}\}.
\end{align*}

\noindent
Indeed the sets $\X$ and $\X^{\vee}$  are free abelian groups which form a root datum together with the
 pairing $\<-,-\rr$ between them and the subsets $\Phi \subset \X$ and $\Phi^{\vee} \subset \X^{\vee}$. We choose a basis $\Delta \subset \Phi$ given by
$$\Delta = \{\alpha_{i} =\varepsilon_{i}-\varepsilon_{i+1}; \alpha_{n}= \varepsilon_{n}: i \in \{1, \cdots, n-1\} \},$$
hence the set
$$\Delta^{\vee} = \{\alpha^{\vee}_{i} =\varepsilon_{i}-\varepsilon_{i+1}, \alpha^{\vee}_{n}= 2\varepsilon_{n}: i \in \{1, \cdots, n-1\}\}$$
is a basis for $\Phi^{\vee}$.
Then $\X^{\vee}$ has a $\mathbb{Z}$-basis given by $\{\omega_{i}\}_{i \in \{1, \cdots, n\}}$, where 

\begin{align*}
%\omega_{i} &= \e_{1}+\cdots + \e_{i} \hbox{   } 1\leq i \leq n-1\\
%\omega_{n} &= \frac{1}{2}(\e_{1}+\cdots + \e_{n})\\
\omega_{i} &= \e_{1}+\cdots + \e_{i}\hbox{    } 1\leq i \leq n.
\end{align*}
\noindent
Then $\G= \op{SO}(2n+1, \mathbb{C})$ and $\op{G}^{\vee} = \op{SP}(2n, \mathbb{C})$. 

\subsection{Symplectic keys associated to readable galleries}
\label{symplectickeys}

\noindent
The aim of this section is to assign a symplectic key to every readable gallery.

\subsubsection{Readable blocks}

\noindent
For a subset $\X \subseteq \mathcal{C}_{n}$, we denote the corresponding subset of barred elements by  $\overline{\X} := \{\bar{x}: x \in \X\}$, where, for $i$ unbarred, $\bar{\bar{i}} = i$. 
%%%%%%%%%%%%%description of LS galleries of fundamental type
\begin{defn}
\label{lsblock}
Let $\mathscr{T}$ be a symplectic key. We call $\mathscr{T}$ an \textbf{LS block} if the arrangement of boxes associated to its type consists of only one box or if there exist
positive integers $k,r,s$ such that $2k +r+s \leq n$, and disjoint sets of positive integers
\begin{align*}
\A &= \{a_{i}: 1\leq i\leq r, a_{1}<\cdots <a_{r}\}\\
\B &= \{b_{i}: 1\leq i\leq s, b_{1}<\cdots <b_{s}\}\\
\Z &= \{z_{i}: 1\leq i\leq k, z_{1}<\cdots <z_{k} \}\\
\T &= \{t_{i}: 1\leq i\leq k, t_{1}<\cdots <t_{k}\}
\end{align*} 
such that $\mathscr{T}$ consists of two columns: the rightmost one (respectively the leftmost one) is the column with entries the ordered elements of the set $\{\ov{\T}, \Z, \A, \ov{\B}\}$ (respectively $\{\ov{\Z}, \T, \A, \ov{\B}\}$), and such that the elements of $\T$ are uniquely characterised by the properties
\begin{align}
\label{admissibilityconditions}
t_{k} &= \op{max}\{t \in \mathcal{C}_{n}: t<z_{k}, t\notin \Z\cup\A\cup\B\}\\
t_{j-1} &= \op{max}\{t \in \mathcal{C}_{n}: t<\op{min}(z_{j-1}, t_{j}), t\notin \Z\cup\A\cup\B\} \hbox{ for } j\leq k. 
\end{align}

We say that $\mathscr{T}$ is a \textbf{zero block} if there exists a non-zero integer $k$ such that $\mathscr{T}$ consists of two columns, both of $k$ boxes; the right-most one is filled in with the ordered letters $1 < \cdots < k$ and the left-most one, with $\bar{k} < \cdots < \bar{1}$. A symplectic key is called a \textbf{readable block} if it is either an LS block or a zero block. A \textbf{readable key} is a concatenation of readable blocks. Now assume that $\underline{d} = (d_{1}, \cdots, d_{k+1})$ is such that $d_{1} \leq \cdots \leq d_{k+1}$. A symplectic key of shape $\underline{d}$ is called an \textbf{LS symplectic key} if the entries are weakly increasing in rows and if it is a concatenation of LS blocks. We denote the set of LS symplectic keys of shape $\underline{d}$ as $\bs{\Gamma}(\underline{d})^{\op{LS}}$. 
\end{defn}

\begin{ex} \label{symplecticblockexample}The symplectic key $\Skew(0:\mbox{\tiny{$1$}},\mbox{\tiny{$2$}}|0:\mbox{\tiny{$\overline{2}$}},\mbox{\tiny{$\overline{1}$}})$ is an LS block, with $$\A = \B = \emptyset, \Z = \{2\}, \T = \{1\}.$$ The symplectic key $\Skew(0:\mbox{\tiny{$1$}},\mbox{\tiny{$\overline{2}$}}|0:\mbox{\tiny{2}},\mbox{\tiny{$\overline{1}$}})$ is not an LS block. The symplectic key $\Skew(0:\mbox{\tiny{$\overline{2}$}}, \mbox{\tiny{$1$}}|0:\mbox{\tiny{$\overline{1}$}},\mbox{\tiny{$2$}})$ is a zero block. 
\end{ex}

\begin{rem}
A pair of columns that form an LS block is sometimes called a pair of admissible columns. The original definition of admissible columns was given by DeConcini in \cite{deconcini}, using a slightly diferent convention than Kashiwara and Nakayima's (which is the one we use here). The map, given by Lecouvey, that translates the two can be found in \cite{lecouvey} at the end of Section 2.2. 
\end{rem}

To a readable block $\mathscr{T}$ we assign a gallery 
$\gamma_{\mathscr{T}}$ as follows. If $\mathscr{T}$ consists of only one box filled in with the letter $l \in \mathcal{C}_{n}$, then we define $ \V^{\mathscr{T}}_{0}, \V^{\mathscr{T}}_{1} = \e_{l}$,  $\E^{\mathscr{T}}_{0}:= \{t \V^{\mathscr{T}}_{1}, t \in [0,1]\}$, and 

$$\gamma_{\mathscr{T}}:= \{\V^{\mathscr{T}}_{0}, \E^{\mathscr{T}}_{0}, \V^{\mathscr{T}}_{1}\}.$$

\noindent
 If not, then its columns are filled with the letters $l^{1}_{1} < \cdots < l^{1}_{d}$ and $l^{2}_{1} < \cdots < l^{2}_{d}$ respectively. We then define 

\begin{align*}
\V^{\mathscr{T}}_{1} &=  \frac{1}{2}(\e_{l^{1}_{1}} + \cdots + \e_{l^{1}_{d}})\\
\V^{\mathscr{T}}_{2} &= \e_{l^{1}_{1}} + \cdots + \e_{l^{1}_{d}} +  \e_{l^{2}_{1}} + \cdots + \e_{l^{2}_{d}}\\
\E^{\mathscr{T}}_{1}&= \hbox{line segment joining } \V_{1} \hbox{ and } \V_{2}.
\end{align*}
and $$\gamma_{\mathscr{T}} = (\V^{\mathscr{T}}_{0}, \E^{\mathscr{T}}_{0}, \V^{\mathscr{T}}_{1}, \E^{\mathscr{T}}_{1}).$$
%%%%%%%%%%%%%%%%%%%%%%%%%%%%%%%%%%%%%%%%%%%%%%%%%

\begin{ex}
Let $n = 2$ and $\gamma = (\V_{0}, \E_{0}, \V_{1}, \E_{1},\V_{2})$ where $\V_{0} = 0, \V_{1} = \frac{1}{2}(\varepsilon_{1}+\varepsilon_{2}), \V_{2} = \e_{1}+\e_{2}$ and the edges are the line segments joining the vertices in order. Below is a picture of the associated gallery $\gamma_{\mathscr{K}}$ to the symplectic key $\mathscr{K}$.
\begin{center}
\begin{tikzpicture}[scale = 1.5]
\fill[red!20!, opacity = 0.5] (0,0) -- (1,0) -- (1,1) -- cycle;
\node[draw,circle,inner sep=1pt,fill] at (0,0) {};
\node[below] at (0,0){$\V_{0}$};
\node[above] at (1,1){$\V_{2}$};
\draw[-] (0,0) -- (0.5,0.5);
\node[draw,circle,inner sep=1pt,fill] at (1,1) {};
\node[draw,circle,inner sep=1pt,fill] at (0.5,0.5) {};
\node[above] at (0.5,0.5){$\V_{1}$};
%\node[draw,circle,inner sep=1pt,fill] at (1,0) {};
\draw[-] (0.5,0.5) -- (1,1);
\node[below] at (1.2,-0.01) {\mbox{$\varepsilon_{1}$}};
\node[above] at (0,1){$\varepsilon_{2}$};
\draw[dashed, gray, very thin] (1,0) --(0,0);
\draw[dashed, gray, very thin] (0,0) -- (1,1);
\draw[dashed, gray, very thin] (0,0) --(0,1);
\draw[dashed,gray, very thin] (1,0) --(0,1);
\draw[dashed,gray, very thin] (1,0) --(1,1);
\draw[dashed,gray, very thin] (0,1) --(1,1);
\node[below] at (2,0.7){\Skew(0:\mbox{\tiny{$1$}},\mbox{\tiny{$1$}}|0:\mbox{\tiny{$2$}},\mbox{\tiny{$2$}})};
\node[below] at (0.2,-0.4){$\gamma_{\mathscr{K}} = (\V_{0}, \E_{0}, \V_{1}, \E_{1}, \V_{2});$};
%\node[below] at (0.2,-0.8){$\E_{0} = \E_{1}$};
\node[below] at (2.1,-0.4){$\mathscr{K}$};
\end{tikzpicture}
\end{center}
\end{ex}

%%%%%%%%%%%%%%%%%%%%%%%%%%%%%%%%%%%%%%%%%%%%%%%%%
To a readable key we associate the concatenation of the galleries of each of the readable blocks that it is a concatenation of (from right to left). Given a symplectic shape $\underline{d}$, we will denote the set of all readable keys of shape $\underline{d}$ by $\bs{\Gamma}(\underline{d})^{\R}$. (This set may be empty.)  Let $\underline{d}$ be a shape such that $\bs{\Gamma}(\underline{d})^{\R} \neq 0$. Then it must have the form

\begin{align*}
\underline{d} = (\underline{d}^{l_{1}}, \cdots, \underline{d}^{l_{m}})
\end{align*}
\noindent
where $\underline{d}^{l_{i}} = l_{i}, l_{i}$ for $l_{i} \geq 2$ and $\underline{d}^{l_{i}} = 1$ if  $l_{i} = 1$. For instance, in Example \ref{symplecticblockexample}, all symplectic keys have shape $(2,2)$. To such a shape $\underline{d}$ we associate the dominant coweight 

$$\lambda_{\underline{d}} = \omega_{l_{1}} + \cdots + \omega_{l_{m}}.$$
\noindent For example, to the shape $(2,2)$ is associated the coweight $\omega_{2}$. The following proposition follows from Lemma 2 in \cite{onesk}.
\begin{prop}
\label{symplecticshapeandtype}
 The map 
\begin{align*}
\underset{\underline{d}}{\bigcup}\bs{\Gamma}^{\R}(\underline{d}) & \overset{1:1}{\longrightarrow} \Gamma^{\R}\\
\mathscr{T} &\mapsto \gamma_{\mathscr{T}}
\end{align*}
\noindent
is well defined and is a bijection. Moreover, if $d_{1} \leq \cdots \leq d_{k+1}$ then this map induces a bijection 

\begin{align*}
\bs{\Gamma}^{\op{LS}}(\underline{d}) \overset{1:1}{\longleftrightarrow} \Gamma^{\op{LS}}(\gamma_{\omega_{l_1}} * \cdots * \gamma_{\omega_{l_m}}).
\end{align*}
\end{prop}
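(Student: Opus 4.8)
The plan is to reduce to a single readable block and then reassemble by concatenation. By construction, if $\mathscr{T}=\mathscr{T}_{1}\cdots\mathscr{T}_{m}$ is the decomposition of a readable key into readable blocks then $\gamma_{\mathscr{T}}=\gamma_{\mathscr{T}_{1}}*\cdots*\gamma_{\mathscr{T}_{m}}$, and, since $\Gamma(\gamma_{1}*\cdots*\gamma_{r})=\{\delta_{1}*\cdots*\delta_{r}:\delta_{i}\in\Gamma(\gamma_{i})\}$ (see \ref{bottsamelson}) and a readable gallery is by definition a concatenation of parts (LS galleries of fundamental type and zero lumps), it suffices to prove the statement blockwise: that $\mathscr{T}_{i}\mapsto\gamma_{\mathscr{T}_{i}}$ is a well-defined bijection from readable blocks onto parts, matching LS blocks with LS galleries of fundamental type $\omega_{l_{i}}$ (where $l_{i}$ is the common column height) and zero blocks with zero lumps of endpoint $0$, and that the decompositions on both sides are forced, so the blockwise bijection globalises into a bijection onto $\Gamma^{\R}$. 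The last point is elementary: the interior vertex of a two-column block is $\frac{1}{2}w(C_{1})$, where $w(C)=\sum_{c\in C}\e_{c}$ is the weight of a column $C$ (a barred letter $\bar{c}$ contributing $-\e_{c}$), which is half-integral and hence not a coweight; so the coweight vertices of $\gamma_{\mathscr{T}}$ are exactly the part boundaries, and reading off coordinates recovers the shape $\underline{d}=(\underline{d}^{l_{1}},\cdots,\underline{d}^{l_{m}})$ together with the two columns inside each block.

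For the block level the key input is Lemma 2 of \cite{onesk}, which via the parametrisation (\ref{combg}) identifies galleries of fundamental type $\omega_{l}$ with pairs of columns in such a way that $\mathscr{T}\mapsto\gamma_{\mathscr{T}}$ is precisely the tableau-to-gallery assignment defined above; in particular $\gamma_{\mathscr{T}}$ is a well-defined combinatorial gallery, with interior vertex $\frac{1}{2}w(C_{1})$ and endpoint $w(C_{1})+w(C_{2})$. The zero-block case of the matching is then a direct computation: the zero block with parameter $k$ yields the gallery $0\to\frac{1}{2}\omega_{k}\to 0$, whose two edges lie in the dominant chamber --- a zero lump of endpoint $0$ --- and conversely such a zero lump has interior vertex $\frac{1}{2}\omega_{k}$ for a unique $k$ and hence comes from that zero block.

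The step I expect to be the main obstacle is matching LS blocks with LS galleries of fundamental type. By the combinatorial characterisation of LS galleries in \cite{ls} (Definition 23), an LS gallery of fundamental type $\omega_{l}$ is singled out among galleries of type $\gamma_{\omega_{l}}$ by a chain condition on its two faces; what must be verified is that, transported through Lemma 2 of \cite{onesk}, this condition is literally the admissibility requirement (\ref{admissibilityconditions}) --- that the distinguished set $\T$, pinned down by $t_{k}=\op{max}\{t<z_{k}:t\notin\Z\cup\A\cup\B\}$ and $t_{j-1}=\op{max}\{t<\op{min}(z_{j-1},t_{j}):t\notin\Z\cup\A\cup\B\}$, is exactly the combinatorial shadow of that chain condition for the pair $(C_{1},C_{2})$. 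For this I would use Lakshmibai's description of LS galleries of fundamental type in type $\C_{n}$ in \cite{Lak1}, \cite{Lak2}, together with Lecouvey's reformulation in \cite{lecouvey} (notably the dictionary at the end of Section 2.2 of \cite{lecouvey} between De Concini's and Kashiwara--Nakashima's admissible columns, and the splitting of an admissible column into a pair of columns), comparing these termwise with (\ref{admissibilityconditions}). Once this is in place the blockwise bijection, and hence the first assertion, follow.

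For the ``moreover'', when $d_{1}\leq\cdots\leq d_{k+1}$ the shape has the form $\underline{d}=(\underline{d}^{l_{1}},\cdots,\underline{d}^{l_{m}})$ with $l_{1}\leq\cdots\leq l_{m}$, so that $\gamma_{\omega_{l_{1}}}*\cdots*\gamma_{\omega_{l_{m}}}$ is a concatenation of fundamental galleries with endpoint $\lambda_{\underline{d}}=\omega_{l_{1}}+\cdots+\omega_{l_{m}}$ as in \ref{bottsamelson}. Membership of $\delta_{1}*\cdots*\delta_{m}$ in $\Gamma^{\op{LS}}(\gamma_{\omega_{l_{1}}}*\cdots*\gamma_{\omega_{l_{m}}})$ imposes, beyond each $\delta_{i}$ being an LS gallery of fundamental type, a chain condition between consecutive pieces $\delta_{i},\delta_{i+1}$, while membership of $\mathscr{T}=\mathscr{T}_{1}\cdots\mathscr{T}_{m}$ in $\bs{\Gamma}^{\op{LS}}(\underline{d})$ imposes, beyond each $\mathscr{T}_{i}$ being an LS block, that the entries be weakly increasing along rows. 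By the same Lakshmibai--Musili--Seshadri characterisation (and Lemma 2 of \cite{onesk}), ``weakly increasing along rows'' corresponds under $\mathscr{T}\mapsto\gamma_{\mathscr{T}}$ precisely to the chain condition between consecutive LS blocks, so the bijection of the first part restricts to $\bs{\Gamma}^{\op{LS}}(\underline{d})\overset{1:1}{\longleftrightarrow}\Gamma^{\op{LS}}(\gamma_{\omega_{l_{1}}}*\cdots*\gamma_{\omega_{l_{m}}})$.
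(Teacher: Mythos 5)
Your proposal is correct in substance and follows essentially the same route as the paper: the paper's entire proof of Proposition \ref{symplecticshapeandtype} is the citation of Lemma 2 in \cite{onesk}, which is exactly the input you invoke at the block level, together with the Lakshmibai--Lecouvey description of LS galleries of fundamental type (\cite{Lak1}, \cite{Lak2}, \cite{lecouvey}) that the paper also relies on. Your reduction to blocks and the reconstruction of the key from the half-integral interior vertices is useful scaffolding that the paper leaves implicit.

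One point needs repair: the blockwise matching ``zero blocks $\leftrightarrow$ zero lumps'' is not exact. A zero lump (Definition \ref{readabledefinition}) is any gallery $0\to\V_{1}\to 0$ with both edges dominant, and $\V_{1}$ may be the coweight vertex $\omega_{1}=\e_{1}$. This lump is not the image of any zero block: the putative $k=1$ zero block would have interior point $\tfrac{1}{2}\e_{1}$, which is not a vertex of the standard apartment, so the construction $\mathscr{T}\mapsto\gamma_{\mathscr{T}}$ only makes sense for two-column blocks of height at least $2$ (consistently with the shape convention $\underline{d}^{l}=1$ when $l=1$). Instead, $0\to\e_{1}\to 0$ is the image of the shape-$(1,1)$ key with entries $1,\bar{1}$, i.e.\ of a concatenation of two one-box LS blocks; so a single part on the gallery side need not come from a single readable block on the key side, and your sentence ``conversely such a zero lump has interior vertex $\tfrac{1}{2}\omega_{k}$ for a unique $k$'' fails for this one family member. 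The damage is local: injectivity still follows from your coordinate reconstruction (the interior vertex of a genuine two-column block has at least two coordinates equal to $\pm\tfrac{1}{2}$, hence is not a coweight, while $\e_{1}$ is), and surjectivity holds once this particular lump is assigned the preimage just described; the rest of your argument, including the deferral of the LS-block versus LS-gallery identification and of the ``weakly increasing rows'' versus chain-condition comparison to \cite{onesk}, \cite{Lak1}, \cite{lecouvey}, is exactly what the paper itself does.
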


\begin{rem}
\label{moreshapeandtype}
Zero lumps are not necessarily of fundamental type: this follows from Lemma 2 in \cite{onesk} for a zero lump with $k$ uneven in the above description. This is why readable galleries are not necessarily of the same type as  a concatenation of fundamental galleries. This also means that there can be two readable keys of the same shape but such that their associated galleries are not of the same type! For example, take $n > 3$. Then the key $\mathscr{T} = \Skew(0:\hbox{\tiny{1}},\hbox{\tiny{1}}|0:\hbox{\tiny{2}},\hbox{\tiny{2}}|0:\hbox{\tiny{3}},\hbox{\tiny{3}})$ is LS and $\gamma_{\mathscr{T}}$ is of fundamental type $\gamma_{\omega_{3}}$. The key
$\mathscr{K} = \Skew(0:\hbox{\tiny{$\bar1$}},\hbox{\tiny{1}}|0:\hbox{\tiny{$\bar 2$}},\hbox{\tiny{2}}|0:\hbox{\tiny{$\bar 3$}},\hbox{\tiny{3}})$ is a zero block. Its associated gallery, $\gamma_{\mathscr{K}}$, is not of fundamental type (see Lemma 2 in \cite{onesk}).
\end{rem}

\section{The word of a readable gallery}
The \textbf{word} of a block $\mathscr{B} = \C_{l}\C_{r}$ ($\C_{l}$ is the left column; $\C_{r}$ the right) is obtained by reading first the unbarred entries in $\C_{r}$ and then the barred entries in $\C_{l}$. We denote it by $w(\mathscr{B}) \in \mathcal{W}_{\mathcal{C}_{n}}$. For an LS block this is the word of the associated single admissible column defined by Kashiwara and Nakashima - see \cite{lecouvey}, Example 2.2.6. 

\begin{defn}
Let $\gamma_{\mathscr{K}}$ be a readable gallery associated to the key $\mathscr{K}$, which we may write as a concatenation of blocks $$\mathscr{K} = \mathscr{B}_{1} \cdots \mathscr{B}_{k}.$$ The \textbf{word} of $\gamma_{\mathscr{K}}$ (or of $\mathscr{K}$) is $w(\mathscr{B}_{k})\cdots w(\mathscr{B}_{1})$. We denote it by $w(\gamma_{\mathscr{K}})$ (or $w(\mathscr{K})$).
\end{defn}

\begin{ex}
Let $$\mathscr{B}_{1} = \Skew(0:\mbox{\tiny{$1$}},\mbox{\tiny{$2$}}|0:\mbox{\tiny{$\overline{2}$}},\mbox{\tiny{$\overline{1}$}}),\hbox{                       }  \mathscr{B}_{2} = \Skew(0:\mbox{\tiny{$1$}}),$$ and $$\mathscr{K} = \mathscr{B}_{1}\mathscr{B}_{2} = \Skew(0:\mbox{\tiny{$1$}},\mbox{\tiny{2}}, \mbox{\tiny{$1$}}|0:\mbox{\tiny{$\overline{2}$}},\mbox{\tiny{$\overline{1}$}}).$$ Then $w(\mathscr{B}_{1}) = 2 \bar{2}, w(\mathscr{B}_{2}) = 1$, and $w(\mathscr{K}) = 12 \bar{2}$.
\end{ex}

We have the following result about words of readable galleries, which we prove in Section \ref{countingpositivecrossings}. We will use it in Section \ref{main}. It is in this sense that such galleries are called \textit{readable}. 
\begin{prop}
\label{densewordreading}
Let $\gamma$ and $\nu$ be combinatorial galleries and $\mathscr{K}$ be a readable key. Then 
\begin{align*}
\overline{\pi(\C_{\gamma*\gamma_{w(\mathscr{K})}*\nu})} =\overline{\pi'(\C_{\gamma*\gamma_{\mathscr{K}}*\nu})}.
\end{align*}
\end{prop}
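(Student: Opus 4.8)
The statement compares the images under $\pi$ of two cells: one associated to the gallery $\gamma_{w(\mathscr{K})}$ built from the \emph{word} of a readable key $\mathscr{K}$ (a sequence of single-box steps, one per letter), and one associated to the gallery $\gamma_{\mathscr{K}}$ built \emph{blockwise} (one or two edges per readable block). The first observation is that by Corollary \ref{goodtrick} we may replace $\pi(\C_{\delta})$ by the explicit product $\mathbb{U}_{\V_0}\cdots \mathbb{U}_{\V_k}[t^{\mu_\delta}]$, so the claim becomes a statement purely about products of affine root subgroups acting on $[t^{\mu}]$, with $\mu$ the common endpoint. Since concatenation of galleries corresponds to appending the corresponding strings of $\mathbb{U}$-factors and the endpoints agree ($\mu_{\gamma*\gamma_{w(\mathscr{K})}*\nu} = \mu_{\gamma*\gamma_{\mathscr{K}}*\nu}$, because $w(\mathscr{K})$ reads exactly the unbarred and barred entries that define the vertices of $\gamma_{\mathscr{K}}$), the $\gamma$- and $\nu$-factors on both sides are identical, and it suffices to prove the local statement
\begin{align*}
\overline{\U_{\V_0^{\gamma}}\cdots \U_{\V_p^{\gamma}}\, \mathbb{U}_0\cdots \mathbb{U}_{l-1}[t^{\mu}]} = \overline{\U_{\V_0^{\gamma}}\cdots \U_{\V_p^{\gamma}}\, \mathbb{U}_0'\cdots \mathbb{U}_{q}'[t^{\mu}]},
\end{align*}
where the first string of $\mathbb{U}$'s comes from $\gamma_{w(\mathscr{K})}$ and the second from $\gamma_{\mathscr{K}}$, both followed by whatever comes from $\nu$ (absorbed into the closure on the right as in the argument of Corollary \ref{goodtrick}). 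By induction on the number of blocks of $\mathscr{K}$ it reduces further to the case of a single readable block $\mathscr{B}$.

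\textbf{The single-block computation.} For a single block I would split into the two cases of Definition \ref{lsblock}. For a zero block of size $k$, $\gamma_{\mathscr{B}}$ has vertices $0$, $\tfrac12(\e_1+\dots+\e_k)$, $0$, while $\gamma_{w(\mathscr{B})}$ reads $1\,2\cdots k\,\bar k\cdots \bar 1$ and so traces the path $0\to\e_1\to\e_1+\e_2\to\cdots\to\omega_k\to\omega_k-\e_k\to\cdots\to 0$; both end at $0$. For an LS block the right column has entries the ordered elements of $\{\ov{\T},\Z,\A,\ov{\B}\}$ and the word reads the unbarred letters $\A$ (in increasing order), then the barred letters of the left column $\{\ov{\Z},\T,\A,\ov{\B}\}$; the endpoint $\mu$ of $\gamma_{\mathscr{B}}$ is $\sum_{\text{left col}}\e_l + \sum_{\text{right col}}\e_l$, which after the cancellations $\e_z + (-\e_z)$, etc., coincides with the endpoint of the word gallery. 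In each case I would invoke Proposition \ref{order} to write each group $\mathbb{U}_{\V_i}$ (both for the word gallery and the block gallery) as an explicitly parametrised product $\prod_{\beta\in\Phi_i}\U_\beta(a_\beta)$ over the affine roots $\Phi^+_{(\V_i,\E_i)}$, and then use identity (\ref{id1}) (to move torus elements $t^\lambda$ past root subgroups, changing the level $n\mapsto n-\langle\alpha,\lambda\rangle$) together with Chevalley's commutator formula (\ref{chevalley}) to rearrange and compare the two products. The key point is that both $\prod\mathbb{U}_{\V_i}[t^\mu]$ are, by Theorem \ref{celldescription}, isomorphic to affine spaces, and one checks they have the same dimension (both equal $\langle\rho,\lambda_{\underline{d}}+\mu\rangle$ for the LS case; a direct count for the zero block) and that one is contained in the closure of the other — the containment coming from the commutator relations, which show that any monomial occurring in the word-product lies in the block-product up to higher-level factors that are swallowed by $[t^\mu]$ (i.e. lie in $\G(\mathcal{O})$) or by the closure.

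\textbf{The main obstacle.} The hard part is the LS-block case: there one must show that the two $\mathbb{U}$-products, built from genuinely different sequences of affine hyperplanes (the single-box galleries cross many small hyperplanes one at a time, the two-column gallery crosses a cluster at once), have the same Zariski closure after hitting $[t^\mu]$. This is exactly the symplectic analogue of the Knuth-relation argument of \cite{knuth}: one needs to know that the defining relations of the \emph{symplectic plactic monoid} (Lecouvey \cite{lecouvey}) — which is why the word of an LS block is by construction the admissible-column word — translate into precisely the Chevalley commutator identities among the relevant root subgroups, so that reading the block letter-by-letter versus all-at-once changes the product only by factors in $\U_{\E}\subset \G(\mathcal{O})$ or in strictly higher-level subgroups that disappear in the closure. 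I expect the bookkeeping of which affine roots $(\alpha,n)$ appear in $\Phi^+_{(\V_i,\E_i)}$ along the word path, and verifying that the commutators $\U_{i\alpha+j\beta}(\cdots)$ produced all have level $\geq \langle i\alpha+j\beta,\mu\rangle$, to be the technical heart; the admissibility conditions (\ref{admissibilityconditions}) on $\T$ are what guarantee this, and unwinding that is where the real work lies. The zero-block case should be comparatively straightforward, essentially the statement that the ``there and back'' path $0\to\omega_k\to 0$ produces the same $\U$-orbit of $[t^0]=[\mathrm{id}]$ as the degenerate two-column gallery, and reduces to a direct application of (\ref{id1}) and (\ref{chevalley}) with small explicit root systems.
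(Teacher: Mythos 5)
Your skeleton matches the paper's: reduce to $\gamma=\emptyset$ by shifting the levels of the affine roots, reduce to a single readable block, split into the zero-block and LS-block cases, and compare the products $\prod\mathbb{U}_{\V_i}[t^{\mu}]$ using Proposition \ref{order}, identity (\ref{id1}) and Chevalley's commutator formula (\ref{chevalley}); your zero-block discussion is essentially what the paper does (both images collapse onto the truncated image $\T^{\geq 2}_{\gamma_{\mathscr{K}}*\nu}$). The gap is in the substitute you offer for the LS-block case, namely ``one containment plus equality of dimensions.'' The dimension $\<\rho,\lambda_{\underline{d}}+\mu\rr$ is known for $\overline{\pi(\C_{\gamma_{\mathscr{B}}})}$ because a single LS block gives an LS gallery of fundamental type (Theorem \ref{baumanngaussentlittelmannc}), but it is \emph{not} known a priori for the word gallery $\gamma_{w(\mathscr{B})}$, and certainly not for either gallery concatenated with an arbitrary $\nu$: establishing that dimension is essentially equivalent to knowing the image is an MV cycle, which is the conclusion the whole chain of propositions is driving at. So the dimension count is circular, and you are back to needing both containments.

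Neither containment, moreover, is a formal consequence of the commutator formula with ``higher-level factors swallowed by $[t^{\mu}]$.'' The extra factors created by the finer subdivision of the word gallery sit in the middle of the product, not adjacent to $[t^{\mu}]$; the paper disposes of them using Proposition \ref{stable}, the $\U_{\mu_{r}}$-stability of the truncated image $\T^{\geq r}_{\gamma}$ at an intermediate special vertex, a tool your outline never invokes (Corollary \ref{goodtrick} only handles the initial vertex). And after all the commutations one is left, in \emph{both} directions (Lemmas \ref{firstcontention} and \ref{secondcontention} of the paper), with a system of equations in the parameters that is solvable only on an open locus where certain determinants do not vanish --- the admissibility conditions (\ref{admissibilityconditions}) on $\T$ guarantee the systems are square or underdetermined --- which is exactly why the proposition asserts equality of Zariski closures rather than of the images themselves. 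These two ingredients, truncated-image stability and the dense-subset/linear-system argument, constitute the actual proof and are absent from your plan.
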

%%%%%%%%%%%%%%%%%%%%%%%%%%%%%%%%%%%%%%%%%%%%%%%%%%%%%%%%%%%%%%%%%%%%%%%%%%%%%%%%%%%%%%%%%
%%%%%%%%%%%%%%%%%%%%%%%%%%%%%%%%%%%%%%%%%%%%%%%%%%%%%%%%%%%%%%%%%%%%%%%%%%%%%%%%%%%%%%%%%

\subsection{Word galleries}
We associate a (readable!) gallery $\gamma_{w}$ of the same type as $\underbrace{\gamma_{\omega_{1}}*\cdot \cdot \cdot *\gamma_{\omega_{1}}}_{m \hbox{  } times }$ to a word $w \in \mathcal{W}_{\mathcal{C}_{n}}$ of length $m$ - it is the gallery $\gamma_{\mathscr{K}_{w}}$ associated to the readable key $\mathscr{K}_{w}$. We denote the set of word galleries in this case by $\Gamma_{\mathcal{W}_{\mathcal{C}_{n}}}$. Below we recall the crystal structure on the set $\mathcal{W}_{\mathcal{C}_{n}}$ as described by Kashiwara and Nakashima in \cite{kashiwaranakashima}, Proposition 2.1.1. The set of words $\mathcal{W}_{\mathcal{C}_{n}}$, just like the set $\mathcal{W}_{n}$, is in one-to-one correspondence with the set of vertices of the crystal of the representation $\bigotimes_{l \in \mathbb{Z}^{\geq 0}}\V_{n}^{\otimes l}$, where $\V_{n}$ is the natural representation $\li(\omega_{1})$ and hence inherits its crystal structure. Proposition \ref{wordcrystalispathcrystal} says that this structure is compatible with the crystal structure defined on galleries in Section \ref{crystalsubsection}.

\begin{defn}
\label{describewordcrystal}
Let $w \in \mathcal{C}_{n}$ be a word and $i \in \{1, \cdots, n\}$. To apply the root operators $e_{\alpha_{i}}$ and $f_{\alpha_{i}}$ to $w$ one first obtains a word consisting of letters in the alphabet $\{+, -, \emptyset\}$. The word will be obtained from $w$ by replacing every occurence of $i$ or $\overline{i+1}$ by $(+)$, every occurence of $i+1$ or $\overline{i}$ by $(-)$ and all other letters by $\emptyset$. This word $s(w)$ is sometimes called the \textsl{i-signature} of w. Erase all symbols $\emptyset$ and then all subwords of the form $+-$. Repeat this process until the $i$-signature $s(w)$ of $w$ has been reduced to a word of the form 

$$s(w)' = (-)^r(+)^{s}.$$

To apply $f_{\alpha_{i}}$ (respectively $e_{\alpha_{i}}$) to $w$, change the letter whose tag corresponds to the rightmost $(-)$ (respectively to the leftmost $(+)$) from $i+1$ to $i$ and from $\overline{i}$ to $\overline{i+1}$ (repectively from $i$ to $i+1$ and from $\overline{i+1}$ to $\overline{i}$).

\end{defn}

\begin{prop}
\label{wordcrystalispathcrystal}
The crystal structure on words from Definition \ref{describewordcrystal} coincides with the one induced from Definition \ref{definitionofrootoperators}. 
\end{prop}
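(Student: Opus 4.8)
The statement to prove is Proposition \ref{wordcrystalispathcrystal}: the crystal structure on words in $\mathcal{W}_{\mathcal{C}_{n}}$ from Definition \ref{describewordcrystal} agrees, under the correspondence $w \mapsto \gamma_{\mathscr{K}_w}$, with the crystal structure on combinatorial galleries from Definition \ref{definitionofrootoperators}. The plan is to reduce everything to a direct comparison of the two ``signature'' bookkeeping devices. First I would recall that a word gallery $\gamma_w$ for $w = w_1\cdots w_m$ is the concatenation of single-box galleries $\gamma_{w_1} * \cdots * \gamma_{w_m}$, each of which is just the segment from $0$ to $\e_{l}$ for the letter $l \in \mathcal{C}_n$ (with $\e_{\bar k} := -\e_k$). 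So for a fixed simple root $\alpha_i$ with $i \le n-1$, i.e. $\alpha_i = \e_i - \e_{i+1}$, the only vertices $\V_j$ of $\gamma_w$ whose $\langle\alpha_i, -\rangle$-value changes as we pass through edge $\E_j$ are those coming from a box filled with $i$, $i+1$, $\bar i$, or $\overline{i+1}$; a box with $i$ or $\overline{i+1}$ increases $\langle \alpha_i, \cdot\rangle$ by $1$ (a ``$+$'' step), a box with $i+1$ or $\bar i$ decreases it by $1$ (a ``$-$'' step), and any other box leaves it unchanged (an ``$\emptyset$'' step). This is exactly the $i$-signature $s(w)$ of Definition \ref{describewordcrystal}, read as a lattice path in the coordinate $h(j) := \langle \alpha_i, \V_j\rangle$.

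Second, I would match the combinatorial data that the gallery root operators depend on. In Definition \ref{definitionofrootoperators}, $f_{\alpha_i}$ and $e_{\alpha_i}$ are governed by $m_{\alpha_i}$, the minimum of $h$ over the vertices, together with the first/last vertices at which $h$ attains $m_{\alpha_i}$ or $m_{\alpha_i}+1$. Translating: the minimum of the lattice path $h$ is reached precisely after the reduction of $s(w)$ cancels all $+-$ pairs — the surviving $(-)^r$ are exactly the down-steps that take $h$ below every preceding level, and the surviving $(+)^s$ are the up-steps from the global minimum onward. Thus the letter that the gallery operator $f_{\alpha_i}$ modifies (the edge at the ``reflection'' point $s_{(\alpha_i, m)}$, followed by translation by $-\alpha_i^\vee$ on the tail) corresponds to the edge at the first return to the minimum that is followed by an up-step — which is exactly the box tagged by the rightmost surviving $(-)$ in $s(w)'$. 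Likewise $e_{\alpha_i}$ modifies the box tagged by the leftmost surviving $(+)$. One then checks that reflecting that single segment across the appropriate hyperplane and translating the tail has precisely the effect of changing the letter $i+1 \leftrightarrow i$ or $\bar i \leftrightarrow \overline{i+1}$ in the word, and leaves the resulting face sequence a legitimate word gallery (again a concatenation of single-box galleries), since the reflection sends $\e_{i+1} \mapsto \e_i$ and $-\e_i \mapsto -\e_{i+1}$, and the translation of the tail by $-\alpha_i^\vee$ restores the concatenation shape. The case $i = n$ ($\alpha_n = \e_n$, $\alpha_n^\vee = 2\e_n$) is handled the same way, noting that a box $n$ contributes a ``$+$'' step of size $1$ and a box $\bar n$ a ``$-$'' step, and that the reflection $s_{(\alpha_n, m)}$ combined with translation by $-2\e_n$ on the tail again matches the letter change $\bar n \to n$ prescribed (suitably) by the folding rule; I would treat this explicitly since $\alpha_n$ is long.

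Third, I would wrap up with $\op{wt}$: by construction $\op{wt}(\gamma_{\mathscr{K}_w}) = \mu_{\gamma_w}$ is the sum of the $\e_{w_j}$, which is exactly the weight assigned to $w$ as a vertex of the crystal $\bigotimes \V_n^{\otimes \ell}$, so the weight functions agree automatically; combined with the matching of $e_{\alpha_i}, f_{\alpha_i}$ above, this gives that $w \mapsto \gamma_{\mathscr{K}_w}$ intertwines the two crystal structures. I expect the \textbf{main obstacle} to be purely notational rather than conceptual: carefully translating the ``reduce $s(w)$ by cancelling $+-$ pairs'' procedure into the ``first/last vertex on $\mm_{(\alpha_i, m)}$ and $\mm_{(\alpha_i, m+1)}$'' language of Definition \ref{definitionofrootoperators} and verifying the two recipes pick out the \emph{same} box — in particular tracking that the indexing in the case split for $\E_i'$ (the $i<j$, $j\le i <r$, $i\ge r$ trichotomy) corresponds exactly to ``before the pivot box / at the pivot box / after it.'' A secondary subtlety is confirming that the gallery produced is still a word gallery (i.e. that no intermediate vertex leaves the one-skeleton in a way that breaks the block structure), but this follows from Remark \ref{typeremark} — the root operators are type-preserving — together with the observation that the type of $\gamma_w$ is that of $\gamma_{\omega_1}*\cdots*\gamma_{\omega_1}$, whose galleries of that type are exactly the word galleries.
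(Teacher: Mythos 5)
Your overall strategy is the right one, and it is in fact more than the paper supplies: the paper's ``proof'' of Proposition \ref{wordcrystalispathcrystal} consists of a citation to Section 13 of \cite{placticalgebra} plus the remark that the statement follows directly from the definitions, and your plan is a legitimate unpacking of that remark (read $h(j)=\langle\alpha_i,\V_j\rangle$ as a unit-step lattice path, identify the $i$-signature with the step sequence, match the two selection rules, and use type-preservation plus the matching of weights to close). One simplification worth making explicit: for a word gallery every edge changes $h$ by $0$ or $\pm 1$, so in Definition \ref{definitionofrootoperators} one always has $r=j+1$ and exactly one edge is reflected; the trichotomy collapses to ``before the pivot / the pivot / after the pivot.''

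There is, however, a concrete error at the crucial matching step. The gallery operator $f_{\alpha_i}$ reflects the edge $\E_j$ where $j$ is \emph{maximal} with $\V_j\in\mm_{(\alpha_i,m)}$, i.e.\ the edge leaving the \emph{last} visit to the minimal level; that edge is necessarily an up-step, hence a box tagged $(+)$, and reflection plus translation of the tail changes its letter $i\mapsto i+1$ (resp.\ $\overline{i+1}\mapsto\bar i$). In signature terms this is the \emph{leftmost surviving} $(+)$, not ``the first return to the minimum'' and not the rightmost surviving $(-)$ as you claim. Dually, $e_{\alpha_i}$ reflects the down-step arriving at the minimum for the first time, i.e.\ the rightmost surviving $(-)$, changing $i+1\mapsto i$. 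Test $w=1\,1$ with $i=1$: the gallery computation gives $f_{\alpha_1}(1\,1)=2\,1$ (first letter changes, a $(+)$-tagged box, $1\mapsto 2$), whereas your rule (and, as printed, Definition \ref{describewordcrystal}) would produce $e_{\alpha_1}$ instead. Indeed Definition \ref{describewordcrystal} as stated in the paper appears to have $e_{\alpha_i}$ and $f_{\alpha_i}$ interchanged --- compare with the worked example following Proposition \ref{wordreading}, where $f_{\alpha_1}(2\,\ov{2})=2\,\ov{1}$, which is the leftmost-$(+)$, $\overline{i+1}\mapsto\bar i$ rule --- so taken literally the two structures do \emph{not} coincide and the proposition is vacuously false. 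Your proof must therefore first fix the intended convention (the one consistent with the paper's examples and with Kashiwara--Nakashima) and then carry out the matching with the correct assignment of pivots; as written, your identification would fail on any word with an uncancelled $(+)$, such as $1\,1$ or $2\,1$.
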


\noindent For a proof, see Section 13 of \cite{placticalgebra}. It also follows directly from the definitions.

%%%%%%%%%%%%%%%%%%%%%%%%%%%%%%%%%%%%%%%%%%%%%%%%%%%%%%%%%%%%%%%%%%%%%%%%%%%%%%%%%%%%%%%%%%
%%%%%%%%%%%%%%%%%%%%%%%%%%%%%%%%%%%%%%%%%%%%%%%%%%%%%%%%%%%%%%%%%%%%%%%%%%%%%%%%%%%%%%%%%%
\subsection{Word Reading is a Crystal Morphism}
\label{wordreadingsection}
This subsection is the `symplectic' version of Proposition 2.5 in \cite{jt}.
 Since the root operators are type preserving (see \ref{definitionofrootoperators}), the set of words $\mathcal{W}_{\mathcal{C}_{n}}$ is naturally endowed with a crystal structure. The following proposition will be useful in Section \ref{main}. This result was shown for LS blocks by Kashiwara and Nakashima in \cite{kashiwaranakashima}, Proposition 4.3.2. They show that word reading induces an isomorphism of crystals from $\B(\omega_{k})$ onto the subcrystal of $\bigotimes_{l \in \mathbb{Z}_{\geq 1}}\B(\omega_{1})^{\otimes l}$ generated by the tensor product $\Skew(0:\hbox{\tiny{k}})\otimes \cdots \otimes \Skew(0:\hbox{\tiny{1}})$. We show that for readable galleries the proof is reduced to this case. 

\begin{prop}
\label{wordreading}
The map
\begin{align*}
\Gamma^{\op{R}}&\overset{w}{\longrightarrow}\Gamma_{\mathcal{W}_{\mathcal{C}_{n}}}\\
\gamma_{\mathscr{K}} &\mapsto \gamma_{w(\mathscr{K})}
\end{align*}
is a crystal morphism. 
\end{prop}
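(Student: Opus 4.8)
The plan is to reduce the statement to the already-known case of single readable blocks (LS blocks and zero blocks) by exploiting the compatibility of the crystal structures and the fact that word reading is defined block-by-block. First I would recall that a readable gallery $\gamma_{\mathscr{K}}$ is a concatenation $\gamma_{\mathscr{B}_k}*\cdots*\gamma_{\mathscr{B}_1}$ of galleries of readable blocks, and that (by Remark \ref{typeremark}) the root operators $e_{\alpha_i}, f_{\alpha_i}$ on $\Gamma$ are type-preserving, so that $\Gamma^{\op{R}}$ and $\Gamma_{\mathcal{W}_{\mathcal{C}_n}}$ are genuine subcrystals. Since $\op{wt}(\gamma_{\mathscr{K}})$ is by construction the endpoint of the concatenated gallery, which is the sum of the endpoints of the block galleries, and $\op{wt}(\gamma_{w(\mathscr{K})})$ is the weight of the word $w(\mathscr{B}_k)\cdots w(\mathscr{B}_1)$, one checks directly from the definitions in Section \ref{typecombinatorics} that the word of a block has weight equal to the endpoint of $\gamma_{\mathscr{B}}$; hence $w$ preserves weights. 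The bulk of the argument is then to show $w$ commutes with the root operators.

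For the commutation, the key point is that the root operator $f_{\alpha_i}$ (resp. $e_{\alpha_i}$) acts on a concatenation of galleries by the tensor product rule: it acts on exactly one factor, determined by comparing the $i$-signatures of the factors (this is the content of Littelmann's path-model tensor rule, which underlies the crystal structure of Section \ref{crystalsubsection}, together with Definition \ref{describewordcrystal} which is literally the signature rule for words). So I would argue: given $\gamma_{\mathscr{K}} = \gamma_{\mathscr{B}_k}*\cdots*\gamma_{\mathscr{B}_1}$, applying $f_{\alpha_i}$ selects some block $\mathscr{B}_j$ and applies $f_{\alpha_i}$ to $\gamma_{\mathscr{B}_j}$ (possibly shifting later blocks by a translation, which does not change their type and hence is irrelevant for the word). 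On the word side, $w(\mathscr{K}) = w(\mathscr{B}_k)\cdots w(\mathscr{B}_1)$, and applying $f_{\alpha_i}$ via Definition \ref{describewordcrystal} likewise, by the signature/tensor rule, affects exactly the subword $w(\mathscr{B}_j)$ coming from the same block. Therefore it suffices to know that, for a single readable block $\mathscr{B}$, the map $\gamma_{\mathscr{B}} \mapsto \gamma_{w(\mathscr{B})}$ commutes with $f_{\alpha_i}$ and $e_{\alpha_i}$ — i.e. that $w(f_{\alpha_i}(\mathscr{B})) = f_{\alpha_i}(w(\mathscr{B}))$ and likewise for $e_{\alpha_i}$, whenever the operator acts nontrivially on that block, and that $f_{\alpha_i}$ kills $\gamma_{\mathscr{B}}$ iff it kills $w(\mathscr{B})$.

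This single-block statement splits into two cases. For an LS block, it is exactly Kashiwara–Nakashima's Proposition 4.3.2 cited before the statement (word reading of an admissible column induces the crystal embedding $\B(\omega_k)\hookrightarrow \bigotimes \B(\omega_1)^{\otimes l}$), together with Proposition \ref{wordcrystalispathcrystal} and Proposition \ref{symplecticshapeandtype} to translate between keys, columns, and galleries; I would spell out that the gallery $\gamma_{\mathscr{B}}$ of an LS block is, under these identifications, precisely the LS gallery of fundamental type corresponding to the admissible column, so the KN result applies verbatim. For a zero block, the word is $w(\mathscr{B}) = 1\,2\cdots k\,\overline{k}\cdots\overline{2}\,\overline{1}$ (unbarred entries of the right column followed by barred entries of the left column), whose $i$-signature is empty after cancellation for every $i$ — each $i$ contributes a $(+)$ from the letter $i$ and a $(-)$ from $i{+}1$ on the unbarred side which cancel, and symmetrically on the barred side, while $\alpha_n$ sees $n$ and $\overline{n}$, again cancelling — so $f_{\alpha_i}$ and $e_{\alpha_i}$ annihilate $w(\mathscr{B})$ for all $i$; correspondingly the gallery $\gamma_{\mathscr{B}}$ of a zero block has both edges in the dominant chamber and returns to $0$, so a direct check of Definition \ref{definitionofrootoperators} shows $m_{\alpha_i} = 0$ (resp. $\langle\alpha_i, \mu_\gamma\rangle = \langle\alpha_i,0\rangle = 0 < m+1$), hence every root operator kills $\gamma_{\mathscr{B}}$ as well. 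Thus both maps send zero blocks to crystal-theoretic ``sinks-and-sources,'' and the commutation holds trivially there.

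The main obstacle I anticipate is bookkeeping the tensor/signature rule carefully enough to justify the reduction in the second paragraph: one must be sure that the block selected by $f_{\alpha_i}$ on the gallery side is the same as the subword selected by $f_{\alpha_i}$ on the word side, and that zero blocks, being ``transparent'' to all root operators, can simply be deleted from the signature computation on both sides without affecting which non-zero block is hit. This is morally immediate from the associativity and functoriality of the tensor product of crystals plus Propositions \ref{wordcrystalispathcrystal} and \ref{symplecticshapeandtype}, but writing it cleanly requires invoking Littelmann's concatenation/tensor compatibility for the path model (the ``independence of bracketing'' of the signature rule). Everything else — the weight-preservation and the two single-block cases — is either a direct unwinding of the definitions in Sections \ref{typecombinatorics}–\ref{wordreadingsection} or a citation of Kashiwara–Nakashima.
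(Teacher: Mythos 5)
Your proposal is correct and follows essentially the same route as the paper: reduce to a single readable block, invoke Kashiwara--Nakashima for LS blocks, and check directly that every root operator annihilates both a zero lump and its word. The "tensor/signature bookkeeping" you flag as the main obstacle is handled in the paper by an explicit comparison of the height functions $h(j)=\langle\alpha,\V^{\mathscr{B}}_{j}\rangle$ and $h'(j)=\langle\alpha,\V^{\mathscr{K}_{w(\mathscr{B})}}_{j}\rangle$, showing $h'$ is just a refinement of $h$ taking the same three values; this is the concrete form of your appeal to the tensor-product rule.
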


\begin{proof}
Let $\gamma$ be a readable gallery and let $$\gamma_{\mathscr{B}} = (\V^{\mathscr{B}}_{0}, \E^{\mathscr{B}}_{0}, \V^{\mathscr{B}}_{1}, \E^{\mathscr{B}}_{1}, \V^{\mathscr{B}}_{2})$$ \noindent be one of its parts, associated to the readable block $\mathscr{B}$; we write 
$$\gamma_{\mathscr{K}_{w(\mathscr{B})}} = (\V^{\mathscr{K}_{w(\mathscr{B})}}_{0},\E^{\mathscr{K}_{w(\mathscr{B})}}_{0}, \cdots, \V^{\mathscr{K}_{w(\mathscr{B})}}_{r+s}).$$ If 

$$w(\mathscr{B}) = g_{1}\cdots g_{s}\overline{h_{r}}\cdots \overline{h_{1}}$$ for $g_{i}$ and $h_{i}$ unbarred, then $\V^{\mathscr{K}_{w(\mathscr{B})}}_{j} = \sum_{i= 1}^{j}\varepsilon_{x_{i}}$, where $x_{i} = g_{i}$ for $1 \leq i \leq s$ and $x_{s+ i} = \bar{h}_{i}$ for $1 \leq i \leq r$. Let

\begin{align*}
 h(j)&= \<\alpha, \V^{\mathscr{B}}_{j}\rr \\
 h'(j)&= \<\alpha, \V^{\mathscr{K}_{w(\mathscr{B})}}_{j}\rr.
 \end{align*}

\noindent 
  Then there exist $d_{1}\leq s, s< d_{2} \leq s+r$ such that  
\[
    h'(j)= 
\begin{cases}
    h(0)& \text{ for } 0\leq j < d_{1} \\
    h(1)& \text{ for } d_{1} \leq j < d_{2}\\
    h(2)& \text{ for } d_{2} \leq j \leq r+s+1 
\end{cases}
.\]
From this we conclude that it is enough to consider readable blocks. As mentioned previously, this was shown in \cite{kashiwaranakashima} for LS blocks. Hence let $\mathscr{L}$ be a zero lump - it has word $w(\mathscr{L}) = 1\cdots k \overline{k}\cdots \overline{1}$ - and let $\alpha_{i}$ be a simple root. Then, since the galleries associated to $\mathscr{L}$ and $w(\mathscr{L})$ are both dominant, $f_{\alpha_{i}}(\mathscr{L}) = e_{\alpha_{i}}(\mathscr{L}) = f_{\alpha_{i}}(w(\mathscr{L})) = e_{\alpha_{i}}(w(\mathscr{L})) = 0$. 
\end{proof}

\begin{ex}
Let $ n = 2$ and $\mathscr{B}$ be the readable block
$
\Skew(0:\mbox{\tiny{$1$}},\mbox{\tiny{$2$}}|0:\mbox{\tiny{$\overline{2}$}}, \mbox{\tiny{$\overline{1}$}})
.$
Then 
$
w(\mathscr{B}) = 2\ov{2}
.$ To calculate $f_{\alpha_{1}}(\mathscr{B})$, note that $m_{\alpha_{1}} = -1$, j = 1, r = 2, 
hence $f_{\alpha_1}(\mathscr{B}) = \Skew(0:\mbox{\tiny{$2$}},\mbox{\tiny{$2$}}|0:\mbox{\tiny{$\overline{1}$}}, \mbox{\tiny{$\overline{1}$}})$. Similarly, $f_{\alpha_{1}}(w(\mathscr{B})) = 2\ov{1} = w(f_{1}(\mathscr{B}))$.
\end{ex}

\subsection{Readable galleries are Littelmann galleries}

We begin with a lemma. 

\begin{lem}
\label{dlem}
Let $\gamma_{\mathscr{K}}$ be a readable gallery. Then $\gamma_{\mathscr{K}}$ is dominant if and only if $\gamma_{w(\mathscr{K})}$ is dominant. 
\end{lem}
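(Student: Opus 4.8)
The plan is to reduce the statement to a blockwise computation and then to invoke the key relation between $\gamma_{\mathscr{K}}$ and $\gamma_{w(\mathscr{K})}$ established in the proof of Proposition~\ref{wordreading}. First I would recall that a combinatorial gallery is dominant precisely when every one of its vertices lies in the closed dominant chamber $\ov{\C^{+}}$; equivalently, $\<\alpha_{i}, \V_{j}\rr \geq 0$ for every simple root $\alpha_{i}$ and every vertex $\V_{j}$. Since both $\gamma_{\mathscr{K}}$ and $\gamma_{w(\mathscr{K})}$ are concatenations of sub-galleries coming from the blocks $\mathscr{B}_{1}, \cdots, \mathscr{B}_{k}$ of $\mathscr{K}$ (respectively of the keys $\mathscr{K}_{w(\mathscr{B}_{1})}, \cdots, \mathscr{K}_{w(\mathscr{B}_{k})}$), and since concatenation translates the second gallery to the endpoint of the first, it suffices to show: for a single readable block $\mathscr{B}$, the partial sums of vertices of $\gamma_{\mathscr{B}}$ are all dominant if and only if the partial sums of vertices of $\gamma_{w(\mathscr{B})}$ are all dominant, \emph{and} the two galleries have the same endpoint. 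The equality of endpoints is immediate from the constructions in Section~\ref{symplectickeys}: both equal $\e_{l^{1}_{1}} + \cdots + \e_{l^{1}_{d}} + \e_{l^{2}_{1}} + \cdots + \e_{l^{2}_{d}}$ (or $\e_{l}$ in the one-box case), so dominance of the concatenation is controlled entirely by the intermediate vertices within each block.

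The main tool is already in the proof of Proposition~\ref{wordreading}: for a block $\mathscr{B}$ and each simple root $\alpha = \alpha_{i}$, the function $h'(j) = \<\alpha, \V^{\mathscr{K}_{w(\mathscr{B})}}_{j}\rr$ takes exactly the three values $h(0), h(1), h(2)$, where $h(j) = \<\alpha, \V^{\mathscr{B}}_{j}\rr$, passing from one to the next at the indices $d_{1}$ and $d_{2}$. Thus the set of values $\{h'(j) : j\}$ equals the set $\{h(0), h(1), h(2)\}$. Consequently $h'(j) \geq 0$ for all $j$ if and only if $h(j)\geq 0$ for $j \in \{0,1,2\}$. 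Taking the conjunction over all simple roots $\alpha_{i}$, we get that every vertex of $\gamma_{w(\mathscr{B})}$ is dominant if and only if every vertex of $\gamma_{\mathscr{B}}$ is dominant. I would then assemble the blockwise statements: because concatenation merely shifts galleries, the intermediate vertices of $\gamma_{\mathscr{K}}$ in block $i$ are $(\text{endpoint of blocks } 1,\dots,i-1) + \V^{\mathscr{B}_{i}}_{j}$, and similarly for $\gamma_{w(\mathscr{K})}$; since the shared shifts (which are the common endpoints of the initial segments) are themselves dominant once one block is, a clean induction on the number of blocks finishes the argument.

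One subtlety to address explicitly is that a zero block contributes vertices that return to $0$: its gallery $\gamma_{\mathscr{L}}$ and the gallery $\gamma_{w(\mathscr{L})}$ of its word $1\cdots k\,\ov{k}\cdots\ov{1}$ are \emph{both} dominant (this is exactly the last line of the proof of Proposition~\ref{wordreading}), so zero blocks never obstruct dominance on either side and the equivalence is trivially preserved across them. For LS blocks the three-value property above does all the work; I do not need the admissibility conditions (\ref{admissibilityconditions}) beyond what is already packaged into the construction of $\gamma_{\mathscr{B}}$.

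\textbf{Expected main obstacle.} The only real work is verifying the three-value claim $h'(j) \in \{h(0), h(1), h(2)\}$ with the correct transition indices $d_{1} \leq s < d_{2} \leq s+r$ — but this is precisely the computation carried out in the proof of Proposition~\ref{wordreading}, so here it can simply be quoted. The remaining obstacle is purely bookkeeping: making the induction over concatenated blocks airtight, i.e.\ checking that the partial-sum vertices interleave correctly and that dominance of a prefix endpoint is a consequence of (not an extra hypothesis for) dominance of the blocks already processed. I expect no genuine difficulty, only care with indices.
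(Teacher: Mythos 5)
There is a genuine gap, and it sits exactly where you declare there is no work to do: the per-block equivalence for an LS block. Your argument rests on the claim quoted from the proof of Proposition~\ref{wordreading} that the height function $h'(j)=\<\alpha,\V^{\mathscr{K}_{w(\mathscr{B})}}_{j}\rr$ takes exactly the values $h(0),h(1),h(2)$, so that the value sets of $h'$ and $h$ coincide. That claim cannot be used this way (and is not literally correct as a statement about value sets). First, $h(1)=\<\alpha,\V^{\mathscr{B}}_{1}\rr$ is evaluated at the half-integral point $\V^{\mathscr{B}}_{1}=\frac12(\sum_{\A}\e_{a}+\sum_{\Z}\e_{z}-\sum_{\B}\e_{b}-\sum_{\T}\e_{t})$ and is typically a half-integer, whereas every $h'(j)$ is an integer; so $h(1)$ is in general not attained by $h'$ at all (already for the block $\Skew(0:\mbox{\tiny{1}},\mbox{\tiny{1}})$ with word $1$ one has $h(1)=\tfrac12$ while $h'$ takes values $0,1$). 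Second, $h'$ can attain values outside $\{h(0),h(1),h(2)\}$: for $\Skew(0:\mbox{\tiny{1}},\mbox{\tiny{1}}|0:\mbox{\tiny{2}},\mbox{\tiny{2}})$ with word $12$ and $\alpha=\alpha_{1}$ one gets $h\equiv 0$ but $h'=(0,1,0)$. So the inference ``$h'(j)\geq 0$ for all $j$ iff $h(j)\geq 0$ for $j=0,1,2$'' does not follow from anything you have established.

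The direction that actually requires proof is: if the middle vertex $\V^{\mathscr{B}}_{1}$ of an LS block leaves $\C^{+}$, then some vertex of the word gallery does too. The word gallery never visits $\V^{\mathscr{B}}_{1}$; its relevant vertex is $\mu+\sum_{\A}\e_{a}+\sum_{\Z}\e_{z}$, which differs from $\V^{\mathscr{B}}_{1}$ precisely by the terms $-\frac12\sum_{\B}\e_{b}-\frac12\sum_{\T}\e_{t}$ (and a factor $\frac12$). Relating the dominance of these two points is exactly where the admissibility conditions (\ref{admissibilityconditions}) enter: the paper argues by contraposition and convexity that if $\mu+\sum_{\A}\e_{a}+\sum_{\Z}\e_{z}$ and the block's endpoint are both dominant, then the maximality property defining the $t_{i}$'s forces $q_{j}\leq q_{t_{i}}-\frac12$ and $q_{j}\leq q_{b}-\frac12$ in the relevant coordinates, whence $\V^{\mathscr{B}}_{1}\in\C^{+}$. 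This is the substantive content of the lemma and it cannot be discharged by the reduction you propose; your reduction to blocks and your treatment of zero blocks and of the common prefix endpoints are fine, but the LS-block case needs the full argument.
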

\begin{proof}
Since the entries in the columns symplectic keys are strictly increasing, it follows from the definition of word reading that if $\gamma$ is a dominant readable gallery then $w(\gamma)$ is also dominant. Now let $\gamma$ be a non-dominant readable gallery. Then there is a readable block $\mathscr{B}= \C_{l}\C_{r}$ such that $\gamma = \eta_{1} * \gamma_{\mathscr{B}} * \eta_{2}$ with $\eta_{1}$ dominant and $\eta_{1} * \gamma_{\mathscr{B}}$ not dominant. This block can't be a zero lump (they are dominant) - so it must be LS. Let $\A, \B, \Z, \T$ be the sets described in Definition \ref{lsblock}. The entries of $\C_{r}$ are the letters in $\A \cup \Z \cup \overline{\B} \cup \overline{\T}$ and the entries of $\C_{l}$ are the letters in $\A \cup \T \cup \overline{\B} \cup \overline{\Z}$. Now, $\mu_{\eta_{1} * \gamma_{\mathscr{B}}}$ may be dominant or not. If it is, then, since $\mu_{\gamma_{w(\eta_{1} * \gamma_{\mathscr{B}})}} = \mu_{\mu_{\eta_{1} * \gamma_{\mathscr{B}}}}$, the word gallery $\gamma_{w(\eta_{1} * \gamma_{\mathscr{B}})}$ is not dominant, and this implies that $\gamma_{w{\mathscr{K}}}$ is not dominant either. Now assume that 
$$\mu_{\eta_{1} * \gamma_{\mathscr{B}}} = \mu_{\eta_{1}} + \underset{a \in \A}{\sum}\e_{a} - \underset{b \in \B}{\sum}\e_{b}$$

\noindent \textit{is} dominant, but that the gallery $\eta_{1} * \gamma_{\mathscr{B}}$ is not. The last three vertices of this  gallery are 

\begin{align}
\label{firstvertex}
\V_{l-1}&= \mu_{\eta_{1}} \in \C^{+}\\
\label{secondvertex}
\V_{l}&= \mu_{\eta_{1}} + \frac{1}{2}(\underset{a \in \A}{\sum}\e_{a}+ \underset{z \in \Z}{\sum}\e_{z} - \underset{b \in \B}{\sum}\e_{b} - \underset{t \in \T}{\sum}\e_{t}) \notin  \C^{+}\\
\label{thirdvertex}
\V_{l+1}&= \mu_{\eta_{1}} + \underset{a \in \A}{\sum}\e_{a} - \underset{b \in \B}{\sum}\e_{b} \in \C^{+}.
\end{align}

\noindent Let $d_{1} < \cdots < d_{r+k}$ be the ordered elements of $\A \cup \Z$ and let $f_{1} < \cdots < f_{s+k}$ be the ordered elements of $\B \cup \Z$. We have

\begin{align*}
w(\mathscr{B}) = d_{1} \cdots d_{r+k} \bar{f}_{s+k} \cdots \bar{f}_{1}.
\end{align*}

\noindent We claim that the weight 
$$\mu_{\eta_{1}} + \overset{r+k}{\underset{i = 1}{\sum}}\e_{d_{i}} = \mu_{\eta_{1}} + \underset{a \in \A}{\sum}\e_{a} +  \underset{z \in \Z}{\sum}\e_{z},$$

\noindent which is the endpoint of $\eta * \gamma_{d_{1} \cdots d_{r+k}}$ and therefore a vertex of $\eta* \gamma_{w(\mathscr{B})}$, is not dominant. To see this, assume otherwise:

\begin{align*}
 \mu_{\eta_{1}} + \underset{a \in \A}{\sum}\e_{a} +  \underset{z \in \Z}{\sum}\e_{z} \in \C^{+}.
\end{align*}

\noindent Since the dominant Weyl chamber $\C^{+}$ is convex, this means that the line segment that joins $\mu_{\eta_{1}}$ and $ \mu_{\eta_{1}} + \underset{a \in \A}{\sum}\e_{a} +  \underset{z \in \Z}{\sum}\e_{z}$ is contained in $\C^{+}$, in particular the point

\begin{align*}
 \mu_{\eta_{1}} + \frac{1}{2}(\underset{a \in \A}{\sum}\e_{a} +  \underset{z \in \Z}{\sum}\e_{z}) \in \C^{+}
\end{align*}

\noindent belongs to the dominant Weyl chamber. The dominant Weyl chamber has, in this case, the following description in the coordinates $\e_{1}, \cdots , \e_{n}$:

\begin{align*}
\C^{+} = \{\overset{n}{\underset{i = 1}{\sum}}p_{i}\e_{i}: p_{i} \in \mathbb{R}_{\geq 0} \hbox{   }\& \hbox{   } p_{1} \geq \cdots \geq p_{n} \}.
\end{align*}

\noindent Write 

\begin{align*}
\mu_{\eta_{1}} = \overset{n}{\underset{i = 1}{\sum}}q_{i}\e_{i}
\end{align*}

\noindent We will now show that $ \mu_{\eta_{1}} + \frac{1}{2}(\underset{a \in \A}{\sum}\e_{a}+ \underset{z \in \Z}{\sum}\e_{z} - \underset{b \in \B}{\sum}\e_{b} - \underset{t \in \T}{\sum}\e_{t}) \in  \C^{+}$. This would contradict our assumption and therefore complete the proof.\\

For every $i \in \{1, \cdots, r\}$, we have $t_{i}<z_{i}< j$ for every $j \in \{1, \cdots, n\}$ such that $t_{i}< j$. Since
$\mu_{\eta_{1}} + \frac{1}{2}(\underset{a \in \A}{\sum}\e_{a} +  \underset{z \in \Z}{\sum}\e_{z}) \in \C^{+}$, we know therefore that 

\begin{align*}
q_{j} \leq q_{z_{i}} + \frac{1}{2} \leq q_{t_{i}},
\end{align*}

\noindent which implies, since $q_{t_{i}} \in \mathbb{Z}$, that

\begin{align*}
q_{j} \leq q_{z_{i}} + \frac{1}{2} \leq q_{t_{i}} - \frac{1}{2}.
\end{align*}

\noindent Now let $b \in \B$, and let $j \in \{1, \cdots, n\}$ such that $b < j$. Since (cf. (\ref{thirdvertex})),

\begin{align*}
\mu_{\eta_{1}} + \underset{a \in \A}{\sum}\e_{a} - \underset{b \in \B}{\sum}\e_{b} \in \C^{+}, 
\end{align*}

\noindent if $j \in (\Z \cup \T)^{c}$, then this implies

\begin{align*}
q_{j} \leq q_{b} - \frac{1}{2} . 
\end{align*}

\noindent If $j \in \Z \cup \T$ then, as before, by the definition of an LS block we may assume that $j = t \in \T$. But this means $q_{t} \leq q_{b}$, therefore $q_{t} - \frac{1}{2} \leq q_{b} - \frac{1}{2}$. All of these arguments imply

\begin{align*}
\mu_{\eta_{1}} + \frac{1}{2}(\underset{a \in \A}{\sum}\e_{a}+ \underset{z \in \Z}{\sum}\e_{z} - \underset{b \in \B}{\sum}\e_{b} - \underset{t \in \T}{\sum}\e_{t}) \in  \C^{+},
\end{align*}

\noindent which contradicts (\ref{secondvertex}).

\end{proof}

As in Chapter 2 we have the following lemma.

\begin{lem}
\label{symplecticd}
A readable gallery $\nu$ is dominant if and only if $e_{\alpha_{i}}(\nu) = 0$ for all $i \in \{1, \cdots, n\}$. 
\end{lem}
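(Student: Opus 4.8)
The plan is to prove both implications, the forward one being essentially trivial and the reverse one being the content of the lemma. For the direction ``dominant $\Rightarrow$ all $e_{\alpha_i}$ vanish'', observe that a dominant gallery $\nu$ has $\mu_{\alpha_i}(\nu) := m_{\alpha_i} = 0$ for every simple root $\alpha_i$, since every vertex lies in the dominant chamber $\C^+ \subset \mm^-_{(\alpha_i,0)}$ and $\V_0 = 0$ lies on the wall $\mm_{(\alpha_i,0)}$; by the very definition of $e_{\alpha_i}$ in Definition \ref{definitionofrootoperators}, when $m=0$ we have $e_{\alpha_i}(\nu) = 0$. This needs no readability assumption.

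For the reverse direction, suppose $\nu = \gamma_{\mathscr{K}}$ is a readable gallery with $e_{\alpha_i}(\nu) = 0$ for all $i$. I would run the argument through word reading and reduce to the known classical statement. By Proposition \ref{wordreading} the map $\gamma_{\mathscr{K}} \mapsto \gamma_{w(\mathscr{K})}$ is a crystal morphism, so it commutes with all $e_{\alpha_i}$ and preserves weight; hence $e_{\alpha_i}(\gamma_{w(\mathscr{K})}) = w(e_{\alpha_i}(\gamma_{\mathscr{K}})) = w(0) = 0$ for every $i$. Now $\gamma_{w(\mathscr{K})}$ is a word gallery, i.e. a concatenation of single-box galleries, and by Proposition \ref{wordcrystalispathcrystal} its crystal structure is that of Definition \ref{describewordcrystal} on $\mathcal{W}_{\mathcal{C}_n}$, which realises a subcrystal of $\bigotimes_l \B(\omega_1)^{\otimes l}$. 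For such a tensor-product-type crystal it is a standard fact (Kashiwara--Nakashima \cite{kashiwaranakashima}, also reflected in the combinatorics of Definition \ref{describewordcrystal}: the reduced $i$-signature is $(-)^r(+)^s$, and $e_{\alpha_i}$ acts nontrivially iff $s>0$) that $e_{\alpha_i}$ annihilating an element for all $i$ forces the $i$-signature to have no surviving $(+)$, hence the element is the highest weight vector of its component, which for a word gallery means $\gamma_{w(\mathscr{K})}$ is dominant. Finally apply Lemma \ref{dlem}: since $\gamma_{w(\mathscr{K})}$ is dominant, so is $\gamma_{\mathscr{K}} = \nu$.

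There is one gap to fill carefully: a priori a crystal morphism need not be injective, so ``$\gamma_{w(\mathscr{K})}$ is a highest weight vector'' does not immediately pin down $\gamma_{\mathscr{K}}$ — this is exactly why I invoke Lemma \ref{dlem} rather than trying to transport the ``highest weight'' property back directly. The real work has already been done in Lemma \ref{dlem}, which established the equivalence of dominance between $\gamma_{\mathscr{K}}$ and $\gamma_{w(\mathscr{K})}$, and in Proposition \ref{wordreading}. So the proof here is short: forward direction from the definition of $e_{\alpha_i}$; reverse direction by pushing through the crystal morphism $w$, applying the classical word-crystal fact, and then pulling dominance back via Lemma \ref{dlem}.

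I expect the main subtlety — though not a genuine obstacle given the machinery assembled — to be making precise the classical statement ``in $\bigotimes_l \B(\omega_1)^{\otimes l}$, an element killed by every $e_{\alpha_i}$ is dominant.'' One must be slightly careful because $e_{\alpha_n}$ (the long simple root in type $\C_n$, i.e.\ in $\B(\omega_1)$ for $\mathfrak{sp}_{2n}$) behaves differently from the short ones; but the signature rule of Definition \ref{describewordcrystal} is uniform, so ``$e_{\alpha_i}(w) = 0$ for all $i$'' is equivalent to ``the $i$-signature of $w$ has $s=0$ for all $i$'', which is equivalent to $w$ being a lowest-weight-free... no — equivalent to $\mathrm{wt}(w)$ dominant, i.e.\ to $\gamma_w$ being a dominant gallery. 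Either cite \cite{kashiwaranakashima} Proposition 2.1.1 / 4.3.2 directly, or give the two-line signature argument.
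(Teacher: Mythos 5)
Your proof is correct and follows essentially the same route as the paper's: reduce to the word gallery via Proposition \ref{wordreading}, use the signature rule of Definition \ref{describewordcrystal} to see that vanishing of all $e_{\alpha_{i}}$ forces the word gallery to be dominant, and pull dominance back with Lemma \ref{dlem}, which is exactly how the paper argues. One trivial slip in your forward direction: the dominant chamber lies in $\mm^{+}_{(\alpha_{i},0)}$, not $\mm^{-}_{(\alpha_{i},0)}$, but the facts you actually use ($m_{\alpha_{i}}\leq 0$ always, and dominance forces $m_{\alpha_{i}}\geq 0$, hence $m_{\alpha_{i}}=0$ and $e_{\alpha_{i}}(\nu)=0$) are stated correctly.
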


\begin{proof}
Notice that for a word $w \in \mathcal{W}_{\mathcal{C}_{n}}$ and $\alpha_{i}$ a simple root, $e_{\alpha_{i}}(\mathscr{K}_{w}) = 0$ means that to the right of each $i+1$ in $\mathscr{K}_{w}$ there is at least one $i$ which has not been cancelled out in the tagging and subword extraction process described in Definition \ref{describewordcrystal}. This is equivalent to the gallery $\gamma_{w}$ being dominant. Lemma \ref{dlem} and Proposition \ref{wordreading} imply the desired result.
\end{proof}

\begin{prop}
\label{rightset}
Every readable gallery is a Littelmann gallery. 
\end{prop}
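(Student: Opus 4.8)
The plan is to show that for any readable gallery $\gamma$, iterated application of the raising operators $e_{\alpha_i}$ eventually produces a dominant gallery, which will be readable by Remark \ref{readablegalleriesarestable}. The key structural fact I would exploit is that the crystal $\Gamma$ carries a weight function $\op{wt}$ with values in $\X^{\vee}$, and each $e_{\alpha_i}$ raises the weight by the simple coroot $\alpha_i^{\vee}$ (property (1) in the definition of a crystal, Section \ref{crystals}). So I would first argue that the process of applying raising operators must terminate: starting from $\gamma$, one cannot apply $e_{\alpha_i}$ indefinitely for any fixed $i$, since $\epsilon_i(\gamma) = \op{max}\{n : e_{\alpha_i}^n(\gamma) \neq 0\}$ is finite (this is built into the combinatorial definition of $e_{\alpha_i}$ via the integer $m_{\alpha_i}$, which is bounded below). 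More globally, since applying any raising operator strictly increases the height $\<\rho, \op{wt}(\cdot)\rr$ of the weight, and the weights appearing in a single crystal connected component are bounded (they all lie below a common dominant weight — this follows because readable galleries are Littelmann galleries in the relevant ambient tensor crystal, but more elementarily the set $\Gamma(\gamma)$ of galleries of a fixed type is finite by \eqref{combg}), any sequence $e_{\alpha_{i_1}}, e_{\alpha_{i_2}}, \ldots$ of successful raising operations must be finite.

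**Second**, I would invoke Lemma \ref{symplecticd}: once no further raising operator can be applied, i.e.\ $e_{\alpha_i}(\gamma^+) = 0$ for all $i \in \{1, \ldots, n\}$, the resulting gallery $\gamma^+$ is dominant. This is precisely the characterization proven just above, so the argument is essentially: take any maximal sequence of raising operators applicable to $\gamma$, let $\gamma^+$ be the result; by maximality $e_{\alpha_i}(\gamma^+) = 0$ for every $i$; by Lemma \ref{symplecticd} $\gamma^+$ is dominant; hence $\gamma$ is a Littelmann gallery by definition. One should also note that the raising operators preserve readability (Remark \ref{readablegalleriesarestable}), so every intermediate gallery in the sequence is again readable and the hypothesis of Lemma \ref{symplecticd} applies at each stage.

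**The main obstacle**, such as it is, is the termination argument — making precise that the sequence of raising operations must halt. The cleanest route is: the root operators are type-preserving (Remark \ref{typeremark}), so all galleries obtained from $\gamma$ by applying root operators lie in the finite set $\Gamma(\gamma)$; since each $e_{\alpha_i}$ strictly increases $\<\rho, \op{wt}(\cdot)\rr$ whenever it does not kill the gallery, and there are only finitely many galleries in $\Gamma(\gamma)$, no infinite sequence of successful raising operations exists. Alternatively, one can transport the problem via word reading (Proposition \ref{wordreading}) to the word crystal $\mathcal{W}_{\mathcal{C}_n}$, where termination is classical — but this requires a compatibility between "dominant for $\gamma$" and "dominant for $w(\gamma)$" which is exactly the content of Lemma \ref{dlem}, so it is cleaner to argue directly with galleries and cite Lemma \ref{symplecticd}. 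I expect the proof to be short: a termination remark plus an appeal to Lemma \ref{symplecticd} and Remark \ref{readablegalleriesarestable}.
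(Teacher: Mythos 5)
Your proof is correct, but it reaches the dominant gallery by a different route than the paper. You argue directly on galleries: the root operators are type-preserving, $\Gamma(\gamma)$ is finite, and each successful application of $e_{\alpha_i}$ raises $\<\rho,\op{wt}(\cdot)\rr$ by $\<\rho,\alpha_i^{\vee}\rr=1$, so a maximal raising sequence exists; its endpoint is readable by Remark \ref{readablegalleriesarestable} and killed by all $e_{\alpha_i}$, hence dominant by Lemma \ref{symplecticd}. The paper instead takes the route you explicitly set aside: it identifies the word crystal $\mathcal{W}_{\mathcal{C}_n}$ with the crystal of $\bigoplus_{l}\V_n^{\otimes l}$, so that $\gamma_{w(\gamma)}$ is connected to a highest weight (hence dominant) vertex, and then pulls this back to $\gamma$ itself using Proposition \ref{wordreading} (word reading commutes with the $e_{\alpha_i}$) and Lemma \ref{dlem} (dominance of $\gamma$ is equivalent to dominance of $\gamma_{w(\gamma)}$). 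Your termination argument is more elementary and self-contained — it does not need the representation-theoretic identification of the word crystal — while the paper's version has the small advantage of producing the raising sequence from an already-known structural fact about tensor crystals rather than proving termination by hand. Both arguments lean on Lemma \ref{symplecticd} (whose proof is independent of this proposition, so there is no circularity), and both are complete; I see no gap in yours.
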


\begin{proof}
Let $\V_{n}$ be the vector representation of $\op{SP}(2n, \mathbb{C})$. Then the crystal of words $\mathcal{W}_{\mathcal{C}_{n}}$ is isomorphic to the crystal associated to $\T(\V_{n}) = \bigoplus_{l\in \mathbb{Z}_{\geq 1}} \V_{n}^{\otimes l}$, see for example Section 2.1 in \cite{lecouvey}. Now let $\gamma$ be any readable gallery. Then there exist indices $i_{1}, \cdots, i_{r}$ such that $e_{\alpha_{i_{r}}}\cdots e_{\alpha_{i_{1}}}( \gamma_{w(\gamma)})$ is a highest weight vertex, hence dominant, by Lemma \ref{symplecticd}. Since word reading is a morphism of crystals by Proposition \ref{wordreading}, $\gamma_{w(e_{\alpha_{i_{r}}}\cdots e_{\alpha_{i_{1}}}(\gamma))} = e_{\alpha_{i_{r}}}\cdots e_{\alpha_{i_{1}}}( \gamma_{w(\gamma)})$. It follows from Lemma \ref{dlem} that $e_{\alpha_{i_{r}}}\cdots e_{\alpha_{i_{1}}}(\gamma)$  is dominant. 
\end{proof}

%%%%%%%%%%%%%%%%%thesymplecticplacticmonoid%%%%%%%%%%%%%%%%%%%%%%%%%%%%%%
\begin{defn}
\label{symplecticrels}
The \textbf{symplectic plactic monoid} $\mathcal{P}_{\mathcal{C}_{n}}$ is the quotient of the word monoid $\mathcal{W}_{\mathcal{C}_{n}}$ by the ideal generated by the following relations
\begin{itemize}
\item[(R1)] 
\label{r1}
For $z \neq \overline{x}$:
$$
\begin{aligned}
y\hbox{ }x\hbox{ }z \equiv y\hbox{ }z\hbox{ }x &&\text{    for    }& x\leq y < z \\
x\hbox{ }z\hbox{ }y \equiv z\hbox{ }x\hbox{ }y &&\text{  for   }& x<y\leq z
\end{aligned}
$$

\item[(R2)] \label{r2}
For   $1<x\leq n$ and $x\leq y \leq \bar x$:
$$
\begin{aligned}
y\hbox{ }\gr{x-1}\hbox{ }x-1 &\equiv y\hbox{ }x\hbox{ }\bar x\\
\gr{x-1}\hbox{ }x-1\hbox{ }y& \equiv x\hbox{ }\bar x\hbox{ }y
\end{aligned}
$$

\item[(R3)] 
\label{r3}
$\hbox{  }$
$$a_{1}\cdots a_{r}\hbox{ }z\hbox{ }\hbox{ }\bar z\hbox{ }\bar b_{s} \cdots \bar b_{1} \equiv a_{1}\cdots a_{r}\hbox{ }\bar b_{s} \cdots \bar b_{1}$$
for $a_{i}, b_{i} \in \{1, \cdots, n\}, i \in \{1, \cdots, \op{max\{s,r\}}\}$, such that $a_{1}< \cdots < a_{r}, b_{1}<\cdots< b_{s}$, and such that the left hand side of the above expression is not the word of an LS block. 
\end{itemize}

\noindent If two words $w_{1}, w_{2} \in \mathcal{W}_{\mathcal{C}_{n}}$ are representatives of the same class in $\mathcal{W}_{\mathcal{C}_{n}}$ we say they are \textbf{ symplectic plactic equivalent}. 
\end{defn}

\begin{ex}
\begin{align*}
12\bar 2 \bar 1 &\sim 1\bar 1 \sim \emptyset \\
112 &\sim 121
\end{align*}
\end{ex}

\begin{rem}
Relations (R1) are the Knuth relations in type A, while relation (R3) may be understood as the general relation which specialises to $1\bar 1 \cong \emptyset$. Note that the gallery $\gamma_{w}$ associated to $w = 1 \bar{1}$ is a zero lump. This definition of the symplectic plactic monoid is the same as Definition 3.1.1 in \cite{lecouvey} except for relation (R3). The equivalence between the relation (R3) above and the one in \cite{lecouvey} is given in the Appendix. 
\end{rem}

%%%%%%%%%%%%%%%%%%%%%%%%%%%%%%%%%%%%%%%%%%%%%%%%%%%%%%%%%%%%%
\normalfont
The following Theorem is due to Lecouvey and it is proven in \cite{lecouvey}. 

\begin{thm}
\label{lecouvey}
Two words $w_{1}, w_{2} \in \mathcal{W}_{\mathcal{C}_{n}}$ are symplectic plactic equivalent if and only if their associated galleries $\gamma_{w_{1}}$ and $\gamma_{w_{2}}$ are equivalent. 
\end{thm}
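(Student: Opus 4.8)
The plan is to reduce the statement to two facts that are already essentially available to us: first, that the generating relations (R1), (R2), (R3) of $\mathcal{P}_{\mathcal{C}_{n}}$ all preserve the crystal equivalence class of the associated gallery; and second, that within each connected component of the crystal $\mathcal{W}_{\mathcal{C}_{n}}$ the symplectic plactic monoid has exactly one class per highest-weight vertex, so there are no extra identifications to worry about. Concretely, since Theorem \ref{lecouvey} is attributed to Lecouvey and proven in \cite{lecouvey}, the task here is to check that our reformulation with relation (R3) in place of Lecouvey's relation is still equivalent; by the remark following Definition \ref{symplecticrels} and the Appendix this equivalence of presentations is exactly what is established there, so it suffices to verify the translation on generators.

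First I would prove the ``only if'' direction: if $w_{1} \sim w_{2}$ then $\gamma_{w_{1}}$ and $\gamma_{w_{2}}$ are equivalent. Because $\sim$ is generated by (R1)--(R3), it is enough to show that applying any one of these relations to a factor of a word does not change the crystal equivalence class of the associated word gallery. By Proposition \ref{wordcrystalispathcrystal}, the crystal on words agrees with the crystal on galleries, and equivalence of Littelmann galleries (Definition in Section \ref{pathmodelsection}) is precisely ``same connected component, matched up by the $e_{\alpha_{i}}$'s going to the common source.'' So I would check that each relation leaves the word inside the same crystal component with the same position. For (R1) this is the classical type A Knuth relation, which is well known to be a crystal isomorphism on the relevant two/three-letter pieces. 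For (R3), the relation $a_{1}\cdots a_{r}\,z\,\bar z\,\bar b_{s}\cdots\bar b_{1} \equiv a_{1}\cdots a_{r}\,\bar b_{s}\cdots\bar b_{1}$ corresponds, via Proposition \ref{symplecticshapeandtype}, to inserting or deleting a zero lump, and by Remark \ref{readablegalleriesarestable} (stability of readable galleries under root operators) this does not alter the equivalence class; the hypothesis that the left side is not the word of an LS block is exactly what guarantees that the relevant block really is a zero block so that its gallery contributes trivially. For (R2), which rewrites an LS block in terms of a different arrangement with the same highest-weight data, one invokes the Kashiwara--Nakashima description (cited before Proposition \ref{wordreading}) that word reading of an admissible column is a crystal morphism, together with Lemma \ref{dlem}, to see that both sides lie in the same component matched in the same way. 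Thus each generator preserves equivalence, hence so does $\sim$.

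For the ``if'' direction — $\gamma_{w_{1}} \equiv \gamma_{w_{2}}$ implies $w_{1} \sim w_{2}$ — I would use a counting/normal-form argument. By Proposition \ref{rightset} every readable (in particular every word) gallery is a Littelmann gallery, so each equivalence class of word galleries contains a unique dominant representative $\gamma^{+}$ together with a specified sequence of root operators from it. On the other side, one shows that every $\sim$-class of words has a canonical representative — the reading word of a symplectic tableau, via Lecouvey's insertion algorithm — and that two words have the same such representative if and only if their galleries are crystal-equivalent; this is the content of Lecouvey's theorem in \cite{lecouvey} under his presentation, and by the first part the map $\{\sim\text{-classes}\} \to \{\text{equivalence classes of galleries}\}$ is a well-defined surjection, while the insertion tableau gives an injection in the other direction, so the two are mutually inverse. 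Concretely I would argue: the map $w \mapsto [\gamma_{w}]$ descends to $\mathcal{P}_{\mathcal{C}_{n}} \to \{\text{gallery equiv.\ classes}\}$ by the ``only if'' direction; it is surjective because every component of the gallery crystal arises from a word gallery; and it is injective because each $\sim$-class contains a word whose gallery is dominant (apply (R1)--(R3) to push to the column-reading of the highest weight), and dominant word galleries in a fixed component are in bijection with... well, there is exactly one, giving injectivity on the nose once one reconciles our (R3) with Lecouvey's via the Appendix.

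The main obstacle I expect is the ``if'' direction, specifically producing the normal form: showing that the relations (R1)--(R3) are \emph{enough} to bring any word to the reading word of its symplectic tableau. The subtlety is entirely in relation (R3) and its interaction with (R2): one must be careful that the non-LS-block hypothesis in (R3) is never an obstruction to completing the reduction, i.e.\ that whenever a ``$z\,\bar z$'' pair appears that cannot be deleted because the surrounding columns would form an LS block, relation (R2) (or a sequence of Knuth moves) can be applied first to disentangle it. Handling this is exactly where the Appendix's comparison with Lecouvey's original relation does the work, so in the write-up I would lean on \cite{lecouvey} for the completeness of his relations and only carry out the translation carefully; the genuinely new verification is the crystal-morphism statement for (R3), which follows from our Proposition \ref{symplecticshapeandtype} and Remark \ref{readablegalleriesarestable} as indicated above.
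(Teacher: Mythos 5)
Your overall strategy is in fact the paper's: the paper gives no independent proof of Theorem \ref{lecouvey} at all, attributing it to \cite{lecouvey}, and its only original contribution is Lemma \ref{translation} in the Appendix, which reconciles relation (R3) with Lecouvey's relation $\R_{3}$. Your plan of importing Lecouvey's theorem for the completeness of the relations and using Proposition \ref{wordcrystalispathcrystal} to identify crystal equivalence of words with equivalence of word galleries is therefore exactly the intended route, and the extra normal-form discussion in your ``if'' direction is just a restatement of what is being imported.

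However, the step you single out as ``the genuinely new verification'' does not work as you state it. You claim that an instance of (R3), $a_{1}\cdots a_{r}\,z\,\bar z\,\bar b_{s}\cdots \bar b_{1} \equiv a_{1}\cdots a_{r}\,\bar b_{s}\cdots \bar b_{1}$, amounts to inserting or deleting a zero lump, and that the non-LS-block hypothesis ``guarantees that the relevant block really is a zero block.'' It does not: a zero block has columns $1<\cdots<k$ and $\bar k<\cdots<\bar 1$, whereas in (R3) the letters $a_{i}$, $b_{j}$, $z$ are essentially arbitrary subject to the stated inequalities, and the deleted pair $z\,\bar z$ is not by itself crystal-trivial (for $z\geq 2$ the word $z\bar z$ is not a highest weight word, e.g. $e_{\alpha_{z-1}}(z\bar z)\neq 0$), so neither Proposition \ref{symplecticshapeandtype} nor Remark \ref{readablegalleriesarestable} gives you that deleting it preserves the crystal position; the surrounding letters and the non-LS-block condition are essential, and this compatibility is precisely part of Lecouvey's content for $\R_{3}$, to be transported via the Appendix translation rather than re-derived from zero lumps. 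With that sub-argument removed and replaced by the citation plus Lemma \ref{translation}, your proposal coincides with the paper's treatment.
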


\normalfont
\noindent Together with the results we have recollected in this section, Theorem \ref{lecouvey} implies the following proposition. 

\begin{prop}
\label{typecwordandgalleryequivalence}
Two readable galleries $\gamma$ and $\nu$ are equivalent if and only if the words $w(\gamma)$ and $w(\nu)$ are symplectic plactic equivalent.
\end{prop}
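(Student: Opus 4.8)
The plan is to deduce this from Lecouvey's Theorem~\ref{lecouvey} by transporting the notion of equivalence across the word-reading map $w\colon \Gamma^{\op{R}}\to\Gamma_{\mathcal{W}_{\mathcal{C}_{n}}}$. By Theorem~\ref{lecouvey} the words $w(\gamma)$ and $w(\nu)$ are symplectic plactic equivalent if and only if the word galleries $\gamma_{w(\gamma)}$ and $\gamma_{w(\nu)}$ are equivalent as Littelmann galleries, so it suffices to show that $\gamma$ and $\nu$ are equivalent if and only if $\gamma_{w(\gamma)}$ and $\gamma_{w(\nu)}$ are. Both pairs consist of Littelmann galleries by Proposition~\ref{rightset} (word galleries are readable, being concatenations of one-box LS blocks), so ``equivalent'' is meaningful in each case. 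Throughout I write $E = e_{\alpha_{i_{1}}}\cdots e_{\alpha_{i_{r}}}$ for a sequence of raising operators, and I use that $w$ commutes with the root operators (Proposition~\ref{wordreading}), i.e. $\gamma_{w(E(\gamma))} = E(\gamma_{w(\gamma)})$, with both sides zero exactly when $E(\gamma)=0$, and that $w$ preserves weight.

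First I would do the forward direction. Assume $\gamma$ and $\nu$ are equivalent, witnessed by a common $E$ with $E(\gamma)$ and $E(\nu)$ dominant and $\mu_{E(\gamma)} = \mu_{E(\nu)}$. Applying $w$ gives $E(\gamma_{w(\gamma)}) = \gamma_{w(E(\gamma))}$ and $E(\gamma_{w(\nu)}) = \gamma_{w(E(\nu))}$, both nonzero. By Remark~\ref{readablegalleriesarestable} the galleries $E(\gamma)$ and $E(\nu)$ are readable, so Lemma~\ref{dlem} shows that $\gamma_{w(E(\gamma))}$ and $\gamma_{w(E(\nu))}$ are dominant; since $w$ preserves weight their endpoints agree with $\mu_{E(\gamma)} = \mu_{E(\nu)}$. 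Hence the same $E$ witnesses equivalence of $\gamma_{w(\gamma)}$ and $\gamma_{w(\nu)}$.

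The converse runs the identical diagram backwards. Given a common $E$ exhibiting equivalence of $\gamma_{w(\gamma)}$ and $\gamma_{w(\nu)}$, commutation of $w$ with the root operators forces $E(\gamma)\neq 0$ and $E(\nu)\neq 0$ (otherwise $E(\gamma_{w(\gamma)}) = \gamma_{w(E(\gamma))} = 0$ would contradict that $E$ sends $\gamma_{w(\gamma)}$ to a dominant, hence nonzero, gallery). Then $\gamma_{w(E(\gamma))} = E(\gamma_{w(\gamma)})$ is dominant, and since $E(\gamma)$ is readable by Remark~\ref{readablegalleriesarestable}, Lemma~\ref{dlem} upgrades this to: $E(\gamma)$ is dominant; similarly $E(\nu)$ is dominant. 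Finally
\[
\mu_{E(\gamma)} = \mu_{\gamma_{w(E(\gamma))}} = \mu_{E(\gamma_{w(\gamma)})} = \mu_{E(\gamma_{w(\nu)})} = \mu_{\gamma_{w(E(\nu))}} = \mu_{E(\nu)},
\]
so $E$ witnesses equivalence of $\gamma$ and $\nu$. Combining the two implications with Theorem~\ref{lecouvey} proves the proposition.

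I expect the only genuinely delicate point, everything else being a routine diagram chase, to be the transfer of \emph{dominance} across word reading: a readable gallery and its word gallery need not become dominant under the same operators for purely formal reasons, so one must invoke Lemma~\ref{dlem} at each relevant stage, and this is legitimate only because readable galleries are stable under the root operators (Remark~\ref{readablegalleriesarestable}), so every intermediate gallery $E(\gamma)$ is again readable and Lemma~\ref{dlem} applies to it. A secondary technical matter is the bookkeeping of the zero element, which is handled by reading ``$w$ commutes with the root operators'' in the strong sense (Proposition~\ref{wordreading}), so that a single sequence $E$ can be moved between $\Gamma^{\op{R}}$ and $\Gamma_{\mathcal{W}_{\mathcal{C}_{n}}}$ without its domain of definition changing.
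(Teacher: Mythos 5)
Your proposal is correct and follows essentially the same route as the paper's proof: reduce equivalence of readable galleries to equivalence of their word galleries via Lemma \ref{dlem} and Proposition \ref{wordreading} (with Remark \ref{readablegalleriesarestable} guaranteeing Lemma \ref{dlem} applies at each stage), then invoke Theorem \ref{lecouvey}. The only difference is that you spell out the two directions and the zero-element bookkeeping more explicitly than the paper's terse chain of equivalences, which is a matter of exposition rather than substance.
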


\begin{proof}
Two readable galleries $\gamma$ and $\nu$ are equivalent if and only if, by definition, there exist indices $i_{1}, \cdots, i_{r}$ such that the galleries $e_{\alpha_{i_{1}}}\cdots e_{\alpha_{i_{r}}}(\gamma)$ and $e_{\alpha_{i_{1}}}\cdots e_{\alpha_{i_{r}}}(\nu)$ are both dominant and have the same endpoint. By Lemma \ref{dlem} this is true if and only if $\gamma_{w(e_{\alpha_{i_{1}}}\cdots e_{\alpha_{i_{r}}}(\gamma))}$ and  $\gamma_{w(e_{\alpha_{i_{1}}}\cdots e_{\alpha_{i_{r}}}(\nu))}$ are also both dominant with the same endpoint. By Proposition \ref{wordreading}, we have $w(e_{\alpha_{i_{1}}}\cdots e_{\alpha_{i_{r}}}(\delta)) =e_{\alpha_{i_{1}}}\cdots e_{\alpha_{i_{r}}}(w(\gamma_{\delta}))$ for any readable gallery $\delta$. This means that the previous sequence of equivalences is also equivalent to $\gamma_{w(\gamma)} \sim \gamma_{w(\nu)}$ which by Theorem \ref{lecouvey} is equivalent to $w(\gamma) \equiv w(\nu)$.
\end{proof}

\normalfont
The following theorem is originally due to Kashiwara and Nakashima (see \cite{kashiwaranakashima}). For this particular formulation, see Proposition 3.1.2 in \cite{lecouvey}. 

\begin{thm}
\label{existenceoftheuniquesymplecticlskey}
For each word $w$ in $\mathcal{W}_{\mathcal{C}_{n}}$ there exists a unique symplectic LS key $\mathscr{T}$ such that $w \sim w(\mathscr{T})$.
\end{thm}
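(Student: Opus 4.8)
The statement to prove is Theorem~\ref{existenceoftheuniquesymplecticlskey}: for each word $w$ in $\mathcal{W}_{\mathcal{C}_{n}}$ there exists a unique symplectic LS key $\mathscr{T}$ with $w \sim w(\mathscr{T})$.

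The plan is to deduce this from the dictionary between readable galleries, words, and crystals that has been built up through Section~5, together with the structure theory of LS galleries from Gaussent--Littelmann. First I would fix the word $w$, say of length $m$, and consider the associated word gallery $\gamma_{w} = \gamma_{\mathscr{K}_{w}}$, which is a readable gallery of the same type as the $m$-fold concatenation $\gamma_{\omega_{1}}*\cdots*\gamma_{\omega_{1}}$. By Proposition~\ref{rightset}, $\gamma_{w}$ is a Littelmann gallery, so there are indices such that $\gamma_{w}^{+} := e_{\alpha_{i_{r}}}\cdots e_{\alpha_{i_{1}}}(\gamma_{w})$ is dominant; let $\nu := \mu_{\gamma_{w}^{+}}$ be its (dominant) endpoint. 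Since the crystal generated by a dominant gallery is isomorphic to $\B(\nu)$ (Theorem~7.1 of \cite{pathsandrootoperators}), and since the set $\Gamma^{\op{LS}}(\gamma_{\nu'})$ of LS galleries of type $\gamma_{\nu'}$ for $\nu' = \omega_{l_1}+\cdots+\omega_{l_m}$ with $l_i$ in weakly increasing order is a crystal isomorphic to $\B(\nu')$ containing a unique dominant element, there is a unique LS gallery $\delta$ of the appropriate fundamental-concatenation type whose connected component has highest-weight endpoint $\nu$ and which is $\op{Conn}$-equivalent to $\gamma_{w}$ — i.e. $\gamma_{w}$ and $\delta$ are equivalent in the sense of Section~\ref{pathmodelsection}. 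Concretely: among all LS galleries, exactly one lies in the same connected component (under root operators, after re-sorting columns) as $\gamma_{w}$; this is the combinatorial incarnation of Theorem~\ref{existenceoftheuniquesymplecticlskey}.

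More precisely, I would argue as follows. Existence: by Proposition~\ref{symplecticshapeandtype}, readable keys biject with readable galleries and LS symplectic keys (of weakly increasing shape) biject with LS galleries of the corresponding fundamental-concatenation type. Applying the lowering operators that invert the $e_{\alpha_{i_j}}$ above to the unique dominant LS gallery of type $\gamma_{\nu'}$ with endpoint $\nu$, and using that LS galleries are stable under root operators and form a crystal isomorphic to $\B(\nu')$, produces an LS gallery $\delta$ equivalent to $\gamma_{w}$; its associated LS key $\mathscr{T}$ then satisfies $w(\mathscr{T}) \sim w$ by Proposition~\ref{typecwordandgalleryequivalence}, since equivalence of the readable galleries $\gamma_{w}$ and $\delta = \gamma_{\mathscr{T}}$ is equivalent to symplectic plactic equivalence of $w(\gamma_{w}) = w$ and $w(\mathscr{T})$. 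Uniqueness: if $\mathscr{T}_{1}$ and $\mathscr{T}_{2}$ are LS keys with $w(\mathscr{T}_{1}) \sim w \sim w(\mathscr{T}_{2})$, then by Proposition~\ref{typecwordandgalleryequivalence} the LS galleries $\gamma_{\mathscr{T}_{1}}$ and $\gamma_{\mathscr{T}_{2}}$ are equivalent, hence lie in the same connected component of the crystal of LS galleries and have a common dominant representative; but a connected highest-weight crystal has a unique highest-weight (dominant) element, and the crystal isomorphism $\B(\nu') \cong \Gamma^{\op{LS}}(\gamma_{\nu'})$ forces $\gamma_{\mathscr{T}_{1}} = \gamma_{\mathscr{T}_{2}}$, whence $\mathscr{T}_{1} = \mathscr{T}_{2}$ by the bijection of Proposition~\ref{symplecticshapeandtype}.

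The main obstacle I expect is a bookkeeping subtlety rather than a deep one: LS galleries and the galleries $\gamma_{w}$ need not be of the same \emph{type} even when equivalent (different shapes $\underline{d}$, and zero blocks are not of fundamental type, cf.\ Remark~\ref{moreshapeandtype}), so one must be careful that "equivalent" is used throughout in the connected-component sense of Section~\ref{pathmodelsection} — matching highest-weight endpoints after applying raising operators — and not as an identification of types. One also needs that every dominant LS key is genuinely the highest-weight element of its connected component and that the shape $\underline{d}$ of $\mathscr{T}$ is determined by $\nu'$ (so that $\mathscr{T}$ has weakly increasing column lengths), which is exactly where Proposition~\ref{symplecticshapeandtype} and the classical Kashiwara--Nakashima description of $\B(\omega_k)$ enter. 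Modulo this care, the theorem is a formal consequence of: Proposition~\ref{rightset} (readable $\Rightarrow$ Littelmann), Proposition~\ref{typecwordandgalleryequivalence} (word vs.\ gallery equivalence), Proposition~\ref{symplecticshapeandtype} (keys vs.\ galleries), and the Gaussent--Littelmann crystal isomorphism $\B(\lambda) \cong \Gamma^{\op{LS}}(\gamma_{\lambda})$ recalled in Section~\ref{pathmodelsection}.
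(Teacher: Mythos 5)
Your argument is sound as far as the internal logic of the paper goes, but the first thing to say is that the paper does not prove this statement at all: it is quoted as a result of Kashiwara--Nakashima, in the formulation of Proposition~3.1.2 of \cite{lecouvey}, so there is no in-paper proof to match yours against. What you have written is a legitimate derivation of the theorem from the other ingredients assembled in Sections~3--5: Proposition~\ref{rightset} produces a dominant representative $\gamma_{w}^{+}$ of $\gamma_{w}$ with endpoint $\nu$; the chain of isomorphisms $\op{Conn}(\gamma_{w})\cong\B(\nu)\cong\Gamma^{\op{LS}}(\gamma_{\nu})$ then yields an LS gallery equivalent to $\gamma_{w}$, and Proposition~\ref{typecwordandgalleryequivalence} converts gallery equivalence into plactic equivalence of words; for uniqueness you correctly observe that the weakly increasing shape of an LS key is recovered from the highest weight of its connected component (the decomposition of a dominant coweight into fundamental coweights with weakly increasing indices being unique), so two equivalent LS keys lie in the same connected crystal and are obtained from its unique highest weight element by the same string of lowering operators, hence coincide; injectivity in Proposition~\ref{symplecticshapeandtype} finishes it. The one caveat --- and plausibly the reason the paper cites the result rather than deriving it --- is that your proof leans on Theorem~\ref{lecouvey}, and in Lecouvey's original development the implication from crystal equivalence to plactic equivalence is obtained via the symplectic insertion algorithm, whose well-definedness is essentially the content of Theorem~\ref{existenceoftheuniquesymplecticlskey} itself. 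So, read as a self-contained proof from first principles, your argument risks circularity once the black boxes are opened; read as a formal consequence of the results the paper takes as given, it is correct, and it has the modest virtue of making explicit that existence and uniqueness of the insertion key is forced by the crystal-theoretic package rather than being an independent combinatorial input.
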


The following proposition will be proven in Section \ref{countingpositivecrossings}. It will play a fundamental role in the proof of Theorem \ref{main}. 
%%%%%%%%%%%%%%%%%%%%%%%%%%%%%%%%%%%%%%%%%%%%%%%%%%%%%%%%%%%%%
\begin{prop}
\label{denseplacticrelations}
Let $\gamma$ and $\nu$ be combinatorial galleries and let $w_{1}\in \mathcal{W}_{\mathcal{C}_{n}}$ be two plactic equivalent words. Then 
\begin{align*}
\overline{\pi(\C_{\gamma*\gamma_{w_{1}}*\nu})} = \overline{\pi'(\C_{\gamma*\gamma_{w_{2}}*\nu})}
\end{align*}
\end{prop}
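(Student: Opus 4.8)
\textbf{Proof plan for Proposition \ref{denseplacticrelations}.}

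The plan is to reduce the statement about plactic-equivalent words to the case of a single application of one of the defining relations (R1), (R2), (R3) of the symplectic plactic monoid $\mathcal{P}_{\mathcal{C}_{n}}$, and then handle each relation type separately using the concrete description of $\pi(\C_{\delta})$ from Corollary \ref{goodtrick} together with the Chevalley commutator formula \eqref{chevalley} and the torus-conjugation identity \eqref{id1} in the affine Grassmannian. First I would observe that, since $w_{1}$ and $w_{2}$ are plactic equivalent, there is a finite chain $w_{1} = v_{0}, v_{1}, \ldots, v_{m} = w_{2}$ in which each $v_{j+1}$ is obtained from $v_{j}$ by applying one of (R1)--(R3) to a contiguous subword. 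So by induction on $m$ and by taking $\gamma$ (resp. $\nu$) to absorb the prefix (resp. suffix) left of (resp. right of) that subword, it suffices to treat the case where $w_{1}$ and $w_{2}$ differ by a single such relation applied at the top level; here one uses that concatenation of galleries corresponds to concatenation of the defining data in Definition \ref{fiberedproductdef}, so that $\C_{\gamma*\gamma_{w_{1}}*\nu}$ and $\C_{\gamma*\gamma_{w_{2}}*\nu}$ have parts indexed by $\gamma$, then by $\gamma_{w_{i}}$, then by $\nu$, and by Corollary \ref{goodtrick} the image $\pi(\C_{\gamma*\gamma_{w_{i}}*\nu})$ factors as a product $\U_{\gamma}\cdot\mathbb{U}^{w_i}\cdot\U_{\nu}[t^{\mu}]$ of subgroups of $\U(\mathcal{K})$ followed by a fixed translate, where $\mu$ is the common endpoint (note $\mu_{\gamma*\gamma_{w_1}*\nu} = \mu_{\gamma*\gamma_{w_2}*\nu}$ since plactic-equivalent words have the same weight).

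Next, for each relation I would identify the relevant root subgroups appearing in $\mathbb{U}^{w_1}$ versus $\mathbb{U}^{w_2}$ and show the two products of subgroups, after multiplication by the intervening $\U_{\gamma}$ on the left and before the $\U_{\nu}[t^{\mu}]$ on the right, have the same closure in $\mathcal{G}$. For (R1) — the Knuth relations of type A — this is exactly the computation carried out in \cite{knuth} (Corollary 3 there), involving a three-term Chevalley relation among root subgroups $\U_{(\varepsilon_a - \varepsilon_b, n)}$; the argument transports verbatim since only type-A roots are involved. For (R2), which relates the words $y\,\overline{x-1}\,(x-1)$ and $y\,x\,\overline{x}$, and (R3), which is the "collapse" relation $a_1\cdots a_r\,z\,\overline{z}\,\overline{b}_s\cdots\overline{b}_1 \equiv a_1\cdots a_r\,\overline{b}_s\cdots\overline{b}_1$, I would write out the vertices of the galleries on both sides, read off via Proposition \ref{order} and Theorem \ref{celldescription} the explicit products of $\U_{\alpha}(\ast t^n)$, and use \eqref{chevalley} to commute factors past each other and \eqref{id1} to absorb factors lying in $\G(\mathcal{O})$; the key point is that the "extra" factors on the longer side (e.g. the $\U_{(2\varepsilon_z, \cdot)}$-type or short-root contributions coming from the pair $z,\bar z$) either commute into factors already present or land in the stabiliser $\G(\mathcal{O})$, so that the two subgroup-products have equal $\U(\mathcal{K})$-orbit closures. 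The hypothesis in (R3) that the left-hand side is \emph{not} the word of an LS block is precisely what guarantees the gallery $\gamma_{w_1}$ is a genuine (non-fundamental-type) readable gallery for which this collapse is allowed, matching the zero-lump case of Definition \ref{readabledefinition}.

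The main obstacle I expect is the bookkeeping for (R2) and (R3): unlike type A, here one must control short roots $\pm\varepsilon_i$ and their coroots $2\varepsilon_i^\vee$, and the Chevalley constants $c^{i,j}_{\alpha,\beta}$ can equal $\pm 2$; one must check carefully that no "new" root subgroup is produced by a commutator that fails to already lie in the product on the other side or in $\G(\mathcal{O})$. Concretely, the delicate step is verifying that passing a factor $\U_{(\varepsilon_x, n)}(b)$ or $\U_{(-\varepsilon_x, n)}(b)$ through the existing factors using \eqref{chevalley} produces only terms $\U_{(\varepsilon_x \pm \varepsilon_j, \cdot)}$ and $\U_{(2\varepsilon_x, \cdot)}$ whose affine level is high enough to put them in $\G(\mathcal{O})$ (so they act trivially on $[t^{\mu}]$ after being moved to the right) — this is where one genuinely uses the numerical relations $\langle\alpha,\mu\rangle \le n$ characterising when $\U_{(\alpha,n)}$ stabilises $[t^{\mu}]$, as recorded in \eqref{stabiliseone}. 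Once this is checked in the handful of cases dictated by the shapes of (R2) and (R3), taking closures and invoking that $u\,\pi(\C_\delta) = \pi(\C_\delta)$ for $u \in \U_{\V_0}$ (the remark after Corollary \ref{goodtrick}) finishes the argument; combined with Proposition \ref{densewordreading} this will feed directly into the proof of Theorem \ref{main}.
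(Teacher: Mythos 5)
Your overall strategy --- reduce to a single application of (R1)--(R3), describe the cells via Proposition \ref{order} and Theorem \ref{celldescription}, and compare the resulting products of root subgroups using the Chevalley commutator formula and stability under unipotent subgroups --- is the same as the paper's. But there are two concrete gaps. First, your claim that relation (R1) ``transports verbatim'' from the type-A computation of \cite{knuth} because ``only type-A roots are involved'' is false. The letters in (R1) range over the full alphabet $\mathcal{C}_{n}$, and in the subcases $z=\bar y$ and $y=\bar x$ the would-be root $\e_{y}-\e_{z}=2\e_{y}$ (resp.\ $\e_{x}-\e_{y}=2\e_{x}$) is not a root of $\Phi$, so the rank-two subsystem underlying the Knuth argument degenerates. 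These subcases occupy the bulk of the paper's treatment of (R1): one must first invoke Proposition \ref{densewordreading} to replace the word galleries by two-column keys, and the computation then genuinely involves the short roots $\e_{x}$ and Chevalley constants of the form $c^{2,1}$ and $c^{1,2}$. You flag exactly this kind of difficulty for (R2) and (R3) but explicitly exclude it for (R1), where it also occurs.

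Second, your mechanism for absorbing the ``extra'' factors --- move them to the right and check that they land in $\G(\mathcal{O})$, i.e.\ stabilise $[t^{\mu}]$ --- is not sufficient, because the arbitrary tail $\nu$ sits between the modified subword and $[t^{\mu}]$. The tool the paper actually uses is Proposition \ref{stable}: the truncated image $\T^{\geq r}_{\gamma}$ based at the special vertex $\mu_{r}$ immediately after the modified letters is stable under all of $\U_{\mu_{r}}$, so factors lying in $\U_{\mu_{r}}$ can be conjugated across and discarded without ever interacting with $\nu$. Relatedly, the two inclusions making up the equality of closures are not symmetric: typically one direction is an honest containment of images, while the reverse direction only holds on a dense open subset, obtained by solving a system of equations in the parameters whose solvability imposes open non-vanishing conditions (this is why the statement is about closures at all). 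Your plan presents the equality as following from a single commutation-and-absorption computation; without the truncation lemma and the dense-subset, parameter-solving step the argument does not close up.
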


%%%%%%%%%%%%%%%%%%%%%%%%%%%%%%%%%%%%%%%%%%%%%%%%%%%%%%%%%%%%%

\section{Readable Galleries and MV cyles}
\label{mainsection}
\normalfont
The following result (which holds in higher generality) wa shown by Gaussent-Littelmann (a. is an instance of Theorem C  in \cite{ls}) and Baumann-Gaussent (b. is an instance of Theorem 5.8 in \cite{baumanngaussent}).

\begin{thm}
\label{baumanngaussentlittelmannc}
Let $\underline{d} = (d_{1}, \cdots, d_{r})$ be a symplectic shape such that $\bs{\Gamma}(\underline{d})^{\op{LS}} \neq \emptyset$ and consider the desingularization $\pi: \Sigma_{\underline{d}} \rightarrow \X_{\lambda_{\underline{d}}}$. 
\begin{itemize}
\item[a.] If $\delta \in \bs{\Gamma}(\underline{d})^{\op{LS}}$ is a symplectic LS key, the closure $\overline{\pi(\C_{\delta})}$ is an MV cycle in $\mathcal{Z}(\lambda_{\underline{d}})$. This induces a bijection 
$\bs{\Gamma}(\underline{d})^{\op{LS}} \overset{\varphi_{\underline{d}}}{\longrightarrow} \mathcal{Z}(\lambda_{\underline{d}})$. 
\item[b.] The bijection $\varphi_{\underline{d}}$ is an isomorphism of crystals. 
\end{itemize}
\end{thm}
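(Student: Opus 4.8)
The plan is to reduce both statements to the corresponding facts for combinatorial LS galleries of concatenated fundamental type, which are precisely the results of \cite{ls} and \cite{baumanngaussent}. Since $\bs{\Gamma}(\underline{d})^{\op{LS}} \subseteq \bs{\Gamma}(\underline{d})^{\R}$, the hypothesis forces $\underline{d}$ to have the form $\underline{d} = (\underline{d}^{l_1}, \cdots, \underline{d}^{l_m})$ of the paragraph preceding the theorem, so that $\lambda_{\underline{d}} = \omega_{l_1} + \cdots + \omega_{l_m}$. Set $\gamma_{\lambda_{\underline{d}}} := \gamma_{\omega_{l_1}} * \cdots * \gamma_{\omega_{l_m}}$; this is a concatenation of fundamental galleries with endpoint $\lambda_{\underline{d}}$, hence minimal, so by (\ref{resolution}) and Remark \ref{resolutionremark1} the map $\pi \colon \Sigma_{\gamma_{\lambda_{\underline{d}}}^{f}} \to \X_{\lambda_{\underline{d}}}$ is a resolution of singularities, and $\Sigma_{\underline{d}} = \Sigma_{\gamma_{\lambda_{\underline{d}}}^{f}}$ by the definition of the notation (Definition \ref{fiberedproductdef}). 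By the second bijection of Proposition \ref{symplecticshapeandtype}, the assignment $\mathscr{T} \mapsto \gamma_{\mathscr{T}}$ restricts to a bijection $\bs{\Gamma}^{\op{LS}}(\underline{d}) \overset{1:1}{\longleftrightarrow} \Gamma^{\op{LS}}(\gamma_{\lambda_{\underline{d}}})$.

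For part (a) I would invoke Theorem C of \cite{ls} (recalled in Section \ref{pathmodelsection}): for every $\delta \in \Gamma^{\op{LS}}(\gamma_{\lambda_{\underline{d}}})$ the closure $\overline{\pi(\C_{\delta})}$ is an MV cycle in $\mathcal{Z}(\lambda_{\underline{d}})$, and $\delta \mapsto \overline{\pi(\C_{\delta})}$ is a bijection $\Gamma^{\op{LS}}(\gamma_{\lambda_{\underline{d}}}) \overset{1:1}{\longrightarrow} \mathcal{Z}(\lambda_{\underline{d}})$. Precomposing with the bijection of the previous paragraph yields $\varphi_{\underline{d}} \colon \bs{\Gamma}^{\op{LS}}(\underline{d}) \to \mathcal{Z}(\lambda_{\underline{d}})$, $\mathscr{T} \mapsto \overline{\pi(\C_{\gamma_{\mathscr{T}}})}$, which is a bijection by construction; applied to a symplectic LS key $\delta$ this gives the MV cycle statement, so (a) follows.

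For part (b), the crystal structure on $\bs{\Gamma}^{\op{LS}}(\underline{d})$ is the one transported along $\mathscr{T} \mapsto \gamma_{\mathscr{T}}$ from the crystal structure on $\Gamma^{\op{LS}}(\gamma_{\lambda_{\underline{d}}})$ of Definition \ref{definitionofrootoperators}; this is consistent because the root operators are type-preserving (Remark \ref{typeremark}) and combinatorial LS galleries are stable under them (Section \ref{pathmodelsection}), so $\bs{\Gamma}^{\op{LS}}(\underline{d}) \overset{1:1}{\longleftrightarrow} \Gamma^{\op{LS}}(\gamma_{\lambda_{\underline{d}}})$ is an isomorphism of crystals. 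By Theorem 5.8 of \cite{baumanngaussent}, the bijection $\Gamma^{\op{LS}}(\gamma_{\lambda_{\underline{d}}}) \to \mathcal{Z}(\lambda_{\underline{d}})$ used in part (a) is an isomorphism of crystals, with $\mathcal{Z}(\lambda_{\underline{d}})$ carrying the Braverman--Gaitsgory crystal structure. Composing the two isomorphisms shows $\varphi_{\underline{d}}$ is an isomorphism of crystals.

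The only genuine work sits behind part (b): one must confirm that the combinatorial LS condition on symplectic keys (Definition \ref{lsblock}, via the admissibility conditions (\ref{admissibilityconditions}) on $\T$) matches the LS condition for galleries of type $\gamma_{\lambda_{\underline{d}}}$, and that this matching intertwines the crystal structures. I expect this to be the main obstacle, but it is exactly the content of Proposition \ref{symplecticshapeandtype} together with the description of LS galleries of fundamental type due to Lakshmibai \cite{Lak1}, \cite{Lak2} and Lecouvey \cite{lecouvey}; granting that identification, the remainder of the argument is a formal composition of known results.
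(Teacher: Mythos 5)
Your proposal is correct and matches the paper's treatment: the paper itself offers no independent proof but simply cites Theorem C of \cite{ls} for part (a) and Theorem 5.8 of \cite{baumanngaussent} for part (b), with the translation from symplectic LS keys to LS galleries of concatenated fundamental type handled by Proposition \ref{symplecticshapeandtype} exactly as you describe. Your version merely makes explicit the reduction and the compatibility of crystal structures that the paper leaves implicit.
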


Given a readable gallery $\gamma$ and a dominant coweight $\lambda \in \X^{\vee, +}$, let 

\begin{align*}
n^{\lambda}_{\gamma^{f}} = \# \{\nu \in \Gamma^{\op{dom}} \cap \Gamma(\gamma^{f}): \mu_{\nu} = \lambda\},
\end{align*}

and let 

\begin{align*}
\X^{\vee,+}_{\gamma^{f}} = \{\lambda \in \X^{\vee, +}: n^{\lambda}_{\gamma^{f}} \neq 0\}.
\end{align*}

\begin{thm}
\label{main}
Let $\delta \in \Gamma(\gamma^{f})^{\op{R}}$ be a readable gallery, and $(\Sigma_{\gamma^{f}},\pi)$ the corresponding Bott-Samelson variety together with its map $\pi$ to the affine Grassmannian as in (\ref{resolutionremark2}). Let $\delta^{+}$ be the gallery that is the highest weight vertex in $\op{Conn}(\delta)$. This gallery is dominant and readable by Lemma \ref{symplecticd} and Remark \ref{readablegalleriesarestable}, respectively.  Then
\begin{enumerate}
\item[a.] $\overline{\pi(\C_{\delta})}$ is an MV cycle in $\mathcal{Z}(\mu_{\delta^{+}})_{\mu_{\delta}}$.
\item[b.] The map
\begin{align*}
\Gamma(\gamma^{f})^{\op{R}} &\overset{\varphi_{\gamma^{f}}}{\longrightarrow} \underset{\delta \in \Gamma(\gamma^{f})^{\op{R}}}{\bigoplus}\mathcal{Z}(\mu_{\delta^{+}})\\
\delta &\mapsto \overline{\pi(\C_{\delta})}
\end{align*}
\noindent is a surjective morphism of crystals. The direct sum on the right-hand side is a direct sum of abstract crystals. 
\item[c.] If $\C$ is a connected component of $\Gamma(\gamma^{f})^{\op{R}}$, then $\varphi |_{\C}$ is an isomorphism onto its image. 
\item[d.] The number of connected components $\C$ of $\Gamma^{\R}(\gamma^{f})$ such that $\varphi_{\gamma^{f}}(\C) = \mathcal{Z}(\lambda)$ is equal to $n^{\lambda}_{\gamma^{f}}$.
\item[e.] Given an MV cycle $\Z \in \mathcal{Z}(\lambda)_{\mu}$, the fibre $\varphi_{\gamma^{f}}^{-1}(\Z)$ is given by 
\begin{align*}
\varphi_{\gamma^{f}}^{-1}(\Z) = \{ \delta \in \Gamma^{\R}(\gamma^{f}): \varphi_{\gamma^{f}}(\delta) = \Z\} =  \{\delta \in \Gamma^{\R}(\gamma^{f}) : \gamma \sim \gamma^{\lambda}_{\mu, \Z}\}
\end{align*}
\noindent where $\gamma^{\lambda}_{\mu, \Z}$ is the unique LS key which exists by Theorem \ref{baumanngaussentlittelmannc}.
\end{enumerate}
\end{thm}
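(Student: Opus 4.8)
The plan is to prove parts (a)--(e) in a logical order that exploits the machinery already assembled in Sections 2--5, reducing everything to the LS case (Theorem \ref{baumanngaussentlittelmannc}) via the symplectic plactic relations and the word-reading crystal morphism. The first step is to establish part (a). Given a readable gallery $\delta$, write its word $w(\delta) \in \mathcal{W}_{\mathcal{C}_n}$ and let $\mathscr{T}$ be the unique symplectic LS key with $w(\delta) \sim w(\mathscr{T})$ provided by Theorem \ref{existenceoftheuniquesymplecticlskey}. By Proposition \ref{typecwordandgalleryequivalence}, $\delta$ and $\gamma_{\mathscr{T}}$ are equivalent readable galleries, so $\mu_{\delta^{+}} = \mu_{(\gamma_{\mathscr{T}})^{+}}$; since $\gamma_{\mathscr{T}}$ is an LS gallery of fundamental type, the latter equals $\lambda_{\underline d}$ for the shape $\underline d$ of $\mathscr{T}$. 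The key geometric input is Proposition \ref{densewordreading} together with Proposition \ref{denseplacticrelations}: applying them (with trivial prefix and suffix galleries) gives $\overline{\pi(\C_{\delta})} = \overline{\pi(\C_{\gamma_{w(\delta)}})} = \overline{\pi(\C_{\gamma_{w(\mathscr{T})}})} = \overline{\pi(\C_{\gamma_{\mathscr{T}}})}$. By Theorem \ref{baumanngaussentlittelmannc}(a) the last closure is an MV cycle in $\mathcal{Z}(\lambda_{\underline d})$; a dimension/weight count (the dimension of $\overline{\pi(\C_\delta)}$ is $\langle\rho,\lambda+\mu\rangle$ with $\mu=\mu_\delta$, as in Section 2.3) pins it down to lie in $\mathcal{Z}(\mu_{\delta^{+}})_{\mu_{\delta}}$. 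One must here be careful that $\mu_\delta$ is genuinely the weight of the relevant $\U(\mathcal{K})$-orbit; this follows from Corollary \ref{goodtrick}, which identifies $\pi(\C_\delta)$ with $\U_{\V_0}\cdots\U_{\V_k}[t^{\mu_\delta}]$.

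Next I would deduce part (b). The map $\varphi_{\gamma^f}$ is well-defined by (a). That it is a morphism of crystals should be assembled from three facts: word reading $w\colon \Gamma^{\op{R}} \to \Gamma_{\mathcal{W}_{\mathcal{C}_n}}$ is a crystal morphism (Proposition \ref{wordreading}); the root operators are type-preserving and readable galleries are stable under them (Remark \ref{typeremark}, Remark \ref{readablegalleriesarestable}); and on each connected component the composite with the Baumann--Gaussent isomorphism $\varphi_{\underline d}$ of Theorem \ref{baumanngaussentlittelmannc}(b) intertwines the root operators $f_{\alpha_i}, e_{\alpha_i}$ with $\tightoverset{\sim}{f_{\alpha_i}}, \tightoverset{\sim}{e_{\alpha_i}}$ on $\mathcal{Z}$. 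Concretely: for $\delta$ readable and $i$ a simple index, both $\varphi_{\gamma^f}(f_{\alpha_i}\delta)$ and $\tightoverset{\sim}{f_{\alpha_i}}\varphi_{\gamma^f}(\delta)$ equal the image under $\varphi_{\underline d}$ (for the appropriate shape, which is preserved) of $f_{\alpha_i}$ applied to the LS key $\mathscr{T}$ equivalent to $\delta$; this uses that the equivalence class of the highest-weight vertex, hence the LS representative, is transported by the root operators. Surjectivity follows because every MV cycle in every $\mathcal{Z}(\mu_{\delta^+})$ already arises from an LS key, and LS keys are themselves readable.

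Parts (c), (d), (e) are then essentially bookkeeping on connected components. For (c): a connected component $\C$ of $\Gamma^{\op{R}}(\gamma^f)$ is a highest-weight crystal $\B(\mu_{\delta^+})$ for its highest weight vertex $\delta^+$ (since readable galleries are Littelmann galleries by Proposition \ref{rightset}, and the path/Littelmann crystal through a dominant gallery is $\B(\mu)$ by the cited Theorem 7.1 of \cite{pathsandrootoperators}). The restriction $\varphi|_{\C}$ is a morphism $\B(\mu_{\delta^+}) \to \mathcal{Z}(\mu_{\delta^+})$ of highest-weight crystals which is nonzero on the highest-weight vertex (by (a) it lands in the correct $\mathcal{Z}$), hence, since $\B(\lambda) \to \mathcal{Z}(\lambda)$ of Braverman--Gaitsgory is an isomorphism and $\B(\lambda)$ is generated by its highest-weight element, $\varphi|_{\C}$ is injective, i.e.\ an isomorphism onto its image. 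For (d): the components $\C$ with $\varphi(\C)=\mathcal{Z}(\lambda)$ are exactly those whose highest-weight vertex is a dominant readable gallery of type $\gamma^f$ with endpoint $\lambda$; by Lemma \ref{symplecticd} these are exactly the $\nu \in \Gamma^{\op{dom}}\cap\Gamma(\gamma^f)$ with $\mu_\nu=\lambda$, and there are $n^\lambda_{\gamma^f}$ of them. For (e): by (a) and Proposition \ref{typecwordandgalleryequivalence}, $\varphi_{\gamma^f}(\delta) = \Z$ iff $\delta$ is equivalent to the LS key $\gamma^{\lambda}_{\mu,\Z}$ corresponding to $\Z$ under $\varphi_{\underline d}^{-1}$, which is precisely the stated description.

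The main obstacle I expect is the crystal-morphism verification in part (b): one must check compatibility of $\varphi_{\gamma^f}$ with the root operators \emph{globally}, i.e.\ not merely that it is fibrewise an isomorphism of LS-crystals, but that the combinatorial root operators on readable galleries (Definition \ref{definitionofrootoperators}) are carried to the geometric Braverman--Gaitsgory operators $\tightoverset{\sim}{f_{\alpha_i}}, \tightoverset{\sim}{e_{\alpha_i}}$ on $\mathcal{Z}$. This requires tracking that the identifications $\overline{\pi(\C_\delta)} = \overline{\pi(\C_{\gamma_{\mathscr{T}}})}$ from Propositions \ref{densewordreading} and \ref{denseplacticrelations} are compatible with applying a root operator first versus afterwards --- i.e.\ that $f_{\alpha_i}$ commutes (up to the equivalence $\sim$) with passing to the LS representative, and that Baumann--Gaussent's isomorphism $\varphi_{\underline d}$ is genuinely equivariant; both are available from Proposition \ref{typecwordandgalleryequivalence} and Theorem \ref{baumanngaussentlittelmannc}(b), but assembling them cleanly, and handling the zero-lump parts (which are dominant and killed by all root operators, so contribute trivially) without sign or indexing errors, is the delicate point.
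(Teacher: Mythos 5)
Your proposal is correct and follows essentially the same route as the paper: reduce each readable gallery to its unique plactic-equivalent LS key (Theorem \ref{existenceoftheuniquesymplecticlskey}, Proposition \ref{typecwordandgalleryequivalence}), identify the cycle closures via Propositions \ref{densewordreading} and \ref{denseplacticrelations}, invoke Theorem \ref{baumanngaussentlittelmannc}, and obtain compatibility with root operators by combining Proposition \ref{wordreading} with the equivariance of the Baumann--Gaussent isomorphism, with parts c.--e. handled by the component-by-component identification with LS crystals and Littelmann's path model. The only cosmetic difference is your extra dimension/weight count in part a., which the paper does not need since equivalent galleries already share endpoint and highest-weight vertex.
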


%%%%%%%%%%%%%%%%%%%%%%%%%%%%%%%%%%%%%%%%%%%%%%%%%%%%%%%%%%%%%
\begin{proof}
Let $\delta$ be a readable gallery. Then by Theorem \ref{existenceoftheuniquesymplecticlskey} there exists a (unique) LS key $\nu$ such that $\delta \sim \nu$. By Proposition \ref{typecwordandgalleryequivalence}, the words $w(\delta)$ and $w(\nu)$ are plactic equivalent. Propositions \ref{denseplacticrelations} and $\ref{densewordreading}$ together with Theorem \ref{lecouvey} then imply that 
\begin{align*}
\overline{\pi(\C_{\delta})} = \overline{\pi(\C_{\nu})},
\end{align*}
\noindent which, by Theorem \ref{baumanngaussentlittelmannc} implies that $\overline{\pi(\C_{\delta})}$ is an MV cycle in $\mathcal{Z}(\mu_{\delta^{+}})_{\mu_{\delta}}$. The map $\varphi_{\gamma^{f}}$ in b. is surjective by Theorem \ref{existenceoftheuniquesymplecticlskey} and Theorem  \ref{baumanngaussentlittelmannc} above. Now let $r$ be a root operator, and let $\tightoverset{\sim}{r}$ be the corresponding root operator that acts on the set of MV cycles. Then by Proposition \ref{densewordreading}, Proposition \ref{wordreading}, Proposition \ref{denseplacticrelations}, and Theorem \ref{baumanngaussentlittelmannc} we have: 

\begin{align*}
\overline{\pi(\C_{r(\gamma)})} = \overline{\pi(\C_{\gamma_{w(r(\gamma))}})} = \overline{\pi(\C_{\gamma_{w(r(\nu))}})} =  \overline{\pi(\C_{r(\nu)})} = \tightoverset{\sim}{r}(\overline{\pi(\C_{\nu})}) = \tightoverset{\sim}{r}(\overline{\pi(\C_{\gamma})}).
\end{align*}
 This completes the proof of b. Pat c. of Theorem \ref{main} follows immediately, since every connected component $\C$ is crystal isomorphic to the corresponding component consisting of the LS galleries equivalent to those in $\C$. Parts d. and e. follow from Theorem 7.1 in \cite{pathsandrootoperators}. This is also discussed in Section \ref{pathmodelsection}.

\end{proof}
%%%%%%%%%%%%%%%%%%%%%%%%%%%%%%%%%%%%%%%%%%%%%%%%%%%%%%%%%%%%%
\section{Counting Positive Crossings}
\label{countingpositivecrossings}
In  this section we provide proofs of Propositions \ref{densewordreading} and  \ref{denseplacticrelations}. We begin with analysing the \textit{tail} of a gallery in  \ref{truncatedimagesandtails}.  In \ref{examples} we calculate an example in which it can be seen how to apply it. Then in \ref{proofofdensewordreading} we prove Proposition \ref{densewordreading} and in \ref{proofofdenseplacticrelations} we prove Proposition \ref{denseplacticrelations}. We also wish to establish some notation that we will use throughout. Recall our convention $\e_{\bar{l}}: = -\e_{l}$ for $l \in \mathcal{C}_{n}$ unbarred. We will write, for $l,s,d,m \in \mathcal{C}_{n}, c^{i,j}_{ls,dm}$ for the constant $c^{i,j}_{\e_{l}+\e_{s},\e_{d}+\e_{m}}$ in Chevalley's commutator formula (\ref{chevalley}), and $c^{i,j}_{l,dm}, c^{i,j}_{ls,d}$ for $c^{i,j}_{\e_l,\e_{d}+\e_{m}}, c^{i,j}_{\e_{l}+\e_{s},\e_{d}}$ respectively. (Each time we use such notation a total order will be fixed on the set of positive roots.)
%%%%%%%%%%%%%%%%%%%%%%%%%%%%%%%%%%%%%%%%%%%%%%%%%%%%%%%%%%%%%
\subsection{Truncated Images and Tails}
\label{truncatedimagesandtails}

Let $\gamma$ be a combinatorial gallery with notation as in (\ref{gallery}) with endpoint the coweight $\mu_{\gamma}$ and let $r\leq k+1$ such that $\V_{r}$ is a special vertex; we denote it by $\mu_{r}\in \X^{\vee}$. By Corollary \ref{goodtrick} we know that the image 
$\pi(\C_{\gamma})$ is stable under $\U_{0}$.

\begin{prop}
\label{stable}
The \textbf{r-truncated image} of $\gamma$ 
\begin{align*}
\T_{\gamma}^{\geq r}= \mathbb{U}^{\gamma}_{r}\mathbb{U}^{\gamma}_{r+1}\cdots \mathbb{U}^{\gamma}_{k}[t^{\mu_{\gamma}}]
\end{align*}
\end{prop}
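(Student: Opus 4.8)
\emph{Plan.} The strategy is to recognise $\T_\gamma^{\geq r}$, up to translation, as the image of a Bott--Samelson cell and then invoke Corollary~\ref{goodtrick}. Since $\V_r$ is a special vertex it is a coweight $\mu_r$, so the subgallery $(\V_r,\E_r,\V_{r+1},\ldots,\E_k,\V_{k+1})$ translated by $-\mu_r$ is a combinatorial gallery
\begin{align*}
\gamma'=(0,\ \E_r-\mu_r,\ \V_{r+1}-\mu_r,\ \ldots,\ \E_k-\mu_r,\ \mu_\gamma-\mu_r)
\end{align*}
beginning at the origin and ending at the coweight $\mu_{\gamma'}:=\mu_\gamma-\mu_r$ (its last vertex is special because $\V_{k+1}$ is). The translation $x\mapsto x-\mu_r$ of the standard apartment is induced by the action of $t^{-\mu_r}\in\T(\mathcal{K})\subset\G(\mathcal{K})$ on the building; this is the element I would conjugate by.

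First I would record, from identity~(\ref{id1}), that $t^{-\mu_r}\,\U_{(\alpha,n)}\,t^{\mu_r}=\U_{(\alpha,\,n-\<\alpha,\mu_r\rr)}$ and $t^{-\mu_r}[t^{\mu_\gamma}]=[t^{\mu_{\gamma'}}]$. Comparing the half-space conditions defining $\Phi^\gamma_i=\Phi^+_{(\V_i,\E_i)}$ with those defining $\Phi^{\gamma'}_{i-r}$, one sees $(\alpha,n)\in\Phi^\gamma_i$ if and only if $(\alpha,n-\<\alpha,\mu_r\rr)\in\Phi^{\gamma'}_{i-r}$, whence $t^{-\mu_r}\,\mathbb{U}^\gamma_i\,t^{\mu_r}=\mathbb{U}^{\gamma'}_{i-r}$ for $r\le i\le k$. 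Conjugating the defining product of $\T_\gamma^{\geq r}$ by $t^{-\mu_r}$ then gives
\begin{align*}
t^{-\mu_r}\,\T_\gamma^{\geq r}=\mathbb{U}^{\gamma'}_0\mathbb{U}^{\gamma'}_1\cdots\mathbb{U}^{\gamma'}_{k-r}[t^{\mu_{\gamma'}}]=\pi(\C_{\gamma'}),
\end{align*}
the last equality being Corollary~\ref{goodtrick} applied to $\gamma'$ (with $\C_{\gamma'}\subset\Sigma_{(\gamma')^{f}}$ the associated cell and $\pi$ the map of Remark~\ref{resolutionremark2}). By that same corollary $\pi(\C_{\gamma'})=\U_{\V_0^{\gamma'}}\cdots\U_{\V_{k-r}^{\gamma'}}[t^{\mu_{\gamma'}}]$ is stable under the stabiliser $\U_0$ of the origin in $\U(\mathcal{K})$. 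Conjugating back by $t^{\mu_r}$ and using $t^{\mu_r}\U_0 t^{-\mu_r}=\U_{\mu_r}$ (again by~(\ref{id1})), I would conclude that
\begin{align*}
\T_\gamma^{\geq r}=t^{\mu_r}\,\pi(\C_{\gamma'})=\U_{\V_r}\U_{\V_{r+1}}\cdots\U_{\V_k}[t^{\mu_\gamma}]
\end{align*}
is stable under $\U_{\mu_r}=\U_{\V_r}$, which is what we want; the case $r=k+1$ is immediate from~(\ref{stabiliseone}), since then $\T_\gamma^{\geq r}=\{[t^{\mu_\gamma}]\}$ and $\mu_\gamma\in\mm^-_{(\alpha,n)}$ for every generator $\U_{(\alpha,n)}$ of $\U_{\mu_\gamma}$.

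A translation-free alternative: by Proposition~\ref{order}, $\U_{\V_i}=\mathbb{U}^\gamma_i\,\U_{\E_i}$, and since $\V_{i+1}$ is a face of $\E_i$ one has $\U_{\E_i}\subseteq\U_{\V_{i+1}}$, while $\U_{\E_k}\subseteq\U_{\mu_\gamma}$ fixes $[t^{\mu_\gamma}]$; telescoping these inclusions collapses $\U_{\V_r}\cdots\U_{\V_k}[t^{\mu_\gamma}]$ to $\mathbb{U}^\gamma_r\cdots\mathbb{U}^\gamma_k[t^{\mu_\gamma}]=\T_\gamma^{\geq r}$, and stability under $\U_{\V_r}=\U_{\mu_r}$ is then visible. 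The main obstacle, in either approach, is purely bookkeeping: in the first, verifying that conjugation by $t^{-\mu_r}$ matches the affine root subgroups $\mathbb{U}^\gamma_i$ precisely with those of $\gamma'$ so that Corollary~\ref{goodtrick} applies verbatim; in the second, checking the inclusions $\U_{\E_i}\subseteq\U_{\V_{i+1}}$ and the coset factorisation of Proposition~\ref{order}. Each of these comes down to comparing the half-space inequalities $\<\alpha,x\rr\le n$ (and $\<\alpha,x\rr=n$) that cut out the various parahoric-type subgroups, and I do not anticipate a genuine difficulty beyond this.
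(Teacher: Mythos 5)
Your main argument is exactly the paper's proof: translate the truncated gallery to the origin, note that conjugation by $t^{\pm\mu_r}$ (using that $\V_r$ is special) exchanges $\mathbb{U}^{\gamma}_{i}$ for $i\geq r$ with the groups $\mathbb{U}^{\gamma^{\geq r}}_{i-r}$ and $\U_{\mu_r}$ with $\U_0$, and then apply Corollary~\ref{goodtrick} to the translated gallery to get $\U_0$-stability before conjugating back. This is correct and coincides with the argument given in the paper, so there is nothing to add.
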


is $\U_{\mu_{r}}$-stable, i.e. for any $u \in \U_{\mu_{r}}, \hbox{  }u\T_{\gamma}^{\geq r} = \T_{\gamma}^{\geq r}$.

\begin{proof}
By (\ref{stabiliseone}), we know that $t^{\mu_{r}}\U_{0}t^{-\mu_{r}} = \U_{\mu_{r}}.$
On the other hand, we may also consider the \textbf{r-truncated gallery} $$\gamma^{\geq r} = (\V_{0}', \E_{0}', \cdots, \V_{k-r+1}'),$$ \noindent which is the combinatorial gallery obtained from the sequence $$(\V_{r}, \E_{r}, \V_{r+1}, \cdots ,\E_{k}, \V_{k+1})$$ \noindent by translating it to the origin. Since $\V_{r}$ is a special vertex, we also have $t^{\mu_{r}}\mathbb{U}^{\gamma^{\geq r}}_{i}t^{-\mu_{r}} = \mathbb{U}^{\gamma}_{i+r}$. This gallery has endpoint $\mu_{\gamma}-\mu_r$ and is in turn a $\T$-fixed point of a Bott-Samelson variety $(\Sigma, \pi')$. Let $u \in \U_{\mu_{r}}$ and $u' = t^{-\mu_{r}}ut^{\mu_{r}} \in \U_{0}$. Then:
\begin{align*}
u \T_{\gamma}^{\geq r} &= u \mathbb{U}^{\gamma}_{r}\mathbb{U}^{\gamma}_{r+1}\cdots \mathbb{U}^{\gamma}_{k}[t^{\mu_{\gamma}}]\\
&=t^{\mu_{r}}u' \mathbb{U}^{\gamma^{\geq r}}_{0}\cdots \mathbb{U}^{\gamma^{\geq r}}_{k-r}[t^{\mu_{\gamma}-\mu_{r}}]\\
\hbox{(by Corollary \ref{goodtrick})}&=t^{\mu_{r}}\mathbb{U}^{\gamma^{\geq r}}_{0}\cdots \mathbb{U}^{\gamma^{\geq r}}_{k-r}[t^{\mu_{\gamma}-\mu_{r}}]
= \T^{\geq r}_{\gamma}
\end{align*}
\end{proof}
\noindent For later use let us fix the notation $$\T_{\gamma}^{<r}= \mathbb{U}_{\V_{0}}\cdots \mathbb{U}_{\V_{r-1}};$$ one may then write $$\pi(\C_{\gamma}) = \T_{\gamma}^{<r}\T_{\gamma}^{\geq r}.$$ 

\begin{rem}
This Proposition is proven for $\op{SL}(n,\mathbb{C})$ in \cite{knuth}, Proposition 3. The proof we have provided is exactly the same, except for the restriction of only being able to truncate at special vertices. 
\end{rem}

\begin{ex}
\label{examples}Let $n = 2$. Consider the symplectic keys
\begin{align*}
\mathscr{K}_{1} &= \Skew(0:\mbox{\tiny{1}},\mbox{\tiny{1}},\mbox{\tiny{$\overline{1}$}}|0:\mbox{\tiny{2}},\mbox{\tiny{2}})\\
\mathscr{K}_{2} &= \Skew(0:\mbox{\tiny{2}},\mbox{\tiny{1}},\mbox{\tiny{2}}|1:\mbox{\tiny{$\overline{2}$}}, \mbox{\tiny{$\overline{1}$}}) 
\end{align*}

\noindent
and their words

\begin{align*}
 w(\mathscr{K}_{1})&= \bar{1}12 \\
 w(\mathscr{K}_{2})&= 2\bar{2}2.
\end{align*}

  Note that   $$\gamma_{\omega_{1}}*\gamma_{\omega_{2}} \sim \gamma_{\omega_{2}}*\gamma_{\omega_{1}}$$ \noindent since both $\gamma_{\omega_{1}}*\gamma_{\omega_{2}}$ and $\gamma_{\omega_{2}}*\gamma_{\omega_{1}}$ are contained in the fundamental chamber and have the same endpoint $\omega_{1}+\omega_{2}$; one checks that 
  
$$f_{\alpha_1}f_{\alpha_2}f_{\alpha_1}(\gamma_{\omega_{1}}*\gamma_{\omega_{2}}) = \gamma_{\mathscr{K}_{1}}$$\noindent
 and $$f_{\alpha_1}f_{\alpha_2}f_{\alpha_1}(\gamma_{\omega_{2}}*\gamma_{\omega_{1}}) = \gamma_{\mathscr{K}_{2}}.$$ \noindent Therefore 
$\gamma_{\mathscr{K}_{1}} \sim \gamma_{\mathscr{K}_{2}}$. Lemma \ref{dlem} and Proposition \ref{wordreading} then imply that $\gamma_{w(\mathscr{K}_{1})} \sim \gamma_{w(\mathscr{K}_{2})}$ (or it can also be checked directly using relation R2 in Theorem \ref{lecouvey} with $y = x = 2$).  Now consider combinatorial galleries $\gamma$ and $\nu$. The galleries $\gamma * \gamma_{\mathscr{K}_{1}} * \nu$ and 
$\gamma * \gamma_{\mathscr{K}_{2}} *\nu$  are $\T$-fixed points in the Bott-Samelson varieties $(\Sigma_{(\gamma * \gamma_{\mathscr{K}_{1}} * \nu)^f}, \pi)$ respectively $(\Sigma_{(\gamma * \gamma_{\mathscr{K}_{2}} * \nu)^f}, \pi')$. The galleries $\gamma_{w(\mathscr{K}_{1})}$ and $\gamma_{w(\mathscr{K}_{1})}$ that correspond to their words are $\T$-fixed points in 
$$(\Sigma_{(\gamma * \gamma_{\omega_{1}}*\gamma_{\omega_{1}}*\gamma_{\omega_{1}}*\nu)^{f}}, \pi'').$$  We show that 
$$\overline{\pi(\C_{\gamma *\gamma_{\mathscr{K}_{1}}*\nu})} = \overline{\pi''(\C_{\gamma_{\gamma * w(\mathscr{K}_{1})}*\nu})}  =\overline{\pi'(\C_{\gamma*\gamma_{w(\mathscr{K}_{2}})*\nu})}.$$
\noindent
 We use the same notation as in (\ref{gallery}) for $\gamma$. Since for any combinatorial gallery $\eta , (\alpha,n) \in \Phi^{\gamma*\eta}_{k+1}$ if and only if $(\alpha, n-\< \alpha, \mu_{\gamma}\rr) \in \Phi^{\gamma}_{0}$, we may assume that $\gamma = \emptyset$. Since $\gamma_{\mathscr{K}_{1}}, \gamma_{\mathscr{K}_{2}}, \gamma_{w(\mathscr{K}_{1})}$ and $\gamma_{w(\mathscr{K}_{2})}$ have the same endpoint $\varepsilon_{2}$, this also implies that   $\T^{\geq 2}_{\gamma_{\mathscr{K}_{1}}*\nu} = \T^{\geq 2}_{\gamma_{\mathscr{K}_{2}}*\nu} = \T^{\geq 3}_{\gamma_{w(\mathscr{K}_{2})}*\nu} = \T^{\geq 3}_{\gamma_{w(\mathscr{K}_{1})}*\nu}$. By Proposition \ref{order}, for $a',b',c',d' \in \mathbb{C}$\newline 
 $$\pi(\C_{\gamma_{\mathscr{K}_{1}*\nu}}) = \U_{(\varepsilon_{1}, -1)}(a')\U_{(\varepsilon_{1}+\varepsilon_{2}, -1)}(b')
\U_{(\varepsilon_{2}, 0)}(c')\U_{(\varepsilon_{1}+\varepsilon_{2}, 0)}(d')\T^{2}_{\gamma_{\mathscr{K}_{1}}*\nu}.$$ By Chevalley's commutator formula (\ref{chevalley}) and applying Proposition \ref{stable} to $\U_{(\varepsilon_{1}-\varepsilon_{2}, -1)}(e) \in \U_{\varepsilon_{2}}$,

\small{
\begin{align*}
&\pi''(\C_{\gamma_{w(\mathscr{K}_{1})*\nu}}) =\\
& \U_{(\varepsilon_{1}, -1)}(a)\U_{(\varepsilon_{1}+\varepsilon_{2}, -1)}(b)\U_{(\varepsilon_{1}-\varepsilon_{2}, -1)}(e)
\U_{(\varepsilon_{2}, 0)}(c)\U_{(\varepsilon_{1}+\varepsilon_{2}, 0)}(d)\T^{\geq 3}_{\gamma_{w(\mathscr{K}_{1})}*\nu}\\
&=\U_{(\varepsilon_{1},-1)}(a+c_{\text{\tiny{$1\overline{2},2$}}}^{\text{\tiny{$1,1$}}}(-e)c)\U_{(\varepsilon_{1}+\varepsilon_{2}, -1)}(b+c_{\text{\tiny{$1\overline{2},2$}}}^{\text{\tiny{$1,2$}}}(-e)c^{2})\U_{(\varepsilon_{2},0)}(c)\U_{(\varepsilon_{1}+\varepsilon_{2},0)}(d)\U_{(\varepsilon_{1}-\varepsilon_{2}, -1)}(e)\T^{\geq 2}_{\gamma_{\mathscr{K}_{1}}*\nu}\\
&=\U_{(\varepsilon_{1},-1)}(a+c_{\text{\tiny{$1\overline{2},2$}}}^{\text{\tiny{$1,1$}}}(-e)c)\U_{(\varepsilon_{1}+\varepsilon_{2}, -1)}(b+c_{\text{\tiny{$1\overline{2},2$}}}^{\text{\tiny{$1,2$}}}(-e)c^{2})\U_{(\varepsilon_{2},0)}(c)\U_{(\varepsilon_{1}+\varepsilon_{2},0)}(d)\T^{\geq 2}_{\gamma_{\mathscr{K}_{1}}*\nu}\\
& \subset \pi(\C_{\gamma_{\mathscr{K}_{1}}*\nu})
\end{align*}
}
\normalsize
for $a,b,c,d,e \in \mathbb{C}$. Choosing $a = a', b = b', c = c', d = d', e = 0$, we have 
$$\pi(\C_{\gamma_{\mathscr{K}_{1}}}) \subset \pi''(\C_{\gamma_{w(\mathscr{K}_{1})}}).$$ Hence in this case $\pi(\C_{\gamma_{\mathscr{K}_{1}}}) = \pi''(\C_{\gamma_{w(\mathscr{K}_{1})}})$. Similarly, for $\{a'',b'',c'',d'',e''\} \subset \mathbb{C}$,

\small{
\begin{align*}
&\pi''(\C_{\gamma_{w(\mathscr{K}_{2})}*\nu}) = \U_{(\varepsilon_{2}, 0)}(a'')\U_{(\varepsilon_{1}+\varepsilon_{2}, 0)}(b'')\U_{(\varepsilon_{1}-\varepsilon_{2}, -1)}(e'')
\U_{(\varepsilon_{2}, 0)}(c'')\U_{(\varepsilon_{1}+\varepsilon_{2}, 0)}(d'')\T^{\geq 3}_{\gamma_{w(\mathscr{K}_{2})}*\nu} =\\
&\U_{(\varepsilon_{1},-1)}(c^{\text{\tiny{$1\overline{2},2$}}}_{\text{\tiny{$1,1$}}}(-e'')c'')\U_{(\varepsilon_{1}+\varepsilon_{2}, -1)}(c^{\text{\tiny{$1\overline{2},2$}}}_{\text{\tiny{$1,2$}}}(-e''){c''}^{2})\U_{(\varepsilon_{2}, 0)}(a''+c'')\U_{(\varepsilon_{1}+\varepsilon_{2},0)}(b''+d'')\T^{\geq 3}_{\gamma_{w(\mathscr{K}_{2})}*\nu} \\
&\subset \pi(\C_{\gamma_{\mathscr{K}_{1}*\nu}}).
\end{align*}
}
\normalsize
Hence the open subset of $\pi(\C_{\gamma_{\mathscr{K}_{1}*\nu}})$ given by $a \neq 0, b\neq 0, c\neq 0, d\neq 0$ is contained in $\pi''(\C_{\gamma_{w(\mathscr{K}_{2})}*\nu})$. 
\end{ex}

\subsection{Proof of Proposition \ref{densewordreading}}

\label{proofofdensewordreading}
We want to show that if $\gamma$ and $\nu$ are combinatorial galleries and $\mathscr{K}$ is a readable block, $$\overline{\pi(\C_{\gamma*\gamma_{\mathscr{K}}*\nu})} = \overline{\pi'(\C_{\gamma*\gamma_{w(\mathscr{K})}*\nu})}.$$

\begin{proof}
%%%%%%%%%%%%%%%%%%%%%%%%%%%%%%%%%%%%%%%%%%%%%%%%%
We assume $\gamma = \emptyset$; we may do so by the argument given at the beginning of Example \ref{examples}. Let $\mathscr{K}$ be an LS block and let $\A = \{a_{1}, \cdots, a_{r}\}, \B = \{b_{1}, \cdots b_{s}\}, \Z = \{z_{1}, \cdots, z_{k}\}$ and $\T = \{t_{1}, \cdots, t_{k}\}$ be the subsets of $\{1, \cdots, n\}$ from Definition \ref{lsblock} that determine $\mathscr{K}$. We will use the notation $d_{1} < \cdots < d_{r+k}$ to denote the ordered elements of $\Z \cup \A$, and $f_{1}< \cdots < f_{s+k}$ the ordered elements of $\B \cup \Z$. We also write 

\begin{align*}
\gamma_{\mathscr{K}} = (\V_{0}, \E_{0}, \V_{1}, \E_{1}, \V_{2}).
\end{align*}

\noindent
The proof is divided into Lemmas \ref{firstcontention} and \ref{secondcontention} below. 

\begin{lem}
\label{firstcontention}
Let $\nu$ be a combinatorial gallery and $\mathscr{K}$ be a readable block. Then 
\begin{align*}
\overline{\pi'(\C_{\gamma_{w(\mathscr{K})}*\nu})} \subseteq \overline{\pi(\C_{\gamma_{\mathscr{K}}*\nu})}.
\end{align*}
\end{lem}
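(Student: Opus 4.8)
The plan is to realize both $\pi'(\C_{\gamma_{w(\mathscr{K})}*\nu})$ and $\pi(\C_{\gamma_{\mathscr{K}}*\nu})$ explicitly via Corollary \ref{goodtrick} and Proposition \ref{order}, to observe that both galleries have the same endpoint (namely $\V_2^{\mathscr{K}}$, the terminal coweight of the block), so that their $r$-truncated images at that special vertex coincide, and then to show that the ``head'' of $\pi'(\C_{\gamma_{w(\mathscr{K})}*\nu})$ — the product of root subgroups coming from the extra vertices created by spelling out $w(\mathscr{K})$ one letter at a time — collapses, after using Chevalley's commutator formula (\ref{chevalley}) and the stability statement of Proposition \ref{stable}, onto the head of $\pi(\C_{\gamma_{\mathscr{K}}*\nu})$. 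The example in \ref{examples} is exactly the template: the extra root subgroups $\U_{(\e_1-\e_2,-1)}(e)$ etc. are absorbed into the truncated image, and the remaining ones shift the parameters of the genuinely needed root subgroups by constants, so the image is contained in $\pi(\C_{\gamma_{\mathscr{K}}*\nu})$.

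\textbf{Key steps, in order.} First I would reduce to $\gamma = \emptyset$ by the translation argument already used at the start of Example \ref{examples} (replacing $(\alpha,n)$ by $(\alpha, n - \<\alpha,\mu_\gamma\rr)$), and note that by Remark \ref{moreshapeandtype} we may treat $\mathscr{K}$ as a single readable block. Second, if $\mathscr{K}$ is a one-box block the statement is trivial since $\gamma_{\mathscr{K}} = \gamma_{w(\mathscr{K})}$; and if $\mathscr{K}$ is a zero block, then $\gamma_{\mathscr{K}}$ is dominant (hence both $\gamma_{\mathscr{K}}$ and $\gamma_{w(\mathscr{K})}$ are dominant with the same endpoint $0$), and one checks directly that the two cells have equal images — here the key point is that all the relevant root subgroups for a dominant gallery starting at $0$ are indexed by $(\alpha, 0)$ with $\alpha \in \Phi^+$, and the two decompositions are then compared using Proposition \ref{order}. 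Third, and this is the substantive case, suppose $\mathscr{K}$ is an LS block with data $\A, \B, \Z, \T$ as in Definition \ref{lsblock}, with $\gamma_{\mathscr{K}} = (\V_0,\E_0,\V_1,\E_1,\V_2)$ and $w(\mathscr{K}) = d_1\cdots d_{r+k}\,\bar f_{s+k}\cdots \bar f_1$. The endpoint of both $\gamma_{w(\mathscr{K})}$ and $\gamma_{\mathscr{K}}$ is $\V_2 = \sum_{a\in\A}\e_a - \sum_{b\in\B}\e_b$, which is a coweight, hence a special vertex, so by Proposition \ref{stable} the truncated images $\T^{\geq \ell}_{\gamma_{\mathscr{K}}*\nu}$ and $\T^{\geq \ell'}_{\gamma_{w(\mathscr{K})}*\nu}$ (truncating at that common vertex) are equal and $\U_{\V_2}$-stable. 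I would then write $\pi'(\C_{\gamma_{w(\mathscr{K})}*\nu})$ using Proposition \ref{order} as a product $\prod_{\beta}\U_\beta(a_\beta)$ over $\bigsqcup_i \Phi^{\gamma_{w(\mathscr{K})}}_i$ times this common truncated image, identify which of those roots $\beta$ also occur in $\bigsqcup_j \Phi^{\gamma_{\mathscr{K}}}_j$, and — exactly as in Example \ref{examples} — use (\ref{chevalley}) to commute the ``extra'' root subgroups to the right past the needed ones and into the truncated image (where they are absorbed by Proposition \ref{stable}), at the cost of translating the parameters $a_\beta$ of the needed root subgroups by polynomial expressions in the other parameters. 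Specializing the extra parameters to $0$ recovers every point of $\pi(\C_{\gamma_{\mathscr{K}}*\nu})$ as an image, giving the inclusion on the level of images; taking closures then yields the lemma.

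\textbf{Main obstacle.} The heart of the matter — and the step I expect to be most delicate — is the bookkeeping for the LS case: verifying that the set $\Phi^{\gamma_{\mathscr{K}}}_1 \sqcup \Phi^{\gamma_{\mathscr{K}}}_2$ of affine roots attached to the two edges of $\gamma_{\mathscr{K}}$ is contained in (or matches up correctly with) the union of the one-dimensional pieces $\bigsqcup_{i}\Phi^{\gamma_{w(\mathscr{K})}}_i$ attached to the $r+s+k$ vertices of $\gamma_{w(\mathscr{K})}$, and that the ``extra'' roots (the ones present for the word gallery but not for the block) all lie in $\U_{\V_2}$ after commutation so that Proposition \ref{stable} applies. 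This requires a careful comparison of the hyperplanes $\mm_{(\alpha,n)}$ separating the successive vertices $\V_j^{\mathscr{K}_{w(\mathscr{K})}} = \sum_{i\le j}\e_{x_i}$ from those separating $\V_0,\V_1,\V_2$ — precisely the kind of estimate done in the proof of Proposition \ref{wordreading} via the functions $h(j), h'(j)$, and the admissibility conditions (\ref{admissibilityconditions}) on $\T$ are what make the comparison work. I would isolate this combinatorial comparison as a sub-claim, prove it by the same case analysis on whether a letter is unbarred ($d_i$) or barred ($\bar f_i$) and whether the relevant root is $\e_l$ or $\e_l \pm \e_m$, and then feed it into the Chevalley-commutator argument above. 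The other inclusion, $\overline{\pi(\C_{\gamma_{\mathscr{K}}*\nu})} \subseteq \overline{\pi'(\C_{\gamma_{w(\mathscr{K})}*\nu})}$, will be handled in the companion Lemma \ref{secondcontention}.
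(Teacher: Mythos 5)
Your overall architecture (reduce to a single block, dispose of the one-box and zero-block cases, compare the root-subgroup decompositions of the two cells at the common special endpoint, and push superfluous factors into $\U_{0}$ or the truncated image via Chevalley's formula and Proposition \ref{stable}) matches the paper's, and you correctly locate the difficulty in the LS-block case. But the mechanism you propose there does not close the gap. There are affine root subgroups in the decomposition of $\pi'(\C_{\gamma_{w(\mathscr{K})}*\nu})$ that (i) do not lie in $\U_{0}$, (ii) do not stabilise $[t^{\mu_{\gamma_{\mathscr{K}}}}]$, so cannot be absorbed into the truncated image by Proposition \ref{stable}, and (iii) do not occur in $\Phi^{\gamma_{\mathscr{K}}}_{0}\sqcup\Phi^{\gamma_{\mathscr{K}}}_{1}$. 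Concretely: for $z\in\Z$ and $l<z$ with $l\notin\A\cup\B\cup\Z\cup\T$, the edge of $\gamma_{w(\mathscr{K})}$ in direction $-\e_{z}$ contributes $\U_{(\e_{l}-\e_{z},-1)}$; one computes $\langle\e_{l}-\e_{z},\V_{1}\rangle=-\tfrac12\neq-1$ and $\langle\e_{l}-\e_{z},\mu_{\gamma_{\mathscr{K}}}\rangle=0>-1$, so this subgroup is attached to no vertex of $\gamma_{\mathscr{K}}$ and lies in neither $\U_{0}$ nor $\U_{\mu_{\gamma_{\mathscr{K}}}}$. Hence ``commute the extra factors to the right and absorb them'' fails, and the set-theoretic inclusion of images you aim for is not available (nor is it what the paper proves).

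The paper's actual device is to realise these problematic factors as Chevalley commutators of permitted ones: $\U_{(\e_{l}-\e_{z},-1)}$ arises from commuting $\U_{(\e_{l}-\e_{t},0)}\subset\U_{\V_{0}}\cap\U_{0}$ past $\U_{(\e_{t}-\e_{z},-1)}\subset\U_{\V_{1}}$, where the needed $t\in\T$ exists precisely because of the admissibility conditions (\ref{admissibilityconditions}); Corollary \ref{goodtrick} (with the full stabilisers $\U_{\V_{0}}\U_{\V_{1}}$, not merely $\mathbb{U}_{\V_{0}}\mathbb{U}_{\V_{1}}$) then places the resulting product inside $\pi(\C_{\gamma_{\mathscr{K}}*\nu})$. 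Matching the prescribed coefficients $k_{l\bar b}, k_{l\bar z}$ with those produced by the commutators is a square linear system, solvable only when a determinant is non-zero --- an open condition on the parameters. This is why only a \emph{dense subset} of $\pi'(\C_{\gamma_{w(\mathscr{K})}*\nu})$ lands in $\pi(\C_{\gamma_{\mathscr{K}}*\nu})$, and why the lemma is genuinely a statement about closures rather than images. Finally, your closing step (``specialising the extra parameters to $0$ recovers every point of $\pi(\C_{\gamma_{\mathscr{K}}*\nu})$'') argues the reverse containment, which is the content of Lemma \ref{secondcontention}; for the present lemma you must show that every generic point of the word-gallery image lies in the block-gallery image, not conversely.
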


We first need the following claim.

\begin{cl}
\label{firstcontentionfirstclaim}
$$\pi'(\C_{\gamma_{w(\mathscr{K})}*\nu}) \subset \U_{0} \mathbb{P}'''_{\bar{f}_{k+s}}\cdots \mathbb{P}'''_{\bar{f}_{1}} \T^{\geq 2k +r+s}_{\gamma_{w(\mathscr{K})}* \nu}$$
\noindent where
\begin{align}
\label{dwr-1}
\mathbb{P}'''_{\bar b} &= \underset{l<b}{\underset{l \notin \Z\cup \A \cup \B \cup \T;}{\prod}}\U_{(\e_{l}-\e_{b},0)}(k_{l\bar{b}}) \underset{t \in \T^{<b}}{\prod} \U_{(\e_{t}-\e_{b}, 0)}(k_{t\bar{b}})\underset{a\in \A^{<b}}{\prod} \U_{(\e_{a}-\e_{b},1)}(k_{a\bar{b}}) \hbox{ for } b \in \B\\
\label{dwr0}
\mathbb{P}'''_{\bar z} &= \underset{l<z}{\underset{l \notin \Z\cup \A \cup \B \cup \T;}{\prod}} \U_{(\e_{l}-\e_{z},-1)}(k_{l\bar{z}})\underset{t \in \T^{<z}}{\prod} \U_{(\e_{t}-\e_{z},-1)}(k_{t\bar{z}})\underset{b \in \B^{<z}}{\prod} \U_{(\e_{b}-\e_{z},-1)}(k_{b\bar{z}}) \hbox{ for } z \in \Z
\end{align}
\end{cl}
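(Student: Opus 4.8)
The plan is to describe the image $\pi'(\C_{\gamma_{w(\mathscr{K})}*\nu})$ explicitly using Corollary~\ref{goodtrick} and Proposition~\ref{order}, and then rearrange the resulting product of root subgroups into the asserted form by a sequence of Chevalley commutations (\ref{chevalley}), moving the ``dangerous'' factors to the right where they can be absorbed into the truncated image by Proposition~\ref{stable}. First I would write out the word $w(\mathscr{K}) = d_{1}\cdots d_{r+k}\,\bar f_{s+k}\cdots \bar f_{1}$, so that $\gamma_{w(\mathscr{K})}$ is a concatenation of $2k+r+s$ one-step galleries of fundamental type $\omega_{1}$, whose $i$-th vertex is $\V_{i} = \sum_{j\le i}\e_{x_{j}}$ with $x_{j}=d_{j}$ for $j\le r+k$ and $x_{r+k+j}=\bar f_{s+k+1-j}$. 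For each such vertex $\V_{i}\subset \E_{i}$ one computes $\Phi^{\gamma_{w(\mathscr{K})}}_{i} = \Phi^{+}_{(\V_{i},\E_{i})}$ directly from the definition: these are the positive real affine roots $(\alpha,n)$ vanishing on $\V_{i}$ but not on $\E_{i}$, i.e. $\<\alpha,\V_{i}\rr = n$ and $\<\alpha,\e_{x_{i+1}}\rr \ne 0$. By Corollary~\ref{goodtrick}, $\pi'(\C_{\gamma_{w(\mathscr{K})}*\nu}) = \mathbb{U}^{\gamma_{w(\mathscr{K})}*\nu}_{\V_{0}}\cdots \mathbb{U}^{\gamma_{w(\mathscr{K})}*\nu}_{\V_{2k+r+s-1}}\,\T^{\ge 2k+r+s}_{\gamma_{w(\mathscr{K})}*\nu}$, and by Proposition~\ref{order} each $\mathbb{U}_{\V_{i}}$ is a product over $\Phi^{\gamma_{w(\mathscr{K})}}_{i}$ of one-parameter subgroups in any fixed order.

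Next I would organise the product by which letter of $w(\mathscr{K})$ is being read. The leading factors $\mathbb{U}_{\V_{0}},\ldots$ coming from the unbarred letters $d_{1},\ldots,d_{r+k}$ all involve root subgroups $\U_{(\alpha,n)}$ with $\alpha\in\Phi^{+}$ vanishing on a sub-sum of the $\e_{d_{j}}$'s; since those vertices lie in the dominant chamber, a short computation (as in the analogous type-A argument of \cite{knuth} and in Example~\ref{examples}) shows these contribute root subgroups that either already lie in $\U_{0}$ or can be commuted to the far left into $\U_{0}$, which is legitimate because $\U_{0}$ normalises $\pi'(\C_{\cdot})$ on the left by Corollary~\ref{goodtrick}. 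This accounts for the leading $\U_{0}$ in the claimed expression. The core of the argument is then the analysis of the factors $\mathbb{P}'''_{\bar f_{s+k}},\ldots,\mathbb{P}'''_{\bar f_{1}}$ coming from the barred letters: for $\bar f_{i}$ read at vertex $\V_{r+k+(s+k-i)}$, one lists $\Phi^{+}_{(\V,\E)}$ and finds exactly the roots $\e_{l}-\e_{f_{i}}$ (for unbarred $l<f_{i}$) at the affine levels dictated by whether $f_{i}\in\B$ (level $0$, or level $1$ when $l\in\A$) or $f_{i}\in\Z$ (level $-1$), which is precisely how (\ref{dwr-1}) and (\ref{dwr0}) are defined; the only extra roots that appear are those with $l\in\Z\cup\A\cup\B\cup\T$ with $l>f_{i}$ or $l\in\B$, and these I would push to the right past the later $\mathbb{P}'''$ factors using (\ref{chevalley}) — the commutators produce only root subgroups already present or deeper ones — and finally absorb them into $\T^{\ge 2k+r+s}_{\gamma_{w(\mathscr{K})}*\nu}$ using Proposition~\ref{stable}, exactly as the term $\U_{(\e_{1}-\e_{2},-1)}(e)$ was absorbed in Example~\ref{examples}.

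The main obstacle I anticipate is purely bookkeeping: verifying that every Chevalley commutator $[\U_{(\alpha,m)},\U_{(\beta,n)}]$ arising in the rearrangement produces only root subgroups $\U_{(i\alpha+j\beta,\,im+jn)}$ that are \emph{either} among the $\mathbb{P}'''$-factors listed in (\ref{dwr-1})–(\ref{dwr0}) \emph{or} lie in the stabiliser $\U_{\mu_{r}}$ of the relevant special vertex (so that Proposition~\ref{stable} applies), and never a root subgroup that would fall ``outside'' and spoil the containment. Concretely one must check that for the roots $\alpha=\e_{l}-\e_{f_{i}}$, $\beta=\e_{l'}-\e_{f_{j}}$ occurring here, the only positive integer combinations $i\alpha+j\beta$ that are roots are $\alpha+\beta$ (when $f_{i}=l'$ or $f_{j}=l$, giving $\e_{l}-\e_{f_{j}}$ type roots) or the long roots $\e_{l}+\e_{l'}$, and that the resulting affine level is $\ge$ the level at which that root subgroup stabilises $[t^{\mu_{r}}]$ — this is where the admissibility conditions (\ref{admissibilityconditions}) on $\T$ relative to $\Z$, $\A$, $\B$ get used, just as in Lemma~\ref{dlem}. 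I would isolate this as a separate case analysis (the four cases $f_{i},f_{j}\in\B$ or $\Z$), each case being a one-line Chevalley computation, and everything else is then formal.
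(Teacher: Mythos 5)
Your skeleton --- describe the cell via Theorem \ref{celldescription} and Proposition \ref{order}, separate the generators already of the form (\ref{dwr-1})--(\ref{dwr0}) from the extra ones, and remove the extras by Chevalley commutation plus an absorption argument --- is the right one, but the direction in which you propose to absorb the extra barred-letter factors is wrong, and that step genuinely fails. At the step where the letter $\bar z$, $z\in\Z$, is read, the group $\mathbb{U}_{(\V,\E)}$ contains, besides the factors listed in (\ref{dwr0}), the generators $\U_{(\e_{a}-\e_{z},0)}(\ast)$ for $a\in\A^{<z}$ and $\U_{(\e_{z'}-\e_{z},0)}(\ast)$ for $z'\in\Z^{<z}$ (and at the step for $\bar b$, $b\in\B$, the extras are $\U_{(\e_{b'}-\e_{b},0)}$, $b'\in\B^{<b}$, and $\U_{(\e_{z}-\e_{b},1)}$, $z\in\Z^{<b}$; your description of the extra roots as those with ``$l>f_{i}$ or $l\in\B$'' is off --- there are no positive roots $\e_{l}-\e_{f_{i}}$ with $l>f_{i}$). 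You propose to push all of these to the right and absorb them into $\T^{\geq 2k+r+s}_{\gamma_{w(\mathscr{K})}*\nu}$ via Proposition \ref{stable}. That proposition only applies to elements of $\U_{\mu}$ for $\mu=\mu_{\gamma_{\mathscr{K}}}=\sum_{a\in\A}\e_{a}-\sum_{b\in\B}\e_{b}$, i.e.\ to $\U_{(\alpha,n)}$ with $n\geq\<\alpha,\mu\rr$. For $\alpha=\e_{a}-\e_{z}$ one has $\<\alpha,\mu\rr=1>0$, so $\U_{(\e_{a}-\e_{z},0)}$ does \emph{not} stabilise the truncated image; this factor is genuinely present whenever $\A^{<z}\neq\emptyset$ and cannot be disposed of on the right. (It is no accident that your model case from Example \ref{examples} worked: there the pairing of the root with the endpoint coweight was $\leq$ the level.)

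The correct move, and the one the paper makes, is the opposite: all of these extra factors are positive root subgroups at levels $\geq 0$, hence lie in $\U_{0}$, and one commutes them to the \emph{left}, collecting them into the prefix $\U_{0}$ of the asserted containment. The bookkeeping is then for the leftward pass: these $\U_{0}$-factors commute with all the level-$(-1)$ and level-$0$ factors of $\mathbb{P}'''_{\bar f_{j}}$ standing to their left, with the single exception that $\U_{(\e_{b'}-\e_{b},0)}$ and $\U_{(\e_{z}-\e_{b},1)}$ do not commute with $\U_{(\e_{b}-\e_{z'},-1)}$ for $z'\in\Z^{>b}$; the resulting Chevalley commutators are $\U_{(\e_{z}-\e_{z'},0)}(\ast)$, which is again in $\U_{0}$ and continues left, and $\U_{(\e_{b'}-\e_{z'},-1)}(\ast)$, which is of a shape already present in $\mathbb{P}'''_{\bar z'}$ and is harmless because the parameters $k_{ij}$ there are arbitrary. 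So you should redo your case analysis for leftward rather than rightward commutations; as written, your plan both loses the factor $\U_{(\e_{a}-\e_{z},0)}$ and would deposit commutator terms to the right of the $\mathbb{P}'''$-factor they belong to.
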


\begin{proof}[Proof of Claim \ref{firstcontentionfirstclaim}]
\noindent
The points of $\pi'(\C_{\gamma_{w(\mathscr{K})}*\nu})$ are of the form 

\begin{align}
\label{dwr1}
\mathbb{P}_{d_{1}}\cdots \mathbb{P}_{d_{r+k}}\mathbb{P}_{\bar{f}_{k+s}}\cdots \mathbb{P}_{\bar{f}_{1}} \T^{\geq 2k +r+s}_{\gamma_{w(\mathscr{K})}* \nu}
\end{align}

\noindent
where
\small{
\begin{align}
\label{dwr2}
&\mathbb{P}_{d} = \U_{(\e_{d}, 0)}(g_{d}) \underset{d<l\leq n}{\prod} \U_{(\e_{d}-\e_{l}, 0)}(g_{d\bar l}) \underset{l \notin (\Z\cup \A)^{<d}}{\prod} \U_{(\e_{d}+\e_{l}, 0)}(g_{dl})\underset{l\in (\Z\cup \A)^{<d}}{\prod} \U_{(\e_{d}+\e_{l}, 1)}(g^{1}_{dl})\\
\label{dwr3}
&\mathbb{P}_{\bar b} =
\s_{\bar b}\overbrace{ \underset{l<b}{\underset{l \notin \Z\cup \A \cup \B \cup \T;}{\prod}}\U_{(\e_{l}-\e_{b},0)}(g_{l\bar{b}}) \underset{t \in \T^{<b}}{\prod} \U_{(\e_{t}-\e_{b}, 0)}(g_{t\bar{b}})\underset{a\in \A^{<b}}{\prod} \U_{(\e_{a}-\e_{b},1)}(g_{a\bar{b}})}^{= \mathbb{P}^{iv}_{\bar b}}\\
\label{dwr4}
& \s_{\bar b} =\underset{b' \in \B^{<b}}{\prod} \U_{(\e_{b'}-\e_{b},0)}(g_{b'\bar{b}})\underset{z \in \Z^{<b}}{\prod} \U_{(\e_{z}-\e_{b},1)}(g^{1}_{z\bar{b}}) \in \U_0\\
\label{dwr5}
&\mathbb{P}_{\bar {z}} = \J_{\bar z}\underbrace{\underset{l<z}{\underset{l \notin \Z\cup \A \cup \B \cup \T;}{\prod}} \U_{(\e_{l}-\e_{z},-1)}(g_{l\bar{z}})\underset{t \in \T^{<z}}{\prod} \U_{(\e_{t}-\e_{z},-1)}(g_{t\bar{z}})\underset{b \in \B^{<z}}{\prod} \U_{(\e_{b}-\e_{z},-1)}(g_{b\bar{z}})}_{= \mathbb{P}^{iv}_{\bar z}} \\
\label{dwr6}
&\J_{\bar z} = \underset{a \in\A^{<z}}{\prod} \U_{(\e_{a}-\e_{z},0)}(g_{a\bar{z}})\underset{z'\in \Z^{<z}}{\prod} \U_{(\e_{z'}-\e_{z},0)}(g_{z'\bar{z}}) \in \U_{0}.
\end{align}
}
\normalsize

\noindent
for $d \in \A \cup \Z$, $z \in \Z,$ and $b \in \B$. All the terms in $\J_{\bar z}$ commute with $\mathbb{P}^{iv}_{z'}$ for $z' \in \Z^{>z}$ and with $\mathbb{P}^{iv}_{\bar b}$ for $b \in \B^{>z}$. All the terms in $\s_{\bar b}$ commute with $\mathbb{P}^{iv}_{\bar b'}$ for $b' \in \B^{>b}$. For $z' > b$ it commutes with all terms of $\mathbb{P}^{iv}_{\bar z'}$ except for the term $\U_{(\e_{b}-\e_{z'}, -1)}(g_{b \bar z'})$. However, commuting $\s_{\bar b}$ with this term (using Chevalley's commutator formula \ref{chevalley}) produces terms $\U_{(\e_{z}-\e_{z'}, 0)}(*)$ and $\U_{(\e_{b'}-\e_{z'}, -1)}(*)$. Out of these terms, $\U_{(\e_{z}-\e_{z'}, 0)}(*)$ commutes with $\mathbb{P}^{iv}_{z'}$ for $z' \in \Z^{>z}$ and with $\mathbb{P}^{iv}_{\bar b}$ for $b \in \B^{>z}$, and $\U_{(\e_{b'}-\e_{z'}, -1)}(*)$ is a term of the form of those appearing in $\mathbb{P}^{iv}_{\bar z}$. Therefore (and since the the terms that appear in $\mathbb{P}^{iv}_{\bar b}$ and $\mathbb{P}^{iv}_{\bar z}$ are the same as $\mathbb{P}^{''}_{\bar b}$ and $\mathbb{P}^{''}_{\bar z}$ respectively) concludes the proof of Claim \ref{firstcontentionfirstclaim}.

\end{proof}

\begin{cl}
\label{firstcontentionsecondclaim}
There is a dense subset of  $\mathbb{P}'''_{\bar{f}_{k+s}}\cdots \mathbb{P}'''_{\bar{f}_{1}}\T^{\geq 2k +r+s}_{\gamma_{w(\mathscr{K})}* \nu}$ that is contained in the subset

\begin{align*}
&\mathbb{P}_{\T, \B}\mathbb{P}_{\mathscr{K},\bar{f}_{s}}\cdots \mathbb{P}_{\mathscr{K}, \bar{f}_{s}}\T^{\geq 2k +r+s}_{\gamma_{w(\mathscr{K})}* \nu} \subset \overline{\pi(\C_{\gamma_{\mathscr{K}}*\nu})},
\end{align*}
\noindent where

\begin{align*}
&\mathbb{P}_{\T, \B} = \underset{ t \in \T l < t}{\underset{l\notin \Z\cup \A\cup \B\cup \T,}{\prod}}\U_{(\e_{l}-\e_{t},0)}(v_{l\bar t}) \underset{b \in \B, l < b}{\underset{l\notin \Z\cup \A\cup \B\cup \T,}{\prod}} \U_{(\e_{l}-\e_{b},0)}(v_{l\bar b}) \in \U_{\V_{0}}\\
&\mathbb{P}_{\mathscr{K}, \bar b} =  \underset{ t \in \T^{<b}}{\underset{b \in \B;}{\prod}} \U_{(\e_{t}-\e_{b}, 0)}(v_{t\bar{b}})\underset{a \in \A^{<b}}{\prod} \U_{(\e_{a}-\e_{b},1)}(v_{a\bar{b}}) \in\U_{\V_1}\\
&\mathbb{P}_{\mathscr{K}, \bar z} =\underset{t\in \T^{<z}}{\prod} \U_{(\e_{t}-\e_{z},-1)}(v_{t\bar{z}}) \underset{b \in \B^{<z}}{\prod} \U_{(\e_{b}-\e_{z},-1)}(v_{b\bar{z}}) \in \U_{\V_1}, 
\end{align*}

\noindent
for $v_{ij} \in \mathbb{C}, b \in \B$ and $z \in \Z$. (It is indeed a subset by Corollary \ref{goodtrick}.)
\end{cl}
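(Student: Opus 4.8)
The plan is to prove the desired density statement by an explicit rewriting of the product $\mathbb{P}'''_{\bar{f}_{k+s}}\cdots \mathbb{P}'''_{\bar{f}_{1}}$, moving the unwanted root-subgroup factors past the $\T$-truncation using the stability result of Proposition \ref{stable}, and absorbing the leftover commutators into factors that are visibly of the type occurring in $\pi(\C_{\gamma_{\mathscr{K}}*\nu})$. First I would recall, from Claim \ref{firstcontentionfirstclaim} and the description of $\gamma_{\mathscr{K}}$ via Corollary \ref{goodtrick}, exactly which factors $\U_{(\e_l-\e_b,\ast)}$, $\U_{(\e_t-\e_z,-1)}$, etc.\ appear in $\pi(\C_{\gamma_{\mathscr{K}}*\nu})$: these are precisely the factors parametrized by $\mathbb{P}_{\T,\B}$, $\mathbb{P}_{\mathscr{K},\bar b}$, $\mathbb{P}_{\mathscr{K},\bar z}$ above, grouped according to whether the defining vertex is $\V_0$ or $\V_1$. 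The content of the claim is then that the $\mathbb{P}'''$-factors, which live in the single group $\mathbb{U}_{\V_0}$ (there is no intermediate vertex for a word gallery), can be spread out over the two vertices of $\gamma_{\mathscr{K}}$ after clearing away, modulo the $\T$-stabilizer, the factors $\U_{(\e_l-\e_b,0)}$ and $\U_{(\e_l-\e_z,-1)}$ with $l\notin\Z\cup\A\cup\B\cup\T$.

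The key steps, in order, are: (1) isolate in each $\mathbb{P}'''_{\bar b}$ (resp.\ $\mathbb{P}'''_{\bar z}$) the ``environmental'' factors $\prod_{l<b,\ l\notin\Z\cup\A\cup\B\cup\T}\U_{(\e_l-\e_b,0)}(k_{l\bar b})$ (resp.\ $\prod_{l<z}\U_{(\e_l-\e_z,-1)}(k_{l\bar z})$) and collect all of them, over all $b\in\B$ and $z\in\Z$, into a single product; using Proposition \ref{order} to fix a convenient total order, these environmental factors can be commuted to the left of everything else, the resulting Chevalley commutators $\U_{i\alpha+j\beta}(\ast)$ either being again of the environmental form or of the $\mathbb{P}_{\mathscr{K},\ast}$ form, so that nothing new is introduced. (2) Observe that each such collected environmental factor $\U_{(\e_l-\e_z,-1)}$ or $\U_{(\e_l-\e_b,0)}$ lies in $\U_{\mu_r}$ for the appropriate special vertex $\mu_r$ along $\gamma_{w(\mathscr{K})}*\nu$ — this is where the inequality $l<b$ (resp.\ $l<z$) and $l\notin\Z\cup\A\cup\B\cup\T$ is used, to guarantee the affine-root inequality $\langle\e_l-\e_b,\mu_r\rangle\le -1$ of Proposition \ref{stable} — and hence, by Proposition \ref{stable}, absorb it into $\T^{\geq 2k+r+s}_{\gamma_{w(\mathscr{K})}*\nu}$; this is exactly the mechanism used in Example \ref{examples} with the factor $\U_{(\e_1-\e_2,-1)}(e)$. (3) What remains is precisely $\mathbb{P}_{\T,\B}$ (the environmental factors attached to $t\in\T$ and $b\in\B$, living in $\mathbb{U}_{\V_0}$) followed by the $\mathbb{P}_{\mathscr{K},\bar f_i}$-factors (attached to $\T$ and $\A$, living in $\mathbb{U}_{\V_1}$), which by Corollary \ref{goodtrick} is a subset of $\pi(\C_{\gamma_{\mathscr{K}}*\nu})$; taking closures gives the stated inclusion. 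The phrase ``dense subset'' enters because Step (1) requires certain structure constants $c^{i,j}_{\ast,\ast}$ to be nonzero, so the rewriting is valid on the open locus where the relevant parameters are nonzero.

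The main obstacle, I expect, is Step (1): bookkeeping the Chevalley commutators to show the rewriting is closed, i.e.\ that commuting an environmental factor $\U_{(\e_l-\e_b,0)}$ past a factor $\U_{(\e_t-\e_z,-1)}$ or $\U_{(\e_a-\e_b,1)}$ produces only terms $\U_{i\alpha+j\beta}(\ast)$ whose affine roots are again either environmental (so eventually absorbed in Step (2)) or of $\mathbb{P}_{\mathscr{K},\ast}$-type, and crucially never a factor that would escape $\pi(\C_{\gamma_{\mathscr{K}}*\nu})$. This is a finite but delicate case analysis governed by the sign and index constraints in Definition \ref{lsblock} — in particular the admissibility conditions (\ref{admissibilityconditions}) relating $\T$, $\Z$, $\A$, $\B$ — and by the fact that $2k+r+s\le n$ forces all the roots involved to be genuine roots of type $\C_n$. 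I would organize this by first checking the pairwise commutators among the three families appearing in the $\mathbb{P}'''_{\bar b}$'s, then those among the $\mathbb{P}'''_{\bar z}$'s, then the mixed $\bar b$–$\bar z$ commutators (the same analysis already sketched in the proof of Claim \ref{firstcontentionfirstclaim}), and record at each step which family the output lands in.
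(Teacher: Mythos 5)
Your overall strategy (rewrite one side, push stabilizing factors into the truncated image via Proposition \ref{stable}, track Chevalley commutators) is the right general toolkit, but Step (2) of your plan contains a genuine error, and it is precisely the point where the real work of the claim lies. The truncated image $\T^{\geq 2k+r+s}_{\gamma_{w(\mathscr{K})}*\nu}$ is stable under $\U_{\mu}$ for $\mu=\mu_{\gamma_{\mathscr{K}}}=\sum_{a\in\A}\e_{a}-\sum_{b\in\B}\e_{b}$, i.e.\ under the subgroups $\U_{(\alpha,n)}$ with $\<\alpha,\mu\rr\leq n$. For an ``environmental'' factor $\U_{(\e_{l}-\e_{z},-1)}$ with $l\notin\Z\cup\A\cup\B\cup\T$ and $z\in\Z$ one has $\<\e_{l}-\e_{z},\mu\rr=0\not\leq -1$, so these factors do \emph{not} lie in $\U_{\mu}$ and cannot be absorbed into the truncation; the analogy with Example \ref{examples} breaks down because there the absorbed factor $\U_{(\e_{1}-\e_{2},-1)}(e)$ satisfied $\<\e_{1}-\e_{2},\e_{2}\rr=-1$, whereas here both $l$ and $z$ pair to zero with the endpoint. (The factors $\U_{(\e_{l}-\e_{b},0)}$ with $b\in\B$ likewise satisfy $\<\e_{l}-\e_{b},\mu\rr=+1$ and are not absorbable, though these at least appear verbatim in $\mathbb{P}_{\T,\B}$.)

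Because of this, the level $-1$ environmental factors cannot be discarded: in the paper's proof they are \emph{generated} as Chevalley commutators of the $\U_{(\e_{l}-\e_{t},0)}$-factors of $\mathbb{P}_{\T,\B}$ (attached to $t\in\T$) with the $\U_{(\e_{t}-\e_{z},-1)}$-factors of $\mathbb{P}_{\mathscr{K},\bar z}$, and matching the given parameters $k_{l\bar b},k_{l\bar z}$ amounts to solving the linear system (\ref{fceq1})--(\ref{fceq3}) in the unknowns $v_{l\bar t},v_{l\bar b}$. The system is square exactly because of the maximality property defining $\T$ in an LS block (Definition \ref{lsblock}), and it is solvable precisely when the determinant of its coefficient matrix (which depends on the $k_{ij}$) is non-zero -- this open condition on the $k_{ij}$ is the true source of the phrase ``dense subset.'' Your proposal instead locates the density in non-vanishing of structure constants $c^{i,j}_{\ast,\ast}$, which are fixed non-zero integers and impose no condition at all; with the absorption step invalid and the density mechanism misattributed, the argument as written does not close.
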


\noindent
Note that $\T^{\geq 2k +r+s}_{\gamma_{w(\mathscr{K})}* \nu} = \T^{\geq 2}_{\gamma_{\mathscr{K}}* \nu}$ and that $$u =  \underset{ t \in \T l < t}{\underset{l\notin \Z\cup \A\cup \B\cup \T,}{\prod}}\U_{(\e_{l}-\e_{t},0)}(v_{l\bar t}) \in \U_{\mu_{\gamma_{\mathscr{K}}}}.$$
\noindent We have the following equalities
%%%%%%%%%%%%%%%%%%%%%%%%%%%%%%%%%%%%%%%%%%%%%%%%%
\begin{align*}
&\mathbb{P}_{\T, \B}\mathbb{P}_{\mathscr{K}, \bar{f}_{s}}\cdots \mathbb{P}_{\mathscr{K},\bar{f}_{s}}\T^{\geq 2k +r+s}_{\gamma_{w(\mathscr{K})}* \nu} =\\
&\mathbb{P}''_{\bar{f}_{s}}\cdots \mathbb{P}''_{\bar{f}_{s}}u\T^{\geq 2}_{\gamma_{\mathscr{K}}* \nu} = \\
&\mathbb{P}''_{\bar{f}_{s}}\cdots \mathbb{P}''_{\bar{f}_{s}}\T^{\geq 2}_{\gamma_{\mathscr{K}}* \nu}
\end{align*}
\noindent
where, for $z \in \Z$ and $b \in \B$:
%%%%%%%%%%%%%%%%%%%%%%%%%%%%%%%%%%%%%%%%%%%%%%%%%
\begin{align*}
&\mathbb{P}''_{\bar b}= \underset{l<b}{\underset{l \notin \Z\cup \A \cup \B \cup \T;}{\prod}}\U_{(\e_{l}-\e_{b},0)}(\xi_{l\bar{b}}) \underset{t \in \T^{<b}}{\prod} \U_{(\e_{t}-\e_{b}, 0)}(\xi_{t\bar{b}})\underset{a\in \A^{<b}}{\prod} \U_{(\e_{a}-\e_{b},1)}(\xi_{a\bar{b}}) \hbox{ c.f (\ref{dwr-1})}\\
&\mathbb{P}''_{\bar z} = \underset{l<z}{\underset{l \notin \Z\cup \A \cup \B \cup \T;}{\prod}} \U_{(\e_{l}-\e_{z},-1)}(\xi_{l\bar{z}})\underset{t \in \T^{<z}}{\prod} \U_{(\e_{t}-\e_{z},-1)}(\xi_{t\bar{z}})\underset{b \in \B^{<z}}{\prod} \U_{(\e_{b}-\e_{z},-1)}(\xi_{b\bar{z}}) \hbox{ c.f (\ref{dwr0})}\\
&\xi_{l\bar{b}} = v_{l\bar {b}} + \underset{t \in \T}{\underset{l<t<b,}{\sum}} c^{1,1}_{s\bar t, t \bar{b}}(-v_{l\bar t})v_{t \bar{b}}\\
& \xi_{l \bar z} = \rho_{l \bar z} + \underset{z' \in \Z}{\sum} c^{1,1}_{l\bar{z}', z'\bar{z}}(-\rho_{l\bar{z}'})v_{z'\bar z} + \underset{b \in \B}{\underset{l<b<z,}{\sum}}c^{1,1}_{l\bar b , b \bar z}(-\xi_{l\bar b})v_{b \bar z} \\
& \rho_{l\bar{z}} = \underset{ t \in \T}{\sum_{l<t<z,}}c^{1,1}_{l\bar{t},t\bar{z}}(-v_{l\bar{t}})v_{t\bar{z}} \hbox{ (for }z \in \Z)\\
& \xi_{t \bar z} = v_{t \bar z} \\
& \xi_{b \bar z} = v_{b \bar z}\\
& \xi_{t \bar b} = v_{t \bar b}.
\end{align*}
\noindent
To prove Claim \ref{firstcontentionsecondclaim} we must set open conditions on the parameters $k_{ij}$ such that the system of equations defined by $v_{ij} = \xi_{ij}$ has a solution in the variables $v_{ij}$. Setting $v_{t \bar z}: = k_{t \bar z}$ and $v_{b \bar z}: = k_{b \bar z}$ this is reduced to setting conditions on the $k_{ij}$ so that the following system can be solved:
%%%%%%%%%%%%%%%%%%%%%%%%%%%%%%%%%%%%%%%%%%%%%%%%
\begin{align}
\label{fceq1}
&k_{l\bar{b}} = v_{l\bar {b}} + \underset{l<t<b, t \in \T}{\sum}c^{1,1}_{l\bar t, t \bar{b}}(-v_{l\bar t})k_{t \bar{b}}\\
\label{fceq2}
& k_{l \bar z} = \rho_{l \bar z} - \underset{l<b<z, b \in \B}{\sum}c^{1,1}_{l\bar b , b \bar z}(v_{l\bar {b}} + \underset{t \in \T}{\underset{l<t<b,}{\sum}}c^{1,1}_{l\bar t, t \bar{b}}(-v_{l\bar t})k_{t \bar{b}})k_{b \bar z} \\
\label{fceq3}
& \rho_{l\bar{z}} = \sum_{l<t<z, t \in \T}c^{1,1}_{l\bar{t},t\bar{z}}(-v_{l\bar{t}})k_{t\bar{z}}.
\end{align}
\noindent
Lines (\ref{fceq1}) and (\ref{fceq2}) above define a linear system of as many equations as variables: the variables are $\{v_{l\bar b}\}_{l \notin \A \cup \B \cup \T; b \in \B^{>l}}\cup 
\{v_{l\bar t}\}_{l \notin \A\cup \B \cup \Z\cup \T; t  \in \T^{>l}}$, there is one equation for each $l\bar b, l \notin \A \cup \B \cup \T; b \in \B^{>l}$, for each $l\bar z, l \notin \A \cup \B \cup \T; z \in \Z^{>l}$, and note that by definition of an LS block the sets $\{ l\bar z, l \notin \A \cup \B \cup \T; z \in \Z^{>l} \}$ and $\{ l\bar t, s \notin \A \cup \B \cup \T; b \in \B^{>l} \}$ have the same cardinality ($t_{i}$ is the maximal element of the set $\{l \notin \A \cup \B \cup \T, s<t_{i+1}, s< z_{i}\}$). Therefore the system has a solution as long as the matrix of coefficients has non-zero determinant, which imposes open conditions on the $k_{ij}'s$. Hence Claim \ref{firstcontentionsecondclaim} is proven. Now, to finish the proof of Lemma \ref{firstcontention}, note that if the $k_{ij}'s$ satisfy the open conditions established by Claim \ref{firstcontentionsecondclaim}, then 
%%%%%%%%%%%%%%%%%%%%%%%%%%%%%%%%%%%%%%%%%%%%%%%%%
\begin{align*}
\mathbb{P}'''_{\bar{f}_{k+s}}\cdots \mathbb{P}'''_{\bar{f}_{1}} \T^{\geq 2k +r+s}_{\gamma_{w(\K)}* \nu} \subseteq \pi (\C_{\gamma_{\mathscr{K}}*\nu}),
\end{align*}

\noindent
and therefore Proposition \ref{stable} implies that

\begin{align*}
\U_{0}\mathbb{P}'''_{\bar{f}_{k+s}}\cdots \mathbb{P}'''_{\bar{f}_{1}} \T^{\geq 2k +r+s}_{\gamma_{w(\K)}* \nu} \subseteq \pi (\C_{\gamma_{\mathscr{K}}*\nu}),
\end{align*}
%%%%%%%%%%%%%%%%%%%%%%%%%%%%%%%%%%%%%%%%%%%%%%%%%

\noindent
which implies Lemma \ref{firstcontention}. Now we show the second contention towards Proposition \ref{densewordreading}.

\begin{lem}
\label{secondcontention}
Let $\nu$ be a combinatorial gallery and $\mathscr{K}$ be an LS block. Then 
\begin{align*}
 \overline{\pi (\C_{\gamma_{\mathscr{K}}*\nu})}\subseteq \overline{\pi'(\C_{\gamma_{w(\mathscr{K})}*\nu})} 
\end{align*}
\end{lem}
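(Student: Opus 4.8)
The plan is to run the argument of Lemma \ref{firstcontention} and of Example \ref{examples} in reverse: starting from the explicit parametrisation of $\pi(\C_{\gamma_{\mathscr{K}}*\nu})$ furnished by Proposition \ref{order}, I would rewrite a general point of it, through repeated application of Chevalley's commutator formula (\ref{chevalley}), as a point of the analogous parametrisation of $\pi'(\C_{\gamma_{w(\mathscr{K})}*\nu})$, modulo factors that act trivially. As in the proof of Lemma \ref{firstcontention}, one first reduces to $\gamma = \emptyset$ using that $(\alpha,n) \in \Phi^{\gamma*\eta}_{k+1}$ iff $(\alpha, n - \langle\alpha,\mu_\gamma\rangle) \in \Phi^{\eta}_0$. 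Write $\gamma_{\mathscr{K}} = (\V_0, \E_0, \V_1, \E_1, \V_2)$, so $\V_1 = \tfrac12\big(\sum_{t\in\T}\e_t + \sum_{a\in\A}\e_a - \sum_{z\in\Z}\e_z - \sum_{b\in\B}\e_b\big)$ and $\V_2 = \mu := \sum_{a\in\A}\e_a - \sum_{b\in\B}\e_b$; since $\gamma_{w(\mathscr{K})}$ also ends at the (special) vertex $\mu$, we have $\T^{\geq 2}_{\gamma_{\mathscr{K}}*\nu} = \T^{\geq 2k+r+s}_{\gamma_{w(\mathscr{K})}*\nu}$.

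First I would make both parametrisations explicit via Proposition \ref{order}. A general point of $\pi(\C_{\gamma_{\mathscr{K}}*\nu})$ is $\mathbb{P}_{\V_0}\mathbb{P}_{\V_1}\T^{\geq 2}_{\gamma_{\mathscr{K}}*\nu}$, where $\mathbb{P}_{\V_0}$ is an ordered product of factors $\U_{(\alpha,0)}(\ast)$ over the $\alpha \in \Phi^+$ with $\langle\alpha,\V_1\rangle > 0$, and $\mathbb{P}_{\V_1}$ an ordered product of factors $\U_{(\alpha,n)}(\ast)$ over the pairs with $\langle\alpha,\V_1\rangle = n$ and $\langle\alpha,\V_2\rangle > n$; these $\alpha$ are roots $\e_l \pm \e_m$ and $\e_l$ with one index in $\A\cup\Z$, the level $0$ or $1$ according as the other index does or does not lie in $\A\cup\Z$, so that the factors have the same shape as those in (\ref{dwr2})--(\ref{dwr6}). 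A general point of $\pi'(\C_{\gamma_{w(\mathscr{K})}*\nu})$ is, by the same proposition, $\mathbb{P}_{d_1}\cdots\mathbb{P}_{d_{r+k}}\mathbb{P}_{\bar f_{s+k}}\cdots\mathbb{P}_{\bar f_1}\T^{\geq 2k+r+s}_{\gamma_{w(\mathscr{K})}*\nu}$ with $\mathbb{P}_d$, $\mathbb{P}_{\bar f}$ as in (\ref{dwr2})--(\ref{dwr6}).

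The core of the proof is then to order $\Phi^+$ compatibly with the vertex sequence of $\gamma_{w(\mathscr{K})}$ and to sort the factors of $\mathbb{P}_{\V_0}\mathbb{P}_{\V_1}$ into this order, applying (\ref{chevalley}) at each exchange. As in the analysis following Claim \ref{firstcontentionfirstclaim}, the combinatorics of the LS block --- in particular the maximality conditions (\ref{admissibilityconditions}) that characterise $\T$ --- force every commutator term that arises to fall into one of three harmless classes: (i) a root subgroup already occurring in some $\mathbb{P}_d$ or $\mathbb{P}_{\bar f}$, absorbed by updating parameters; (ii) a root subgroup lying in $\U_{\V_0} = \U_0$, moved to the far left and discarded, since $u\,\pi(\C_{\gamma_{\mathscr{K}}*\nu}) = \pi(\C_{\gamma_{\mathscr{K}}*\nu})$ and $u\,\pi'(\C_{\gamma_{w(\mathscr{K})}*\nu}) = \pi'(\C_{\gamma_{w(\mathscr{K})}*\nu})$ for $u \in \U_0$ by Corollary \ref{goodtrick}; or (iii) a root subgroup $\U_{(\beta,m)}$ with $\mu \in \mm^-_{(\beta,m)}$, hence contained in $\U_\mu$, moved to the far right and absorbed into the truncated image by Proposition \ref{stable}. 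Carrying this out yields $\mathbb{P}_{\V_0}\mathbb{P}_{\V_1}\T^{\geq 2}_{\gamma_{\mathscr{K}}*\nu} = \mathbb{P}_{d_1}\cdots\mathbb{P}_{d_{r+k}}\mathbb{P}_{\bar f_{s+k}}\cdots\mathbb{P}_{\bar f_1}\T^{\geq 2k+r+s}_{\gamma_{w(\mathscr{K})}*\nu}$ for word-gallery parameters depending polynomially on the original ones and on the (class (ii)/(iii)) auxiliary ones, the latter set to zero.

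Finally I would check that this change of variables is generically invertible --- its linear part is triangular with respect to the chosen ordering, just as the linear system in Claim \ref{firstcontentionsecondclaim} --- hence defines a dominant morphism, so its image contains a dense open subset of $\pi(\C_{\gamma_{\mathscr{K}}*\nu})$. Thus a dense subset of $\pi(\C_{\gamma_{\mathscr{K}}*\nu})$ lies in $\pi'(\C_{\gamma_{w(\mathscr{K})}*\nu})$, and taking closures gives $\overline{\pi(\C_{\gamma_{\mathscr{K}}*\nu})} \subseteq \overline{\pi'(\C_{\gamma_{w(\mathscr{K})}*\nu})}$. The main obstacle is the bookkeeping in the third paragraph: proving that the iterated commutators never leave classes (i)--(iii). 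This is the same delicate computation as the one underlying Claim \ref{firstcontentionfirstclaim}, but with the roles of the block gallery and the word gallery exchanged, and it is exactly here that the structure of an LS block --- the admissibility of $\T$ in (\ref{admissibilityconditions}) --- is indispensable; for a general, non-admissible pair of columns such bad terms would appear, which is the combinatorial shadow of relation (R3) and is handled separately in Proposition \ref{denseplacticrelations}.
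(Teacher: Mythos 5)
Your overall strategy (parametrise both cells via Proposition \ref{order}, sort with Chevalley's formula, discard factors stabilising the image or the truncated image) is the right general framework, but the classification of factors into your three classes fails exactly at the crux of this lemma, so as written the argument has a genuine gap. Work out which generators of $\mathbb{U}^{\gamma_{\mathscr{K}}*\nu}_{1}$ fail to lie in $\U_{0}$: with $\V_{1} = \tfrac12\bigl(\sum_{a\in\A}\e_{a}+\sum_{z\in\Z}\e_{z}-\sum_{b\in\B}\e_{b}-\sum_{t\in\T}\e_{t}\bigr)$ (note your formula swaps $\T$ and $\Z$), they are $\U_{(\e_{t}-\e_{z},-1)}$ for $t\in\T$, $z\in\Z^{>t}$, and $\U_{(\e_{t}+\e_{t'},-1)}$ for $t,t'\in\T$. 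The first kind does occur among the word-gallery factors (inside $\mathbb{P}_{\bar z}$), so it is in your class (i). But $\U_{(\e_{t}+\e_{t'},-1)}$ is in none of your classes: $(\e_{t}+\e_{t'},-1)$ does not appear in any $\mathbb{P}_{d}$ or $\mathbb{P}_{\bar f}$; it is not in $\U_{0}$ since its level is $-1$; and it is not in $\U_{\mu}$ since $\langle\e_{t}+\e_{t'},\mu\rangle=0>-1$ (as $\T$ is disjoint from $\A\cup\B$). These are original factors of the block parametrisation, not commutator by-products, so no reordering makes them disappear or turn into word-gallery factors.

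The missing idea is that these factors must be \emph{manufactured} on the word-gallery side: one starts from the subset $\prod\U_{(\e_{z}+\e_{t'},0)}(k_{zt'})\prod\U_{(\e_{t}-\e_{z},-1)}(k_{t\bar z})\,\T^{\geq 2k+r+s}_{\gamma_{w(\mathscr{K})}*\nu}$ of $\pi'(\C_{\gamma_{w(\mathscr{K})}*\nu})$ (the first product lies in $\U_{0}$, the second among the cell's generators), and commuting the two products produces exactly the terms $\U_{(\e_{t}+\e_{t'},-1)}(\xi_{tt'})$ with $\xi_{tt'}$ bilinear in the $k_{zt'}$ and $k_{t\bar z}$. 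Matching $\xi_{tt'}=v_{tt'}$ with $k_{t\bar z}=v_{t\bar z}$ is then an underdetermined linear system in the $k_{zt'}$, solvable precisely when certain determinants in the $v_{t\bar z}$ are non-zero --- and the LS condition (\ref{admissibilityconditions}) guarantees the requisite variables exist. This genericity in the $v_{t\bar z}$, not invertibility of a triangular change of variables, is the reason the conclusion holds only after taking closures. Without this conjugation step your sorting procedure cannot reach the points of $\pi(\C_{\gamma_{\mathscr{K}}*\nu})$ with $v_{tt'}\neq 0$.
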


\noindent
Recall that 
\begin{align*}
\pi (\C_{\gamma_{\mathscr{K}}*\nu}) = \mathbb{U}^{\gamma_{\mathscr{K}}*\nu}_{0}\mathbb{U}^{\gamma_{\mathscr{K}}*\nu}_{1}\T^{\geq 2}_{\gamma_{\mathscr{K}}*\nu}.
 \end{align*}
 Notice that $ \mathbb{U}^{\gamma_{\mathscr{K}}*\nu}_{0} \subset \U_{0}$ and that all generators of $ \mathbb{U}^{\gamma_{\mathscr{K}}*\nu}_{1}$ also belong to $\U_{0}$ except for  those of the form $\U_{(\e_{t}-\e_{z},-1)}(v_{t\bar z})$ or $\U_{(\e_{t}+\e_{t'},-1)}(v_{tt'})$ for $t, t' \in \T, z \in \Z^{>t}$, and $v_{t \bar z}, v_{tt'} \in \mathbb{C}$. Hence, since, again, $\T^{\geq 2}_{\gamma_{\mathscr{K}}*\nu} = \T^{\geq 2k+r+s}_{\gamma_{w(\mathscr{K})}*\nu}$ all elements of $\pi(\C_{\gamma_{\mathscr{K}}*\nu})$ belong to 
%%%%%%%%%%%%%%%%%%%%%%%%%%%%%%%%%%%%%%%%%%%%%%%%%%%%%%%%%%%
\begin{align}
\label{dwr9}
\U_{0} \underset{z \in \Z^{>t}}{\underset{ t \in \T}{\prod}}\U_{(\e_{t}-\e_{z}, -1)}(v_{t\bar z})\underset{ t, t' \in \T}{\prod} \U_{(\e_{t}+\e_{t'}, -1)}(v_{tt'}) \T^{\geq 2k+r+s}_{\gamma_{w(\mathscr{K})}*\nu}.
\end{align}

\noindent
Now consider
\begin{align*}
\underset{t' \in \T, z \in \Z}{\prod}\U_{(\e_{z}+\e_{t'}, 0)}(k_{zt'})\underset{t \in \T, z \in \Z^{>t}}{\prod}\U_{(\e_{t}-\e_{z}, -1)}(k_{t\bar z}) \T^{\geq 2k+r+s}_{\gamma_{w(\mathscr{K})}*\nu}
\end{align*}
which is a subset of $\pi'(\C_{\gamma_{w(\mathscr{K})}*\nu})$ (by Proposition \ref{stable}) because 
\begin{align*}
&\underset{t \in \T, z \in \Z}{\prod}\U_{(\e_{z}+\e_{t}, 0)}(k_{zt}) \in \U_{0} \hbox{ and  }\\
&\underset{z \in \Z^{>t}}{\underset{ t \in \T}{\prod}} \U_{(\e_{t}-\e_{z}, -1)}(k_{t\bar z}) \T^{\geq 2k+r+s}_{\gamma_{w(\mathscr{K})}*\nu} \subset \pi'(\C_{\gamma_{w(\mathscr{K})}*\nu}).
\end{align*}
We have
%%%%%%%%%%%%%%%%%%%%%%%%%%%%%%%%%%%%%%%%%%%%%%%%%%%%%%%%%%%
\begin{align}
\label{dwr10}
&\underset{t' \in \T, z \in \Z}{\prod}\U_{(\e_{z}+\e_{t'}, 0)}(k_{zt'})\underset{t \in \T, z \in \Z^{>t}}{\prod}\U_{(\e_{t}-\e_{z}, -1)}(k_{t\bar z}) \T^{\geq 2k+r+s}_{\gamma_{w(\mathscr{K})}*\nu} = \\
\label{dwr11}
&\underset{t \neq t'}{\underset{ t, t' \in \T}{\prod}} \U_{(\e_{t}+\e_{t'}, -1)}(\xi_{tt'})\underset{t \in \T, z \in \Z^{>t}}{\prod}\U_{(\e_{t}-\e_{z}, -1)}(k_{t\bar z})\underset{t' \in \T, z \in \Z}{\prod}\U_{(\e_{z}+\e_{t'}, 0)}(k_{zt'}) \T^{\geq 2k+r+s}_{\gamma_{w(\mathscr{K})}*\nu}\\
\label{dwr12}
&\underset{t \neq t'}{\underset{ t, t' \in \T}{\prod}} \U_{(\e_{t}+\e_{t'}, -1)}(\xi_{tt'})\underset{t \in \T, z \in \Z^{>t}}{\prod}\U_{(\e_{t}-\e_{z}, -1)}(k_{t\bar z}) \T^{\geq 2k+r+s}_{\gamma_{w(\mathscr{K})}*\nu}
\end{align}
where
%%%%%%%%%%%%%%%%%%%%%%%%%%%%%%%%%%%%%%%%%%%%%%%%%%%%%%%%%%%
\begin{align}
\label{equations}
\xi_{tt'} & = \underset{z \in \Z^{>t'}}{\sum} c^{1,1}_{zt, t' \bar z}(-k_{zt})k_{t'\bar{z}} + \underset{z \in \Z^{>t}}{\sum} c^{1,1}_{zt', t \bar z}(-k_{zt'})k_{t\bar{z}}.
\end{align}
The equality between (\ref{dwr10}) and  (\ref{dwr11}) is due to Chevalley's commutator formula (\ref{chevalley}) and the equality between (\ref{dwr11}) and (\ref{dwr12}) is obtained by using Proposition \ref{stable} and $\U_{(\e_{z}+\e_{t'}, 0)}(k_{zt'}) \in \U_{\mu_{\gamma_{\mathscr{K}}}}$. 
Now fix an element in (\ref{dwr9}). Setting $k_{t\bar z} = v_{t\bar z}$ defines the linear equations

\begin{align*}
v_{tt'} & = \underset{z \in \Z^{>t'}}{\sum} c^{1,1}_{zt, t' \bar z}(-k_{zt})v_{t'\bar{z}} + \underset{z \in \Z^{>t}}{\sum} c^{1,1}_{zt', t \bar z}(-k_{zt'})v_{t\bar{z}}
\end{align*}
%%%%%%%%%%%%%%%%%%%%%%%%%%%%%%%%%%%%%%%%%%%%%%%%%%%%%%%%
in the variables $k_{zt},$ for $z \in \Z$ and $t \in \T$. There are more variables than equations: for each equation indexed by a non ordered pair $(t_{i}, t_{j})$ there are the variables $v_{zt_{i}}$ and $v_{z' t_{j}}$ for $z > t'$ and $z' > t$ (which always exist by definition of an LS block);  hence the system has solutions as long as the matrix of coefficients has non-zero determinants. This imposes an open condition on the parameters $v_{t\bar z}$. Hence for such $v_{t \bar z}, v_{tt'}, k_{t\bar z} = v_{t\bar z}$, and solutions $k_{ij},$ for the latter equations we have
\begin{align*}
& \underset{z \in \Z^{>t}}{\underset{ t \in \T}{\prod}}\U_{(\e_{t}-\e_{z}, -1)}(v_{t\bar z})\underset{ t, t' \in \T}{\prod} \U_{(\e_{t}+\e_{t'}, -1)}(v_{tt'}) \T^{\geq 2k+r+s}_{\gamma_{w(\mathscr{K})}*\nu}= \\
&\underset{z \in \Z}{\underset{t' \in \T,}{\prod}}\U_{(\e_{z}+\e_{t'}, 0)}(k_{zt'})\underset{ z \in \Z^{>t}}{\underset{t \in \T,}{\prod}}\U_{(\e_{t}-\e_{z}, -1)}(k_{t\bar z}) \T^{\geq 2k+r+s}_{\gamma_{w(\mathscr{K})}*\nu} \subset \pi'(\C_{\gamma_{w(\mathscr{K})}*\nu});
\end{align*}
Proposition \ref{stable} then implies 
\begin{align*}
\U_{0} \underset{z \in \Z^{>t}}{\underset{ t \in \T}{\prod}}\U_{(\e_{t}-\e_{z}, -1)}(v_{t\bar z})\underset{ t, t' \in \T}{\prod} \U_{(\e_{t}+\e_{t'}, -1)}(v_{tt'}) \T^{\geq 2}_{\gamma_{\mathscr{K}}*\nu} \subset \pi'(\C_{\gamma_{w(\mathscr{K})}*\nu});
\end{align*}
this completes the proof of Lemma \ref{secondcontention} and hence of Proposition \ref{densewordreading}.

Now let $\mathscr{K}$ be a zero lump. This means there exists $k>1$ such that the right (respectively left) column of $\mathscr{K}$ has as entries the integers $1 < \cdots < k$ (respectively $\bar k < \cdots < \bar 1$); its word is therefore $w(\mathscr{K}) = 1 \cdots k \bar k \cdots \bar 1$. This means, in particular, that the truncated images $\T^{\geq 2k}_{\gamma_{w(\mathscr{K})}*\nu} = \T^{\geq 2}_{\gamma_{\mathscr{K}}*\nu}$ are stabilised by $\U_{0}$, by Proposition \ref{stable}. We have

\begin{align*}
\pi'(\C_{\gamma_{w(\mathscr{K})}*\nu}) = \mathbb{U}^{\gamma_{w(\mathscr{K})}*\nu}_{0}\cdots \mathbb{U}^{\gamma_{w(\mathscr{K})}*\nu}_{2k -1} \T^{\geq 2k}_{\gamma_{w(\mathscr{K})}*\nu}
\end{align*}

\noindent
by Theorem \ref{celldescription}. Clearly all the subgroups $\mathbb{U}^{\gamma_{w(\mathscr{K})}*\nu}_{l} \subset \U_{0}$ for $1\leq l \leq k$. For $0\leq j \leq k-1$, the generators of $\mathbb{U}^{\gamma_{w(\mathscr{K})}*\nu}_{k+j}$ are all of the form $\U_{(\e_{s}-\e_{k-j}, n_{k-j})}$ for $l< k-j$. In particular the gallery $\gamma_{1\cdots k \bar k \cdots \overline{k-j-1}}$ has crossed the hyperplanes $\mm_{(\e_{s}-\e_{k-j}, m)}$ once positively at $m = 0$ and once negatively at $m=1$, which means that $n_{k-j} = 0$, $\U_{(\e_{s}-\e_{k-j}, n_{k-j})}(a) = \U_{(\e_{s}-\e_{k-j},0)}(a) \in \U_{0}$, for all $a \in \mathbb{C}$. Hence 

\begin{align*}
\pi'(\C_{\gamma_{w(\mathscr{K})}*\nu}) &= \mathbb{U}^{\gamma_{w(\mathscr{K})}*\nu}_{0}\cdots \mathbb{U}^{\gamma_{w(\mathscr{K})}*\nu}_{2k -1} \T^{\geq 2k}_{\gamma_{w(\mathscr{K})}*\nu} \\
&= \T^{\geq 2k}_{\gamma_{w(\mathscr{K})}*\nu}\\
&= \T^{\geq 2}_{\gamma_{\mathscr{K}}*\nu}.
\end{align*}

\noindent
In 

\begin{align*}
\pi(\C_{\gamma_{\mathscr{K}}* \nu}) = \mathbb{U}^{\gamma_{\mathscr{K}}* \nu}_{0}\mathbb{U}^{\gamma_{\mathscr{K}}* \nu}_{1}  \T^{\geq 2}_{\gamma_{\mathscr{K}}*\nu} 
\end{align*}

\noindent
we have $\mathbb{U}^{\gamma_{\mathscr{K}}* \nu}_{1}  = \{\op{Id}\}$ and $\mathbb{U}^{\gamma_{\mathscr{K}}* \nu}_{0} \subset \U_{0}$, therefore

\begin{align*}
\pi(\C_{\gamma_{\mathscr{K}}*\nu}) = \T^{\geq 2}_{\gamma_{\mathscr{K}}*\nu} = \T^{\geq 2k}_{\gamma_{w(\mathscr{K})}*\nu}
\end{align*}

\noindent
 since $\mu_{\gamma_{\mathscr{K}}} = \mu_{\gamma_{w(\mathscr{K})}}$.

%%%%%%%%%%%%%%%%%%%%%%%%%%%%%%%%%%%%%%%%%%%%%%%%%%%%%%%%%%%%%

\subsection{Proof of Proposition \ref{denseplacticrelations}}
\label{proofofdenseplacticrelations}
\begin{proof}[Proof of Proposition \ref{denseplacticrelations}]

%\subsection{Dense plactic relations.}
Let $\nu$ be a combinatorial gallery. 

\subsection*{Relation R1}
For $z \neq \overline{x}$:
$$
\begin{aligned}
a) \hbox{ }& y\hbox{ }x\hbox{ }z \equiv y\hbox{ }z\hbox{ }x &&\text{    for    }& x\leq y < z \\
b) \hbox{ }& x\hbox{ }z\hbox{ }y \equiv z\hbox{ }x\hbox{ }y &&\text{  for   }& x<y\leq z
\end{aligned}
$$

\begin{lem}
\label{denser1}
Let $w_{1} = y\hbox{ }x\hbox{ }z$ and $w_{2} = y\hbox{ }z\hbox{ }x$, $w_{3} = x\hbox{ }z\hbox{ }y$, and $w_{4} = z\hbox{ }x\hbox{ }y$ for $z \neq \bar x$. Then

\begin{align*}
a) \overline{\pi(\C_{\gamma_{w_{1}}*\nu})} = \overline{\pi(\C_{\gamma_{w_{2}}*\nu})}\\
b) \overline{\pi(\C_{\gamma_{w_{3}}*\nu})} = \overline{\pi(\C_{\gamma_{w_{4}}* \nu})}
\end{align*}

\end{lem}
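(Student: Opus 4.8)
The plan is to realize both sides as $\pi(\C)$ for galleries of word type, use Theorem \ref{celldescription} and Corollary \ref{goodtrick} to write the images as products of root subgroups acting on a truncated image, and then move one root subgroup past another using Chevalley's commutator formula (\ref{chevalley}) and the stability result Proposition \ref{stable}. As at the start of Example \ref{examples}, we may assume the prefix gallery is empty; since all four words have the same multiset of letters, they have the same endpoint, hence the same $3$-truncated image, which we abbreviate $\T = \T^{\geq 3}_{\gamma_{w_i}*\nu}$ (a special vertex occurs after three unit edges of fundamental type $\omega_1$). For case a), the gallery $\gamma_{yxz}$ has vertices $0, \e_y, \e_y+\e_x, \e_y+\e_x+\e_z$, so by Proposition \ref{order} and Theorem \ref{celldescription},
\begin{align*}
\pi(\C_{\gamma_{yxz}*\nu}) = \mathbb{U}^{\gamma_{yxz}}_{0}\,\mathbb{U}^{\gamma_{yxz}}_{1}\,\mathbb{U}^{\gamma_{yxz}}_{2}\,\T,
\end{align*}
and similarly for $\gamma_{yzx}$; the first factor $\mathbb{U}^{\gamma}_{0}$ is the same for both (it depends only on the first letter $y$), and I must compare $\mathbb{U}^{\gamma_{yxz}}_{1}\mathbb{U}^{\gamma_{yxz}}_{2}$ with $\mathbb{U}^{\gamma_{yzx}}_{1}\mathbb{U}^{\gamma_{yzx}}_{2}$.

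The heart of the argument is the following. The root subgroups generating $\mathbb{U}^{\gamma}_{1}$ and $\mathbb{U}^{\gamma}_{2}$ are of the form $\U_{(\alpha,n)}$ with $\alpha \in \Phi^+$ vanishing appropriately at $\e_y$ (resp.\ $\e_y+\e_x$), and for almost all such $\alpha$ the corresponding generators already lie in $\U_0$ and commute or can be absorbed into $\T$ using Proposition \ref{stable} (note $\e_y$ and $\e_y+\e_x$ are translates of $0$ by coweights, so $\U_{\e_y}\subset \U_0$-conjugates behave as in Proposition \ref{stable}). The only genuinely different generators between $w_1$ and $w_2$ involve the roots $\e_x - \e_z$ (or $\e_x + \e_z$, when both are unbarred vs.\ one barred) and $\e_z - \e_x$: crossing the hyperplane $\mm_{(\e_x\mp\e_z,m)}$ happens at $m$ depending on the order in which the letters $x$ and $z$ appear. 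The condition $x\le y<z$ (resp.\ $x<y\le z$) is exactly what guarantees that, when one commutes the offending root subgroup of $\gamma_{w_1}$ past the others via (\ref{chevalley}), the extra terms $\U_{i\alpha+j\beta}(\ast)$ produced are either (i) already present as factors of the other side, (ii) lie in $\U_0$ and get killed by Proposition \ref{stable}, or (iii) lie in a truncated image that is $\U$-stable. Running the same computation in the reverse direction gives the opposite inclusion on a dense open subset (the parameters must avoid the vanishing of certain determinants, exactly as in Claim \ref{firstcontentionsecondclaim}), and taking closures yields the equality. Case b) is symmetric: $\gamma_{xzy}$ and $\gamma_{zxy}$ share the last factor $\mathbb{U}^{\gamma}_{2}$ (depending only on $y$ and the common endpoint), and one commutes within $\mathbb{U}^{\gamma}_{0}\mathbb{U}^{\gamma}_{1}$ instead; the inequality $x<y\le z$ plays the analogous role.

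The main obstacle, as in the proof of Proposition \ref{densewordreading} in Section \ref{countingpositivecrossings}, is \emph{bookkeeping}: carefully enumerating which affine roots appear in each $\Phi^{\gamma}_i$, checking in each of the several sub-cases ($z$ barred or unbarred, $x$ barred or unbarred, coincidences among $x,y,z$) that the heights $n$ of the relevant hyperplane crossings are what one expects, and verifying that the Chevalley structure constants $c^{i,j}$ never conspire to make the relevant coefficient matrix identically singular (so that the dense-subset argument genuinely applies). The inequalities in R1 are used precisely to rule out the bad crossings; identifying the one or two "offending" root subgroups in each sub-case and showing everything else is inert is the bulk of the work, and it parallels — but is substantially shorter than — the LS-block computation already carried out.
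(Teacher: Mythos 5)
Your outline does reproduce the paper's argument, but only in the generic sub-case of a) and b), namely when $x,y,z$ are pairwise distinct and no two of them form a pair $l,\bar l$: there the two images are indeed $\U_{0}$ times one or two level $-1$ root subgroups acting on the common truncated image (this is Claim \ref{expressionsforcase1}), and the equality of closures follows from Chevalley's formula (\ref{chevalley}), Proposition \ref{stable} and a dense-parameter argument, exactly as you describe.

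The gap is in the sub-cases that R1 still allows, namely $z=\bar y$ and $y=\bar x$ (and, less seriously, $x=y$ or $y=z$); your remark that the inequalities in R1 rule out the bad crossings is not correct, since R1 only forbids $z=\bar x$. These cases are not bookkeeping. For instance, when $y=\bar x$ (so $z=\bar b$ with $b<x$), the cell of $\gamma_{w_2}=\gamma_{\bar x\,\bar b\,x}$ contains the level $-2$ factor $\U_{(\e_{x}+\e_{b},-2)}$, while every non-$\U_{0}$ generator on the $w_{1}$ side sits at level $-1$; moreover $\U_{(\e_{x}+\e_{b},-2)}$ neither lies in $\U_{0}$ nor stabilises the truncated image, so none of your mechanisms (i)--(iii) accounts for it, and no linear shuffle of ``one or two offending subgroups'' produces it. In the paper this factor is manufactured by conjugating the short-root factor $\U_{(\e_{x},-1)}(a_{x})$ by an element of $\U_{\V}\cap\U_{0}$ and using the degree-two term $c^{2,1}_{x,b\bar x}(a_{x}^{2})$ of Chevalley's formula, valid only on the open set $a_{x}\neq 0$; and both this case and $z=\bar y$ are treated only after invoking Proposition \ref{densewordreading} to replace the word galleries by the readable-key galleries $\gamma_{\mathscr{K}_{i}}$, whose keys introduce the auxiliary letters $x-1$, respectively $y-1$, and force a further split into $y-1=x$ versus $y-1\neq x$ --- this is needed because for the word galleries the two cells differ by whole families of factors $\U_{(\e_{l}-\e_{y},-1)}$, not by one or two. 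Supplying such a mechanism (either the reduction through Proposition \ref{densewordreading} or a direct argument creating the level $-2$ factors via quadratic commutator terms) is the substantive part still missing from your proposal.
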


\begin{proof}
For the proof we recall the notation $\varepsilon_{\bar a} = -\e_{a}$ and $\bar \bar i  = i$ for any $i \in \{1, \cdots, n\}$. Also note that $\T^{\geq 3}_{\gamma_{w_{i}}*\nu}$ all coincide for $i \in \{1,2,3,4\}$; we will denote them by $\T^{w}$. We divide the proof of Lemma \ref{denser1} in three cases.\\

\noindent
\textbf{Case 1}: $x < y < z$\\
\noindent
\begin{cl} If $z \neq \bar y$ and $y \neq \bar x:$
\label{expressionsforcase1}
\begin{itemize}
\item[i.] $\pi(\C_{\gamma_{w_{1}}*\nu}) = \U_{0}\U_{(\e_{x}-\e_{y}, -1)}(v_{x\bar y})\T^{w}$
\item[ii.] $\pi(\C_{\gamma_{w_{2}}*\nu}) = \U_{0}\U_{(\e_{x}-\e_{y}, -1)}(v_{x\bar y})\U_{(\e_{x}-\e_{z}, -1)}(v_{x \bar z})\T^{w}$
\item[iii.] $\pi(\C_{\gamma_{w_{3}}*\nu}) = \U_{0}\U_{(\e_{y}-\e_{z}, -1)}(v_{y\bar z})\T^{w}$
\item[iv.] $\pi(\C_{\gamma_{w_{4}}*\nu}) =  \U_{0}\U_{(\e_{x}-\e_{z}, -1)}(v_{x\bar z})\U_{(\e_{y}-\e_{z}, -1)}(v_{y \bar z})\T^{w}$.
\end{itemize}
\end{cl}

\begin{proof}[Proof of Claim \ref{expressionsforcase1}]
 We first remark that, regardless whether $x, y,$ and $z$ are barred or unbarred, the roots $\e_{x} - \e_{z}, \e_{y} - \e_{z},$ and $\e_{x} - \e_{y}$ are always positive. Now we recall the notation from Theorem \ref{celldescription}:
\begin{align*}
\pi(\C_{\gamma_{w_{1}}*\nu}) = \mathbb{U}^{\gamma_{w_{i}}* \nu}_{0}\mathbb{U}^{\gamma_{w_{i}}* \nu}_{1}\mathbb{U}^{\gamma_{w_{i}}* \nu}_{2}\T^{w}
\end{align*}
Assume that $z \neq \bar y$ and $y \neq \bar x$.

i. We have $\U_{(\e_{x} - \e_{y}, -1)}(v_{x \bar y}) \in  \mathbb{U}^{\gamma_{w_{1}}* \nu}_{1}$ for any $v_{x \bar y} \in \mathbb{C}$, hence 
$$\U_{0}\U_{(\e_{x}-\e_{y}, -1)}(v_{l \bar y})\T^{w} \subseteq \pi(\C_{\gamma_{w_{1}}*\nu}).$$
Out of all generators of $\mathbb{U}^{\gamma_{w_1}* \nu}_{i}$ for $i \in \{0,1,2\}$, the only one that does not belong to $\U_{0}$ is of the form $\U_{(\e_{x}-\e_{y}, -1)}(v_{x \bar y}) \in \mathbb{U}^{\gamma_{w_{1}}* \nu}_{1}$, and the ones from $\mathbb{U}^{\gamma_{w_{1}}* \nu}_{2}$ that do not commute with it are those of the form $\U_{(\e_{y}+\e_{z}, 1)}(a)$, but in that case Chevalley's commutator formula produces a term $\U_{(\e_{x}+\e_{z}, 0)} (c^{1,1}_{x \bar y, yz}(-v_{x\bar y})a) \in \U_{0}$. This implies the other inclusion, together with Proposition \ref{order}, which allows us to write down the generators of each $\mathbb{U}^{\gamma_{w_{1}}* \nu}_{i}$ in any order. \\ 
\noindent ii. The only generators of $\mathbb{U}^{\gamma_{w_{2}}* \nu}_{i}$ for $i \in \{0,1,2\}$ that do not belong to $\U_{0}$ are those of the form $\U_{(\e_{x} - \e_{y}, -1)}(v_{x\bar y}) \in \mathbb{U}^{\gamma_{w_{2}}* \nu}_{2}$ and $\U_{(\e_{x}- \e_{z}, -1)}(v_{x \bar z}) \in \mathbb{U}^{\gamma_{w_{2}}* \nu}_{2}$. The equality follows by Proposition \ref{order}, Theorem \ref{celldescription}, and Proposition \ref{stable}. \\
\noindent iii. All the generators of $\mathbb{U}^{\gamma_{w_{3}}*\nu}_{0}$ and $\mathbb{U}^{\gamma_{w_{3}}*\nu}_{1}$ belong to $\U_{0}$, and the only generators of $\mathbb{U}^{\gamma_{w_{3}}*\nu}_{2}$ that do not are $\U_{(\e_{y}-\e_{z}, -1)}$. Thus Claim \ref{expressionsforcase1} follows by Proposition \ref{stable} and Theorem \ref{celldescription}.\\
\noindent iv. As in the previous cases, we have
\begin{align*}
\pi(\C_{\gamma_{w_{4}*\nu}}) = \mathbb{U}^{\gamma_{w_{4}}*\nu}_{0} \mathbb{U}^{\gamma_{w_{4}}*\nu}_{1} \mathbb{U}^{\gamma_{w_{4}}*\nu}_{2}\T^{w},
\end{align*}
and $\mathbb{U}^{\gamma_{w_{4}}*\nu}_{0} \subset \U_{0}$. All generators of $\mathbb{U}^{\gamma_{w_{4}}*\nu}_{1}$ and respectively $\mathbb{U}^{\gamma_{w_{4}}*\nu}_{2}$ belong to $\U_{0}$ except for $\U_{(\e_{x}-\e_{z}, -1)}(a) \in \mathbb{U}^{\gamma_{w_{4}}*\nu}_{1}$ and $\U_{(\e_{y}-\e_{z}, -1)}(b) \in \mathbb{U}^{\gamma_{w_{4}}*\nu}_{2}$, respectively, for $\{a,b\} \subset \mathbb{C}$. To prove this part of Claim \ref{expressionsforcase1} we observe that $\U_{(\e_{x}-\e_{z}, -1)}(a)$ commutes with all generators of $\mathbb{U}^{\gamma_{w_{4}}*\nu}_{2}$ except for $\U_{(\e_{z}+\e_{y},1)}(d)$, with $d \in \mathbb{C}$. However, commuting the latter two terms produces elements $\U_{(\e_{x}+\e_{y}, 0)}(c^{1,1}_{x\bar z, zy}(-a)d) \in \U_{0}$. Therefore 
\begin{align*}
\pi(\C_{\gamma_{w_{4}}*\nu}) \subseteq  \U_{0}\U_{(\e_{x}-\e_{z}, -1)}(v_{x\bar z})\U_{(\e_{y}-\e_{z}, -1)}(v_{y \bar z})\T^{w},
\end{align*}
and the other inclusion is clear by Proposition \ref{stable} and the above discussion. This finishes the proof of Claim \ref{expressionsforcase1}.

\end{proof}

Now we make use of Claim \ref{expressionsforcase1} to prove Lemma \ref{denser1} in this case, assuming $z \neq \bar y$ and $y \neq \bar x$. For both a) and b) Claim \ref{expressionsforcase1} immediately implies 
\begin{align*}
\pi(\C_{\gamma_{w_{1}}*\nu}) &\subseteq \pi (\C_{\gamma_{w_{2}}*\nu}) \hbox{ and }\\
\pi (\C_{\gamma_{w_{3}}*\nu}) &\subseteq \pi(\C_{\gamma_{w_{4}}*\nu}).
\end{align*}

\noindent
Next we will show 

\begin{align*}
\ov{\pi(\C_{\gamma_{w_{2}}*\nu})} \subseteq \ov{\pi(\C_{\gamma_{w_{1}}*\nu})}.
\end{align*}

\noindent
For this, let $v_{y\bar z } \in \mathbb{C}$ and $v_{x\bar y} \in \mathbb{C}$ with $v_{x\bar y} \neq 0$. Then since $\U_{(\e_{y}-\e_{z}, 0)}(v_{y\bar z}) \in \U_{\mu_{w}} \cap \U_{0}$ for any $v_{y\bar z} \in \mathbb{C}$, Lemma \ref{denser1} ,  Chevalley's commutator formula, and Proposition \ref{stable} imply 

\begin{align*}
\pi (\C_{\gamma_{w_{1}}* \nu}) \supset \U_{(\e_{y}-\e_{z}, 0)}(v_{y\bar z})\U_{(\e_{x}-\e_{y}, -1)}(v_{x\bar y})\T^{w} = \\
\U_{(\e_{x}-\e_{z}, -1)}(c^{1,1}_{y\bar z, v\bar y}(-v_{y\bar z})v_{x\bar y})\U_{(\e_{x}-\e_{y}, -1)}(v_{x\bar y}) \U_{(\e_{y}-\e_{z}, 0)}(v_{y\bar z})\T^{w} = \\
\U_{(\e_{x}-\e_{z}, -1)}(c^{1,1}_{y\bar z, v\bar y}(-v_{y\bar z})v_{x\bar y})\U_{(\e_{x}-\e_{y}, -1)}(v_{x\bar y})\T^{w}
\end{align*}

\noindent
Therefore $$\U_{(\e_{x}-\e_{y}, -1)}(v_{x\bar y})\U_{(\e_{x}-\e_{z}, -1)}(v_{x \bar z})\T^{w} \subset \pi (\C_{\gamma_{w_{1}}*\nu})$$

\noindent
as long as $v_{x \bar y} \neq 0$, since in that case $c^{1,1}_{y\bar z, v\bar y}(-v_{y\bar z})v_{x\bar y} = v_{x\bar z}$ has a solution in $v_{y\bar z}$. Hence Proposition \ref{stable} implies 

\begin{align*}
\U_{0} \U_{(\e_{x}-\e_{y}, -1)}(v_{x\bar y})\U_{(\e_{x}-\e_{z}, -1)}(v_{x \bar z})\T^{w} \subset \pi (\C_{\gamma_{w_{1}}*\nu}).
\end{align*}

\noindent
Claim \ref{expressionsforcase1} (i. and ii.) then implies that a dense subset of $\pi(\C_{\gamma_{w_{2}}*\nu})$ is contained in $\pi (\C_{\gamma_{w_{1}}*\nu})$, which implies Lemma \ref{denser1} , a) in this case. To finish the proof of 
Lemma \ref{denser1} b), let $v_{x \bar y} \in \mathbb{C}$ and $v_{y \bar z} \in \mathbb{C}$ with $v_{y \bar z} \neq 0$. Then, just as for a)

\begin{align}
& \pi (\C_{\gamma_{w_{3}}*\nu}) \supset \U_{(\e_{x}-\e_{y}, 0)}(v_{x\bar y})\U_{(\e_{y}-\e_{z}, -1)}(v_{y\bar z})\T^{w} = \\
&\U_{(\e_{x}-\e_{z}, -1)}(c^{1,1}_{x\bar y, y\bar z}(-v_{x\bar y})v_{y\bar z})\U_{(\e_{y}-\e_{z}, -1)}(v_{y\bar z}) \U_{(\e_{x}-\e_{y}, 0)}(v_{y\bar z})\T^{w} = \\
\label{r1.1w43}
&\U_{(\e_{x}-\e_{z}, -1)}(c^{1,1}_{x\bar y, y\bar z}(-v_{x\bar y})v_{y\bar z})\U_{(\e_{y}-\e_{z}, -1)}(v_{y\bar z})\T^{w}.
\end{align}

\noindent
Therefore the elements of the set 

$$\U_{(\e_{x}-\e_{z}, -1)}(v_{x\bar z})\U_{(\e_{y}-\e_{z}, -1)}(v_{y \bar z})\T^{w}$$

\noindent
such that $v_{y \bar z} \neq 0$ are contained in (\ref{r1.1w43}). By Claim \ref{expressionsforcase1} (iii. and iv)  and Proposition \ref{stable} there is a dense subset of 

\begin{align*}
&\pi(\C_{\gamma_{w_{4}}*\nu}) =  \U_{0}\U_{(\e_{x}-\e_{z}, -1)}(v_{x\bar z})\U_{(\e_{y}-\e_{z}, -1)}(v_{y \bar z})\T^{w}
\end{align*}

\noindent
that is contained in $\pi (\C_{\gamma_{w_{3}}*\nu})$. \\

The cases $z= \bar y$ and $y = \bar x$ are missing so far. (Note that $ z \neq \bar x$ is not allowed. Also note that and that if $y = \bar x$ then $x$ must be unbarred and if $z= \bar y$ then $y$ must be unbarred.) \\

 Case 1.1 $z= \bar y$\\

a. We first show that 
\begin{align}
\label{hardcaseeasycontention}
\overline{\pi(\C_{\gamma_{w_{1}}*\nu})} \subseteq \overline{\pi(\C_{\gamma_{w_{2}}*\nu})}. 
\end{align} 
%%%%%%%%%%%%%%%%%%%%%%%%%%%%%%%%%%%%%%%%%%%%%

All of the generators of $\mathbb{U}^{\gamma_{w_{1}}*\nu}_{1}$ belong to $\U_{0}$ except for $\U_{(\e_{x}-\e_{y},-1)}(v_{x\bar y}),$ for $v_{x\bar y} \in \mathbb{C}$. The generators of 
$\mathbb{U}^{\gamma_{w_{1}}*\nu}_{1}$ are $\U_{(\e_{l}-\e_{y}, -1)}(v_{l \bar y})$ for $l \neq x$ and $v_{l \bar y} \in \mathbb{C}$, and $\U_{(\e_{x}-\e_{y}, 0)}(v_{x\bar y})$ for $v_{x\bar y}\in \mathbb{C}$. This last term commutes with $\U_{(\e_{x}-\e_{y},-1)}(v_{x\bar y})$. Therefore, by parallel arguments to those given in the proof of Claim \ref{expressionsforcase1}, 

\begin{align*}
\pi(\C_{\gamma_{w_{1}}*\nu}) = \U_{0} \U_{(\e_{x}-\e_{y},-1)}(v_{x\bar y}) \underset{l \neq x}{\underset{l< y}{\prod}}\U_{(\e_{l}-\e_{y}, -1)}(v_{l \bar y})\T^{w}
\end{align*}

All terms in the product $\U_{(\e_{x}-\e_{y},-1)}(v_{x\bar y}) \underset{l \neq x}{\underset{l< y}{\prod}}\U_{(\e_{l}-\e_{y}, -1)}(v_{l \bar y})$ are at the same time generators of $\mathbb{U}_{1}^{\gamma_{w_2}}$ as well, therefore, by Proposition \ref{stable}, 

\begin{align*}
\pi(\C_{\gamma_{w_{1}}*\nu}) \subseteq \pi(\C_{\gamma_{w_{2}}*\nu}),
\end{align*}

\noindent
as wanted. Next we would like to show 
\begin{align}
\label{hardcasehardcontention}
\overline{\pi(\C_{\gamma_{w_{2}}*\nu})} \subseteq \overline{\pi(\C_{\gamma_{w_{1}}*\nu})}. 
\end{align} 
%%%%%%%%%%%%%%%%%%%%%%%%%%%%%%%
To do so we will make use of Proposition \ref{densewordreading}. Let 
$$\mathscr{K}_{1} = \Skew(0:\mbox{\tiny{x}}, \mbox{\tiny{x}}, \mbox{\tiny{y}}|0: \ov{\mbox{\tiny{y}}}, \ov{\mbox{\tiny{y}}})$$ and $$\mathscr{K}_{2} = \Skew(0: \mbox{\tiny{x}}, \mbox{\tiny{y-1}}, \mbox{\tiny{y}}|1: \ov{\mbox{\tiny{y}}}, \ov{\mbox{\tiny{y-1}}}).$$ Then we have $w_{1} = y \hbox{ }x \hbox{ }\bar y = w(\mathscr{K}_{1})$ and $w_{2} =y \hbox{ } \bar y \hbox{ } x = w(\mathscr{K}_{2})$. By Proposition \ref{densewordreading} it then suffices to show 
\begin{align*}
\overline{\pi''(\C_{\gamma_{\mathscr{K}_{2}}})} \subseteq \overline{\pi'(\C_{\gamma_{\mathscr{K}_{1}}})}. 
\end{align*}

\noindent
First assume $y-1 \neq x$.  Note that in this case $\mathbb{U}^{\gamma_{\mathscr{K}_{2}}*\nu}_{1}$ is generated by terms $\U_{(\e_{y-1}-\e_{y}, -1)}(a)$ with $a \in \mathbb{C}$, and all generators of $\mathbb{U}^{\gamma_{\mathscr{K}_{2}}*\nu}_{0}$ and $\mathbb{U}^{\gamma_{\mathscr{K}_{2}}*\nu}_{2}$ belong to $\U_{0}$. Out of these, the only ones in $\mathbb{U}^{\gamma_{\mathscr{K}_{2}}*\nu}_{2}$ that do not commute with  with $\U_{(\e_{y-1}-\e_{y}, -1)}(a)$ are $\U_{(\e_{x}+\e_{y}, 0)}(b)$ and $\U_{(\e_{x}-\e_{y-1},0)}(d)$.  Then for every element in $\pi(\C_{\gamma_{\mathscr{K}_{2}}*\nu})$ there is a $u \in \U_{0}$ such that it belongs to
\begin{align*}
&u \U_{(\e_{y-1}-\e_{y}, -1)}(a) \overbrace{\U_{(\e_{x}+\e_{y}, 0)}(b) \U_{(\e_{x}-\e_{y-1},0)}(d)}^{=: u'} \T^{w} = \\
&u u' \U_{(\e_{y-1}+\e_{x}, -1)}(c^{1,1}_{y-1 \bar y, xy}(-a)b)\U_{(\e_{x}-\e_{y}, -1)}(c^{1,1}_{y-1 \bar y, x \ov{y-1}}(-a)d)  \U_{(\e_{y-1}-\e_{y}, -1)}(a) \T^{w}. 
\end{align*}

\noindent
Fix such $u, a, b,$ and $d$ such that $abd \neq 0$. Such elements form a dense subset of $\pi''(\C_{\gamma_{\mathscr{K}_{2}}*\nu})$. We will show 
\begin{align*}
 \U_{(\e_{y-1}+\e_{x}, -1)}(c^{1,1}_{y-1 \bar y, xy}(-a)b)\U_{(\e_{x}-\e_{y}, -1)}(c^{1,1}_{y-1 \bar y, x \ov{y-1}}(-a)d) \U_{(\e_{y-1}-\e_{y}, -1)}(a) \T^{w} \\ \subset \pi'(\C_{\gamma_{\mathscr{K}_{1}}*\nu})
\end{align*}
%%%%%%%%%%%%%%%%
If this is true, then (\ref{hardcasehardcontention}) is then implied by Proposition \ref{stable} applied to \newline $u  \U_{(\e_{x}+\e_{y}, 0)}(b) \U_{(\e_{x}-\e_{y-1},0)}(d) \in \U_{0}$. \\

\noindent
First note that for all $\{a_{x\bar{y}},a_{y-1 \bar{y}}, a_{y y-1}\} \subset \mathbb{C}$, $\U_{(\e_{x}-\e_{y}, -1)}(a_{x\bar y})$ and \newline $\U_{(\e_{y-1}-\e_{y}, -1)}(a_{y-1 y})$ belong to $\mathbb{U}^{\gamma_{\mathscr{K}_{1}}*\nu}_{1}$, and  $ v: =\U_{(\e_{y}+\e_{y-1}, 0)}(a_{y y-1}) \in \U_{\e_{x}} \cap \U_{0}$ stabilises the truncated image $\T^{w}$ as well as the whole image $\pi'(\C_{\gamma_{\mathscr{K}_{1}}* \nu})$. Therefore all elements of 

\begin{align*}
v^{-1}\U_{(\e_{x} - \e_{y}, -1)}(a_{x\bar y}) \U_{(\e_{y-1}-\e_{y}, -1)}(a_{y-1 \bar{y}}) v \T^{w} = \\
\U_{(\e_{x}+\e_{y-1}, -1)}(c^{1,1}_{x\bar y, y y-1}(-a_{x\bar y})a_{y y-1})\U_{(\e_{x} - \e_{y}, -1)}(a_{x\bar y}) \U_{(\e_{y-1}-\e_{y}, -1)}(a_{y-1 \bar{y}}) \T^{w}
\end{align*}

belong to $\pi'(\C_{\gamma_{\mathscr{K}_{1}}*\nu})$ and since $abd \neq 0$ we may find $a_{x\bar{y}}, a_{y-1 \bar{y}},$ and $a_{y y-1}$ such that

\begin{align*}
 a_{x\bar y} &= c^{1,1}_{y-1 \bar y, x \ov{y-1}}(-a)d,\\
 c^{1,1}_{x\bar y, y y-1}(-a_{x\bar y})a_{y y-1} &= c^{1,1}_{y-1 \bar y, xy}(-a)b, \hbox{ and }\\
 a_{y-1 \bar{y}} &= a.
\end{align*}
\noindent
This concludes the proof if $ y \neq x-1$. Now assume that $y = x-1$. In this case all generators of $\mathbb{U}_{2}^{\gamma_{\K_{2}}*\nu}$ commute with $\U_{(\e_{y-1}-\e_{y}, -1)}(a_{y-1\bar{y}})$, and therefore all elements in $\pi''(\C_{\gamma_{\mathscr{K}_{2}}*\nu})$ belong to 
\begin{align*}
u \U_{(\e_{y-1}-\e_{y}, -1)}(a) \T^{w}
\end{align*}
for some $u \in \U_{0}$ and $a \in \mathbb{C}$ - but $\U_{(\e_{y-1}-\e_{y}, -1)}(a) \in \mathbb{U}_{1}^{\gamma_{\mathscr{K}_{1}}*\nu}$, which implies (\ref{hardcasehardcontention}) by applying Proposition \ref{stable} to $u \in \U_{0}$. \\

b. We now have 
\begin{align*}
w_{3} = x \hbox{ } \bar y \hbox{ } y = w(\mathscr{K}_{3}) \hbox{ and } w_{4} = \bar y \hbox{ } x \hbox{ } y = w(\mathscr{K}_{4}),
\end{align*}
where $$\mathscr{K}_{3} = \Skew(0: \mbox{\tiny{y}}, \mbox{\tiny{x}}, \mbox{\tiny{x}}|1: \ov{\mbox{\tiny{y}}}, \ov{\mbox{\tiny{y}}})$$ and $$\mathscr{K}_{4} = \Skew(0:\mbox{\tiny{x}},\mbox{\tiny{x}}, \ov{\mbox{\tiny{y}}}|0:\mbox{\tiny{y}}, \mbox{\tiny{y}}).$$ We want to show 

\begin{align*}
\overline{\pi'''(\C_{\gamma_{\mathscr{K}_{3}}*\nu})} = \overline{\pi''''(\C_{\gamma_{\mathscr{K}_{4}}*\nu})}.
\end{align*}
First $\mathbb{U}^{\gamma_{\mathscr{K}_{3}}*\nu}_{0}$ and $\mathbb{U}^{\gamma_{\mathscr{K}_{3}}*\nu}_{1}$ are both contained in $\U_{0}$. The generators of $\mathbb{U}^{\gamma_{\mathscr{K}_{3}}*\nu}_{2}$ that do not belong to $\U_{0}$ are $\U_{(\e_{y}, -1)}(\alpha_{y}),\U_{(\e_{y} + \e_{l}, -1)}(\beta_{yl}),$ and $\U_{(\e_{y}-\e_{s}, -1)}(\gamma_{y\bar s})$ for $\{\alpha_{y},\beta_{yl},\gamma_{y \bar s}\} \subset \mathbb{C}$ and $l \leq n, l \neq x, y< s \leq n$. All of these are also generators of $\mathbb{U}^{\gamma_{\mathscr{K}_{4}}*\nu}_{1}$, hence by Proposition \ref{stable} and Theorem \ref{celldescription} we have
\begin{align*}
\pi'''(\C_{\gamma_{\mathscr{K}_{3}}*\nu}) \subset \pi''''(\C_{\gamma_{\mathscr{K}_{4}}*\nu}). 
\end{align*}
The discussion above also implies that
\begin{align}
\label{zisybarbk3}
\pi'''(\C_{\gamma_{\mathscr{K}_{3}}*\nu}) = \U_{0}\U_{(\e_{y}, -1)}(\alpha_{y}) \underset{l \neq x}{\underset{l \leq n}{\prod}}\U_{(\e_{y} + \e_{l}, -1)}(\beta_{yl}) \underset{y <s \leq n}{\prod}\U_{(\e_{y}-\e_{s}, -1)}(\gamma_{y\bar s})\T^{w}
\end{align}

There is one more generator of $\mathbb{U}^{\gamma_{\mathscr{K}_{4}}*\nu}_{1}$, not mentioned above, which is $\U_{(\e_{x}+\e_{y}, -1)}(d_{xy})$. Since all generators of $\mathbb{U}^{\gamma_{\mathscr{K}_{4}}*\nu}_{2} (\hbox{ which are } \U_{(\e_{x}+\e_{y}, 0)}(d') \in \U_{0} \hbox{ for } d' \in \mathbb{C})$ commute with those of 
$\mathbb{U}^{\gamma_{\mathscr{K}_{3}}*\nu}_{1}$, we have by Proposition \ref{stable}:
\begin{align*}
&\pi''''(\C_{\gamma_{\mathscr{K}_{4}}*\nu}) = \\
&\U_{0}\U_{(\e_{x}+\e_{y}, -1)}(d_{xy})\U_{(\e_{y}, -1)}(a_{y})\underset{l \neq x}{\underset{l \leq n}{\prod}}\U_{(\e_{y} + \e_{l}, -1)}(b_{yl})\underset{s > y}{\underset{s \leq n}{\prod}}\U_{(\e_{y}-\e_{s}, -1)}(c_{y \bar s})\T^{w}
\end{align*}
We now would like to show
\begin{align*}
\overline{\pi''''(\C_{\gamma_{\mathscr{K}_{4}}*\nu})} \subset \overline{\pi'''(\C_{\gamma_{\mathscr{K}_{3}}*\nu})}.
\end{align*}
%%%%%%%%%%%%%%%%%%%%%%%%%%%%%%%%%%%%%%%%%%%%%%%%%%%%%%%%%%
To do this we will  see that for complex numbers $a_{y}, b_{yl}, c_{y\bar s}$, and $d_{xy}$,  with $a_{y} \neq 0$,
 
\begin{align}
\label{zisybarbk3.1}
&\U_{(\e_{x}+\e_{y}, -1)}(d_{xy})\U_{(\e_{y}, -1)}(a_{y})\underset{l \neq x}{\underset{l \leq n}{\prod}}\U_{(\e_{y} + \e_{l}, -1)}(b_{yl})\underset{s > y}{\underset{s \leq n}{\prod}}\U_{(\e_{y}-\e_{s}, -1)}(c_{y \bar s})\T^{w}\\
&\subset  \pi'''(\C_{\gamma_{\mathscr{K}_{3}}*\nu}).
\end{align}

\noindent
By (\ref{zisybarbk3}) we conclude that for any complex numbers $\alpha_{y}, \beta_{yl}, \gamma_{y\bar s}$, and $\delta$ the following set is contained in $\pi'''(\C_{\gamma_{\mathscr{K}_{3}}*\nu})$
\begin{align*}
&v^{-1}\U_{(\e_{x}-\e_{y},1)}(\delta)\U_{(\e_{y}, -1)}(\alpha_{y}) \underset{l \neq x}{\underset{l \leq n}{\prod}}\U_{(\e_{y} + \e_{l}, -1)}(\beta_{yl}) \underset{s > y}{\underset{s \leq n}{\prod}}\U_{(\e_{y}-\e_{s}, -1)}(\gamma_{y\bar s})\T^{w} =\\
&v^{-1}v \U_{(\e_{x}+\e_{y}, -1)}(\rho_{xy})\U_{(\e_{y}, -1)}(\alpha_{y}) \underset{l \neq x}{\underset{l \leq n}{\prod}}\U_{(\e_{y} + \e_{l}, -1)}(\beta_{yl}) \underset{s > y}{\underset{s \leq n}{\prod}}\U_{(\e_{y}-\e_{s}, -1)}(\gamma_{y\bar s}) \T^{w}
\end{align*}

where 
\begin{align*}
&v = \\
& \U_{(\e_{x}, 0)}(c^{1,1}_{x\bar y, y}(-\delta)\alpha_{y}) \underset{l \neq x}{\underset{l \leq n}{\prod}} \U_{(\e_{x}+\e_{l}, 0)}(c^{1,1}_{x\bar y, y l}(-\delta)\beta_{yl})  \underset{s > y}{\underset{s \leq n}{\prod}}\U_{(\e_{x}-\e_{s}, 0)}(c^{1,1}_{x\bar y, y \bar s}(-\delta)\gamma_{y \bar s})\\
&\rho_{xy} = c^{1,2}_{x \bar y, y}(-\delta)\alpha^{2}_{y}, 
\end{align*}

\noindent
and where the latter equality is obtained by applying Chevalley's commutator formula and Proposition \ref{stable} applied to $\U_{(\e_{x}-\e_{y},1)}(\delta)$, which stabilises the truncated image $\T^{w}$. We will have shown our claim in (\ref{zisybarbk3.1}) if we find complex numbers $\alpha_{y}, \beta_{yl}, \gamma_{y\bar s}$, and $\delta$ such that
 
\begin{align*}
c^{1,2}_{x \bar y, y}(-\delta)\alpha^{2}_{y} &= d_{xy}\\
\alpha_{y} &= a_{y}\\
\beta_{yl} & = b_{yl}, 
\end{align*}
\noindent
which we may obtain since $a_{y} \neq 0$. This concludes the proof in case $z = \bar y$. \\

 Case 1.2 $y= \bar x$. This means that $x$ is necessarily unbarred and therefore $z = \bar b$ for some $b < x$. \\
 
 a. As before, we will use Proposition \ref{densewordreading}.  We have 
\begin{align*}
w_{1} = \bar x \hbox{ } x \hbox{ }\bar b = w(\mathscr{K}_{1}) \hbox{ and }\\
w_{2} = \bar x \hbox{ } \bar b \hbox{ } x = w(\mathscr{K}_{2}),
\end{align*}

where $$\mathscr{K}_{1} = \Skew(0: \mbox{\tiny{x}}, \mbox{\tiny{x}}, \ov{\mbox{\tiny{x}}}| 0: \ov{\mbox{\tiny{b}}}, \ov{\mbox{\tiny{b}}})$$ and $$\mathscr{K}_{2} = \Skew(0: \mbox{\tiny{x}}, \ov{\mbox{\tiny{x}}}, \ov{\mbox{\tiny{x}}}| 1: \ov{\mbox{\tiny{b}}}, \ov{\mbox{\tiny{b}}}).$$ First we show 
\begin{align}
\label{case1.2afirstcontention}
\overline{\pi'(\C_{\gamma_{\mathscr{K}_{1}}* \nu})} \subseteq \overline{\pi''(\C_{\gamma_{\mathscr{K}_{2}}*\nu})}.
\end{align}
To do this, we claim that
\begin{align}
\label{case1.2k1}
\pi'(\C_{\gamma_{\mathscr{K}_{1}}*\nu}) = \U_{0} \U_{(\e_{x}, -1)}(a_{x})\underset{\e_{x}+\e_{s} \in \Phi^{+}}{\underset{ s \in \mathcal{C}_{n} \neq b}{\prod}}\U_{(\e_{x}+\e_{s}, -1)}(a_{xs})\T^{w}.
\end{align}
Indeed, $\U_{(\e_{x}, -1)}(a_{x})$ and $\U_{(\e_{x}+\e_{s}, -1)}(a_{xs})$ for $s \in \mathcal{C}_{n}$ and $s\neq b$ are the generators of $\mathbb{U}^{\gamma_{\mathscr{K}_{1}}*\nu}_{1}$ that do not belong to $\U_{0}$, and $\mathbb{U}^{\gamma_{\mathscr{K}_{1}}*\nu}_{2}$ is the identity, because $\e_{x} - \e_{b}$ is not a positive root. Therefore (\ref{case1.2k1}) follows by Proposition \ref{stable}. The aforementioned terms are also generators (but not all!) of 
$\mathbb{U}^{\gamma_{\mathscr{K}_{2}}*\nu}_{2}$, therefore (\ref{case1.2afirstcontention}) follows. Now we show 
\begin{align}
\label{case1.2asecondcontention}
\overline{\pi''(\C_{\gamma_{\mathscr{K}_{2}}*\nu})} \subseteq \overline{\pi'(\C_{\gamma_{\mathscr{K}_{1}}*\nu})}.
\end{align}
To do this, let us first analyse the image 
\begin{align*}
\pi''(\C_{\gamma_{\mathscr{K}_{2}}*\nu}) = \mathbb{U}^{\gamma_{\mathscr{K}_{2}}*\nu}_{0}\mathbb{U}^{\gamma_{\mathscr{K}_{2}}*\nu}_{1}\mathbb{U}^{\gamma_{\mathscr{K}_{2}}*\nu}_{2}\T^{w}.
\end{align*}
In this case $\mathbb{U}^{\gamma_{\mathscr{K}_{2}}*\nu}_{0} \subset \U_{0}$ and $\mathbb{U}^{\gamma_{\mathscr{K}_{2}}*\nu}_{1}$ is the identity, because $-(\e_{x}+\e_{b})$ is not a positive root. The generators of $\mathbb{U}^{\gamma_{\mathscr{K}_{2}}*\nu}_{2}$ are $\U_{(\e_{x}, -1)}(\alpha_{x}), \U_{(\e_{x}+\e_{s}, -1)}(\alpha_{xs})$ and $\U_{(\e_{x}+\e_{b}, -2)}(\alpha_{xb})$ for $s \in \mathcal{C}_{n}$ such that $s \neq b$ and complex numbers $\alpha_{x}, \alpha_{xs}$, and $\alpha_{xb}$. Therefore 
\begin{align}
\label{case1.2k2}
\pi(\C_{\gamma_{\mathscr{K}_{2}}*\nu}) = \U_{0} \U_{(\e_{x}, -1)}(\alpha_{x})\underset{\e_{x}+\e_{s} \in \Phi^{+}}{\underset{s \neq b}{\prod}}\U_{(\e_{x}+\e_{s}, -1)}(\alpha_{xs})\U_{(\e_{x}+\e_{b}, -2)}(\alpha_{xb})\T^{w}. 
\end{align}
 Let us fix complex numbers $\alpha_{x}, \alpha_{xs},$ and $\alpha_{xb}$, such that $\alpha_{x} \neq 0$. We will show that (cf. (\ref{case1.2k1}))
 \begin{align}
 \label{case1.2k2usualtrick}
 \U_{(\e_{x}, -1)}(\alpha_{x})\underset{\e_{x}+\e_{s} \in \Phi^{+}}{\underset{s \neq b}{\prod}}\U_{(\e_{x}+\e_{s}, -1)}(\alpha_{xs})\U_{(\e_{x}+\e_{b}, -2)}(\alpha_{xb})\T^{w} \subset \pi'(\C_{\gamma_{\mathscr{K}_{1}}*\nu})
 \end{align}
To do this we will use Corollary \ref{goodtrick}, which says, in particular, that, if we write
\begin{align*}
\gamma_{\mathscr{K}_{1}} = (\V_{0}, \E_{0}, \V_{1}, \E_{1}, \V_{2}, \E_{2}, \V_{3}),
\end{align*}
then 
\begin{align*}
\pi'(\C_{\gamma_{\mathscr{K}_{1}}}) \supset \U_{\V_{0}}\U_{\V_{1}}\U_{\V_{2}}\T^{w}. 
\end{align*}
Therefore, since $u: =\U_{(\e_{b}-\e_{x}, 0)}(a) \in \U_{\V_{2}} \cap \U_{0}$ for all $a \in \mathbb{C}$, and since $\U_{(\e_{x}, -1)}(a_{x})$ and $\U_{(\e_{x}+\e_{s}, -1)}(a_{xs}),$ for $s \in \mathcal{C}_{n}$ and $s\neq b$ are the generators of $\mathbb{U}^{\gamma_{\mathscr{K}_{1}}*\nu}_{1} \subset \U_{\V_{1}}$ for any complex numbers $a_{xs}$ and $a_{x}$ we have (using, again, Proposition \ref{stable} applied to $u \in \U_{0}$ and $v \in \U_{\V_{3}}$ ($\V_{3}$ stabilises the truncated image $\T^{w}$; see below for a definition of $v$)): 
\begin{align*}
&\pi'(\C_{\gamma_{\mathscr{K}_{1}}*\nu}) \supset \\
& u^{-1}\U_{(\e_{x}, -1)}(a_{x})\underset{\e_{x}+\e_{s} \in \Phi^{+}}{\underset{s \neq b}{\prod}}\U_{(\e_{x}+\e_{s}, -1)}(a_{xs}) u \T^{w} = \\
&u^{-1}u \U_{(\e_{x} + \e_{b}, -2)}(c^{2,1}_{x, b \bar x}(a^{2}_{x})b) \U_{(\e_{x}, -1)}(a_{x})\underset{\e_{x}+\e_{s} \in \Phi^{+}}{\underset{s \neq b}{\prod}}\U_{(\e_{x}+\e_{s}, -1)}(a_{xs}) v  \T^{w} = \\ 
&\U_{(\e_{x} + \e_{b}, -2)}(c^{2,1}_{x, b \bar x}(a^{2}_{x})b) \U_{(\e_{x}, -1)}(a_{x})\underset{\e_{x}+\e_{s} \in \Phi^{+}}{\underset{s \neq b}{\prod}}\U_{(\e_{x}+\e_{s}, -1)}(a_{xs})\T^{w}.
\end{align*}
where 
\begin{align*}
v = \U_{(\e_{b}, -1)}(c^{1,1}_{x, b \bar x}(-a_{x})b) \underset{\e_{x}+\e_{s} \in \Phi^{+}}{\underset{s \neq b}{\prod}}\U_{(\e_{b}+\e_{s}, -1)}(c^{1,1}_{x, bs}(-a_{xs})b) \in \U_{\V_{3}}.
\end{align*}
In order to show (\ref{case1.2k2usualtrick}) it suffices to find complex numbers $a_{x}, a_{xs},$ and $b$ such that 
\begin{align*}
c^{2,1}_{x, b \bar x}(a^{2}_{x})b = \alpha_{xb}\\
a_{x} = \alpha_{x}\\
a_{xs} = \alpha_{xs},
\end{align*}
and we may do this, since $\alpha_{x} \neq 0$. \\

b.  We will again use Proposition \ref{densewordreading}. We have 
\begin{align*}
w_{3} = x \hbox{ }\bar b\hbox{ }\bar x = w(\mathscr{K}_{3}) \hbox{ and }
w_{4} = \bar b \hbox{ } x \hbox{ } \bar x = w(\mathscr{K}_{4}),
\end{align*}

where $$\mathscr{K}_{3} = \Skew(0: \ov{\mbox{\tiny{x}}}, \mbox{\tiny{x}}, \mbox{\tiny{x}}| 1: \ov{\mbox{\tiny{b}}}, \ov{\mbox{\tiny{b}}})$$ and $$\mathscr{K}_{4} = \Skew(0: \mbox{\tiny{x-1}}, \mbox{\tiny{x}}, \ov{\mbox{\tiny{b}}}|0: \ov{\mbox{\tiny{x}}}, \ov{\mbox{\tiny{x-1}}}).$$ By Proposition \ref{densewordreading} it is enough to show 

\begin{align}
\label{case1.2b}
\overline{\pi'''(\C_{\gamma_{\mathscr{K}_{3}}*\nu})} = \overline{\pi''''(\C_{\gamma_{\mathscr{K}_{4}}*\nu})}.
\end{align}

\noindent
We analyse both images $\pi'''(\C_{\gamma_{\mathscr{K}_{3}}*\nu})$ and $\pi''''(\C_{\gamma_{\mathscr{K}_{4}}*\nu})$ separately and then show (\ref{case1.2b}). First , since $\mathbb{U}^{\gamma_{\mathscr{K}_{3}}*\nu}_{0} \subset \U_{0}$ and $\mathbb{U}^{\gamma_{\mathscr{K}_{3}}*\nu}_{1}$ is the identity (this is because $\e_{x}-\e_{b}$ is not a positive root), we have
\begin{align}
\label{case1.2k3}
\pi'''(\C_{\gamma_{\mathscr{K}_{3}}*\nu}) = \U_{0}\underset{l \neq b}{\underset{l < x}{\prod}} \U_{(\e_{l}-\e_{x}, -1)}(a_{l \bar x}) \U_{(\e_{b}-\e_{x}, -2)}(a_{b \bar x})\T^{w}.
\end{align}

\noindent
Now, $\mathbb{U}^{\gamma_{\mathscr{K}_{4}}*\nu}_{2}$ is generated by elements $\U_{(\e_{x-1}-\e_{x}, -1)}(\alpha_{x-1 x}),$ for $\alpha_{x-1 x} \in \mathbb{C},$ and $\mathbb{U}^{\gamma_{\mathscr{K}_{4}}*\nu}_{1}$ is generated by $\U_{(\e_{b}-\e_{x-1}, -1)}(\alpha_{b \ov{x-1}})$ for $\alpha_{b \ov{x-1}} \in \mathbb{C}$, by $\U_{(\e_{l}-\e_{x-1}, 0)}(\alpha_{l \bar{x-1}})$ for $l < x-1$ and $\alpha_{l \bar{x-1}} \in \mathbb{C}$ (this last element stabilises the truncated image $\T^{w}$) , and by other elements of $\U_{0}$. Therefore

\begin{align}
\label{case1.2k4}
&\pi''''(\C_{\gamma_{\mathscr{K}_{4}}* \nu})  \\
\label{case1.2k4.2}
& = \U_{0} \underset{l \neq b}{\underset{l < x}{\prod}}\U_{(\e_{l}-\e_{x-1}, 0)}(\alpha_{l \ov{x-1}}) \U_{(\e_{b}-\e_{x-1}, -1)}(\alpha_{b \ov{x-1}})\U_{(\e_{x-1}-\e_{x}, -1)}(\alpha_{x-1 \bar{x}}) \T^{w}  \\
\label{case1.2k4.3}
&=\U_{0}\underset{l \neq b, l \neq x-1}{\underset{l < x}{\prod}} \U_{(\e_{l}-\e_{x}, -1)}(\xi_{l \bar x}) \U_{(\e_{x-1}-\e_{x}, -1)}(\alpha_{x-1 \bar x}) \U_{(\e_{b}-\e_{x}, -2)}(\xi_{b \bar x})\T^{w}, \hbox{ where }
\end{align} 

\begin{align*}
\xi_{b \bar x} = c^{1,1}_{b\ov{x-1}, x-1 \bar x}(-\alpha_{b\ov{x-1}\alpha_{x-1 \bar x}})\\
\xi_{l \bar x} = c^{1,1}_{l\ov{x-1}, x-1 \bar x}(-\alpha_{l\ov{x-1}\alpha_{x-1 \bar x}})
\end{align*}

\noindent
and where the equality between (\ref{case1.2k4.2}) and (\ref{case1.2k4.3}) arises by using (\ref{chevalley}) and Proposition \ref{stable} applied to $\U_{(\e_{l}-\e_{x-1}, 0)}(\alpha_{l \ov{x-1}}) \U_{(\e_{b}-\e_{x-1}, -1)}(\alpha_{b \ov{x-1}}) \in \U_{\mu_{\gamma_{\mathscr{K}_{4}}}}$. The sets displayed in (\ref{case1.2k3}) and  (\ref{case1.2k4.3}) are equal as long as all the parameters are non-zero.  \\

%%%%%%%%%%%%%%%%%%%%%%%%%%%%%%%%%%%%%%%%%%%%%%%%%%%%%%%%%%%%%
\noindent
\textbf{Case 2}: $x = y < z, z \neq \bar x$\\
In this case we have $w_{1}= y \hbox{ }y \hbox{ } z$ and $w_{2} = y \hbox{ } z \hbox{ }y$. We want to look at

\begin{align*}
\pi(\C_{\gamma_{w_{1}}*\nu}) &= \mathbb{U}^{\gamma_{w_{1}}*\nu}_{0}\mathbb{U}^{\gamma_{w_{1}}*\nu}_{1}\mathbb{U}^{\gamma_{w_{1}}*\nu}_{2} \T^{w} \\
\pi(\C_{\gamma_{w_{2}}*\nu}) &= \mathbb{U}^{\gamma_{w_{2}}*\nu}_{0}\mathbb{U}^{\gamma_{w_{2}}*\nu}_{1}\mathbb{U}^{\gamma_{w_{2}}*\nu}_{2} \T^{w}
\end{align*}
%%%%%%%%%%%%%%%%%%%%%%%%%%%%%%%%%%%%%%%%%%%%%%%%%%%%%%%%%
In this case all generators of $\mathbb{U}^{\gamma_{w_{1}}*\nu}_{i}$ and of $\mathbb{U}^{\gamma_{w_{2}}*\nu}_{i}$ belong to $\U_{0}$ for $i \in \{1, 2 , 3\}$. Therefore Proposition \ref{stable} implies in this case that

\begin{align*}
\pi(\C_{\gamma_{w_{1}}*\nu}) = \U_{0}\T^{w} = \pi(\C_{\gamma_{w_{2}}*\nu}),
\end{align*}

which concludes the proof. \\

%%%%%%%%%%%%%%%%%%%%%%%%%%%%%%%%%%%%%%%%%%%%%%%%%%%%%%%%%%%%%
\noindent
\textbf{Case 3}: $x < y =z, z \neq \bar x$\\
For this case it will be convenient to use Proposition \ref{densewordreading}. Let $$\mathscr{K}_{1} = \Skew(0: \mbox{\tiny{y}}, \mbox{\tiny{x}}|1:\mbox{\tiny{y}})$$ and $$\mathscr{K}_{2} = \Skew(0: \mbox{\tiny{x}}, \mbox{\tiny{y}}|0: \mbox{\tiny{y}}).$$ It is then enough to show (by Proposition \ref{densewordreading}) that
\begin{align*}
\overline{\pi'(\C_{\gamma_{\mathscr{K}_{1}}*\nu})} = \overline{\pi''(\C_{\gamma_{\mathscr{K}_{2}}*\nu})},
\end{align*}
since 
\begin{align*}
w_{1} = x \hbox{ } y \hbox{ } y = w(\mathscr{K}_{1}) \hbox{ and }\\
w_{2} = y \hbox{ }x \hbox{ }y = w(\mathscr{K}_{2}). 
\end{align*}

\noindent
However, this case is now the same as the previous one: all generators of $\mathbb{U}^{\gamma_{\mathscr{K}_{1}}*\nu}_{i}$ and $\mathbb{U}^{\gamma_{\mathscr{K}_{2}}*\nu}_{i}$ belong to $\U_{0}$, therefore, as before, 
\begin{align*}
\pi'(\C_{\gamma_{\mathscr{K}_{1}}*\nu}) = \U_{0}\T^{w} = \pi''(\C_{\gamma_{\mathscr{K}_{2}}*\nu}).
\end{align*}
With this case we conclude the proof of Lemma \ref{denser1}. 
%%%%%%%%%%%%%%%%%%%%%%%%%%%%%%%%%%%%%%%%%%%%%%%%%%%%%%%%%%%%%
\end{proof}

\subsection*{Relation R2}
For $1< x \leq n$ and $x\leq y \leq \bar x$:
\begin{itemize}
\item[a.] $y\hbox{ } \ov{x-1}\hbox{ }x-1 \equiv y\hbox{ }x\hbox{ }\bar{x}$ and 
\item[b.] $\ov{x-1}\hbox{ }x-1\hbox{ }y \equiv x\hbox{ }\bar{x}\hbox{ }y$. 
\end{itemize}

%%%%%%%%%%%%%%%%%%%%%%%%%%%%%%%%%%%%%%%%%%%%%%%%%%%%%%%%%%%%%%%%%%%%%%%%%%%%%%%%%%%%%%%%%%%%%%%%%%%%%%%%%%%%%%%%%%%%%%%%%%%%%%%%%%%%%%%%%%%%%%%%%%%%%%%%%%%%%%%%%%%%%%%%%%%%%%%%%%%%%%%

\begin{lem}
\label{denser2}
Let 

\begin{align*}
w_{1} &= y\hbox{ } \ov{x-1}\hbox{ }x-1\\
w_{2} &=  y\hbox{ }x\hbox{ }\bar{x} \\
w_{3} &= \ov{x-1}\hbox{ }x-1\hbox{ }y \\
w_{4} &= x\hbox{ }\bar{x}\hbox{ }y
\end{align*}
\noindent
for $z \neq \bar x$. Then 

\begin{align*}
a) \overline{\pi(\C_{\gamma_{w_{1}}*\nu})} &= \overline{\pi(\C_{\gamma_{w_{2}}*\nu})}\\
b) \overline{\pi(\C_{\gamma_{w_{3}}*\nu})} &= \overline{\pi(\C_{\gamma_{w_{4}}*\nu})}
\end{align*}
\end{lem}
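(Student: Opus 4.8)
The plan is to follow the pattern of the proof of Lemma \ref{denser1}, after first making a reduction that removes the letter $y$. All four words have the same endpoint, namely the coweight $\e_y$ (recall $\e_{\bar l}=-\e_l$), so the common truncated image $\T^{w}:=\T^{\geq 3}_{\gamma_{w_i}*\nu}$ is well defined. For part a., the common first letter $y$ means that $\gamma_{w_1}$ and $\gamma_{w_2}$ share their first edge $\E_0$ (from $0$ to $\e_y$), so $\mathbb{U}_{\V_0}$ is the same for both, and that their common first vertex $\V_1=\e_y$ is a special vertex. Hence Corollary \ref{goodtrick} together with Proposition \ref{stable} applied at $\V_1$ gives $\pi(\C_{\gamma_{w_1}*\nu})=\mathbb{U}_{\V_0}\,t^{\e_y}\,\pi(\C_{\gamma_{\overline{x-1}\,(x-1)}*\nu})$ and $\pi(\C_{\gamma_{w_2}*\nu})=\mathbb{U}_{\V_0}\,t^{\e_y}\,\pi(\C_{\gamma_{x\,\bar x}*\nu})$ (with the appropriate maps $\pi$), so part a. follows once $\overline{\pi(\C_{\gamma_{\overline{x-1}\,(x-1)}*\nu})}=\overline{\pi(\C_{\gamma_{x\,\bar x}*\nu})}$ is known. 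Part b. is exactly that same statement with $\nu$ replaced by $\gamma_y*\nu$, since $\gamma_{w_3}*\nu=\gamma_{\overline{x-1}\,(x-1)}*(\gamma_y*\nu)$ and $\gamma_{w_4}*\nu=\gamma_{x\,\bar x}*(\gamma_y*\nu)$. So the whole lemma reduces to the following claim for every combinatorial gallery $\eta$ and every $x$ with $1<x\leq n$ (note that the hypothesis $x\leq y\leq\bar x$ has disappeared, which is harmless because Proposition \ref{denseplacticrelations} is a one-directional statement): $\overline{\pi(\C_{\gamma_{\overline{x-1}\,(x-1)}*\eta})}=\overline{\pi(\C_{\gamma_{x\,\bar x}*\eta})}$.

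To prove the claim, note that both $\gamma_{\overline{x-1}\,(x-1)}$ and $\gamma_{x\,\bar x}$ have middle vertex and endpoint $0$, which is special, so the common truncated image $\T^{w}:=\T^{\geq 2}$ is $\U_0$-stable by Proposition \ref{stable}. Computing the sets $\Phi^{+}_{(\V_i,\E_i)}$ via Proposition \ref{order} and using Corollary \ref{goodtrick}, one checks that among the generators of the relevant $\mathbb{U}_{\V_i}$ those not lying in $\U_0$ are, for $\gamma_{x\,\bar x}$, precisely the $\U_{(\e_i-\e_x,-1)}$ with $1\leq i<x$, and for $\gamma_{\overline{x-1}\,(x-1)}$, precisely $\U_{(\e_{x-1},-1)}$, the $\U_{(\e_{x-1}-\e_j,-1)}$ with $x-1<j\leq n$, and the $\U_{(\e_{x-1}+\e_j,-1)}$ with $j\neq x-1$. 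Consequently
\begin{align*}
&\pi(\C_{\gamma_{x\,\bar x}*\eta})=\U_0\ \underset{1\leq i<x}{\prod}\U_{(\e_i-\e_x,-1)}(v_i)\ \T^{w},\\
&\pi(\C_{\gamma_{\overline{x-1}\,(x-1)}*\eta})=\U_0\ \U_{(\e_{x-1},-1)}(a)\underset{x-1<j\leq n}{\prod}\U_{(\e_{x-1}-\e_j,-1)}(b_j)\underset{j\neq x-1}{\prod}\U_{(\e_{x-1}+\e_j,-1)}(d_j)\ \T^{w}.
\end{align*}

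For the containment $\overline{\pi(\C_{\gamma_{\overline{x-1}\,(x-1)}*\eta})}\supseteq\overline{\pi(\C_{\gamma_{x\,\bar x}*\eta})}$: the root subgroup $\U_{(\e_{x-1}-\e_x,-1)}$ occurs on the left-hand side (the term $j=x$); commuting it past the elements $\U_{(\e_i-\e_{x-1},0)}\in\U_0$ ($i<x-1$) by Chevalley's formula (\ref{chevalley}) produces $\U_{(\e_i-\e_x,-1)}$ with coefficient a free scalar multiple of the $\U_{(\e_{x-1}-\e_x,-1)}$-coefficient, and after absorbing the $\U_0$-elements back into $\T^{w}$ via Proposition \ref{stable} one obtains that $\pi(\C_{\gamma_{\overline{x-1}\,(x-1)}*\eta})$ contains $\U_0\prod_{i<x}\U_{(\e_i-\e_x,-1)}(v_i)\T^{w}$ with $v_{x-1}\neq 0$, a dense subset of the other image. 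For the reverse containment, one starts in $\pi(\C_{\gamma_{x\,\bar x}*\eta})$ from $\U_{(\e_{x-1}-\e_x,-1)}(v_{x-1})$ with $v_{x-1}\neq 0$ and commutes with the $\U_0$-elements $\U_{(\e_x,0)}$, $\U_{(\e_x\pm\e_j,0)}$ ($j>x$) and $\U_{(\e_i,0)}$ ($i<x-1$), producing first $\U_{(\e_{x-1},-1)}$ and the $\U_{(\e_{x-1}-\e_j,-1)}$, then the $\U_{(\e_{x-1}+\e_j,-1)}$, i.e. every root subgroup occurring on the left-hand side above; the resulting coefficient system is solvable over a Zariski-dense subset of parameter space, using $v_{x-1}\neq 0$ and decoupling the $\U_{(\e_{x-1}+\e_j,-1)}$-coefficients from the rest by letting $\U_{(\e_x,0)}$, resp. $\U_{(\e_x+\e_j,0)}$, act on the already produced $\U_{(\e_{x-1},-1)}$, resp. $\U_{(\e_{x-1}-\e_j,-1)}$. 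Together the two containments give the claim, hence the lemma.

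The main obstacle is the bookkeeping in the last step: one must spell out exactly which commutators are triggered by (\ref{chevalley}) at each stage, track the resulting polynomial relations among the coefficients, and exhibit the open loci on which the corresponding systems are solvable. This is entirely parallel to, and no harder than, the analogous arguments in the proof of Lemma \ref{denser1}, with the type-$C_n$ root combinatorics of $\e_{x-1},\e_x,\e_i,\e_j$ replacing that of $\e_x,\e_y,\e_z$ there. A secondary point needing care is the correct determination, for each of the four word galleries, of the sets $\Phi^{+}_{(\V_i,\E_i)}$ and of which of the associated root subgroups lie in $\U_0$, together with the check that every application of Proposition \ref{stable} is made at a special vertex — here the middle vertex $0$ and, in part a., the vertex $\V_1=\e_y$.
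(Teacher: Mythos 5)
Your reduction is where the proof breaks down: you reduce both parts to the claim that $\overline{\pi(\C_{\gamma_{\ov{x-1}\,(x-1)}*\eta})}=\overline{\pi(\C_{\gamma_{x\ov{x}}*\eta})}$ for \emph{every} combinatorial gallery $\eta$ and every $1<x\leq n$, and this claim is false. Take $n=2$, $x=2$, $\eta$ trivial. For $\gamma_{2\ov{2}}$ (vertices $0,\e_{2},0$) one has $\mathbb{U}_{\V_{0}}=\langle\U_{(\e_{2},0)},\U_{(\e_{1}+\e_{2},0)}\rangle\subset\U_{0}$ and $\mathbb{U}_{\V_{1}}=\langle\U_{(\e_{1}-\e_{2},-1)}\rangle$; commuting the $\U_{0}$-factors past the seed via (\ref{chevalley}) and letting them act trivially on $[t^{0}]$ gives $\pi(\C_{\gamma_{2\ov{2}}})=\{\U_{(\e_{1},-1)}(c_{1}ac)\,\U_{(\e_{1}+\e_{2},-1)}(c_{2}a^{2}c)\,\U_{(\e_{1}-\e_{2},-1)}(c)\,[t^{0}]: a,c\in\mathbb{C}\}$ with $c_{1},c_{2}$ nonzero structure constants, so every point satisfies the relation $c_{2}p^{2}=c_{1}^{2}qr$ in the coordinates $(p,q,r)$ of the affine space $\mathbb{U}_{(\V_{1},\E_{1})}[t^{0}]\cong\mathbb{C}^{3}$ attached to $\gamma_{\ov{1}1}$. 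For $\gamma_{\ov{1}1}$ (vertices $0,-\e_{1},0$) one has $\mathbb{U}_{\V_{0}}=\{1\}$ and $\mathbb{U}_{\V_{1}}=\langle\U_{(\e_{1},-1)},\U_{(\e_{1}+\e_{2},-1)},\U_{(\e_{1}-\e_{2},-1)}\rangle$, so $\pi(\C_{\gamma_{\ov{1}1}})$ is that whole three-parameter affine space. Hence the two closures differ (a surface versus a threefold); consistently, $2\ov{2}$ and $\ov{1}1$ are \emph{not} plactic equivalent -- their dominant representatives are $12$ and $11$, so by Theorem \ref{lecouvey} and the rest of the machinery their cells must produce MV cycles for $\omega_{2}$ and $2\omega_{1}$ respectively. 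So your parenthetical remark that losing the hypothesis $x\leq y\leq\ov{x}$ is ``harmless'' is exactly backwards: the witness letter $y$ is what makes the lemma true, and an argument that never uses it cannot succeed.

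The error is localised in your reverse containment. Conjugating the single seed $\U_{(\e_{x-1}-\e_{x},-1)}(v_{x-1})$ by elements of $\U_{0}$ does produce terms $\U_{(\e_{x-1},-1)}$, $\U_{(\e_{x-1}\pm\e_{j},-1)}$, but their coefficients are monomials in the $\U_{0}$-parameters times powers of $v_{x-1}$ and satisfy forced algebraic relations (in the example above, $c_{2}p^{2}=c_{1}^{2}qr$); the $\U_{0}$-orbit of the seed stays inside a proper closed subset, so there is no dense locus on which your ``coefficient system'' becomes solvable. (Your first containment, and the truncation identity $\pi(\C_{\gamma_{w_{1}}*\nu})=\mathbb{U}_{\V_{0}}\,t^{\e_{y}}\,\pi_{1}(\C_{\gamma_{\ov{x-1}\,(x-1)}*\nu})$, are fine -- but the latter only shows that equality of the truncated pieces would suffice, not that it is necessary, and for part b.\ the claim is needed only for $\eta$ beginning with an edge in direction $\e_{y}$, $x\leq y\leq\ov{x}$, which is where the hypothesis re-enters.) The paper's proof keeps the three-letter words, rewrites them as two-column keys via Proposition \ref{densewordreading}, and treats the cases $y\notin\{x,\ov{x}\}$, $y=x$, $y=\ov{x}$ separately, precisely because the $y$-part of the gallery contributes further affine root subgroups and levels (for instance $\U_{(\e_{x-1}-\e_{x},-2)}$, and conjugators such as $\U_{(\e_{x},1)}\in\U_{0}$) which supply the independent parameters your argument is missing; you would need to redo the argument at that level of detail, with $y$ present, rather than reduce to the two-letter statement.
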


\begin{proof}
As usual, the proof is divided in some cases: we first consider the case where $y \notin \{x, \bar x\}$ and then we analyse $y = x$ and $ y = \bar x$ separately. \\

\noindent
\textbf{Case 1} $ y \notin \{x, \bar x\}$

a) We will use Proposition \ref{densewordreading}. Note that 
\begin{align*}
w_{1} = y\hbox{ } \ov{x-1} \hbox{ } x-1 = w\Bigg(\Skew(0:\mbox{\tiny{x-1}}, \mbox{\tiny{y}}, \mbox{\tiny{y}}| 1: \ov{\mbox{\tiny{x-1}}},\ov{\mbox{\tiny{x-1}}})\Bigg)
\end{align*}
and 
\begin{align*}
w_{2} = y \hbox{ } x \hbox{ } \bar{x} = w\Bigg( \Skew(0:\mbox{\tiny{x-1}}, \mbox{\tiny{x}},\mbox{\tiny{y}}|0: \mbox{\tiny{$\bar{x}$}}, \ov{\hbox{\tiny{x-1}}}) \Bigg).
\end{align*}
Hence by Proposition \ref{densewordreading}, to show Lemma \ref{denser2} a) it is enough to show that 
\begin{align*}
\overline{\pi'(\C_{\gamma_{\mathscr{K}_{1}}*\nu})} = \overline{\pi''(\C_{\gamma_{\mathscr{K}_{2}}*\nu})}, 
\end{align*}

where
\begin{align*}
 \mathscr{K}_{2} =  \Skew(0:\mbox{\tiny{x-1}}, \mbox{\tiny{x}},\mbox{\tiny{y}}|0: \ov{\mbox{\tiny{x}}}, \ov{\hbox{\tiny{x-1}}}) \hbox{ and }
\mathscr{K}_{1} = \Skew(0:\mbox{\tiny{x-1}}, \mbox{\tiny{y}}, \mbox{\tiny{y}}| 1: \ov{\mbox{\tiny{x-1}}},\ov{\mbox{\tiny{x-1}}}). 
\end{align*}

First we check 

\begin{align*}
 \overline{\pi''(\C_{\gamma_{\mathscr{K}_{2}}*\nu})} \subseteq \overline{\pi'(\C_{\gamma_{\mathscr{K}_{1}}*\nu})}.
\end{align*}

\noindent Clearly $\mathbb{U}^{\gamma_{\mathscr{K}_{2}}*\nu}_{0} \subset \U_{0}$; the only generators of $\mathbb{U}^{\gamma_{\mathscr{K}_{2}}*\nu}_{1}$ that do not belong to $\U_{0}$ are those of the form $\U_{(\e_{x}-\e_{y}, -1)}(a), a \in \mathbb{C}$, and those in $\mathbb{U}^{\gamma_{\mathscr{K}_{2}}*\nu}_{2}$ are $\U_{(\e_{x-1}-\e_{x}, -1)}(b),$ for $b \in \mathbb{C}$. This means that every element in $ \overline{\pi''(\C_{\gamma_{\mathscr{K}_{2}}*\nu})}$ belongs to 
\begin{align*}
 u \U_{(\e_{x} - \e_{y}, -1)}(a)\U_{(\e_{x-1}-\e_{x}, -1)}(b)\T^{w}
\end{align*}
for some $u \in \U_{0}$. Both $\U_{(\e_{x} - \e_{y}, -1)}(a)$ and $\U_{(\e_{x-1}-\e_{x}, -1)}(b)$ belong to $\U_{\e_{y}-\e_{x-1}}$, and this implies the contention by Proposition \ref{stable} and Corollary \ref{goodtrick}. Now we want to show 
\begin{align*}
 \overline{\pi'(\C_{\gamma_{\mathscr{K}_{1}}*\nu})} \subseteq \overline{\pi''(\C_{\gamma_{\mathscr{K}_{2}}*\nu})}.
\end{align*}
\noindent
By Theorem \ref{celldescription}, all elements of $\pi'(\C_{\gamma_{\mathscr{K}_{1}}*\nu})$ belong to 
\small{
\begin{align}
\label{r2ak2}
u \U_{(\e_{x-1}-\e_{y}, -2)}(v_{x-1 \bar{y}})\U_{(\e_{x-1}, -1)}(v_{x-1})\underset{l \neq y}{\underset{l \geq x}{\prod}}\U_{(\e_{x-1}-\e_{l}, -1)}(v_{x-1 \bar{l}}) \underset{s \neq y}{\prod}\U_{(\e_{x-1}+\e_{s}, -1)}(v_{x-1 s}) \T^{w}
\end{align}
}
\normalsize
for $u \in \U_{0}$ and $v_{x-1 j} \in \mathbb{C}$. This is because both $\mathbb{U}^{\gamma_{\mathscr{K}_{1}}*\nu}_{0}$ and $\mathbb{U}^{\gamma_{\mathscr{K}_{1}}*\nu}_{1}$ are contained in $\U_{0}$. Fix such an element such that $v_{x-1 \bar x} \neq 0$. We know that $\U_{(\e_{x-1}-\e_{x}, -1)}(v_{x-1 \bar x}) \in \mathbb{U}^{\gamma_{\mathscr{K}_{2}}*\nu}_{2}$ and that for any $a_{x\bar y} \in \mathbb{C}, \U_{(\e_{x}-\e_{y},-1)}(a_{x \bar y}) \in \U_{\e_{y}}$; this means that these elements stabilise both the truncated images $\T^{\geq 3}_{\gamma_{\mathscr{K}_{2}}*\nu}$ and $\T^{\geq 1}_{\gamma_{\mathscr{K}_{2}}*\nu}$. Hence the elements in 
%%%%%%%%%%%%%%%%%%%%%%%%%%%%%%%%%%%%%%%%%%%%%%%%%%%%%%%%%%%%
\begin{align}
& \U_{(\e_{x-1}-\e_{x}, -1)}(v_{x-1 \bar x})\U_{(\e_{x}-\e_{y}, -1)}(v_{x\bar y}) \T^{w} = \\
\label{r2.1}
&\U_{(\e_{x}-\e_{y}, -1)}(v_{x\bar y})\U_{(\e_{x-1}-\e_{y}, -2)}(c^{1,1}_{x-1 \bar x, x\bar y}(-v_{x-1 \bar x}) a_{x \bar y}) \U_{(\e_{x-1}-\e_{x}, -1)}(v_{x-1 \bar x}) \T^{w}
\end{align}
%%%%%%%%%%%%%%%%%%%%%%%%%%%%%%%%%%%%%%%%%%%%%%%%%%%%%%%%%%%%%
all belong to $\pi''(\C_{\gamma_{\mathscr{K}_{2}}*\nu})$; more precisely to $\mathbb{U}^{\gamma_{\mathscr{K}_{1}}*\nu}_{2} \T^{w} \subset \T^{\geq 1}_{\gamma_{\mathscr{K}_{1}}*\nu}$, hence by Proposition \ref{stable}, we may multiply by $\U_{(\e_{x}-\e_{y}, -1)}(-v_{x\bar y})$ on the left of line (\ref{r2.1}) and the product still belongs to 
$\pi''(\C_{\gamma_{\mathscr{K}_{2}}*\nu})$, hence 
%%%%%%%%%%%%%%%%%%%%%%%%%%%%%%%%%%%%%%%%%%%%%%%%%%%%%%%%%%%%%
\begin{align*}
\U_{(\e_{x-1}-\e_{y}, -2)}(c^{1,1}_{x-1 \bar x, x\bar y}(-v_{x-1 \bar x}) a_{x \bar y}) \U_{(\e_{x-1}-\e_{x}, -1)}(v_{x-1 \bar x}) \T^{w} \subset \pi''(\C_{\gamma_{\mathscr{K}_{2}}*\nu}).
\end{align*}
%%%%%%%%%%%%%%%%%%%%%%%%%%%%%%%%%%%%%%%%%%%%%%%%%%%%%%%%%%%%%
Now consider the product
\begin{align*}
u = \U_{(\e_{y}+\e_{x}, 1)}(a_{yx})\U_{(\e_{x}, 0)}(a_{x}) \underset{l \neq y}{\underset{l > x}{\prod}} \U_{(\e_{x}-\e_{l}, 0)}(a_{x\bar l}) \underset{s \neq y} {\prod} \U_{(\e_{x}+\e_{s}, 0)}(a_{xs}) \in \U_{\e_{y}}\cap \U_{0}.
\end{align*}
Proposition \ref{stable} then implies that 
\begin{align*}
&\pi(\C_{\gamma_{\mathscr{K}_{2}}*\nu}) \supset \\
&u^{-1}\U_{(\e_{x-1}-\e_{y}, -2)}(c^{1,1}_{x-1 \bar x, x\bar y}(-v_{x-1 \bar x}) a_{x \bar y}) \U_{(\e_{x-1}-\e_{x}, -1)}(v_{x-1 \bar x})u \T^{w}  = \\
&\U_{(\e_{x-1}+\e_{x}, -1)}(\rho_{x-1 x})\U_{(\e_{x-1}, -1)}(\rho_{x-1}) \U_{(\e_{x-1}-\e_{y}, -2)}(\rho_{x-1 y}) \\
&\underset{l \neq y}{\underset{l >x}{\prod}}\U_{(\e_{x-1}-\e_{l}, -1)}(\rho_{x-1 l}) \underset{s \neq y }{\prod}\U_{(\e_{x-1}+\e_{s}, -1)}(\rho_{x-1 s})\U_{(\e_{x-1}-\e_{x}, -1)}(v_{x-1 \bar{x}})\T^{w} \\
&\rho_{x-1 x} = c^{1,2}_{x-1\bar x, x}(-v_{x-1 \bar x})a_{x}^{2} - c^{1,1}_{x-1 y, yx} c^{1,1}_{x-1 \bar x, x\bar y}(v_{x-1 \bar x}) a_{x \bar y}a_{y x}\\
&\rho_{x-1 j} = c^{1,1}_{x-1 \bar{x}, xj}(-v_{x-1 \bar x}) a_{x j} j \neq y, j \in \{\bar{l}: l> x\} \cup \{s: \e_{x-1}+\e_{s} \in \Phi^{+}\}\\
&\rho_{x-1} = c^{1,1}_{x-1 \bar{x}, x}(-v_{x-1 \bar x}) a_{x}
\end{align*}
The system of equations defined by $v_{x-1} = \rho_{x-1}, v_{x-1 j} = \rho_{x-1 j}$ has indeed solutions (the variables are $a_{x}, a_{yx}, a_{x\bar l},$ and $a_{xs}$) since $v_{x-1, x} \neq 0$! This means that for such solutions  (cf. (\ref{r2ak2}))
\begin{align*}
&\U_{(\e_{x-1}-\e_{y}, -2)}(v_{x-1 \bar{y}})\U_{(\e_{x-1}, -1)}(v_{x-1})\underset{l \neq y}{\underset{l \geq x}{\prod}}\U_{(\e_{x-1}-\e_{l}, -1)}(v_{x-1 \bar{l}}) \underset{s \neq y}{\prod}\U_{(\e_{x-1}+\e_{s}, -1)}(v_{x-1 s}) \T^{w} = \\
&\U_{(\e_{x-1}+\e_{x}, -1)}(\rho_{x-1 x})\U_{(\e_{x-1}, -1)}(\rho_{x-1}) \U_{(\e_{x-1}-\e_{y}, -2)}(\rho_{x-1 y}) \\
&\underset{l \neq y}{\underset{l >x}{\prod}}\U_{(\e_{x-1}-\e_{l}, -1)}(\rho_{x-1 l}) \underset{s \neq y }{\prod}\U_{(\e_{x-1}+\e_{s}, -1)}(\rho_{x-1 s})\U_{(\e_{x-1}-\e_{x}, -1)}(v_{x-1 \bar{x}})\T^{w} \subset \pi(\C_{\gamma_{\mathscr{K}_{2}}*\nu})
\end{align*}
\noindent
and so by Proposition \ref{stable} we get that all elements in (\ref{r2ak2}) belong to $\pi''(\C_{\gamma_{\mathscr{K}_{2}}*\nu})$. All such elements of $\pi'(\C_{\gamma_{\mathscr{K}_{1}}*\nu})$ form a dense open subset. This finishes the proof in this case. \\
\noindent b) Let 
$$
\mathscr{K}_{3} = \Skew(0:\mbox{\tiny{x-1}}, \mbox{\tiny{x-1}}, \ov{\mbox{\tiny{x-1}}}|0: \mbox{\tiny{y}}, \mbox{\tiny{y}})$$    and  $$\mathscr{K}_{4} = \Skew(0:\mbox{\tiny{y}},\mbox{\tiny{x-1}}, \mbox{\tiny{x}}| 1: \ov{\mbox{\tiny{x}}}, \ov{\mbox{\tiny{x-1}}}).$$

Then $w_{3} = \ov{x-1}\hbox{ }x-1\hbox{ }y = w(\mathscr{K}_{3})$ and $w_{4} = x\hbox{ }\bar{x}\hbox{ }y = w(\mathscr{K}_{4})$. As in a), by Proposition \ref{densewordreading}, it is enough to show

\begin{align*}
\overline{\pi'''(\C_{\gamma_{\mathscr{K}_{3}}*\nu})} = \overline{\pi''''(\C_{\gamma_{\mathscr{K}_{4}}*\nu})}. 
\end{align*}
To show 
\begin{align*}
\overline{\pi''''(\C_{\gamma_{\mathscr{K}_{4}}*\nu})} \subset \overline{\pi'''(\C_{\gamma_{\mathscr{K}_{3}}*\nu})},  
\end{align*}
note first that the only generator of $\mathbb{U}^{\gamma_{\mathscr{K}_{4}}*\nu}_{i}$ that does not belong to $\U_{0}$ is $$\U_{(\e_{x-1}-\e_{x}, -1)}(a) \in \mathbb{U}^{\gamma_{\mathscr{K}_{4}}*\nu}_{1}, \hbox{ for } a \in \mathbb{C}.$$ Of $\mathbb{U}^{\gamma_{\mathscr{K}_{4}}*\nu}_{2}$, the only generators that do not commute with $\U_{(\e_{x-1}-\e_{x}, -1)}(a)$ are $\U_{(\e_{y}+\e_{x}, 0)}(b),$ with $b \in \mathbb{C}$. Then Chevalley's commutator formula (\ref{chevalley}) implies that all elements of $\pi''''(\C_{\gamma_{\mathscr{K}_{4}}*\nu})$ belong to the set
\begin{align}
\label{r2imagek4}
\U_{0}\U_{(\e_{x-1}+\e_{y}, -1)}(c^{1,1}_{x-1 \bar x, xy}(-a)b) \U_{(\e_{x-1}-\e_{x}, -1)}(a) \T^{w}.
\end{align}
Since both $\U_{(\e_{x-1}+\e_{y}, -1)}(c^{1,1}_{x-1 \bar x, xy}(-a)b)$ and $\U_{(\e_{x-1}-\e_{x}, -1)}(a)$ belong to $\mathbb{U}^{\gamma_{\mathscr{K}_{3}}*\nu}_{1}$, the desired contention follows by Proposition \ref{stable}. Now we show 
\begin{align}
\label{r2bgeneralcaselastcontention}
\overline{\pi'''(\C_{\gamma_{\mathscr{K}_{3}}})} \subset \overline{\pi''''(\C_{\gamma_{\mathscr{K}_{4}}})}.
\end{align}
The proof is similar to that of a), but there are some subtle differences. First we look at the image $\pi'''(\C_{\gamma_{\mathscr{K}_{3}}*\nu})$. Out of all the generators of $\mathbb{U}^{\gamma_{\mathscr{K}_{3}}*\nu}_{i}$, the only ones that do not belong to $\U_{0}$ belong to $\mathbb{U}^{\gamma_{\mathscr{K}_{3}}*\nu}_{1}$: $\U_{(\e_{x-1}, -1)}(v_{x}), \U_{(\e_{x-1}-\e_{s}, -1)}(v_{x-1 s}),$ and $\U_{(\e_{x-1}+\e_{l}, -1)}(v_{x-1 l})$ for $l \neq x-1, s>x, s \neq y$, and complex numbers $v_{x-1}, v_{x-1 s}$, and $v_{x-1 l}$. The group $\mathbb{U}^{\gamma_{\mathscr{K}_{3}}*\nu}_{2}$ has as generators (only) the terms $\U_{(\e_{x-1}+\e_{y}, 0)}(a)$, and these commute with all the latter terms. Therefore all elements of $\pi'''(\C_{\gamma_{\mathscr{K}_{3}}*\nu})$ belong to 
\begin{align}
\label{r2generalcaseexpressionfork3}
u \U_{(\e_{x-1}, -1)}(v_{x}) \underset{s \neq y}{\underset{s> x-1}{\prod}}\U_{(\e_{x-1}-\e_{s}, -1)}(v_{x-1 s}) \underset{l \neq x-1}{\prod}\U_{(\e_{x-1}+\e_{l}, -1)}(v_{x-1 l}) \T^{w}
\end{align}
for some  $u \in \U_{0}$. Fix such a $u$, and assume $v_{x-1 \bar x} \neq 0$ and $v_{x-1y} \neq 0$. Such elements as (\ref{r2generalcaseexpressionfork3}) form a dense open subset of  $\pi'''(\C_{\gamma_{\mathscr{K}_{3}}*\nu})$. Now, for all complex numbers $a, a_{xy}$, and $a_{x\bar y}$ we have $\U_{(\e_{x-1}-\e_{x}, -1)}(a) \in \mathbb{U}^{\gamma_{\mathscr{K}_{4}}* \nu}_{1}$, $\U_{(\e_{x} + \e_{y}, 0)}(a_{xy}) \in \mathbb{U}^{\gamma_{\mathscr{K}_{4}}* \nu}_{1}$, and $\U_{(\e_{x}-\e_{y}, 0)}(a_{x\bar{y}}) \in \U_{0}$, which stabilises the truncated image $\T^{\geq 2}_{\gamma_{\mathscr{K}_{4}}* \nu}$. Therefore, setting $c = \U_{(\e_{x} + \e_{y}, 0)}(a_{xy}) \U_{(\e_{x}-\e_{y}, 0)}(a_{x\bar{y}}) \in \U_{0}$, all elements in 
\small{
\begin{align*}
&c^{-1}\U_{(\e_{x-1}-\e_{x}, -1)}(a) c \T^{w} = \\
&\U_{(\e_{x-1}+\e_{x}, -1)}(\varrho_{x-1 x})\U_{(\e_{x-1}+\e_{y}, -1)}(\varrho_{x-1 y}) \U_{(\e_{x-1}-\e_{y}, -1)}(c^{1,1}_{x-1 x, x \bar y}(-a)a_{x\bar y})\U_{(\e_{x-1}-\e_{x}, -1)}(a) \T^{w} = \\
&\U_{(\e_{x-1}+\e_{x}, -1)}(\varrho_{x-1 x})\U_{(\e_{x-1}+\e_{y}, -1)}(\varrho_{x-1 y}) \U_{(\e_{x-1}-\e_{x}, -1)}(a) \U_{(\e_{x-1}-\e_{y}, -1)}(c^{1,1}_{x-1 x, x \bar y}(-a)a_{x\bar y}) \T^{w} =\\
&\U_{(\e_{x-1}+\e_{x}, -1)}(\varrho_{x-1 x})\U_{(\e_{x-1}+\e_{y}, -1)}(\varrho_{x-1 y}) \U_{(\e_{x-1}-\e_{x}, -1)}(a)\T^{w}
\end{align*}
}
\normalsize
belong to $\pi''''(\C_{\gamma_{\mathscr{K}_{4}}*\nu})$, where 
\begin{align*}
\varrho_{x-1 x} &= c^{1,1}_{x-1 y, x\bar y}c^{1,1}_{x-1 \bar{x}, xy}aa_{xy}a_{x\bar y}\\
\varrho_{x-1 y} &= c^{1,1}_{x-1 \bar{x}, xy}(-a)a_{xy}, 
\end{align*}

\noindent and where the last equality holds because $\U_{(\e_{x-1}-\e_{y}, -1)}(c^{1,1}_{x-1 x, x \bar y}(-a)a_{x\bar y}) \in \U_{\e_{y}}$, and all elements of the latter stabilise the truncated image $\T^{w}$ by Proposition \ref{stable}. Now let 
$$c' = \U_{(\e_{x}, 0)}(a_{x}) \underset{s \neq y}{\underset{s > x}{\prod}}\U_{(\e_{x}-\e_{s}, 0)}(a_{x\bar s}) \underset{l \neq y}{\underset{l \neq x-1}{\prod}}\U_{(\e_{x}+\e_{l}, 0)}(a_{xl}) \in \U_{\e_{y}}\cap \U_{0}$$ \noindent for $a_{x}, a_{x\bar s},$ and $a_{xl}$ complex numbers; by Proposition \ref{stable} this element stabilises the truncated image $\T^{w}$ and the image $\pi''''(\C_{\gamma_{\mathscr{K}_{4}}*\nu})$. Therefore
\begin{align}
&\pi''''(\C_{\gamma_{\mathscr{K}_{4}}}) \supset \\
&c'^{-1} \U_{(\e_{x-1}+\e_{x}, -1)}(\varrho_{x-1 x})\U_{(\e_{x-1}+\e_{y}, -1)}(\varrho_{x-1 y}) \U_{(\e_{x-1}-\e_{x}, -1)}(a) c' \T^{w} = \\
\label{r2bgral}
&\U_{(\e_{x-1}, -1)}(\varrho_{x}) \underset{ s \neq x}{\underset{s \neq y}{\underset{s> x-1}{\prod}}}\U_{(\e_{x-1}-\e_{s}, -1)}(\varrho_{x-1 s})\U_{(\e_{x-1} - \e_{x}, -1)}(a)\U_{(\e_{x-1}+\e_{x}, -1)}(\varrho'_{x-1 x}) \\
\label{r2bgrall}
&\underset{l \notin  \{x-1, x\}}{\prod}\U_{(\e_{x-1}+\e_{l}, -1)}(\varrho_{x-1 l})\T^{w}
\end{align}
where
\begin{align*}
\varrho_{x-1} & = c^{1,1}_{x-1 x, x}(-a)a_{x}\\
\varrho'_{x-1 x} &= \varrho_{x-1 x} + c^{1,2}_{x-1 x, x}(-a)a^{2}_{x}\\
\varrho_{x-1 l} & = c^{1,1}_{x-1 \bar{x}, xl}(-a)a_{xl}\\
\varrho_{x-1 \bar s} & = c^{1,1}_{x-1 \bar{x}, x\bar s}(-a)a_{x\bar s}. 
\end{align*}

\noindent
We want to show that $\U_{(\e_{x-1}, -1)}(v_{x-1}) \underset{s \neq y}{\underset{s> x-1}{\prod}}\U_{(\e_{x-1}-\e_{s}, -1)}(v_{x-1 s}) \underset{l \neq x-1}{\prod}\U_{(\e_{x-1}+\e_{l}, -1)}(v_{x-1 l}) \T^{w}$ is equal to the product in the  last lines (\ref{r2bgral}) and (\ref{r2bgrall}) above (cf. (\ref{r2generalcaseexpressionfork3})), for some $a_{x}, a_{xl}, a_{x\bar s}$. This determines a system of equations 

\begin{align*}
v_{x-1 \bar x} &= a \\
v_{x-1 x} &= c^{1,1}_{x-1 y, x\bar y}c^{1,1}_{x-1 \bar{x}, xy}aa_{xy}a_{x\bar y} +  c^{1,2}_{x-1 x, x}(-a)a^{2}_{x}\\
v_{x-1} &= c^{1,1}_{x-1 x, x}(-a)a_{x}\\
v_{x-1 \bar s} &= c^{1,1}_{x-1 \bar{x}, x\bar s}(-a)a_{x\bar s}\\
v_{x-1 l} &= c^{1,1}_{x-1 \bar{x}, xl}(-a)a_{xl}\\
v_{x-1 y} &= c^{1,1}_{x-1 \bar{x}, xy}(-a)a_{xy}.
\end{align*}
\noindent
which can always be solved since $v_{x-1 y} \neq 0$ and $v_{x \ov{x-1}} \neq 0$. This completes the proof of b) in this case!\\
%%%%%%%%%%%%%%%%%%%%%%%%%%%%%%%%%%%%%%%%%%%%%%%%%%%%%%%%%%%%%%%%%%%%%%%%%%%%%%%%%%%%%%%%%%%%%%%%

\noindent
\textbf{Case 2} $y = x$\\

\noindent a) As in Case 1, we will make use of Proposition \ref{densewordreading}. Let $$\mathscr{K}_{1} = \Skew(0:\mbox{\tiny{x-1}}, \mbox{\tiny{x}}, \mbox{\tiny{x}}| 1: \ov{\mbox{\tiny{x-1}}}, \ov{\mbox{\tiny{x-1}}})$$ and 
$$\mathscr{K}_{2} = \Skew(0: \mbox{\tiny{x-1}}, \mbox{\tiny{x}}, \mbox{\tiny{x}}| 0: \ov{\mbox{\tiny{x}}}, \ov{\mbox{\tiny{x-1}}}).$$ Then
\begin{align*}
w_{1} &= x\hbox{ }\ov{x-1} \hbox{ } x-1 = w(\mathscr{K}_{1}) \hbox{ and }\\
w_{2} &= x\hbox{ }x\hbox{ }\ov{x} = w(\mathscr{K}_{2}).
\end{align*}
By Proposition \ref{densewordreading} it is enough to show 
\begin{align*}
\overline{\pi'(\C_{\gamma_{\mathscr{K}_{1}}*\nu})} = \overline{\pi''(\C_{\gamma_{\mathscr{K}_{2}}*\nu})}. 
\end{align*}
First we show 
\begin{align}
\label{r2ayequalsxeasy}
\overline{\pi''(\C_{\gamma_{\mathscr{K}_{2}}*\nu})} \subseteq \overline{\pi'(\C_{\gamma_{\mathscr{K}_{1}}*\nu})}.
\end{align}
Since $\mathbb{U}^{\gamma_{\mathscr{K}_{2}}*\nu}_{2}$ is generated by elements of the form $\U_{(\e_{x-1}-\e_{x}, -2)}(a), a \in \mathbb{C}$ and the generators of $\mathbb{U}^{\gamma_{\mathscr{K}_{2}}*\nu}_{i}$ belong to $\U_{0}$ for $i \in \{1, 2\}$, all elements of $\pi''(\C_{\gamma_{\mathscr{K}_{2}}*\nu})$ are of the form 
\begin{align*}
u \U_{(\e_{x-1}-\e_{x}, -2)}(a) \T^{w}
\end{align*}
\noindent
for some $u \in \U_{0}$. Since $\U_{(\e_{x-1}-\e_{x}, -2)}(a) \in \mathbb{U}^{\gamma_{\mathscr{K}_{1}}*\nu}_{2}$, (\ref{r2ayequalsxeasy}) follows by applying Proposition \ref{stable} to $u$. To finish the proof in this case it remains to show 
\begin{align}
\label{r2ayequalsxhard}
\overline{\pi'(\C_{\gamma_{\mathscr{K}_{1}}*\nu})} \subseteq \overline{\pi''(\C_{\gamma_{\mathscr{K}_{2}}*\nu})}.
\end{align}
The generators of $\mathbb{U}^{\gamma_{\mathscr{K}_{1}}*\nu}_{i}$ belong to $\U_{0}$ for $i \in \{0,1\}$, and the generators of $\mathbb{U}^{\gamma_{\mathscr{K}_{2}}*\nu}_{2}$ that do not are $\U_{(\e_{x-1}, -1)}(v_{x}), \U_{(\e_{x-1}-\e_{l}, -1)}(v_{x-1 \bar l}), \U_{(\e_{x-1} + \e_{s}, -1)}(v_{x-1 s}),$ and \newline $\U_{(\e_{x-1}-\e_{x}, -2)}(v_{x-1 \bar x})$, for $n\geq l> x, s \notin \{x, x-1\}$, and  complex numbers $v_{x}, v_{x-1 \bar l}, v_{x-1 s},$ and $v_{x-1 \bar x}$. Therefore all elements of $\pi'(\C_{\gamma_{\mathscr{K}_{1}}*\nu})$ belong to 
\begin{align*}
u \U_{(\e_{x-1}, -1)}(v_{x})\U_{(\e_{x-1}-\e_{l}, -1)}(v_{x-1 \bar l})\U_{(\e_{x-1} + \e_{s}, -1)}(v_{x-1 s})\U_{(\e_{x-1}-\e_{x}, -2)}(v_{x-1 \bar x}) \T^{w}.
\end{align*}
%%%%%%%%%%%%%%%%
Fix such $u \in \U_{0}$ and $v_{x}, v_{x-1 \bar l}, v_{x-1 s}, v_{x-1 \bar x}$ complex numbers such that $v_{x-1 \bar x} \neq 0$. We know that for any $a \in \mathbb{C}, \U_{(\e_{x-1}-\e_{x}, -2)}(a) \in \mathbb{U}_{\gamma_{\mathscr{K}_{2}}*\nu}$; let $$q = \U_{(\e_{x}, 1)}(a_{x}) \underset{s> x}{\prod}\U_{(\e_{x}-\e_{s}, 1)}(a_{x\bar s}) \underset{l \neq x}{\prod}\U_{(\e_{x}+\e_{l},1)}(a_{xl}) \in \U_{(\e_{x})}\cap \U_{0}$$ for any complex numbers $a_{x}, a_{x\bar s}, a_{xl}$. Then by Proposition \ref{stable}, 
%%%%%%%%%%%%%%%%%%%%
\begin{align}
\label{r2ayisxlastbit}
q^{-1}\U_{(\e_{x-1}-\e_{x}, -2)}(a) q \T^{w} \subset \pi''(\C_{\gamma_{\mathscr{K}_{2}}*\nu}). 
\end{align}
%%%%%%%%%%%%%%%%%%%%%%%%
As in the previous cases, we want to find $a, a_{x}, a_{x\bar s}, a_{xl}$ such that 
\begin{align*}
t \U_{(\e_{x-1}, -1)}(v_{x})\U_{(\e_{x-1}-\e_{l}, -1)}(v_{x-1 \bar l})\U_{(\e_{x-1} + \e_{s}, -1)}(v_{x-1 s})\U_{(\e_{x-1}-\e_{x}, -2)}(v_{x-1 \bar x}) \T^{w}
\end{align*}
equals (\ref{r2ayisxlastbit}), for some $t \in \U_{0}$. But 
\begin{align*}
&q^{-1}\U_{(\e_{x-1}-\e_{x}, -2)}(a) q \T^{w} = \\
& t^{-1} \U_{(\e_{x-1}, -1)}(\varrho_{x})\U_{(\e_{x-1}-\e_{l}, -1)}(\varrho_{x-1 \bar l})\U_{(\e_{x-1} + \e_{s}, -1)}(\varrho_{x-1 s})\U_{(\e_{x-1}-\e_{x}, -2)}(a) \T^{w}
\end{align*}
where
\begin{align*}
t^{-1} &= \U_{(\e_{x}+\e_{x-1}, 0)}(c^{1,2}_{x-1 \bar x, x})(-a)a_{x}^{2} \in \U_{0}\\
\varrho_{x}&= c^{1,1}_{x-1 \bar x, x}(-a)a_{x} \\
\varrho_{x-1 \bar l}&= c^{1,1}_{x-1 \bar x, x\bar l}(-a)a_{x\bar l} \\
\varrho_{x-1 s}&= c^{1,1}_{x-1 \bar x, xs}(-a)a_{xs}.
\end{align*}

The system
\begin{align*}
v_{x-1 \bar x} &= a\\
v_{x-1 \bar l}&= \varrho_{x-1 \bar l}\\
v_{x-1 s} &= \varrho_{x-1 s}
\end{align*}
always has a solution since $v_{x-1 \bar x} \neq 0$. This concludes the proof. 
\item[b)] Let $$\mathscr{K}_{3}= \Skew(0: \mbox{\tiny{x-1}}, \mbox{\tiny{x-1}}, \ov{\mbox{\tiny{x-1}}}|0: \mbox{\tiny{x}}, \mbox{\tiny{x}})$$ and $$\mathscr{K}_{4}= \Skew(0:\mbox{\tiny{x}},\mbox{\tiny{x-1}}, \mbox{\tiny{x}}|1: \ov{\mbox{\tiny{x}}}, \ov{\mbox{\tiny{x-1}}}).$$ Then 
\begin{align*}
w_{3} = \ov{x-1}\hbox{ } x-1\hbox{ } x  = w(\mathscr{K}_{3}) \hbox{ and }
w_{4} =  x \hbox{ }\bar x \hbox{ } x = w(\mathscr{K}_{4}).
\end{align*}
By Proposition \ref{densewordreading} it is enough to show 
\begin{align*}
\overline{\pi'''(\C_{\gamma_{\mathscr{K}_{3}}*\nu})} = \overline{\pi''''(\C_{\gamma_{\mathscr{K}_{4}}*\nu})}. 
\end{align*}
To do this we will describe a common dense subset of $\pi'''(\C_{\gamma_{\mathscr{K}_{3}}*\nu})$ and $\pi''''(\C_{\gamma_{\mathscr{K}_{4}}*\nu})$. \\

Consider first
$
 \pi'''(\C_{\gamma_{\mathscr{K}_{3}}*\nu}) = \mathbb{U}^{\gamma_{\mathscr{K}_{3}}*\nu}_{0} \mathbb{U}^{\gamma_{\mathscr{K}_{3}}*\nu}_{1}\mathbb{U}^{\gamma_{\mathscr{K}_{3}}*\nu}_{2}\T^{w}. 
$
We have $\mathbb{U}^{\gamma_{\mathscr{K}_{3}}*\nu}_{0} \subset \U_{0}$ and also $\mathbb{U}^{\gamma_{\mathscr{K}_{3}}*\nu}_{2} \subset \U_{0}$, since it is generated by the terms $\U_{(\e_{x-1}+\e_{x}, 0)}(d), d \in \mathbb{C}$. These commute with all generators of $\mathbb{U}^{\gamma_{\mathscr{K}_{3}}*\nu}_{1}$, out of which $\U_{(\e_{x-1}, -1)}(v_{x-1}), \U_{(\e_{x-1}+\e_{s}, -1)}(v_{x-1 s}),$ and $\U_{(\e_{x-1}-\e_{l}, -1)}(v_{x-1 \bar l})$, (for $s\leq n, s \neq x-1, l>x$, and $v_{x-1}, v_{x-1 s}$ and $v_{x-1 \bar l}$ complex numbers) do not belong to $\U_{0}$. Therefore 
$ \pi'''(\C_{\gamma_{\mathscr{K}_{3}}*\nu})$ coincides with 
\begin{align}
\label{r2case2k3}
\U_{0} \U_{(\e_{x-1}, -1)}(v_{x-1})\underset{s \neq x-1}{\underset{s\leq n}{\prod}}\U_{(\e_{x-1}+\e_{s}, -1)}(v_{x-1 s})\underset{x < l \leq n}{\prod}\U_{(\e_{x-1}-\e_{l}, -1)}(v_{x-1 \bar l}) \T^{w}
\end{align}
%%%%%%%%%%%%%%%%%%%%%%%%%%%%%%%%%%%%%%%%%%%%%%%%%%%%%%%%%
for complex numbers $v_{x-1}, v_{x-1 s}$ and $v_{x-1 \bar l}$. Now we look at elements of
\begin{align*}
\pi''''(\C_{\gamma_{\mathscr{K}_{4}}*\nu}) = \mathbb{U}^{\gamma_{\mathscr{K}_{4}}*\nu}_{0} \mathbb{U}^{\gamma_{\mathscr{K}_{4}}*\nu}_{1}\mathbb{U}^{\gamma_{\mathscr{K}_{4}}*\nu}_{2}\T^{w}. 
\end{align*}
Both $\mathbb{U}^{\gamma_{\mathscr{K}_{4}}*\nu}_{0}$ and $\mathbb{U}^{\gamma_{\mathscr{K}_{4}}*\nu}_{2}$ are contained in $\U_{0}$, and $\mathbb{U}^{\gamma_{\mathscr{K}_{4}}*\nu}_{1}$ is generated by the elements $\U_{(\e_{x-1}-\e_{x}, -1)}(d)$, which belong to $\U_{\e_{x}}$ and therefore stabilise the truncated image $\T^{w}$ by Proposition \ref{stable}. Now, by Proposition \ref{order}, we may write any element $k$ of $\mathbb{U}^{\gamma_{\mathscr{K}_{4}}*\nu}_{2}$ as
\begin{align*}
k = \U_{(\e_{x}, 0)}(k_{x}) \underset{x< l \leq n}{\prod}\U_{(\e_{x}-\e_{l}, 0)}(k_{x \bar l}) \underset{s\leq n s\neq x}{\prod}\U_{(\e_{x}+\e_{s}, 0)}(k_{xs}) \in \U_{0}
\end{align*}
for some complex numbers $k_{x}, k_{x \bar l}$, and $k_{xs}$. Theorem \ref{celldescription} and Proposition \ref{stable} imply that 
\small{
\begin{align}
\label{r2case2k4}
&\pi''''(\C_{\gamma_{\mathscr{K}_{4}}*\nu})  = \U_{0}\U_{(\e_{x-1}-\e_{x}, -1)}(d)k \T^{w} \\
& = \U_{0}k \U_{(\e_{x-1}, -1 )}(\sigma_{x-1}) \U_{(\e_{x-1} + \e_{x}, -1)}(\sigma_{x-1 x})
\underset{x< l \leq n}{\prod}\U_{(\e_{x-1 }-\e_{l}, -1)}(\sigma_{x-1\bar l})\\
&\underset{s\leq n s\neq x}{\prod}\U_{(\e_{x-1}+\e_{s}, 0)}(\sigma_{x-1 s}) \U_{(\e_{x-1}-\e_{x}, -1)}(d)\T^{w}
\end{align}}
\normalsize
for $k \in \mathbb{U}^{\gamma_{\mathscr{K}_{4}}*\nu}_{2}$ and $d \in \mathbb{C}$, where 
\begin{align*}
\sigma_{x-1} = c^{1,1}_{x-1 \bar x, x}(-d)k_{x}\\
\sigma_{x-1 x} = c^{1,2}_{x-1 \bar x, x}(-d)k^{2}_{x}\\
\sigma_{x-1 \bar l} = c^{1,1}_{x-1 \bar x, x\bar l}(-d)k_{x \bar l}\\
\sigma_{x-1 s} = c^{1,1}_{x-1 \bar x, x s}(-d)k_{xs}
\end{align*}
 This set (\ref{r2case2k4}) is clearly contained in (\ref{r2case2k3}). Moreover, the system 
 \begin{align*}
v_{x-1} = \sigma_{x-1}\\
v_{x-1 x} = \sigma_{x-1 x}\\
v_{x-1 \bar l} = \sigma_{x-1 \bar l} \\
v_{x-1 s} = \sigma_{x-1 s}
 \end{align*}
has solutions for $d, k_{x}, k_{x \bar l},$ and $k_{xs}$ as long as $\{v_{x-1}, v_{x-1 x}, v_{x-1 \bar l}, v_{x-1 s}\} \subset \mathbb{C}^{\times}$. Proposition \ref{stable} then implies that a dense subset of $\pi'''(\C_{\gamma_{\mathscr{K}_{3}}*\nu})$ is contained in $\pi''''(\C_{\gamma_{\mathscr{K}_{4}}*\nu})$, which finishes the proof in this case.

%%%%%%%%%%%%%%%%%%%%%%%%%%%%%%%%%%%%%%%%%%%%%%%%%%%%%%%%%%%%%%%%%%%%%%%%%%%%%%%%%%%%%%%%%%%%%%%%%%%%%%%%%%%%%%%%%%%%%%%%%%

\textbf{Case 3} $y = \bar{x}$\\

\noindent a) Let $$\mathscr{K}_{1} = \Skew(0: \mbox{\tiny{x-1}}, \ov{\mbox{\tiny{x}}}, \ov{\mbox{\tiny{x}}}|1:  \ov{\mbox{\tiny{x-1}}},  \ov{\mbox{\tiny{x-1}}})$$ \hbox{ and } $$\mathscr{K}_{2} = \Skew(0: \mbox{\tiny{x-1}}, \mbox{\tiny{x}}, \ov{\mbox{\tiny{x}}}|0: \ov{\mbox{\tiny{x}}}, \ov{\mbox{\tiny{x-1}}}).$$ Then 

\begin{align*}
w_{1} = \bar{x} \hbox{ } \ov{x-1} \hbox{ } x-1 = w(\mathscr{K}_{1}) \hbox{ and } w_{2} = \bar{x} \hbox{ } x \hbox{ } \bar{x} = w(\mathscr{K}_{2})
\end{align*}

By Proposition \ref{densewordreading} it is enough to show

\begin{align*}
\overline{\pi'(\C_{\gamma_{\mathscr{K}_{1}}*\nu})} = \overline{\pi''(\C_{\gamma_{\mathscr{K}_{2}}*\nu})}
\end{align*}

In this case we have $\mathbb{U}^{\gamma_{\mathscr{K}_{1}}*\nu}_{0} = 1 = \mathbb{U}^{\gamma_{\mathscr{K}_{1}}*\nu}_{0}$; Proposition \ref{order} and Theorem \ref{celldescription} then say 

\begin{align}
&\pi'(\C_{\gamma_{\mathscr{K}_{1}}*\nu}) =\\
\label{r2case3k1}
&\U_{(\e_{x-1}-\e_{x}, 0)}(v_{x-1 x}) \U_{(\e_{x-1},-1)}(v_{x-1})\U_{(\e_{x-1}+\e_{x}, -2)}(v_{x-1 x})\\
&\underset{x< l \leq n}{\prod}\U_{(\e_{x-1}-\e_{l}, -1)}(v_{x-1 l}) \underset{ s \neq x}{\underset{s \neq x-1}{\underset{s\leq n}{\prod}}}\U_{(\e_{x-1}+\e_{s})}(v_{x-1 s}) \T^{w}
\end{align}
for complex numbers $v_{x-1 x}, v_{x-1}, v_{x-1 x}, v_{x-1 l},$ and $v_{x-1 s}$. Fix such complex numbers. Now we look at $\pi''(\C_{\gamma_{\mathscr{K}_{2}}})$. We have that $\mathbb{U}^{\gamma_{\mathscr{K}_{2}}*\nu}_{0}$ and $\mathbb{U}^{\gamma_{\mathscr{K}_{2}}*\nu}_{2}$ are both contained in $\U_{0}$, and the latter is generated by elements $\U_{(\e_{x-1}-\e_{x}, 0)}(a), a \in \mathbb{C}$. Out of the generators of $\mathbb{U}^{\gamma_{\K_{2}}*\nu}_{1}$, the ones that do not belong to $\U_{0}$ are $\U_{(\e_{x}, -1)}(a_{x}), \U_{(\e_{x}+\e_{s}, -1)}(a_{xs}),$ and $\U_{(\e_{x}-\e_{l}, -1)}(a_{x\bar l})$. Therefore, if 
\begin{align*}
\A = \U_{(\e_{x}, -1)}(a_{x})\U_{(\e_{x}+\e_{s}, -1)}(a_{xs})\U_{(\e_{x}-\e_{l}, -1)}(a_{x\bar l}) \in \U_{\e_{\bar{x}}},
\end{align*}
we conclude that 
\begin{align}
&\pi''(\C_{\gamma_{\mathscr{K}_{2}}*\nu}) =\\
&\U_{0}\A\U_{(\e_{x-1}-\e_{x}, 0)}(a) \T^{w} = \\
&\U_{0}\U_{(\e_{x-1}-\e_{x}, 0)}(a) \U_{(\e_{x-1}, -1)}(\xi_{x-1}) \U_{(\e_{x-1}+\e_{x}, -2)}(\xi_{x-1 x})\\ &\underset{x< l \leq n}{\prod}\U_{(\e_{x-1}-\e_{l}, -1)}(\xi_{x-1 l}) \underset{s\neq x}{\underset{s \neq x-1}{\underset{s\leq n}{\prod}}}\U_{(\e_{x-1}+\e_{s})}(\xi_{x-1 s}) \A \T^{w} = \\
\label{r2case3k2}
&\U_{0} \U_{(\e_{x-1}, -1)}(\xi_{x-1}) \U_{(\e_{x-1}+\e_{x}, -2)}(\xi_{x-1 x}) \underset{x< l \leq n}{\prod}\U_{(\e_{x-1}-\e_{l}, -1)}(\xi_{x-1 l}) \underset{s\neq x}{\underset{s \neq x-1}{\underset{s\leq n}{\prod}}}\U_{(\e_{x-1}+\e_{s})}(\xi_{x-1 s})\T^{w}
\end{align}
where 
\begin{align*}
\xi_{x-1} = c^{1,1}_{x, x-1 \bar x}(-a_{x})a\\
\xi_{x-1 x}  = c^{2,1}_{x, x-1 \bar x}(a^{2}_{x})a\\
\xi_{x-1 \bar l} = c^{1,1}_{x \bar l, x-1 \bar x}(-a_{x\bar l})a\\
\xi_{x-1 s} = c^{1,1}_{x s, x-1 \bar x}(-a_{x s})a. 
\end{align*}
Therefore it follows directly that in fact 
\begin{align*}
\pi''(\C_{\gamma_{\mathscr{K}_{2}}*\nu})  \subseteq \pi'(\C_{\gamma_{\mathscr{K}_{1}}*\nu}). 
\end{align*}
Now, the system of equations
\begin{align*}
v_{x-1} &= \xi_{x-1}\\
v_{x-1 x} &= \xi_{x-1 x}\\
v_{x-1 \bar l} &= \xi_{x-1 \bar l}\\
v_{x-1 s} &= \xi_{x-1 s}
\end{align*}
 has solutions as long as $\{v_{x-1}, v_{x-1 x}, v_{x-1 \bar l}, v_{x-1 s}\} \subset \mathbb{C}^{\times}$. For such a set of solutions we conclude 
 \begin{align*}
&\U_{(\e_{x-1},-1)}(v_{x-1})\U_{(\e_{x-1}+\e_{x}, -2)}(v_{x-1 x})
\underset{x< l \leq n}{\prod}\U_{(\e_{x-1}-\e_{l}, -1)}(v_{x-1 l}) \underset{ s \neq x}{\underset{s \neq x-1}{\underset{s\leq n}{\prod}}}\U_{(\e_{x-1}+\e_{s})}(v_{x-1 s}) =\\
&\U_{(\e_{x-1}, -1)}(\xi_{x-1}) \U_{(\e_{x-1}+\e_{x}, -2)}(\xi_{x-1 x}) \underset{x< l \leq n}{\prod}\U_{(\e_{x-1}-\e_{l}, -1)}(\xi_{x-1 l}) \underset{s\neq x}{\underset{s \neq x-1}{\underset{s\leq n}{\prod}}}\U_{(\e_{x-1}+\e_{s})}(\xi_{x-1 s})
 \end{align*}
 and therefore we conclude by Proposition \ref{stable} (applied to $\U_{(\e_{x-1}-\e_{x}, 0)}(v_{x-1 x})$ in (\ref{r2case3k1})) that a dense subset of $\pi'(\C_{\gamma_{\mathscr{K}_{1}}*\nu})$ is contained in $\pi''(\C_{\gamma_{\mathscr{K}_{2}}*\nu})$ (cf. (\ref{r2case3k1}), (\ref{r2case3k2})).\\
 
%%%%%%%%%%%%%%%%%%%%%%%%%%%%%%%%%%%%%%%%%%%%%%%%%%%%%%%%%%%%%%%%%%%%%%%%%%%%%%%%%%%%%%%%%%%%%%%%%%%%%%%%%%%%%%%%%%%%%%%%%%
\noindent b) Let $$\mathscr{K}_{3} = \Skew(0: \mbox{\tiny{x-1}}, \mbox{\tiny{x-1}}, \ov{\mbox{\tiny{x-1}}}|0: \ov{\mbox{\tiny{x}}}, \ov{\mbox{\tiny{x}}})$$ and $$\mathscr{K}_{4} = \Skew(0: \ov{\mbox{\tiny{x}}}, \mbox{\tiny{x-1}}, \mbox{\tiny{x}}|1: \ov{\mbox{\tiny{x}}}, \ov{\mbox{\tiny{x-1}}}).$$ Then
\begin{align*}
w_{3} = \ov{x-1}\hbox{ }x-1\hbox{ }\ov{x} = w(\mathscr{K}_{3}) \\
w_{4} = x \hbox{ }\ov{x}\hbox{ }\ov{x} =  w(\mathscr{K}_{4})
\end{align*} 
By Proposition \ref{densewordreading} it is enough to show

\begin{align*}
\overline{\pi'''(\C_{\gamma_{\mathscr{K}_{3}}})} = \overline{\pi''''(\C_{\gamma_{\mathscr{K}_{4}}})}.
\end{align*}

First we claim 
\begin{align*}
\pi''''(\C_{\gamma_{\mathscr{K}_{4}}*\nu}) \subseteq \pi'''(\C_{\gamma_{\mathscr{K}_{3}}*\nu}).
\end{align*}
This is easy. Note that the terms $\U_{(\e_{x-1}-\e_{x}, -1)}(b), b \in \mathbb{C}$ generate both $\mathbb{U}^{\gamma_{\mathscr{K}_{4}}*\nu}_{1}$ and are contained in $\mathbb{U}^{\gamma_{\mathscr{K}_{3}}*\nu}_{1}$. Also, the terms $\U_{(\e_{l}-\e_{x}, 0)}$, which generate $\mathbb{U}^{\gamma_{\mathscr{K}_{4}}*\nu}_{2}$, commute with $\U_{(\e_{x-1}-\e_{x}, -1)}(b)$. Therefore 
\begin{align*}
\pi''''(\C_{\gamma_{\mathscr{K}_{4}}}) = \U_{0}\U_{(\e_{x-1}-\e_{x}, -1)}(b) \T^{w} \subseteq \pi'''(\C_{\gamma_{\mathscr{K}_{3}}}), 
\end{align*}
where the last contention follows by Proposition \ref{stable}. Now we will show 
\begin{align*}
\overline{\pi'''(\C_{\gamma_{\mathscr{K}_{3}}*\nu})} \subseteq \overline{\pi''''(\C_{\gamma_{\mathscr{K}_{4}}*\nu})}.
\end{align*}
We claim that 
\begin{align}
&\pi'''(\C_{\gamma_{\mathscr{K}_{3}}*\nu}) = \\
\label{r2final}
&\U_{0} \U_{(\e_{x-1}, -1)}(v_{x-1}) \U_{(\e_{x-1}-\e_{x}, -1)}(v_{x-1 \bar x}) \underset{ s \neq x\e_{s}+\e_{x-1} \in \Phi^{+}}{\prod}\U_{(\e_{x-1}+\e_{s}, -1)}(v_{x-1 s}) \T^{w}
\end{align}

for complex numbers $v_{x-1}, v_{x-1 \bar x},$ and $v_{x-1 s}$. Let us fix such complex numbers. Let 
\begin{align*}
D = \U_{(\e_{x}, 0)}(a_{x}) \underset{ s \neq x \e_{s}+\e_{x-1} \in \Phi^{+}}{\prod}\U_{(\e_{x}+\e_{s}, -1)}(a_{x-1 s}) \in \U_{0}.  
\end{align*}

Then by the usual arguments (note that $\U_{0}$ stabilises both the image $\pi''''(\C_{\gamma_{\mathscr{K}_{4}}})$ and the truncated image $\T^{\geq 2}_{\gamma_{\mathscr{K}_{4}}*\nu}$). 

\begin{align*}
\D^{-1} \U_{(\e_{x-1}-\e_{x}, -1)}(b) D \T^{w} \subset \pi''''(\C_{\gamma_{\mathscr{K}_{4}}})
\end{align*}
 and 
\begin{align*}
\D^{-1} \U_{(\e_{x-1}-\e_{x}, -1)}(b) D \T^{w} = \\
 \U_{(\e_{x-1}, -1)}(\rho_{x-1}) \U_{(\e_{x-1}-\e_{x}, -1)}(b) \underset{ s \neq x\e_{s}+\e_{x-1} \in \Phi^{+}}{\prod}\U_{(\e_{x-1}+\e_{s}, -1)}(\rho_{x-1 s}) \U_{(\e_{x}+\e_{x-1}, -1)}(\rho_{x x-1})
\end{align*}
where 
\begin{align*}
\rho_{x-1} = c^{1,1}_{x-1 \bar x, x}(-b)a_{x}\\
\rho_{x-1 x}  = c^{2,1}_{x-1 \bar x, x}(-b)a^{2}_{x}\\
\rho_{x-1 s} = c^{1,1}_{x-1 \bar x, x s}(-b)a_{xs}. 
\end{align*}
As usual by requiring that $v_{x-1},v_{x-1 \bar x}, v_{x-1 x},$ and $\rho_{x-1 s}$ be non-zero we may find suitable complex numbers $b, a_{x}, a_{xs}$ such that 
\begin{align*}
\U_{(\e_{x-1}, -1)}(v_{x-1}) \U_{(\e_{x-1}-\e_{x}, -1)}(v_{x-1 \bar x}) \underset{ s \neq x\e_{s}+\e_{x-1} \in \Phi^{+}}{\prod}\U_{(\e_{x-1}+\e_{s}, -1)}(v_{x-1 s}) = \\
\D^{-1} \U_{(\e_{x-1}-\e_{x}, -1)}(b) \D \T^{w}.
\end{align*}
Therefore Proposition \ref{stable} (cf. (\ref{r2final})) implies that a dense open subset of $\pi'''(\C_{\gamma_{\mathscr{K}_{3}}*\nu})$ is contained in $\pi''''(\C_{\gamma_{\mathscr{K}_{4}}*\nu})$.

\end{proof}

\begin{cl}(R3)
\label{P3}
Let $w \in \mathcal{W}_{\mathcal{C}_{n}}$ be a word and $w_{1}$ that is not the word of an LS block, and such that it has the form $w_{1} = a_{1}\cdots a_{r}z\ov{z} \ov{b_{s}} \cdots \ov{b_{1}}$,  and let $w_{2}= a_{1}\cdots a_{r}\ov{b_{s}} \cdots \ov{b_{1}}$ with $a_{1}<\cdots a_{r} <z > b_{s}> \cdots > b_{1}$. Then $\ov{\pi(\C_{\gamma_{w_{1}w}})} = \ov{\pi'(\C_{\gamma_{w_{2}w}})}.$
\end{cl}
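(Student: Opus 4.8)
The plan is to isolate the ``$z\bar z$ excursion'' of $\gamma_{w_1w}$, to truncate both galleries at the common special vertex sitting immediately before and immediately after that excursion, and then to absorb the root subgroups attached to the excursion by the same machinery — Chevalley's formula \eqref{chevalley}, Proposition \ref{stable} and Corollary \ref{goodtrick} — that drove the proofs of R1 and R2. Concretely, I would set $\mu_{\A}=\e_{a_1}+\cdots+\e_{a_r}$, a $0/1$-vector, hence a coweight, hence a special vertex. In $\gamma_{w_1w}$ the vertex reached after reading $a_1\cdots a_r$ and the vertex reached after also reading $z\bar z$ both equal $\mu_{\A}$; call their indices $r$ and $r+2$. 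In $\gamma_{w_2w}$ the vertex reached after $a_1\cdots a_r$ is again $\mu_{\A}$, and from that vertex on the two galleries traverse literally the same faces (the tail reads $\bar b_s\cdots\bar b_1 w$ from $\mu_{\A}$). Hence, with the notation of Theorem \ref{celldescription} and Proposition \ref{stable},
\[
\pi(\C_{\gamma_{w_1w}})=\T^{<r}\cdot\mathbb{U}^{\gamma_{w_1w}}_{\V_r}\,\mathbb{U}^{\gamma_{w_1w}}_{\V_{r+1}}\cdot\T^{w},\qquad \pi'(\C_{\gamma_{w_2w}})=\T^{<r}\cdot\T^{w},
\]
where $\T^{<r}=\mathbb{U}_{\V_0}\cdots\mathbb{U}_{\V_{r-1}}$ is the common prefix product and $\T^{w}=\T^{\geq r+2}_{\gamma_{w_1w}}=\T^{\geq r}_{\gamma_{w_2w}}$ is the common truncated image, which is $\U_{\mu_{\A}}$-stable by Proposition \ref{stable}. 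Taking the two extra factors trivial gives $\pi'(\C_{\gamma_{w_2w}})\subseteq\pi(\C_{\gamma_{w_1w}})$, so one inclusion of closures is free; the content of the claim is the reverse one.

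Next I would identify the two extra root subgroups. Using $a_1<\cdots<a_r<z$ (so $z\notin\A$), a direct check from the definition of $\Phi^{+}_{(\V,\E)}$ shows that $\mathbb{U}^{\gamma_{w_1w}}_{\V_r}$ is generated by the $\U_{(\alpha,\<\alpha,\mu_{\A}\rr)}$ with $\alpha\in\Phi^{+}$ and $\<\alpha,\e_z\rr>0$, and that all of these lie in $\U_{\mu_{\A}}$; while $\mathbb{U}^{\gamma_{w_1w}}_{\V_{r+1}}$ is generated by the $\U_{(\e_i-\e_z,0)}$ for $i\in\A$ (which lie in $\U_{0}$) together with the ``negative crossings'' $\U_{(\e_i-\e_z,-1)}$ for $i<z$, $i\notin\A$. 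Commuting $\mathbb{U}^{\gamma_{w_1w}}_{\V_r}$ rightward past $\mathbb{U}^{\gamma_{w_1w}}_{\V_{r+1}}$ via \eqref{chevalley}, every commutator term $\U_{i\alpha+j\beta}(\ast)$ either vanishes or again lies in $\U_0$ or $\U_{\mu_{\A}}$; since $\T^{<r}\T^{w}=\pi'(\C_{\gamma_{w_2w}})$ is $\U_0$-stable (Corollary \ref{goodtrick}) and $\T^{w}$ is $\U_{\mu_{\A}}$-stable (Proposition \ref{stable}), this reduces the problem to showing that the closure of $\T^{<r}\big(\prod_{i<z,\ i\notin\A}\U_{(\e_i-\e_z,-1)}(c_i)\big)\T^{w}$ lies inside $\overline{\pi'(\C_{\gamma_{w_2w}})}$.

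To dispose of the remaining negative crossings I would run the conjugation argument already used for the zero-lump case of Proposition \ref{densewordreading} and throughout the proofs of Lemmas \ref{denser1} and \ref{denser2}: for each $i<z$ with $i\notin\A$, conjugate $\U_{(\e_i-\e_z,-1)}(c_i)$ by a suitable product of root subgroups $\U_{(\e_z,\ast)},\U_{(\e_z-\e_j,\ast)},\U_{(\e_z+\e_j,\ast)}\in\U_{\mu_{\A}}$ (and, at a later special vertex of the tail, of elements of $\U_{\mu_{\A}-\e_{b_\ast}}$), so that \eqref{chevalley} produces the needed $\U_{(\e_i-\e_z,-1)}(c_i)$ together with only terms in $\U_0$ and $\U_{\mu_{\A}}$, which are then absorbed into $\T^{<r}$ and $\T^{w}$. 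This exhibits a dense open subset of $\pi(\C_{\gamma_{w_1w}})$ inside $\pi'(\C_{\gamma_{w_2w}})$, provided the resulting linear system in the conjugation parameters (with the $c_i$ on the right-hand side) is solvable. This is exactly where I expect to use the hypothesis that $w_1$ is \emph{not} the word of an LS block: it should be precisely what forces the coefficient matrix of this system to be generically invertible, the same kind of non-vanishing-determinant open condition that recurs throughout Section \ref{countingpositivecrossings}. Passing to closures then gives $\overline{\pi(\C_{\gamma_{w_1w}})}\subseteq\overline{\pi'(\C_{\gamma_{w_2w}})}$, finishing the proof.

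The hard part will be this last step: tracking which Chevalley commutator terms survive, choosing for each negative crossing a conjugating element whose ``debris'' is harmless, and pinning down the admissibility bookkeeping so that the solvability of the system fails exactly when $w_1$ is an LS block word. By contrast, the truncation at $\mu_{\A}$ and the identification of $\mathbb{U}^{\gamma_{w_1w}}_{\V_r}$ and $\mathbb{U}^{\gamma_{w_1w}}_{\V_{r+1}}$ are routine, as is the reduction carried out in the second paragraph.
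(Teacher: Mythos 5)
Your setup is sound and in fact more careful than the paper's own argument: the truncation at the special vertex $\mu_{\A}$, the free inclusion $\pi'(\C_{\gamma_{w_2w}})\subseteq\pi(\C_{\gamma_{w_1w}})$, and the identification of $\mathbb{U}^{\gamma_{w_1w}}_{\V_r}$ and $\mathbb{U}^{\gamma_{w_1w}}_{\V_{r+1}}$ are all correct; in particular you rightly record the level $-1$ generators $\U_{(\e_i-\e_z,-1)}$ for $i<z$, $i\notin\A$. But the proof is not complete, for two concrete reasons. First, the assertion in your second paragraph that commuting $\mathbb{U}^{\gamma_{w_1w}}_{\V_r}$ past $\mathbb{U}^{\gamma_{w_1w}}_{\V_{r+1}}$ produces only debris in $\U_0\cup\U_{\mu_{\A}}$ fails: for $i<z$ with $i\notin\A$, Chevalley's formula applied to $\U_{(\e_z,0)}(a)$ and $\U_{(\e_i-\e_z,-1)}(c)$ produces terms $\U_{(\e_i,-1)}(\ast)$ and $\U_{(\e_i+\e_z,-1)}(\ast)$, and since $\langle\e_i,\mu_{\A}\rangle=0>-1$ neither lies in $\U_0$ nor in $\U_{\mu_{\A}}$. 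Second, and more importantly, the entire reverse inclusion hinges on absorbing exactly those negative crossings, and your proposal only announces a conjugation argument whose solvability, and whose dependence on the hypothesis that $w_1$ is not the word of an LS block, you explicitly leave as expectations. Since words such as $w_1=13\bar{3}\bar{2}$ satisfy the hypotheses and do produce a nontrivial $\U_{(\e_2-\e_3,-1)}$, this step cannot be waved through: as it stands, $\overline{\pi(\C_{\gamma_{w_1w}})}\subseteq\overline{\pi'(\C_{\gamma_{w_2w}})}$ is not established.

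For comparison, the paper's proof does not truncate at $\mu_{\A}$: it keeps the factors $\mathbb{P}_{\bar b_s},\dots,\mathbb{P}_{\bar b_1}$ explicit and truncates at the endpoint $\mu_{\gamma_{w_1}}$, then argues that every generator of $\mathbb{P}_z$ stabilises $[t^{\mu_{\gamma_{w_1}}}]$ and commutes (up to harmless debris) with the $\mathbb{P}_{\bar b_j}$, hence is absorbed into the truncated image on the right, while every generator of $\mathbb{P}_{\bar z}$ already occurs among the generators of some $\mathbb{P}_{a_i}$ and is absorbed on the left; no density or open-condition argument is used at all. Note, however, that the paper's displayed $\mathbb{P}_{\bar z}$ contains only the level-$0$ generators $\U_{(\e_{a_i}-\e_z,0)}$, $a_i\in\A$ — precisely the ones that absorb easily — and omits the level $-1$ generators you identified, which is the one point where your analysis is sharper. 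If you want to salvage your route, the work is exactly to show that each $\U_{(\e_i-\e_z,-1)}(c)$, $i<z$, $i\notin\A$, applied to the common truncated image, can be produced from $\pi'(\C_{\gamma_{w_2w}})$ (for instance by conjugating level $-1$ generators coming from the $\bar b_j$-edges of the tail by suitable elements of $\U_0$), and to make precise where the non-LS hypothesis enters; none of that is yet on the page.
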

\begin{proof}[Proof of Claim \ref{P3}]
Let $\A = \{a_{1}, \cdots, a_{r}\}$. We have
\begin{align*}
\pi(\C_{\gamma_{w_{1}w}}) = \mathbb{P}_{a_{1}}\cdots  \mathbb{P}_{a_{r}}\mathbb{P}_{z}\mathbb{P}_{\ov{z}}\mathbb{P}_{\ov{b_{s}}}\cdots \mathbb{P}_{\ov{b_{1}}}\T^{\geq r+s+2}_{\gamma_{w_{1}w}}
\end{align*}
where
\begin{align*}
\mathbb{P}_{z} =& \U_{(\e_{z},0)}(v_{z})\underset{l >z}{\prod}\U_{(\e_{z}-\e_{l}, 0)}(v_{z\bar{l}})\underset{l\notin \A}{\prod}\U_{(\e_{z}+\e_{l},0)}(v_{zl}) \underset{a_{i}\in \A}{\prod}\U_{(\e_{z}+\e_{a_{i}},1)}(v_{za_{i}}), \\
\mathbb{P}_{\ov{z}}=& \underset{a_{i}\in \A}{\prod}\U_{(\e_{a_{i}}-\e_{z},0)}(v_{a_{i}\bar{z}})
\end{align*}
and note that $\mu_{\gamma_{w_{1}}} = \mu_{\gamma_{w_2}} = \underset{i \in \I_{r}}{\sum} \e_{a_{i}} - \underset{j \in \I_{s}}{\sum} \e_{b_{j}}$.
The terms that appear in $\mathbb{P}_{z}$ all stabilise $\mu_{\gamma_{w_{1}}} $ and commute with  $\mathbb{P}_{\ov{b_{j}}}$, while the terms in $\mathbb{P}_{\ov{z}}$ all appear in $\mathbb{P}_{a_{i}}$ and commute with $\mathbb{P}_{a_{l}}$ for $l > i$. This concludes the proof of the claim with the usual arguments. 
\end{proof}
\end{proof}
\end{proof}
\section{Non-examples for non-readable galleries}
Let $n=2$ and  $\lambda = \varepsilon_{1} + \varepsilon_{2}$, and $(\Sigma_{\gamma_{\lambda}}, \pi)$ the corresponding Bott-Samelson as in (\ref{resolution}). Let $\gamma$ be the gallery corresponding to the block
$$\Skew(0:\mbox{\tiny{1}},\mbox{\tiny{$\overline{2}$}}|0:\mbox{\tiny{2}},\mbox{\tiny{$\overline{1}$}}).$$

Then points in $\pi(\C_{\gamma})$ are of the form

\begin{align*}
\U_{(\varepsilon_{1}+\varepsilon_{2},-1)}(b)[t^{0}]
\end{align*}

\noindent
for $b\in \mathbb{C}$, hence form an affine set of dimension 1. We claim that the set $\Z =  \overline{\pi(\C_{\gamma})}$ cannot be an MV cycle in $\mathcal{Z}(\mu)$ for any dominant coweight $\mu$. First note that for any $u \in \U(\mathcal{K})$ a necessary condition for $ut^{0}$ to lie in the closure $\overline{\U(\mathcal{K})t^{\nu} \cap G(\mathcal{O})t^{\mu}}$ is that $0 \leq \nu$, since it would in particular imply that $ut^{0} \in \overline{\U(\mathcal{K})t^{\nu}}$. Also note that it is necesary for $\nu \leq \mu$ in order for the set $\mathcal{Z}(\mu)_{\nu}$ not to be empty. Any MV cycle in $\mathcal{Z}(\mu)_{\nu}$ has dimension $\<\rho, \mu + \nu \rr$, and the only possibility for the latter to be equal to 1 (since $\mu +\nu$ is a sum of positive coroots) is for either $\mu = 0$ and $\nu = \alpha^{\vee}_{i}$, or $\nu = 0$ and $\mu = \alpha^{\vee}_{i}$, for some $i \in \I$, and both options are impossible: the first contradicts $\nu \leq \mu$, and the second contradicts the dominance of $\mu$. Note that $\gamma$ is not a Littelmann gallery. 

\section{Appendix}
\label{appendix}
Here we show that relation (R3) in Theorem \ref{lecouvey} is equivalent to relation $\R_{3}$ in \cite{lecouvey}, Definition 3.1. For a word $w \in \mathcal{W}_{\mathcal{C}_{n}}$ and $m \leq n$ define $\N(w, m) = |\{x \in w: x\leq m \hbox{ or } \overline{m} \leq x \}|$. Lecouvey's relation $\R_{3}$ is: ``Let w be a word that is not the word of an LS block and such that each strict subword is. Let z be the lowest unbarred letter such that the pair $(z, \overline{z})$ occurs in $w$ and $\N(w, z) = z+1$. Then $w \cong w'$, where $w'$ is the subword obtained by erasing the pair $(z, \overline{z})$ in w.'' The following Lemma is a translation between $\R_{3}$ and (R3). 
\begin{lem}
\label{translation}
Let w be a word that is not the word of an LS block and such that each strict subword is. Then $w = a_{1} \cdots a_{r}z\overline{z}\overline{b_{s}}\cdots \overline{b_{1}}$ for $a_{i}. b_{i}$ unbarred and $a_{1}< \cdots a_{r}, b_{1}< \cdots, b_{s}$.
\end{lem}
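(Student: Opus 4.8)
The statement to prove (Lemma \ref{translation}) is a purely combinatorial identification: a word $w$ on $\mathcal{C}_n$ that fails to be the word of an LS block but all of whose strict subwords are words of LS blocks must have the very rigid form $a_1\cdots a_r\, z\, \bar z\, \bar b_s\cdots \bar b_1$ with the $a_i$ and $b_i$ unbarred and strictly increasing, $a_r < z$, $b_s < z$. My plan is to argue directly from the definition of the word of a single admissible column (Kashiwara--Nakashima, as recalled just before Definition \ref{symplecticrels} and in \cite{lecouvey}, Example 2.2.6), together with Lecouvey's characterization $\mathrm{R}_3$ quoted above, which identifies the distinguished pair $(z,\bar z)$ via the counting function $\N(w,m)$.

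\textbf{Key steps, in order.} First I would recall that the word of any admissible column is of the form (unbarred letters, strictly increasing)(barred letters, strictly increasing when read as $\bar f_k < \cdots < \bar f_1$, i.e. $f_1 < \cdots < f_k$); equivalently, any word which is \emph{not} of this shape — because it has a repeated letter, or an unbarred letter following a barred one, or a non-monotone run — already fails to be the word of a column of boxes, hence certainly of an LS block, and in fact it then has a strict subword (a length-two subword) which also fails. This forces $w$ itself to have the shape $w = c_1\cdots c_p \bar d_q \cdots \bar d_1$ with $c_1 < \cdots < c_p$ unbarred, $d_1 < \cdots < d_q$ unbarred. Second, since $w$ is not the word of an LS block but every strict subword is, $w$ cannot be a single admissible column (those are LS blocks by convention when they consist of one column, via the single-box / two-equal-column reductions of Definition \ref{lsblock}); so the admissibility conditions (\ref{admissibilityconditions}) must fail, and I would invoke Lecouvey's $\mathrm{R}_3$: there is a lowest unbarred $z$ with the pair $(z,\bar z)$ occurring in $w$ and $\N(w,z) = z+1$. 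Third, and this is the heart, I would show $z = c_p$ and $z = d_q$, i.e. $z$ is simultaneously the top unbarred letter and (the barred version of) the top barred letter, and that there is no other barred--unbarred coincidence: the condition $\N(w,z) = z+1$ says that among the letters $\{1,\dots,z\}\cup\{\bar z,\dots,\bar 1\}$ there are exactly $z+1$ occurrences in $w$; since the unbarred part $c_1<\cdots<c_p$ contributes at most $z$ of them ($z$ unbarred letters $\le z$) and the barred part contributes $\bar z$ among those $\le z$-type, a counting/pigeonhole argument pins down that all of $1,\dots,z$ occur among the $c_i$'s is \emph{not} required, but rather that $z$ is the unique "overflow"; carefully this forces every letter strictly below $z$ appearing in $w$ to appear on exactly one side, with the single exception of the pair $(z,\bar z)$. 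Removing that pair makes $w$ into $a_1\cdots a_r \bar b_s\cdots\bar b_1$, a word of an LS block (a strict subword, hence admissible), and the minimality of $z$ together with $\N$ forces $a_r = c_{p-1} < z$ and $b_s = d_{q-1} < z$.

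\textbf{Main obstacle.} The delicate point is the third step: translating the numerical condition $\N(w,z) = z+1$ into the structural statement that the \emph{only} repeated-pair is $(z,\bar z)$ and that everything below is strictly monotone on each side with the $a_i$'s and $b_i$'s genuinely $< z$. One must rule out, for instance, a configuration where two distinct pairs $(z,\bar z)$ and $(z',\bar z')$ both occur but the subword obtained by deleting only $(z,\bar z)$ is still non-admissible — this is exactly where the hypothesis "every \emph{strict} subword is an LS block" does the work, since deleting $(z,\bar z)$ yields a strict subword which must then be admissible, and one shows an admissible word on which adding back $(z,\bar z)$ produces a non-admissible word must have had all its letters $<z$ appearing once, with the $a_i$'s below $z$. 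I expect this to require a short case analysis on whether letters $<z$ are present on both sides, each case closed by exhibiting a strict non-admissible subword (contradiction) or by direct verification of the admissibility conditions (\ref{admissibilityconditions}). Once this is done, the displayed form of $w$ and the inequalities are immediate, and Lemma \ref{translation} — and with it the equivalence of (R3) and $\mathrm{R}_3$ — follows.
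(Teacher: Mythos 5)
Your overall route is the same as the paper's: reduce to Lecouvey's numerical criterion that $w$ is the word of an LS block if and only if $\N(w,m)\le m$ for all $m$, locate the minimal $z$ with the pair $(z,\bar z)$ occurring and $\N(w,z)>z$, and use the strict-subword hypothesis to pin down $\N(w,z)=z+1$; your preliminary observation that any violation of the shape (increasing unbarred)(increasing barred) already shows up in a length-two strict subword is a sensible addition that the paper leaves implicit. However, your ``heart'' step has a genuine gap. The crux of the lemma is that $z$ is the \emph{largest} unbarred letter of $w$ and $\bar z$ the extremal barred letter, and no counting argument based on $\N(w,z)=z+1$ can deliver this: the function $\N(\cdot,z)$ only counts occurrences of letters in $\{1,\dots,z\}\cup\{\bar z,\dots,\bar 1\}$ and is completely blind to letters $y$ or $\bar y$ with $y>z$, so the equality $\N(w,z)=z+1$ is compatible with such letters being present. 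What is needed (and what the paper does) is one more application of the strict-subword hypothesis in a different direction: if an unbarred $y>z$ (or a barred $\bar y$ with $y>z$) occurred in $w$, deleting that single letter would leave $\N(\cdot,z)$ unchanged, producing a strict subword with $\N(\cdot,z)=z+1>z$, contradicting that every strict subword is the word of an LS block. Your plan only ever deletes the pair $(z,\bar z)$, which cannot detect letters above $z$.

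Separately, the structural claim you make the target of your pigeonhole argument --- that every letter strictly below $z$ occurs on exactly one side, i.e.\ that $(z,\bar z)$ is the only unbarred--barred coincidence --- is false. For $n=3$ the word $w=2\,3\,\bar 3\,\bar 2$ is not the word of an LS block (every candidate decomposition forces $\A\cap\B\neq\emptyset$ or $2k+r+s=4>3$), every strict subword of it is, the minimal violating letter is $z=3$ with $\N(w,3)=4$, and yet the pair $(2,\bar 2)$ also occurs. Fortunately this claim is not needed: the conclusion of the lemma, like relation (R3) itself, allows the $a_i$ and the $b_j$ to overlap, so you should simply drop that part of the argument and replace the counting step by the deletion argument above.
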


\begin{proof}
By Remark 2.2.2 in \cite{lecouvey}, w is the word of an LS block if and only if $\N(w, m)\leq m$ for all $m \leq n$. Let w be as in the statement of  Lemma \ref{translation}. Then there exists in $w$ a pair $(z,\overline{z})$ such that $\N(w, z)>z$. Let $z$ be minimal with this property. In particular $\N(w, z) = z+1$ since if $w''$ is the word obtained from $w$ by erasing $z$, then $z \geq \N(w'',z) = \N(w,z)-1$. We claim that $z$ is the largest unbarred letter to appear in $w$. If there was a larger letter $y$ then $\N(w''',z) = \N(w,z) = z+1$ where $w'''$ denotes the word obtained from $w$ by deleting $y$. This is impossible since by assumption $w'''$ is the word of an LS block. Likewise $\overline{z}$ is the smallest unbarred letter to appear in $w$. The $a_{i}'s$ and $b_{i}'s$ are then those from Definition \ref{lsblock} for the word obtained from $w$ by deleting $z, \overline{z}$ from it.
\end{proof}

\end{document}